\DeclareSymbolFont{cyrletters}{OT2}{wncyr}{m}{n}
\DeclareMathSymbol{\Sha}{\mathalpha}{cyrletters}{"58}
\DeclareMathSymbol{\Che}{\mathalpha}{cyrletters}{"51}
\newcommand{\calHom}{\mathscr{H}\mathit{om}}
\newcommand{\Ga}{{\mathbf{G}}_{\rm{a}}}
\newcommand{\Gm}{{\mathbf{G}}_{\rm{m}}}
\DeclareMathOperator{\Gal}{Gal}
\DeclareMathOperator{\Br}{Br}
\DeclareMathOperator{\Pic}{Pic}
\DeclareMathOperator{\Ext}{Ext}
\DeclareMathOperator{\Hom}{Hom}
\DeclareMathOperator{\R}{R}
\DeclareMathOperator{\Spec}{Spec}
\DeclareMathOperator{\coker}{coker}
\DeclareMathOperator{\et}{\acute{e}t}
\newcommand*{\Z}{\ensuremath{\mathbf{Z}}}                        
\newcommand*{\Q}{\ensuremath{\mathbf{Q}}}                     
\newcommand*{\F}{\ensuremath{\mathbf{F}}}                        
\newcommand*{\C}{\ensuremath{\mathbf{C}}}                        
\newcommand*{\A}{\ensuremath{\mathbf{A}}}                        
\newcommand*{\calO}{\mathcal{O}}                                  
\newcommand*{\isoarrow}[1]{\arrow[#1,"\rotatebox{90}{\(\sim\)}"
]}
\numberwithin{equation}{section}
\newtheorem{theorem}{Theorem}[section]
\newtheorem{lemma}[theorem]{Lemma}
\newtheorem{proposition}[theorem]{Proposition}
\theoremstyle{definition}
  \newtheorem{definition}[theorem]{Definition}
\theoremstyle{remark}
  \newtheorem{remark}[theorem]{Remark}
\tikzset{commutative diagrams/.cd,
mysymbol/.style={start anchor=center,end anchor=center,draw=none}
}
\title{\textbf{TAMAGAWA NUMBERS AND OTHER INVARIANTS OF PSEUDO-REDUCTIVE GROUPS OVER GLOBAL FUNCTION FIELDS}}
\author{Zev Rosengarten \thanks{While completing this work, the author was supported by an ARCS Scholar Award and by a Ric Weiland Graduate Fellowship. \newline
MSC 2010: primary 11R58; secondary 11R56, 11R34, 11E99. \newline
Keywords: Tamagawa numbers, linear algebraic groups, pseudo-reductive groups, Tate-Shafarevich sets. \newline
}}
\date{}
\begin{document}
\maketitle

\begin{abstract}
We study Tamagawa numbers and other invariants (especially Tate-Shafarevich sets) attached to commutative and pseudo-reductive groups over global function fields. In particular, we prove a simple formula for Tamagawa numbers of commutative groups and pseudo-reductive groups. We also show that the Tamagawa numbers and Tate-Shafarevich sets of such groups are invariant under inner twist, as well as proving a result on the cohomology of such groups which extends part of classical Tate duality from commutative groups to all pseudo-reductive groups. Finally, we apply this last result to show that for suitable quotient spaces by commutative or pseudo-reductive groups, the Brauer--Manin obstruction is the only obstruction to strong (and weak) approximation.
\end{abstract}

\tableofcontents{}

\section{Introduction}

\subsection{Tamagawa numbers}

Let $G$ be a linear algebraic group (i.e., a smooth affine group scheme) over a global field $k$, and assume it is connected. For the adele ring $\A := \A_k$ of $k$, the group $G(\A)$ is a locally compact group, hence admits a (left or right) Haar measure, unique up to scaling. Tamagawa observed that in fact one can pin down a certain canonical Haar measure on this group, the {\em Tamagawa measure}. 

Informally, the Tamagawa measure is obtained as follows. Consider the set of global left (resp. right) invariant differential $d$-forms on $G$, where $d$ is the dimension of $G$. By translation-invariance, such forms are in one-to-one correspondence with elements of $\bigwedge^d(T_eG)^*$, the $d$th exterior power of the cotangent space of $G$ at the identity. Thus, they form a one-dimensional $k$-vector space. Given any one such nonzero form $\omega$, once we fix a Haar measure on each $k_v$ (we choose the one giving $\calO_v$ measure $1$ in the non-archimedean case, $dx$ on $\mathbf{R}$ giving $\mathbf{R}/\Z$ measure $1$, and $|dz \wedge d\overline{z}|$ on $\C$ giving $\C/\Z[i]$ measure $2$ as in \cite[Ch.\,I, \S 4.3]{oesterle}), we obtain left (resp. right) Haar measures $\mu_v$ on $G(k_v)$ for each place $v$ of $k$. Then we may define the ``restricted product'' measure 
\begin{equation}
\label{productmeasure}
\prod_v \mu_v
\end{equation}
on the group $G(\A)$ if this converges in a suitable sense. The key observation here is that this measure is independent of the choice of $\omega$! Indeed, replacing $\omega$ by $\lambda \omega$ for some $\lambda \in k^{\times}$ multiplies this measure by $\prod_v |\lambda|_v$, which equals $1$ by the product formula. We thus obtain a canonical measure on $G(\A)$.

The actual definition of Tamagawa measure in general involves some additional technical complications. For one, the measure (\ref{productmeasure}) does not converge if $G$ admits nontrivial characters (that is, 
the group $\widehat{G}(k)$ of  $k$-homomorphisms $G \rightarrow \Gm$ is nontrivial). For this reason, one must insert certain ``convergence factors'' (and in the actual definition of Tamagawa measure, one inserts specific convergence factors to ensure good behavior under Weil restriction of scalars). One may try to use this measure on $G(\A)$ to define a quotient measure on the coset space $G(k) \backslash G(\A)$. 

If $G$ admits nontrivial characters then the volume of this coset space is automatically infinite, since such a character $\chi$ yields a 
nontrivial composite map
\[
|\!|\chi|\!|:  G(k) \backslash G(\A) \xrightarrow{\chi} k^{\times} \backslash \A^{\times} \xrightarrow{|\!|\cdot|\!|} \mathbf{R}_{>0}
\]
($|\!|\cdot|\!|$ is the norm map sending an idele $(a_v)$ to $\prod_v |a_v|$)
whose image is full or an infinite cyclic group.  We therefore consider the subset 
\[
G(\A)_1 := \underset{\chi \in \widehat{G}(k)}{\bigcap} \ker(|\!|\chi|\!|)
\]
of norm-$1$ adelic points of $G$. This inherits a canonical measure induced from that on $G(\A)$ because $G(\A)/G(\A)_1$ has a canonical Haar measure as a lattice (for function fields) or Euclidean space (for number fields), and we define the {\em Tamagawa number} of $G$, denoted $\tau(G)$, to be the volume of $G(k) \backslash G(\A)_1$.  (For more details on Tamagawa measure, see \cite[Chap. I]{oesterle}.) The choice between using a left-invariant versus right-invariant differential form makes no difference, since the ratio between a nonzero left-invariant form and a nonzero right-invariant form is an algebraic character (the determinant of the adjoint representation), which in turn disappears when we restrict attention to the norm-$1$ adelic points. 

It is by no means apparent that $\tau(G)$ is finite. The finiteness of $\tau(G)$ for reductive groups is due to Borel (building on work of Harish-Chandra) \cite[Thm.\,5.8]{borel} over number fields, using ideas from classical reduction theory and class field theory,
and is due to Harder over function fields \cite{hardertau}. The general finiteness of $\tau(G)$ (for arbitrary connected linear algebraic groups) is an easy reduction to the reductive case in the number field setting, whereas over function fields it was established by Conrad \cite[Thm.\,1.3.6]{conrad} who made essential use of the classification of pseudo-reductive groups in \cite{cgp} and Oesterl\'e's finiteness result in the solvable case \cite[Ch.\,IV, \S 1.3]{oesterle}.

The Tamagawa number of $G$ contains very important arithmetic information. For example, if $q$ is a positive-definite quadratic form over $\Z$, then the Siegel Mass Formula is equivalent (via a  simple argument) to the statement that $\tau({\rm{SO}}(q)) = 2$ \cite[\S 1.1]{lurie}.
As another example, the equality $\tau({\rm{SL}}_n) = 1$ for the group ${\rm{SL}}_n$ over $\Q$ is equivalent to the classical volume computation
\[
{\rm{vol}}({\rm{SL}}_n(\mathbf{R})/{\rm{SL}}_n(\Z)) = \prod_{i = 2}^n \zeta(i).
\]

It is therefore of significant interest to compute Tamagawa numbers of linear algebraic groups. In the late 1950's and early 1960's, Weil \cite{weiladeles} computed the Tamagawa numbers of various classical groups over all global fields, including SL$_n$, SO$(q)$, and Sp$_{2n}$, and formulated his famous conjecture that all simply connected groups over global fields have Tamagawa number $1$ (see the end of \cite{weiladelesetgroupes}). This conjecture was proved over number fields by Langlands, Lai, and Kottwitz (\cite{langlands}, \cite{lai}, \cite{kottwitz}), in conjunction with work of others, and over function fields by Gaitsgory and Lurie \cite{lurie}.

In 1981, Sansuc \cite[Thm.\,10.1]{sansuc} gave a formula for Tamagawa numbers of connected reductive groups over number fields, modulo some deep results which were unknown at the time but which have since been proven, especially Weil's Tamagawa number conjecture mentioned above. Sansuc's formula is
\begin{equation}
\label{sansucformula}
\tau(G) = \frac{\# \Pic(G)}{\# \Sha(G)},
\end{equation}
where $\Pic(G)$ is the Picard group of $G$ (that is, the group of line bundles on $G$ up to isomorphism), and $\Sha(G)$ (also denoted $\Sha^1(G)$ or $\Sha^1(k, G)$) is the Tate-Shafarevich set of $G$, defined as
\[
\Sha(G) := \ker \left( {\rm{H}}^1(k, G) \rightarrow \prod_v {\rm{H}}^1(k_v, G) \right);
\]
i.e., the set of isomorphism classes of principal $G$-torsors over $k$ that have points everywhere locally. This is equivalent to the definition
\[
\Sha(G) = \ker \left( {\rm{H}}^1(k, G) \rightarrow {\rm{H}}^1(\A, G) \right)
\]
by the well-known Proposition \ref{adeliccohomologyprop} below.

The finiteness of $\Pic(G)$ for reductive $G$ (over any field whatsoever) is part of Sansuc's work, whereas the finiteness of $\Sha(G)$ lies much deeper. For abelian varieties, this latter finiteness is a major open problem, but for affine groups $G$ of finite type over global fields, the finiteness of $\Sha(G)$ is known and due to Borel and Serre over number fields (where passage to the reductive case is easy), Harder and Oesterl\'e in the reductive and solvable cases respectively over global function fields, and Conrad in the general case over global function fields; see \cite[\S1.3]{conrad} and references therein for more details and references.

\subsection{Beyond the reductive case}
\label{subsectionbeyondred}

Modulo Weil's Tamagawa number conjecture, recently proven over global function fields by Gaitsgory and Lurie \cite{lurie}, Sansuc's proof works just as well for reductive groups over global function fields. Over number fields, one may easily show that Sansuc's formula (\ref{sansucformula}) holds for {\em all} connected linear algebraic groups, by reducing to the reductive case. Over function fields, the gulf between reductive groups and general (connected) linear algebraic groups is vast. Let us explain why this is.

Given a connected linear algebraic group $G$ over a field $k$, the {\em $k$-unipotent radical} of $G$, denoted $\mathscr{R}_{u, k}(G)$, is defined to be the maximal smooth connected normal unipotent $k$-subgroup of $G$. (That there is a unique such maximal subgroup is a standard result in the structure theory of linear algebraic groups over general fields.) Then we have an exact sequence of connected linear algebraic groups
\begin{equation}
\label{predsequence}
1 \longrightarrow \mathscr{R}_{u, k}(G) \longrightarrow G \longrightarrow H \longrightarrow 1,
\end{equation}
where the group $H$ is {\em pseudo-reductive}, which means (by definition) that $\mathscr{R}_{u, k}(H) = 1$. 

When $k$ is a perfect field, two very nice things happen. First, the group $\mathscr{R}_{u, k}(G)$ is split unipotent; that is, it admits a filtration with successive quotients $k$-isomorphic to $\Ga$, since any smooth connected unipotent group over a perfect field is split \cite[Cor.\,15.5(ii)]{borelalggroups}. Thus, this group is in a suitable sense rather simple to understand, and Sansuc's formula over perfect global fields, i.e., number fields, may be reduced to the case of the pseudo-reductive quotient $H$. Second, over any field, one has (just by definition of the $k$-unipotent radical) an inclusion 
\begin{equation}
\label{unipradicalinclusion}
\mathscr{R}_{u, k}(G)_{\overline{k}} \subset \mathscr{R}_{u, \overline{k}}(G_{\overline{k}}).
\end{equation}
The subgroup $\mathscr{R}_{u, \overline{k}}(G_{\overline{k}})$ of $G_{\overline{k}}$ is characteristic (i.e., preserved by all $\overline{k}$-automorphisms of $G_{\overline{k}}$), hence it is preserved by the action of Aut$(\overline{k}/k)$. If $k$ is perfect, then by Galois descent this $\overline{k}$-subgroup descends to a $k$-subgroup of $G$, hence the inclusion (\ref{unipradicalinclusion}) is an equality. In particular, if $k$ is perfect, then $G$ is pseudo-reductive if and only if it is reductive (which by definition means that $G_{\overline{k}}$ is pseudo-reductive). Thus, if $k$ is perfect, then the group $H$ in (\ref{predsequence}) is reductive and not just pseudo-reductive.

When $k$ is imperfect, however, then both of these nice conclusions may fail: $\mathscr{R}_{u, k}(G)$ can be nonsplit and in fact can be very complicated, and $H$ can be pseudo-reductive but not reductive. Indeed, over every imperfect field, there are many nonsplit smooth connected unipotent groups, as well as many pseudo-reductive groups that are not reductive. (An interesting basic example of the latter is ${\rm{R}}_{k'/k}({\rm{SL}}_p)/{\rm{R}}_{k'/k}(\mu_p)$ for a nontrivial purely inseparable finite extension $k'/k$ in characteristic $p>0$, where ${\rm{R}}_{k'/k}$ denotes Weil restriction of scalars; there are many other examples with more intricate structure.)

In fact, Sansuc's formula (\ref{sansucformula}) fails even for forms of $\Ga$. Indeed, by Theorem \ref{tamagawaformula} below, in order to prove this failure it suffices to give an example of a form $U$ of $\Ga$ over a global function field $k$ such that $\Ext^1(U, \Gm) \neq \Pic(U)$ (see (\ref{extdef}) below for the definition of $\Ext^1(U, \Gm)$). Examples of such $U$ are provided by \cite[Prop.\,5.12]{rospic}.

In order to modify Sansuc's formula in the function field setting, therefore, we introduce a subgroup of $\Pic(G)$ which keeps track of the group structure on $G$ (unlike $\Pic(G)$, which only depends on the scheme structure). Let $m, p_i: G \times G \rightarrow G$ ($i = 1, 2$) denote the multiplication and projection maps. Then we let $\Ext^1(G, \Gm) \subset \Pic(G)$ denote the set of primitive line bundles on $G$. That is,
\begin{equation}
\label{extdef}
\Ext^1(G, \Gm) := \{\mathscr{L} \in \Pic(G) \mid m^*\mathscr{L} \simeq p_1^*\mathscr{L} \otimes p_2^*\mathscr{L}\}.
\end{equation}
The reason for the notation is that any extension of $G$ by $\Gm$ is in particular a $\Gm$-torsor over $G$, hence we get a homomorphism $\Ext^1_{{\rm{Yon}}}(G, \Gm) \rightarrow \Pic(G)$, where the ``Yoneda Ext'' group $\Ext^1_{{\rm{Yon}}}(G, \Gm)$ denotes the set of $k$-isomorphism classes of extensions of $G$ by $\Gm$ made into a group via Baer sum. (Any such extension $E$ is automatically represented by a smooth connected affine $k$-group that is a central extension of $G$ by $\Gm$: the automorphism scheme of $\Gm$ is \'etale, hence the $k$-group map $E \rightarrow {\rm{Aut}}_{\Gm/k}$ induced by conjugation is constant.) This induces an isomorphism $\Ext^1_{{\rm{Yon}}}(G, \Gm) \xrightarrow{\sim} \Ext^1(G, \Gm)$. (This is essentially \cite[Thm.\,4.12]{colliot-thelene}, though the result there is stated only for $G$ of multiplicative type. The proof is the same in general, using Chevalley's Unit Theorem.)

We should also mention that when $G$ is commutative, the notation $\Ext^1(G, \Gm)$ may also be used to denote the derived-functor Ext in the category of fppf abelian sheaves on $\Spec(k)$. This latter Ext group is canonically isomorphic to the group $\Ext^1(G, \Gm)$ defined above \cite[Prop.\,4.3]{rospic}, so there is no ambiguity in the notation. Another nice property of the group $\Ext^1(G, \Gm)$ is that it is finite for any connected linear algebraic group $G$ over a {\em global} field $k$ \cite[Thm.\,1.1]{rospic}. This result is truly arithmetic in nature, as it fails over every local function field and over every imperfect separably closed field \cite[Prop.\,5.9]{rospic}. Let us also remark that if $k$ is a perfect field then the inclusion $\Ext^1(G, \Gm) \subset \Pic(G)$ is an equality for any connected linear algebraic $k$-group $G$ (see the paragraph preceding the statement of Theorem 1.5 in \cite{rospic}). In particular, this equality holds when $k$ is a number field.

A crucial tool for making progress in the study of arithmetic invariants (including Tamagawa numbers) beyond the reductive and number field settings, and which plays a central role throughout this paper, is a substantial generalization of classical Tate duality. Recall that Tate duality, discovered by Poitou and Tate in the 1960's, is a collection of results on the cohomology of finite discrete Galois modules over local and global fields (with a restriction on the order of the module in the positive characteristic setting). These include local duality theorems as well as a global $9$-term exact sequence which in particular involves duality theorems between suitable Tate-Shafarevich groups of $G$ and its fppf $\Gm$-dual $\widehat{G} := \calHom(G, \Gm)$. Recently, \v{C}esnavicius \cite{ces} generalized these results to all finite commutative group schemes over global fields (the classical case being that of finite {\em \'etale} commutative group schemes with \'etale dual), replacing Galois cohomology with fppf cohomology, and replacing various restricted products that appear in the classical duality theorems with adelic cohomology groups. 

Building upon \v{C}esnavi\v{c}ius' work, the author generalized these results further to all affine commutative group schemes of finite type over global fields in \cite{rostateduality}. The essential new feature is to handle positive-dimensional unipotent coefficients and their $\Gm$-dual sheaves over global function fields. These duality results apply in particular to all commutative linear algebraic groups, hence their relevance to the current paper. 

Another essential ingredient for the results in this paper is the classification of pseudo-reductive groups obtained by Conrad, Gabber and Prasad \cite{cgp}, which we review in \S \ref{classificationsection}. This classifies pseudo-reductive groups ``up to the commutative case,'' and commutative pseudo-reductive groups are essentially a black box about which one can say very little. It is therefore crucial that one have an alternative method for understanding commutative groups, and this is provided by global Tate duality for positive-dimensional commutative affine group schemes of finite type (as opposed just the $0$-dimensional -- i.e., finite -- case), henceforward referred to as ``global Tate duality'', or simply ``Tate duality.''

\subsection{Results}
Now let us return to our discussion of global invariants of linear algebraic groups. Unfortunately, Tamagawa numbers of non-commutative unipotent groups exhibit rather pathological behavior. Indeed, many arithmetic quantities exhibit bad behavior beyond the commutative and pseudo-reductive settings. The motto of this work is: ``commutative and pseudo-reductive groups behave nicely, and everything else is pathological.'' Such pathologies with non-commutative unipotent groups will be exhibited in a forthcoming paper which will provide counterexamples to the main results of this work for such groups.

Our first example of nice behavior for commutative and pseudo-reductive groups is the following generalization of Sansuc's formula (\ref{sansucformula}).

\begin{theorem}
\label{tamagawaformula}
Let $G$ be a connected linear algebraic group over a global field $k$. Assume that $G$ is either commutative or pseudo-reductive. Then
\[
\tau(G) = \frac{\# \Ext^1(G, \Gm)}{\# \Sha(G)}.
\]
\end{theorem}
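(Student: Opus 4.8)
The plan is to reduce the asserted identity to a short list of ``atomic'' groups and to treat those directly. Throughout it is convenient to work with the defect
\[
\delta(G) := \frac{\tau(G) \cdot \# \Sha(G)}{\# \Ext^1(G, \Gm)},
\]
so that the theorem becomes the assertion $\delta(G) = 1$. Over a number field this is already known: there $\Ext^1(G, \Gm) = \Pic(G)$ (as recalled in \S\ref{subsectionbeyondred}), and Sansuc's formula (\ref{sansucformula}) --- valid for all connected linear algebraic groups over number fields, granting Weil's now-proven Tamagawa number conjecture --- gives exactly $\delta(G) = 1$. So from now on $k$ is a global function field.

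The technical core is a \emph{multiplicativity lemma} for $\delta$: for a short exact sequence $1 \to G' \to G \to G'' \to 1$ of connected linear algebraic $k$-groups of the relevant types (all commutative, or --- for the pseudo-reductive part --- with $G', G''$ among the groups occurring in the classification below), one has $\delta(G) = \delta(G') \cdot \delta(G'')$. Tamagawa numbers are not literally multiplicative in such sequences: Oesterl\'e's d\'evissage \cite{oesterle} expresses $\tau(G)$ through $\tau(G')$, $\tau(G'')$ and correction factors built from the cohomology of $G', G, G''$ over $k$, over the completions $k_v$, and over $\A$. The point of the lemma is that these factors cancel exactly against the error terms in the long exact sequences computing $\Sha(\cdot)$ and $\Ext^1(\cdot, \Gm)$; establishing this cancellation is, I expect, the main obstacle. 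It is precisely here that global Tate duality for positive-dimensional commutative affine groups (\v{C}esnavi\v{c}ius \cite{ces} and the author \cite{rostateduality}) is essential: it converts the local and adelic cohomology appearing on the Tamagawa-number side into the global (Poitou--Tate) dual terms appearing on the $\Sha$/$\Ext^1$ side, with the local duality statements handling each place individually.

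Granting the lemma, the commutative case goes as follows. A connected commutative linear algebraic $k$-group $G$ has a unique maximal subtorus $T$, and $G/T$ has no nontrivial subtorus --- the preimage in $G$ of such a subtorus would be an extension of tori, hence a torus, contradicting maximality of $T$ --- so $G/T$ is unipotent; setting $U := G/T$ we get $\delta(G) = \delta(T) \cdot \delta(U)$. For the torus $T$ one has $\Ext^1(T, \Gm) = \Pic(T)$ (as $\Pic(T_{\overline{k}}) = 0$), and $\delta(T) = 1$ is Ono's theorem (the torus case of Sansuc's formula), valid over function fields. For the unipotent group $U$, a further d\'evissage along a filtration with successive quotients $k$-forms of $\Ga$ reduces --- again via the lemma --- to the case $\dim G \le 1$, i.e.\ $G$ a $k$-form of $\Ga$. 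This atomic case is the other delicate point: now $\calHom(G, \Gm) = 0$, so (via the local-to-global spectral sequence for derived-functor $\Ext$, legitimate here by \cite{rospic}) $\Ext^1(G, \Gm) = (\calExt^1(G, \Gm))(k)$; one then computes $\tau(G)$ directly from the definition and uses the author's Tate duality \cite{rostateduality} to identify $\Sha(G)$ with a Tate--Shafarevich group of the dual sheaf $\calExt^1(G, \Gm)$, which yields $\tau(G) \cdot \# \Sha(G) = \# \Ext^1(G, \Gm)$.

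For the pseudo-reductive case I would invoke the classification of \cite{cgp}, reviewed in \S\ref{classificationsection}: up to the commutative case, a pseudo-reductive $G$ is built by a standard construction (or, in the exotic characteristic $2$ and $3$ cases, an analogous one) from groups ${\rm{R}}_{k'/k}(G')$ with $G'$ connected reductive over a finite extension $k'/k$, together with a commutative pseudo-reductive ``correction'' group and central isogenies. All three quantities are compatible with Weil restriction: $\tau({\rm{R}}_{k'/k}(G')) = \tau(G')$ (Oesterl\'e, which is the very point of the convergence factors in the definition of Tamagawa measure), $\Sha({\rm{R}}_{k'/k}(G')) = \Sha(G')$ (Shapiro), and $\Ext^1({\rm{R}}_{k'/k}(G'), \Gm) \cong \Ext^1(G', \Gm)$ (from the behaviour of $\Pic$ under Weil restriction, using $\Ext^1 = \Pic$ for reductive groups); hence $\delta({\rm{R}}_{k'/k}(G')) = \delta(G')$. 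Now $\delta(G') = 1$ is Sansuc's formula for the connected reductive group $G'$ over the function field $k'$, which holds over function fields once Weil's conjecture is available there \cite{lurie} (again using $\Ext^1 = \Pic$ for reductive groups). Since the kernels of the central isogenies and the commutative correction group are commutative --- hence covered by the previous paragraph --- the multiplicativity lemma, applied to the exact sequences in the CGP presentation, propagates $\delta = 1$ from the pieces to $G$, which completes the argument.
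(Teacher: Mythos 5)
Your outline correctly locates the two external inputs (Oesterl\'e's volume computations and global Tate duality for positive-dimensional commutative affine groups), but it has two genuine gaps, and both occur exactly where you flag ``the main obstacle.'' First, the multiplicativity lemma $\delta(G) = \delta(G')\cdot\delta(G'')$ is not available in the generality you need. For the pseudo-reductive reduction the relevant sequences have non-commutative middle term and quotient, so there is no long exact Poitou--Tate sequence to cancel against: ${\rm{H}}^1$ is only a pointed set and $\Sha$, $\Che$ are not groups. What one can actually prove (and what the paper proves, as Lemma \ref{tamagawasimplekernel}) is the much weaker statement that $\tau/\#\Ext^1$ is preserved when the \emph{kernel} $H'$ satisfies ${\rm{H}}^1(k,H')={\rm{H}}^1(k_v,H')=1$, $\Ext^1(H',\Gm)=1$ and $\tau(H')=1$; and separately that $\Sha(G)\to\Sha(G'')$ is bijective, whose surjectivity needs $\Sha^2$ of the central kernel to vanish. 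The kernel $\mathscr{C}=\R_{k'/k}(Z_{G'}(T'))$ in the standard presentation satisfies none of these conditions a priori, so the paper must (i) choose the maximal torus $T'$ via Proposition \ref{torustrivialSha^2} so that $\Sha^2(k',Z_{G'}(T'))=0$ (using Lemma \ref{pointsofcartan} to transfer this through the exotic Cartans), and (ii) push out along an embedding $\mathscr{C}\hookrightarrow B=\R_{k''/k}(\mathscr{C}_{k''})$ into a cohomologically trivial group before applying the kernel-triviality lemma. None of this is circumventable by ``exact cancellation in the long exact sequences,'' and in the commutative case the d\'evissage to tori and forms of $\Ga$ is both unproved (same multiplicativity issue: $G(\A)\to U(\A)$ need not be surjective) and unnecessary, since Oesterl\'e's formula in \cite[Ch.\,IV, Thm.\,3.2]{oesterle} already applies to an arbitrary connected commutative group and reduces everything to one application of the nine-term sequence of \cite{rostateduality}.

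Second, your description of the building blocks in characteristics $2$ and $3$ is wrong in a way that hides a large piece of the proof. In the generalized standard construction the $k'$-group $G'$ need not have reductive fibers: its fibers can be \emph{basic exotic} pseudo-reductive groups, and there is additionally the basic non-reduced case in characteristic $2$; neither is a Weil restriction of a reductive group, so ``$\delta(G')=1$ is Sansuc's formula for the connected reductive group $G'$'' does not apply. Establishing $\tau=1$ for these groups is Proposition \ref{basicexotictam=1}, which occupies \S\S\ref{sectionexoticgeneral}--\ref{exotictam=1section} of the paper and proceeds by comparing Tamagawa measures through the homeomorphism $G(\A)\simeq\overline{G}(\A)$ induced by the very special quotient, using open-cell decompositions and the construction of suitably large split tori. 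A minor further point: $\Ext^1(\R_{k'/k}(G'),\Gm)=1$ is not obtained from ``the behaviour of $\Pic$ under Weil restriction'' (which is badly behaved for inseparable $k'/k$) but from perfectness of $\R_{k'/k}(G')$ together with simple connectedness of its geometric reductive quotient (Lemmas \ref{Ext^1vanishessc} and \ref{R_{k'/k}(G')trivial}).
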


Comparing Theorem \ref{tamagawaformula} with (\ref{sansucformula}), it is natural to ask when 
$\Ext^1(G, \Gm) = \Pic(G)$. As noted in \S \ref{subsectionbeyondred}, this fails even for general $k$-forms $G$ of $\Ga$. So the natural question in light of Theorem \ref{tamagawaformula} is whether this equality holds for pseudo-reductive groups. The answer in general is negative; see Remark \ref{PicneqExtexamples}.

Theorem \ref{tamagawaformula} is the analogue for linear algebraic groups of
Bloch's volume-theoretic formulation of the
Birch and Swinnerton--Dyer Conjecture for abelian varieties as discussed in \cite{bloch}. To explain the connection in the special case of finite Mordell--Weil group
(\cite{bloch} handles any Mordell--Weil rank), 
let $B$ be a $g$-dimensional abelian variety with finite Mordell--Weil group over a global function field $k$, and let $\mathcal{B}$ be its Neron model over the smooth proper geometrically connected curve $X$ with function field $k$. As usual, let $\A$ denote the adele ring of $k$. By the theory of abelian varieties over finite fields, the local $L$-factor for the abelian variety at a place $v$ of $k$ satisfies
\[
L_v(B, 1) = \frac{q_v^g}{\# \mathcal{B}_{\F_v}^0(\F_v)},
\]
where  $\F_v$ is the residue field of at $v$, $q_v := \# \F_v$, and (for globalization purposes) 
$L_v$ is expressed as a function of the parameter $s \in \mathbf{C}$ rather than the more traditional
$q_v^{-s}$. 

The integral structure at $v$ arising
from the N\'eron model defines a canonical Haar measure on $B(k_v)$ for each $v$, and 
a calculation using smoothness of the N\'eron model to compute the volume of the preimage 
in $B(k_v) = \mathcal{B}(\calO_v)$ of $0 \in \mathcal{B}(\F_v)$ shows that relative
to this measure we have 
\[
{\rm{vol}}(B(k_v)) = \frac{\# \mathcal{B}(\F_v)}{q_v^g} = \frac{\# \mathcal{B}^0(\F_v)\cdot \# \Phi_v(\F_v)}{q_v^g} =
L_v(B,1)^{-1} \cdot \#\Phi_v(\F_v),
\]
where $\Phi_v := \mathcal{B}_{\F_v}/\mathcal{B}^0_{\F_v}$ is the finite \'etale component group of the $v$-fiber
(and the second equality uses Lang's Theorem to ensure that $\mathcal{B}(\F_v) \rightarrow \Phi(\F_v)$ is surjective). The quantity $c_v := \# \Phi(\F_v)$ is usually called the Tamagawa factor at $v$; it equals $1$ for all but finitely many $v$. A reasonable definition of
 $\tau(B)$ as a ``volume'' for $B(\A)/B(k)$ is therefore
\[
\tau(B) := \frac{L(B, 1)^{-1} \prod_v c_v}{\# B(k)}
\]
if $L(B,1)\ne 0$. (There is a procedure that puts a canonically normalized Haar measure on $B(\A)$ relative
to which one can instead define $\tau(B)$ as the actual volume of $B(\A)/B(k)$, but we do not discuss that here except for the brief Remark \ref{taurem} below.)

The group $\Ext^1(B, \Gm)$ consists of the translation-invariant line bundles on $B$. The Weil--Barsotti formula identifies this with
$B'(k)$ where $B'$ is the dual abelian variety. Note that $B'(k)$ is finite, since $B(k)$ is finite. The analogue of
Theorem \ref{tamagawaformula} in this situation is therefore
\[
\frac{L(B, 1)^{-1} \prod_v c_v}{\# B(k)} \stackrel{?}{=} \frac{\# B'(k)}{\# \Sha^1(B)}
\]
(assuming finiteness of $\Sha^1(B)$). 
This is precisely the Birch and Swinnerton--Dyer Conjecture for $B$, including the non-vanishing of $L(B,1)$, since $B(k)$ is finite.

\begin{remark}\label{taurem}
In \cite{bloch}, Bloch treats the case of abelian varieties of arbitrary (potentially nonzero) rank, and $\tau(B)$ is defined to be an actual volume (suitably normalized). 
Under that definition and assuming finiteness of $\Sha^1(B)$, by \cite[Thm.\,1.17]{bloch} the equality $\tau(B) = \#B'(k)/\#\Sha^1(B)$
is equivalent to the Birch and Swinnerton--Dyer Conjecture for abelian varieties $B$ of rank $0$.
\end{remark}

\begin{remark}
In the general (possibly higher rank) case of the Birch and Swinnerton--Dyer Conjecture, the analogue of $\Ext^1(G, \Gm)$ is the torsion subgroup $$B'(k)_{{\rm{tor}}} = \Ext^1(B, \Gm)_{{\rm{tor}}} = \Pic(B)_{{\rm{tor}}},$$ the last equality coming from the fact that $\Pic(B)/\Pic^0(B)$ is torsion-free. It is therefore natural to wonder whether we have the equality $\Ext^1(G, \Gm) = \Pic(G)_{{\rm{tor}}}$ for all smooth connected commutative affine group schemes $G$ over the global function field $k$. This is false. Indeed, for an affine group scheme $G$ of finite type over any field, one may show that the group $\Pic(G)$ is torsion, but it is not true in general even for commutative pseudo-reductive $G$ (over global function fields) that $\Ext^1(G, \Gm) = \Pic(G)$; see Remark \ref{PicneqExtexamples}.
\end{remark}

In order to state the next theorem, we recall that an {\em inner form} $G'$ of a $k$-group $G$ is one whose isomorphism class lies in the image of the map ${\rm{H}}^1(k, G/Z_G) \rightarrow {\rm{H}}^1(k, {\rm{Aut}}_{G/k})$ induced by the map of group functors $G/Z_G \rightarrow {\rm{Aut}}_{G/k}$ which sends $g$ to conjugation by $g$, where the latter ${\rm{H}}^1$-set classifies fppf $k$-forms of $G$ up to isomorphism. Here, $Z_G$ is the scheme-theoretic center of $G$. We also say $G'$ is an {\em inner twist} of $G$, or is obtained from $G$ by inner twisting. The relation ``is an inner form of'' is an equivalence relation. If $G$ is {\em smooth} then $G/Z_G$ is smooth, so then classes in ${\rm{H}}^1(k, G/Z_G)$ split over $k_s$
and hence inner twists of $G$ are $k_s/k$-forms.

It is known that for reductive groups $G$, both the Tamagawa number $\tau(G)$ and $\# \Sha(G)$ are invariant under inner twist. Indeed, the invariance of $\# \Sha(G)$ follows by applying \cite[Thm.\,4.3]{sansuc} and twisting by a cohomology class. (For the definition of Galois twisting, as well as twisting of sequences, see \cite[Chap.\,I, \S 5,3]{serre}.) The assumption in \cite[Thm.\,4.3]{sansuc} that $G$ has no $E_8$-factor was stated because various cohomological results on simply connected groups that Sansuc used in his paper were unknown at the time; they have since been proven \cite{chernousov}. 

The invariance of $\tau(G)$ under inner twist for reductive groups over number fields (from which it follows easily for all connected linear algebraic groups over such fields) was proved by Kottwitz \cite{kottwitz}. We will show that the same holds for pseudo-reductive groups over global function fields (there is of course nothing to say for commutative groups):

\begin{theorem}
\label{innerinvariance}
Let $G$ be a pseudo-reductive group over a global function field $k$. Then $\tau(G)$ and $\# \Sha(G)$ are invariant under inner twisting. That is, if $G'$ is an inner form of $G$, then $\tau(G) = \tau(G')$ and $\# \Sha(G) = \# \Sha(G')$.
\end{theorem}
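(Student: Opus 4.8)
The plan is to deduce Theorem~\ref{innerinvariance} from the Tamagawa number formula of Theorem~\ref{tamagawaformula} together with the known invariance of the relevant invariants for reductive groups, by means of the classification of pseudo-reductive groups. First, by Theorem~\ref{tamagawaformula} we have $\tau(G) = \#\Ext^1(G, \Gm)/\#\Sha(G)$ and likewise for the inner form $G'$, so it suffices to prove separately that $\#\Ext^1(G, \Gm) = \#\Ext^1(G', \Gm)$ and $\#\Sha(G) = \#\Sha(G')$. The second of these, invariance of $\#\Sha$, should follow by the classical twisting argument: a class $\xi \in {\rm H}^1(k, G/Z_G)$ defining the inner form $G'$ gives a bijection (of pointed sets) between ${\rm H}^1(k, G)$ and ${\rm H}^1(k, G')$, and similarly place-by-place over each $k_v$, and these bijections are compatible with localization, so they carry $\Sha(G)$ bijectively onto $\Sha(G')$. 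One must be slightly careful that $G/Z_G$ acts on $G$ by conjugation and that the twisting formalism of \cite[Chap.\,I, \S 5.3]{serre} applies here exactly as in the reductive case; since $G$ is smooth, $G/Z_G$ is smooth and the cocycle splits over $k_s$, so the standard machinery goes through verbatim with no need for the $E_8$-type caveats.

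The heart of the matter is therefore the invariance of $\#\Ext^1(G, \Gm)$ under inner twist. I would first reduce to understanding how $\Ext^1(-, \Gm)$ interacts with the standard presentation of a pseudo-reductive group. Recall (to be reviewed in \S\ref{classificationsection}) that a pseudo-reductive group $G$ fits, up to the commutative case, into a diagram relating it to $\R_{k'/k}(G')$ for a reductive group $G'$ over a finite (possibly inseparable) extension $k'/k$, together with a commutative pseudo-reductive group and certain gluing data. The key point is that inner twisting of $G$ corresponds to inner twisting of the reductive datum $G'$ over $k'$ (because inner automorphisms of $G$ induce inner automorphisms of the derived/reductive part after base change, the center being characteristic), while leaving the commutative part untouched. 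Then I would argue that $\#\Ext^1(-, \Gm)$ is insensitive to passing between $G$ and these building blocks up to bounded error controlled by invariant data — more precisely, using the exact sequences relating $\Ext^1$ of $G$, of $\R_{k'/k}(G')$, and of the commutative pieces (via $\calHom$, $\calExt$ and the derived-functor interpretation valid in the commutative case, \cite[Prop.\,4.3]{rospic}), each term is either manifestly unchanged by inner twisting or is itself the $\Ext^1(\Gm)$-group of a reductive group, for which invariance under inner twist is classical (it reduces, via the known structure of $\Pic$ and the character group of reductive groups and Sansuc's results, to the invariance already recorded for reductive $G$). Alternatively, and perhaps more cleanly: combine the already-known invariance of $\tau$ and $\#\Sha$ for reductive groups (Kottwitz plus the twisting argument) with Theorem~\ref{tamagawaformula} applied to reductive groups to conclude invariance of $\#\Ext^1(G', \Gm)$ for reductive $G'$, then propagate along the classification.

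There is, however, a subtlety I expect to be the main obstacle: the commutative pseudo-reductive groups appearing in the classification are a genuine black box, and although they are not themselves altered by inner twisting, one must be certain that the comparison between $\Ext^1(G, \Gm)$ and the $\Ext^1$ of the reductive and commutative constituents does not secretly depend on the inner twisting class in an uncontrolled way. Handling this requires knowing that the relevant connecting maps and gluing homomorphisms in the classification are equivariant for the (outer-trivial) action of the inner twisting cocycle, which should follow from the functoriality of the standard construction but needs to be checked with some care. I would also need the finiteness of $\Ext^1(G, \Gm)$ over global fields (\cite[Thm.\,1.1]{rospic}) so that ``$\#\Ext^1$'' makes sense and equalities of cardinalities can be manipulated. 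Once the equivariance of the structural exact sequences is in hand, assembling the pieces — invariance of $\#\Sha$ from twisting, invariance of $\#\Ext^1(\Gm)$ from the reductive case plus the classification, and then $\tau$ from Theorem~\ref{tamagawaformula} — completes the proof.
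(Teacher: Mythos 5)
Your framework---use Theorem \ref{tamagawaformula} to reduce everything to the separate invariance of $\#\Ext^1(G,\Gm)$ and $\#\Sha(G)$---is half right: the paper does prove that $\Ext^1(G,\Gm)$ is invariant under inner twist (Proposition \ref{extinvtinnertwist}), and it does so far more directly than your sketch suggests, for \emph{any} smooth connected group scheme over \emph{any} field: a central extension $1 \to \Gm \to E \to G \to 1$ can itself be twisted by a cocycle valued in $G/Z_G$, because the map $Z_E \to Z_G$ is fppf surjective (Lemma \ref{mapofcenterssurj}), so $G/Z_G$ acts on $E$; one then checks that the resulting map $\Ext^1(G,\Gm) \to \Ext^1(G_x,\Gm)$ is a well-defined isomorphism independent of the choice of cocycle. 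No passage through the classification or through the reductive case is needed for this half, and your worry about equivariance of the gluing data evaporates.

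The fatal gap is your treatment of $\#\Sha$. A class $\xi \in {\rm H}^1(k, G/Z_G)$ does \emph{not} in general give a bijection between ${\rm H}^1(k,G)$ and ${\rm H}^1(k,G_\xi)$: the twisting bijection of \cite[Ch.\,I, \S 5.3, Prop.\,35]{serre} requires the twisting cocycle to be valued in $G$ itself, i.e.\ $\xi$ must lie in the image of ${\rm H}^1(k,G) \to {\rm H}^1(k,G/Z_G)$, and the paper explicitly warns that this map need not be surjective (this is why Theorem \ref{innerinvariance} is strictly stronger than the statement that all nonempty fibers of $\theta_G$ have the same size). Indeed $\#{\rm H}^1(k,G)$ itself is \emph{not} invariant under inner twist even for semisimple groups (compare ${\rm SL}_n$ with ${\rm SL}_1(D)$), so no ``compatible-with-localization'' bijection of ${\rm H}^1$'s can exist; even in the reductive number-field case the invariance of $\#\Sha$ is not a formal twisting statement but rests on Sansuc's duality theorem, for which no pseudo-reductive analogue is available to you at this stage. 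The paper escapes this by running the logic in the opposite direction: it proves the invariance of $\tau$ \emph{directly}, by twisting the structural central extensions $1 \to B \to F \to G \to 1$ and $1 \to \R_{k'/k}(G') \to F \to E \to 1$ by the cocycle (using surjectivity of $Z_F \to Z_G$), observing that the twisted kernel $(\R_{k'/k}(G'))_x$ is merely an \emph{\'etale} form---not an inner form---of $\R_{k'/k}(G')$ but is again a Weil restriction of simply connected or basic exotic groups (Lemma \ref{standardetaletwist}) with trivial ${\rm H}^1$, trivial $\Ext^1(\cdot,\Gm)$ and Tamagawa number $1$, so that the ratio $\tau/\#\Ext^1(\cdot,\Gm)$ is pinned to that of the commutative quotient $E$, which is untouched by the twist. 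Only then is $\#\Sha$-invariance deduced from Theorem \ref{tamagawaformula} and Proposition \ref{extinvtinnertwist}. You would need to replace your $\Sha$ step with an argument of this kind (or with a genuine duality statement for noncommutative pseudo-reductive $G$) for the proof to go through.
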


Note that $G'$ is automatically pseudo-reductive when $G$ is, since pseudo-reductivity may be checked over $k_s$ \cite[Prop.\,1.1.9]{cgp}. In fact, thanks to Theorem \ref{tamagawaformula}, in conjunction with Proposition \ref{extinvtinnertwist} of this paper, the invariance of $\tau$ in Theorem \ref{innerinvariance} is equivalent to that of $\# \Sha$. We will prove Theorem \ref{innerinvariance} by proving the invariance of $\tau$, and then use this deduce the invariance of $\# \Sha$. The inner invariance of $\tau$ will depend crucially on the now-established Weil conjecture on Tamagawa numbers (that simply connected groups have Tamagawa number $1$). This order of reasoning, first proving Weil's conjecture and then using it to deduce the invariance of Tamagawa numbers under inner twist, is the exact opposite of the reasoning used in the number field setting. There, Weil's Tamagawa number conjecture, was proved over number fields by showing that Tamagawa numbers are invariant under inner twist (done by Kottwitz \cite{kottwitz} by using the trace formula, modulo the same $E_8$ issue as mentioned above, arising from lack of knowledge of the Hasse principle for such groups, a defect remedied by Chernousov \cite{chernousov}), thus reducing one to the quasi-split case handled by Lai \cite{lai} (building on work of Langlands \cite{langlands}). The proof by Gaitsgory and Lurie \cite{lurie} of Weil's conjecture over function fields proceeds via a totally different geometric route.

Before turning to our next result, we recall a well-known relation between adelic cohomology and the cohomology of the local completions of a global field $k$.

\begin{proposition}
\label{adeliccohomologyprop}
Let $G$ be a smooth connected group scheme over a global field $k$. Then the map ${\rm{H}}^1(\A, G) \rightarrow \prod_v {\rm{H}}^1(k_v, G)$ induced by the projection maps $\A \rightarrow k_v$ induces a bijection of pointed sets
\[
{\rm{H}}^1(\A, G) \xlongrightarrow{\sim} \coprod_v {\rm{H}}^1(k_v, G).
\]
\end{proposition}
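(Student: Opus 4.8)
The plan is to unwind the definitions of adelic cohomology and the restricted product, and to use the fact that a torsor over the adeles is "unramified" at all but finitely many places. Recall that $\mathrm{H}^1(\A, G)$ means the (fppf, or equivalently, since $G$ is smooth, \'etale) cohomology of $\Spec \A$ with coefficients in $G$, and that $\A$ is the filtered colimit of the rings $\A_S := \prod_{v \in S} k_v \times \prod_{v \notin S} \calO_v$ over finite sets $S$ of places (containing all archimedean places). Since $G$ is of finite presentation over $k$ and hence spreads out to a smooth affine group scheme $\calG$ over $\calO_v$ for all $v$ outside some finite set $S_0$, and since \'etale cohomology commutes with filtered colimits of rings, we get $\mathrm{H}^1(\A, G) = \varinjlim_S \mathrm{H}^1(\A_S, \calG)$, where for $S \supseteq S_0$ the sheaf is the evident integral model.

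The first main step is to identify $\mathrm{H}^1(\A_S, \calG)$ with $\prod_{v \in S} \mathrm{H}^1(k_v, G) \times \prod_{v \notin S} \mathrm{H}^1(\calO_v, \calG)$. This is because $\Spec \A_S$ is a finite disjoint union $\coprod_{v \in S} \Spec k_v \;\sqcup\; \Spec\big(\prod_{v \notin S} \calO_v\big)$, and cohomology of a disjoint union (even an infinite one, realized as a product of rings) splits as a product; the factor $\prod_{v \notin S}\calO_v$ requires a small argument that $\mathrm{H}^1$ of a product of rings is the product of the $\mathrm{H}^1$'s, which holds for \'etale cohomology of affine schemes in low degrees. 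The second main step is Lang's theorem: for $v \notin S_0$, the special fiber $\calG_{\F_v}$ is a smooth connected affine group over the finite field $\F_v$, so $\mathrm{H}^1(\F_v, \calG_{\F_v}) = 1$, and then by smoothness and properness of the relevant approximation (Hensel's lemma / the fact that $\calO_v$ is henselian with residue field $\F_v$, so that $\mathrm{H}^1_{\et}(\calO_v, \calG) \xrightarrow{\sim} \mathrm{H}^1_{\et}(\F_v, \calG_{\F_v})$ for smooth affine $\calG$), we conclude $\mathrm{H}^1(\calO_v, \calG) = 1$ for all $v \notin S_0$. Hence in the colimit only finitely many local factors can be nontrivial, and a class in $\mathrm{H}^1(\A, G)$ is the same as a tuple $(\xi_v)_v \in \prod_v \mathrm{H}^1(k_v, G)$ with $\xi_v = 1$ for almost all $v$ — which is exactly an element of the restricted product, i.e.\ of $\coprod_v \mathrm{H}^1(k_v, G)$ in the pointed-set sense (an element of one $\mathrm{H}^1(k_v, G)$, base point elsewhere) up to the usual identification of the coproduct of pointed sets glued at base points; one checks the map induced by $\A \to k_v$ is precisely the projection, giving the asserted bijection. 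I should be slightly careful that $\coprod_v$ here denotes the "restricted" coproduct of pointed sets (all but one coordinate equal to the base point), which is the standard convention; with that reading the bijection is immediate from the colimit description.

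The step I expect to be the main obstacle is the comparison $\mathrm{H}^1_{\et}(\prod_{v \notin S} \calO_v, \calG) = \prod_{v \notin S} \mathrm{H}^1_{\et}(\calO_v, \calG)$ — that is, that \'etale cohomology in degree $1$ of an infinite product of (henselian) local rings decomposes as the product. Unlike the case of a finite disjoint union, $\Spec$ of an infinite product is not the disjoint union of the $\Spec$'s, so this needs a genuine argument: one can either invoke the general fact that $\mathrm{H}^1$ classifies torsors and a torsor over $\Spec(\prod R_i)$ is a compatible family of torsors over the $\Spec R_i$ (using that torsors under an affine group scheme are themselves affine and finitely presented, hence descend to a finite sub-product and then glue), or cite the standard reference for this (this is essentially the content of the classical statement identifying $\mathrm{H}^1(\A, G)$ with the restricted product, e.g.\ as in the literature on adelic cohomology). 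Everything else — spreading out, Lang's theorem, Hensel's lemma for smooth schemes over henselian rings, commuting cohomology with filtered colimits of rings — is standard, so once this product decomposition is in hand the proof assembles quickly.
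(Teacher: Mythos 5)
Your outline is correct, but note that the paper's entire proof is the single line ``This is a special case of \cite[Thm.\,2.13]{ces}'' --- so what you have written is essentially a reconstruction of the proof of the cited theorem rather than of anything in the paper itself. The skeleton (spreading out to a smooth affine model $\calG$ over $\calO_v$ for $v \notin S_0$, writing $\A$ as a filtered colimit of the $\A_S$, commuting ${\rm{H}}^1$ with that colimit, killing ${\rm{H}}^1(\calO_v, \calG)$ by Lang's theorem plus the henselian invariance ${\rm{H}}^1_{\et}(\calO_v,\calG) \simeq {\rm{H}}^1_{\et}(\F_v,\calG_{\F_v})$ for smooth affine $\calG$) is exactly the standard argument, and you are right to single out the decomposition ${\rm{H}}^1\bigl(\prod_{v \notin S}\calO_v, \calG\bigr) = \prod_{v \notin S}{\rm{H}}^1(\calO_v,\calG)$ as the genuine crux: $\Spec$ of an infinite product of rings is not the disjoint union of the $\Spec$'s, and this product decomposition of nonabelian ${\rm{H}}^1$ is precisely the nontrivial content of \v{C}esnavi\v{c}ius's result (it rests on the fact that finitely presented algebras and modules over $\prod_i R_i$ themselves decompose as products, applied to the affine finitely presented torsor schemes, together with a uniform-boundedness argument to pass a compatible family of local torsors back to a torsor over the product). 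Since you have correctly isolated that step and indicated both a proof strategy and the appropriate literature for it, the proposal is sound; in a formal write-up you would either carry out that torsor-decomposition argument in detail or, as the paper does, simply cite \cite[Thm.\,2.13]{ces}.
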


\begin{proof}
This is a special case of \cite[Thm.\,2.13]{ces}.
\end{proof}

For a smooth connected $k$-group scheme $G$, consider the map
\[
\theta_G: {\rm{H}}^1(k, G) \longrightarrow \coprod_v {\rm{H}}^1(k_v, G).
\]
By definition, $\Sha(G) = \ker(\theta_G)$, but what about the other fibers? For $x \in {\rm{H}}^1(k, G)$, the fiber $\theta_G^{-1}(\theta_G(x))$ containing $x$ is in bijection with the set $\Sha(G_x)$, where $G_x$ is the $k$-form of $G$ obtained by taking the image of $x$ under the map ${\rm{H}}^1(k, G) \rightarrow {\rm{H}}^1(k, G/Z_G) \rightarrow {\rm{H}}^1(k, {\rm{Aut}}_{G/k})$. In particular, $G_x$ is an inner form of $G$!

Theorem \ref{innerinvariance} therefore implies that when $G$ is pseudo-reductive, all of the nonempty fibers of $\theta_G$ have the same size (though that theorem is strictly stronger than this, since the map ${\rm{H}}^1(k, G) \rightarrow {\rm{H}}^1(k, G/Z_G)$ need not be surjective). It is then natural to ask for which $\alpha \in {\rm{H}}^1(\A, G) = \coprod_v {\rm{H}}^1(k_v, G)$ the fiber $\theta_G^{-1}(\alpha)$ is nonempty.

In order to answer this question, we note that for any smooth connected group scheme $G$ over a global field $k$, we have a complex of pointed sets
\begin{equation}
\label{H^1Extcomplex}
{\rm{H}}^1(k, G) \longrightarrow {\rm{H}}^1(\A, G) \longrightarrow \Ext^1(G, \Gm)^*
\end{equation}
which we will now define. The map ${\rm{H}}^1(k, G) \rightarrow {\rm{H}}^1(\A, G)$ is induced by the diagonal inclusion $k \hookrightarrow \A$. To define the second map, consider an extension
\begin{equation}
\label{extensioneqn1}
1 \longrightarrow \Gm \longrightarrow E \longrightarrow G \longrightarrow 1
\end{equation}
(which is automatically central since connectedness of $G$ implies connectedness of $E$) and an element $\alpha \in {\rm{H}}^1(\A, G)$. We obtain for each place $v$ of $k$ the element $\alpha_v \in {\rm{H}}^1(k_v, G)$; $\alpha_v$ is trivial for all but finitely many $v$ by Proposition \ref{adeliccohomologyprop}. Due to the centrality of the extension (\ref{extensioneqn1}), we get a connecting map ${\rm{H}}^1(k_v, G) \rightarrow {\rm{H}}^2(k_v, \Gm)$, this ${\rm{H}}^2$ being $\Q/\Z$, $\frac{1}{2}\Z/\Z$, or $0$, depending on whether $k_v$ is non-archimedean, $\mathbf{R}$, or $\C$ (by taking local Brauer invariants). 

Thus we get for each place $v$ of $k$ an element of $\Q/\Z$, all but finitely many of which are $0$. Adding these up produces an element of $\Q/\Z$. This procedure yields a map ${\rm{H}}^1(\A, G) \times \Ext^1(G, \Gm) \rightarrow \Q/\Z$. One may check that this map is additive in the second argument, hence induces a map of pointed sets ${\rm{H}}^1(\A, G) \rightarrow \Ext^1(G, \Gm)^*$. This defines the second map in (\ref{H^1Extcomplex}). Further, the image of any element of ${\rm{H}}^1(k, G)$ under this map is $0$, since the sum of the local invariants of a global Brauer class is $0$ by class field theory.

\begin{theorem}
\label{pseudoredcomplexexact}
Let $k$ be a global function field, and let $G$ be a connected linear algebraic $k$-group that is either commutative or pseudo-reductive. Then the complex
\[
{\rm{H}}^1(k, G) \longrightarrow {\rm{H}}^1(\A, G) \longrightarrow \Ext^1(G, \Gm)^*
\]
defined in $(\ref{H^1Extcomplex})$ is an exact sequence of pointed sets. That is, the kernel of the second map is the image of the first.
\end{theorem}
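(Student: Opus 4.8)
The plan is to reduce the pseudo-reductive case to the commutative case, and then to prove the commutative case using global Tate duality. For the reduction, I would use the structure theory from \cite{cgp}: a pseudo-reductive group $G$ sits in an exact sequence relating it to $R_{k'/k}(G')$ for a reductive group $G'$ over a finite (possibly inseparable) extension $k'/k$, modulo commutative pseudo-reductive groups. The key point is that $H^1(k,-)$ and $H^1(\A,-)$ of a simply connected semisimple group over a local or global function field are controlled: over a local function field, $H^1(k_v, G^{\mathrm{sc}}) = 1$ by Bruhat--Tits theory, and the Weil Tamagawa number conjecture (proven by Gaitsgory--Lurie \cite{lurie}) together with Theorem \ref{tamagawaformula} forces $\Sha(G^{\mathrm{sc}})$ and $\Ext^1(G^{\mathrm{sc}},\Gm)$ to be trivial. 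So exactness for the ``truly non-commutative part'' should be essentially automatic, and a diagram chase through the derived group, the maximal central quotient, and Weil restriction (all of which behave well under the functors $H^1$, $\Sha$, and $\Ext^1(-,\Gm)$ thanks to the finiteness results cited and the compatibility of $\Ext^1(-,\Gm)$ with these operations) would push everything onto a commutative pseudo-reductive group, which is handled by the commutative case. I should be careful that the intermediate pieces are only commutative and not necessarily linear algebraic — but Weil restriction of a torus and central quotients stay commutative affine of finite type, so the commutative case as stated applies.

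For the commutative case, let $G$ be a smooth connected commutative affine $k$-group. Here $H^1(k,G)$, $H^1(\A,G)$, and $\Ext^1(G,\Gm)$ are all honest abelian groups (the last one via the fppf-sheaf interpretation from \cite{rospic}), and the complex (\ref{H^1Extcomplex}) becomes a genuine complex of abelian groups. The pairing ${\rm H}^1(\A,G) \times \Ext^1(G,\Gm) \to \Q/\Z$ is exactly the cup-product / connecting-map pairing that appears in the global Tate duality developed in \cite{rostateduality} (building on \v{C}esnavi\v{c}ius \cite{ces}): writing $\widehat{G} = \calHom(G,\Gm)$ for the fppf $\Gm$-dual, we have $\Ext^1(G,\Gm) \cong {\rm H}^1(k,\widehat{G})$ when $G$ is, say, of multiplicative type, and more generally there is a duality between $\Ext^1(G,\Gm)$ and a suitable ${\rm H}^1$ of the dual complex. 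The nine-term Poitou--Tate sequence from \cite{rostateduality} contains a segment of the form
\[
{\rm H}^1(k,G) \longrightarrow {\rm H}^1(\A,G) \longrightarrow {\rm H}^1(k,\widehat{G})^*
\]
(with the adelic ${\rm H}^1$ in place of the classical restricted product, which is the whole point of \v{C}esnavi\v{c}ius' and the author's reformulation), and its exactness at the middle term is precisely the assertion of the theorem. So in the commutative case I would simply quote the relevant portion of the generalized Poitou--Tate exact sequence, after checking that the pairing defined by the ad hoc ``sum of local invariants'' recipe in (\ref{H^1Extcomplex}) agrees with the cup-product pairing in \cite{rostateduality} — this identification is a standard compatibility-of-connecting-maps argument.

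The main obstacle I anticipate is the reduction step, not the commutative case. Specifically, the functors $H^1$ and $\Sha$ are only functors to pointed sets, not groups, so ``diagram chasing'' through the \cite{cgp} structure exact sequences requires care: one must use the twisting technique (as in \cite[Chap.\,I, \S 5]{serre}) to analyze fibers of maps on $H^1$, and one must track how $\Ext^1(-,\Gm)$ interacts with the non-abelian extensions involved — in particular with the central extension by the Cartier dual coming from $G^{\mathrm{sc}} \to G^{\mathrm{ss}}$ and with Weil restriction (where one needs $\Ext^1(R_{k'/k}(H),\Gm)$ computed in terms of data over $k'$, presumably via the results of \cite{rospic}). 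I would organize this by first establishing the theorem for the two ``building blocks'' — commutative groups (done above) and groups of the form $R_{k'/k}(G')$ with $G'$ semisimple simply connected, where exactness is nearly trivial because all three terms of the complex over $k_v$ and over $k$ degenerate — and then showing that the class of $G$ for which the conclusion holds is closed under the extension operations appearing in the \cite{cgp} classification. Verifying that last closure property, term by term along the classification, is where the real work lies.
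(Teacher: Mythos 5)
Your high-level skeleton (commutative case via the generalized Poitou--Tate sequence of \cite{rostateduality} plus a compatibility-of-pairings check, then a reduction of the pseudo-reductive case through the \cite{cgp} classification) is the same as the paper's, and your treatment of the commutative case and of the building blocks $\R_{k'/k}(G')$ with simply connected fibers is essentially right. But the reduction step, which you yourself flag as ``where the real work lies,'' has a genuine gap: you have not identified the mechanism that makes the diagram chase go through. In the generalized standard presentation $1 \to \mathscr{C} \to C \ltimes \R_{k'/k}(G') \to G \to 1$ (note $G$ is a central \emph{quotient} of the semidirect product by $\mathscr{C} = \R_{k'/k}(Z_{G'}(T'))$, not an extension of $\R_{k'/k}(G')$ by something commutative, and $G'$ may be basic exotic rather than reductive), the class $\alpha \in {\rm{H}}^1(\A, G)$ must be lifted to ${\rm{H}}^1(\A, C \ltimes \R_{k'/k}(G'))$; the obstruction lives in ${\rm{H}}^2(\A, \mathscr{C})$, and after lifting one must further correct the lift by the ${\rm{H}}^1(\A,\mathscr{C})$-action so that it pairs to zero against $\Ext^1(C \ltimes \R_{k'/k}(G'), \Gm)$, which requires the map ${\rm{H}}^1(\A,\mathscr{C}) \to \Ext^1(\mathscr{C},\Gm)^*$ to be \emph{surjective}. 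Neither step is automatic for a general commutative pseudo-reductive $\mathscr{C}$: by Tate duality the cokernel of that last map is $\Sha^2(\mathscr{C})$, which need not vanish for an arbitrary choice of Cartan subgroup.

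The missing idea is the freedom to choose the maximal $k'$-torus $T'$ (Remark \ref{choiceoftorus}) so that, via Proposition \ref{torustrivialSha^2} applied to the simply connected quotient $\overline{G}'$ and Lemma \ref{pointsofcartan} on basic exotic fibers, one arranges $\Sha^2(k', Z_{G'}(T')) = 0$ and ${\rm{H}}^2(k'_w, Z_{G'}(T')) = 0$ at the places where $\alpha_v \neq 1$. This kills both the lifting obstruction and the cokernel above. You also need two further checks that your sketch elides: that the ${\rm{H}}^1(\A,\mathscr{C})$-action on ${\rm{H}}^1(\A, C\ltimes\R_{k'/k}(G'))$ shifts the image in $\Ext^1(\cdot,\Gm)^*$ additively by the image of ${\rm{H}}^1(\A,\mathscr{C})$ (this is Lemma \ref{actionExt*}, whose proof requires showing a certain preimage $\pi^{-1}(\mathscr{C})$ is central in the relevant $\Gm$-extension), and that the projection $C \ltimes \R_{k'/k}(G') \to C$ induces bijections on ${\rm{H}}^1$ over $k$ and over $\A$ (a twisting argument needing Lemma \ref{standardetaletwist} to control the twisted kernels), which is what finally lands you in the commutative case for $C$ rather than in a vague ``derived group / maximal central quotient'' decomposition, which pseudo-reductive groups do not admit in a usable form. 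Separately, your claim that triviality of $\Sha(G^{\rm{sc}})$ and $\Ext^1(G^{\rm{sc}},\Gm)$ is ``forced'' by the Weil conjecture plus Theorem \ref{tamagawaformula} is backwards: these follow directly from Harder's vanishing of ${\rm{H}}^1(k,G^{\rm{sc}})$ and from simple connectedness of the geometric reductive quotient (Lemma \ref{Ext^1vanishessc}), with no input from Tamagawa numbers.
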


\begin{remark}
\label{compatiblewithtateduality}
The complex of Theorem \ref{pseudoredcomplexexact} is compatible with global Tate duality in the following sense. Given a commutative 
affine group scheme $G$ of finite type over a field $k$, let $$\widehat{G} := \calHom(G, \Gm)$$ denote the fppf $\Gm$-dual sheaf. We have a functorial (in $G$ and $k$) exact sequence 
\begin{equation}
\label{H^1(G^)=kerExt}
0 \longrightarrow {\rm{H}}^1(k, \widehat{G}) \longrightarrow \Ext_{{\rm{cent}}}(G, \Gm) \longrightarrow \Ext_{{\rm{cent}}}(G_{\overline{k}}, (\Gm)_{\overline{k}})
\end{equation}
defined as follows. (When $G$ is disconnected, we have to specify that we are only considering central extensions of $G$ by $\Gm$, as arbitrary such extensions need not be central.) 

A central extension $E$ of $G$ by $\Gm$ splits fppf locally over $k$ if and only if it splits over $\overline{k}$, by the Nullstellensatz and standard spreading-out arguments. Thus, 
\[
\ker \left(\Ext_{{\rm{cent}}}(G, \Gm) \rightarrow \Ext_{{\rm{cent}}}(G_{\overline{k}}, (\Gm)_{\overline{k}})\right)
\]
consists of those extensions that split fppf locally, i.e., the fppf forms of the trivial extension $E = \Gm \times G$ (with the obvious extension structure). But one easily checks that the automorphism functor of the trivial extension (as an extension of $G$ by $\Gm$) is $\widehat{G}$. It follows that the above kernel is canonically (up to a universal choice of sign) isomorphic to ${\rm{H}}^1(k, \widehat{G})$. 

When $k$ is a global field, we have a complex
\begin{equation}
\label{H^1exactsequence1}
{\rm{H}}^1(k, G) \longrightarrow {\rm{H}}^1(\A, G) \longrightarrow {\rm{H}}^1(k, \widehat{G})^*,
\end{equation}
where the first map is induced by the diagonal inclusion $k \hookrightarrow \A$, and the second by cupping everywhere locally and adding the invariants. That is, given $\alpha \in {\rm{H}}^1(\A, G)$ and $\beta \in {\rm{H}}^1(k, \widehat{G})$, we have for each place $v$ of $k$ the cup product pairing 
\[
{\rm{H}}^1(k_v, G) \times {\rm{H}}^1(k_v, \widehat{G}) \rightarrow {\rm{H}}^2(k_v, \Gm) \xrightarrow{\sim} \Q/\Z,
\]
where the last map is once again the Brauer invariant. Thus, by taking the cup product of $\alpha_v$ and $\beta_v$ for each $v$ and then adding the results, we obtain the second map in (\ref{H^1exactsequence1}) above. (By the compatibility of the cohomology with direct limits of rings \cite[Prop.\,D.0.1]{rostateduality}, both $\alpha_v$ and $\beta_v$ lift to ${\rm{H}}^1(\calO_v, \cdot)$ for all but finitely many $v$, hence their cup product lands in ${\rm{H}}^2(\calO_v, \Gm) = 0$, so the sum above contains only finitely many nonzero terms.) The sequence (\ref{H^1exactsequence1}) is a complex, once again because the sum of the local invariants of a global Brauer class is $0$. Part of the statement of global Tate duality is that the sequence (\ref{H^1exactsequence1}) is exact for any affine commutative $k$-group $G$ of finite type \cite[Thm.\,1.2.8]{rostateduality}. 

At any rate, the point we would like to make here is that for connected commutative affine $k$-group schemes $G$ of finite type,  the two complexes (\ref{H^1Extcomplex}) and (\ref{H^1exactsequence1}) are compatible via the first map in (\ref{H^1(G^)=kerExt}) (with the appropriate universal choice of sign). This compatibility may be checked by using the description of cup product (on ${\rm{H}}^1$) in terms of \v{C}ech cohomology. This compatibility, in conjunction with Tate duality \cite[Thm.\,1.2.8]{rostateduality} implies that Theorem \ref{pseudoredcomplexexact} holds for connected commutative linear algebraic groups. The commutative case will play an important role in the proof for pseudo-reductive groups.
\end{remark}

\begin{remark}
Even though Theorem \ref{pseudoredcomplexexact} is true over number fields (in fact, it is true for all connected linear algebraic groups over number fields, by an easy reduction to the reductive case), we have only stated it and will prove it for function fields in order to avoid issues with archimedean places, and because the main focus of this paper is on the function field setting.
\end{remark}

Our last main result is in a certain sense a reinterpretation of Theorem \ref{pseudoredcomplexexact}. We show that that result has implications for weak and strong approximation on quotients by $G$. In order to state this reinterpretation, let us recall what the Brauer--Manin obstruction is. Let $Y$ be a separated scheme of finite type over a global field $k$, let $M_k$ denote the set of places of $k$, and let $S \subsetneq M_k$. Let $\A^S := \prod'_{v \in M_k - S} k_v$ denote the ring of $S$-adeles. Then strong approximation asks about the closure $\overline{Y(k)} \subset Y(\A^S)$, and in particular, we say that strong approximation holds for $Y$ with respect to $S$ if this containment is an equality -- that is, if $Y(k)$ is dense in $Y(\A^S)$. (If $M_k - S$ is finite, then one typically refers to what we have called strong approximation with respect to $S$ as weak approximation with respect to $M_k - S$. One also typically speaks of strong approximation for finite $S$, though we prefer to deal with arbitrary $S$, partially so as not to have to speak about weak and strong approximation separately.)

Manin observed that elements of $\Br(Y) := {\rm{H}}^2_{\et}(Y, \Gm)$ yield constraints on the image of $Y(k) \subset Y(\A)$, and in particular to strong approximation, as follows. Given $\alpha \in \Br(Y)$, we obtain for each element $y \in Y(\A)$ an element $\alpha^*(y) \in \Br(\A)$. For each place $v$ of $k$, local class field theory provides us with the invariant map ${\rm{inv}}: \Br(k_v) \rightarrow \Q/\Z$, and one may show that for any $\beta \in \Br(\A)$, we have ${\rm{inv}}(\beta_v) = 0$ for all but finitely many $v$. We may therefore define the subset
\[
Y(\A)^{\alpha} := \left\{ y \in Y(\A) \, \middle \vert \, \sum_v {\rm{inv}}(\alpha^*(y_v)) = 0\right\} \subset Y(\A),
\]
where the sum is over all places $v$ of $k$. One may show that this is a closed subset of $Y(\A)$, and global class field theory implies that $Y(k) \subset Y(\A)^{\alpha}$. We therefore define
\[
Y(\A)^{\Br} := \underset{\alpha \in \Br(Y)}{\bigcap} Y(\A)^{\alpha},
\]
while for a finite set $S$ of places of $k$, we let $Y(\A^S)^{\Br}$ denote the image of $Y(\A)^{\Br}$ under the projection map $Y(\A) \rightarrow Y(\A^S)$. We then have $\overline{Y(k)} \subset Y(\A)^{\Br}$, and the former set is therefore also contained in $Y(\A^S)^{\Br}$. We say that the Brauer--Manin obstruction is the only obstruction to strong approximation on $Y$ with respect to $S$ if $\overline{Y(k)} = Y(\A^S)^{\Br}$.

\begin{theorem}
\label{weakapprox}
Let $k$ be a global field, let $G$ be a connected linear algebraic $k$-group that is either commutative or pseudo-reductive, and let $H$ be a finite type $k$-group scheme. Further assume that $\Sha^1(k, H) = 1$ and that strong approximation holds for $H$ with respect to the set $S \subsetneq M_k$ of places of $k$. Let $G \hookrightarrow H$ be a $k$-group inclusion, and let $Y := H/G$. Then the Brauer--Manin obstruction is the only obstruction to strong approximation on $Y$ with respect to $S$. That is, $\overline{Y(k)} = Y(\A^S)^{\Br}$.
\end{theorem}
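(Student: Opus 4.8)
The plan is to reduce, by standard torsor--twisting technology, to Theorem \ref{pseudoredcomplexexact}. The inclusion $\overline{Y(k)} \subseteq Y(\A^S)^{\Br}$ is formal from class field theory, so the content is the reverse inclusion. Fix $(y_v)_{v \notin S} \in Y(\A^S)^{\Br}$, a finite set $T \subseteq M_k \setminus S$, and an open neighborhood $U_v \ni y_v$ for each $v \in T$; the goal is to produce $y' \in Y(k)$ lying in $U_v$ for all $v \in T$. The quotient map $\pi\colon H \to Y$ is a $G$-torsor whose restriction along the base point $\Spec k \hookrightarrow Y$ (the image of $1 \in H$) is the trivial $G$-torsor. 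For each $v \notin S$ set $\xi_v := [\pi^{-1}(y_v)] \in {\rm{H}}^1(k_v, G)$. Since $\pi$ and $y_v$ spread out integrally for almost all $v$ and ${\rm{H}}^1(\calO_v, \mathcal{G}) = 1$ for a smooth $\calO_v$-model $\mathcal{G}$ of $G$ with connected special fibre (Lang and Hensel, using that $G$ is connected), one has $\xi_v = 1$ for almost all $v$, so $(\xi_v)_{v \notin S}$ is a well-defined element of ${\rm{H}}^1(\A^S, G)$ by Proposition \ref{adeliccohomologyprop}.

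The key step extracts a global class from the Brauer hypothesis. Any $e \in \Ext^1(G, \Gm)$, regarded as a central extension $1 \to \Gm \to E \to G \to 1$, gives a connecting map $\partial_e \colon {\rm{H}}^1(-, G) \to {\rm{H}}^2(-, \Gm)$ and hence a Brauer class $A_e := \partial_e([H]) \in \Br(Y)$ (where $[H] \in {\rm{H}}^1(Y,G)$ is the class of $\pi$), which is trivial at the base point since $[H]$ is. By functoriality of $\partial_e$ one has $A_e(y_v) = \partial_e(\xi_v)$ in ${\rm{H}}^2(k_v, \Gm)$ for $v \notin S$, so the Brauer--Manin condition for $A_e$ reads $\sum_{v \notin S} \inv_v(\partial_e(\xi_v)) = 0$. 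Extend $(\xi_v)_{v \notin S}$ to an element of ${\rm{H}}^1(\A, G)$ by declaring $\xi_v = 1$ for $v \in S$; since $\partial_e(1) = 1$, its image in $\Ext^1(G, \Gm)^*$ under the second map of $(\ref{H^1Extcomplex})$ is computed place-by-place by the $\partial_e$ and therefore vanishes for every $e$. Theorem \ref{pseudoredcomplexexact} --- the one place where commutativity or pseudo-reductivity of $G$ enters --- now yields $\sigma \in {\rm{H}}^1(k, G)$ with $\sigma_v = \xi_v$ for $v \notin S$ and $\sigma_v = 1$ for $v \in S$. Each $\xi_v$ comes from $Y(k_v)$, hence dies in ${\rm{H}}^1(k_v, H)$, so the image $\tau$ of $\sigma$ in ${\rm{H}}^1(k, H)$ is everywhere locally trivial; thus $\tau \in \Sha^1(k, H) = 1$, and exactness of $H(k) \to Y(k) \to {\rm{H}}^1(k, G) \to {\rm{H}}^1(k, H)$ furnishes $y_1 \in Y(k)$ with $[\pi^{-1}(y_1)] = \sigma$.

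It remains to twist and approximate. Let $P := \pi^{-1}(y_1)$, a $G$-torsor over $k$ of class $\sigma$, and form the twisted torsor ${}^P\pi \colon {}^P H := (H \times P)/G \to Y$, a torsor under the inner twist ${}^P G$ of $G$. With respect to the residual left $H$-action, ${}^P H$ is an $H$-torsor over $k$ of class $\tau = 1$, hence $k$-isomorphic to $H$; in particular ${}^P H$ satisfies strong approximation with respect to $S$. For $v \notin S$ the twisting bijection shows that the obstruction to lifting $y_v$ through ${}^P\pi$ over $k_v$ vanishes precisely because $[\pi^{-1}(y_v)] = \xi_v = \sigma_v$, and for almost all $v$ this lifting can be performed integrally; so $(y_v)_{v \notin S}$ lifts to a point of ${}^P H(\A^S)$. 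Applying strong approximation for ${}^P H$ with respect to $S$, choose $h' \in {}^P H(k)$ whose image in ${}^P H(k_v)$ lies in the preimage of $U_v$ for each $v \in T$; then $y' := {}^P\pi(h') \in Y(k)$ lies in $U_v$ for all $v \in T$, completing the proof.

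The main obstacle is the middle paragraph: one must check carefully that the Brauer--Manin pairing against the classes $A_e$ reproduces exactly the pairing in the complex $(\ref{H^1Extcomplex})$ --- this requires keeping track of the functoriality of the connecting maps $\partial_e$ and of the contribution of the places in $S$ --- and then invoke Theorem \ref{pseudoredcomplexexact}, which is the deep input and the sole source of the restriction to commutative or pseudo-reductive $G$. The twisting manipulations of the first and last paragraphs are routine but must be executed compatibly with the restricted-product topologies on adelic points.
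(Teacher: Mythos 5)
Your overall strategy is the paper's: show the Brauer--Manin condition forces the adelic class of the fibers $\pi^{-1}(y_v)$ to die in $\Ext^1(G,\Gm)^*$, apply Theorem \ref{pseudoredcomplexexact} to produce a global class, use $\Sha^1(k,H)=1$ to descend it to a point $y' \in Y(k)$, and finish with strong approximation for $H$. (The paper skips the twisting in your last paragraph: once $\delta(\widetilde y)=\delta(y'_{\A})$ it shows directly that $\widetilde y = h\cdot y'$ for some $h\in H(\A)$, by observing that a $G$-equivariant isomorphism $\pi^{-1}(y'_{\A})\to\pi^{-1}(\widetilde y)$ must be left translation by a single adelic element; your twist of the fibration by $P$ accomplishes the same thing with a bit more machinery. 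Either is fine.)

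There is, however, a genuine error in your middle paragraph: you extend $(\xi_v)_{v\notin S}$ to ${\rm{H}}^1(\A,G)$ by setting $\xi_v=1$ for $v\in S$ and claim the resulting class pairs to zero with every $e\in\Ext^1(G,\Gm)$. That claim needs $\sum_{v\notin S}\inv_v(\partial_e(\xi_v))=0$, but the Brauer--Manin condition does not give you this partial sum. In this paper $Y(\A^S)^{\Br}$ is \emph{defined} as the image of $Y(\A)^{\Br}$ under $Y(\A)\to Y(\A^S)$, so what you actually know is that there exists a full adelic point $\widetilde y\in Y(\A)^{\Br}$ projecting to $(y_v)_{v\notin S}$, and the vanishing you get is $\sum_{v\in M_k}\inv_v(\partial_e(\delta(\widetilde y_v)))=0$, summed over \emph{all} places. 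The local invariants at $v\in S$ need not vanish individually, so the truncated sum over $v\notin S$ can be nonzero, and your extended class then fails to be orthogonal to $\Ext^1(G,\Gm)$; Theorem \ref{pseudoredcomplexexact} cannot be applied to it. The repair is immediate and is what the paper does: take $\xi_v := \delta(\widetilde y_v)$ at \emph{every} place, including $v\in S$. The full adelic class $\delta(\widetilde y)$ is then orthogonal to $\Ext^1(G,\Gm)$ by the commutativity of (\ref{commdiagweakapproxpf}), Theorem \ref{pseudoredcomplexexact} produces $\sigma$ with $\sigma_v=\delta(\widetilde y_v)$ for all $v$ (these all die in ${\rm{H}}^1(k_v,H)$ since every $\widetilde y_v$ lies in $Y(k_v)$, so $\tau\in\Sha^1(k,H)=1$ still holds), and the remainder of your argument, which only ever uses the places outside $S$, goes through unchanged.
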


Let us summarize the contents of this paper. In \S \ref{classificationsection} we review the 
classification of pseudo-reductive groups obtained in \cite{cgp}. In \S \S \ref{sectionexoticgeneral}-
\ref{exotictam=1section} we show that ``basic exotic'' and ``basic non-reduced'' pseudo-reductive 
groups have Tamagawa number $1$; this is necessary in order to prove Theorem 
\ref{tamagawaformula} in characteristics $2$ and $3$ (so the reader who is willing to ignore 
characteristics $2$ and $3$ may skip these sections or may take Proposition 
\ref{basicexotictam=1} as a black box and then read the proof of Theorem \ref{tamagawaformula}, 
given in \S \ref{sectiontamagawaformula}, without ignoring characteristics $2$ and $3$). In \S 
\ref{sectiontamagawaformula} we prove Theorems \ref{tamagawaformula} and 
\ref{innerinvariance}. In \S \ref{sectionexactsequence} we prove Theorem 
\ref{pseudoredcomplexexact}. Finally, in \S \ref{approximationsection} we prove Theorem \ref{weakapprox}.

\subsection{Notation and Conventions} 

Throughout this paper, $k$ denotes a field and, when it appears, $p$ denotes a prime number equal to the characteristic of $k$.

A {\em linear algebraic group} over $k$ means a smooth affine $k$-group scheme. When $k$ is a global field, $k_v$ denotes the completion of $k$ at a place $v$, $\calO_v$ the ring of integers of $k_v$ when $v$ is non-archimedean, and $\A_k$ (or just $\A$ when there can be no confusion) denotes the ring of adeles of $k$.

For any affine $k$-group scheme $G$ of finite type, we define $\widehat{G}$ to be the functor on $k$-algebras given by
$$\widehat{G}(A) = \Hom_{A-{\rm{gp}}}(G_A, \Gm)$$
(so $\widehat{G}(k)$ is the group of $k$-homomorphisms $G \to \Gm$).

We must also make some remarks about the behavior of cohomology in exact sequences. Given an affine group scheme $G$ of finite type over a field $k$, one may define ${\rm{H}}^1(k, G)$ as the set of fppf $G$-torsors over Spec$(k)$ up to isomorphism; this is a pointed set, and if $G$ is commutative then it is even an abelian group. When $G$ is commutative, one may also define the higher cohomology groups. Note that the affineness of $G$ implies that the torsor sheaves classified by this ${\rm{H}}^1$-set are all representable due to the effectivity of descent for affine schemes.

If $G$ is smooth over $k$, then the \'etale and fppf $G$-torsors agree, so we may define ${\rm{H}}^1(k, G)$ to be the set of isomorphism classes of \'etale or fppf $G$-torsors over Spec$(k)$. The \'etale and fppf cohomology groups ${\rm{H}}^i(k, G)$ agree for all $i$ when $G$ is smooth and commutative \cite[Thm.\,11.7]{briii}. When $G$ is smooth, therefore, all of the defined cohomology groups may be defined in terms of Galois cohomology by using the language of cocycles and coboundaries. This is the language used in \cite{serre}.

On the other hand, for some purposes it is necessary to work with cohomology over general base schemes, or even fppf cohomology over fields, in which case the language of Galois cocycles is insufficient. One may sometimes replace these with \v{C}ech cohomology, but it is also useful to develop the entire theory in a more intrinsic manner, using the language of torsors.

Given an exact sequence
\begin{equation}
\label{asampleseq}
1 \longrightarrow G' \longrightarrow G \longrightarrow G'' \longrightarrow 1
\end{equation}
of smooth affine $k$-group schemes, one may compute the associated cohomology exact sequence using Galois cohomology. In this language, much of the familiar formalism of long exact sequences which comes out of (\ref{asampleseq}) when the groups are commutative remains true in the non-commutative setting. This is discussed in \cite[Chap.\,I, \S 5]{serre}. 

Much of the discussion in \cite{serre} is done in the language of torsors as well as Galois cocycles, though not all of it. We will at various points throughout this work require results for cohomology sets of the form ${\rm{H}}^1(\A_k, G)$, where $G$ is a smooth connected affine group scheme over a global field $k$. Strictly speaking, the results in \cite{serre} do not apply to these sets. There are, however, two ways around this.
The first is to simply invoke Proposition \ref{adeliccohomologyprop} to reduce assertions for adelic cohomology to the case of fields, where one may use Galois cohomology and apply the results in \cite{serre} directly. The second approach is to state and prove all of the results of \cite[Chap.\,I, \S 5]{serre} in the more general context of torsors over an arbitrary base rather than merely Galois (i.e., \'etale) cohomology over fields. This is essentially done in \cite[Appendix B]{conrad}. 

Strictly speaking, \cite[Appendix B]{conrad} only treats the case of fields, but all of the techniques and arguments used there for deriving properties of the long exact sequence associated to the short exact sequence (\ref{asampleseq}) work over a general base ring for affine groups, since affineness ensures the effectivity of all descent datum, which is necessary if one wishes to ensure that all fppf sheaf torsors are in fact representable by schemes. (If one does not care about such representability, and is satisfied with working just with sheaf torsors, then even this assumption is unnecessary.)

Throughout this work, we will refer to \cite{serre} and invoke Proposition \ref{adeliccohomologyprop}, but we wanted to make the reader aware of the more general results essentially proved in \cite[Appendix B]{conrad} which allow one to work directly over the adele ring rather than invoking this ``trick''.

\subsection{Acknowledgements}

I would like to thank the referee, who provided several useful suggestions, especially that Theorem \ref{pseudoredcomplexexact} might have an interpretation in terms of the Brauer--Manin obstruction to weak and strong approximation on quotient spaces. It is a pleasure to thank Brian Conrad, who worked tirelessly to provide me with many helpful suggestions and improvements, on both the presentation and the arguments throughout this paper.

\section{The classification of pseudo-reductive groups}
\label{classificationsection}

The purpose of this section is to review for the reader the classification of pseudo-reductive groups obtained in \cite{cgp}, which will play a major role in our work in this paper.

A central role in this classification is played by the so-called ``standard construction'' (and the ``generalized standard construction'' in characteristics $2$ and $3$), which we now recall. Let $k$ be a field, and let $k'$ be a nonzero finite reduced $k$-algebra (that is, a product of finitely many finite field extensions of $k$). Let $G'$ be a $k'$-group such that its fiber over each factor field $k_i'$ of $k'$ is a connected semisimple group that is absolutely simple and simply connected. Let $T' \subset G'$ be a maximal $k'$-torus (i.e., its $k_i'$-fiber is a maximal $k_i'$-torus of the $k_i'$-fiber of $G'$). Then $T'$ acts on $G'$ by conjugation, and this action factors uniquely through an action $\xi$ by $T'/Z_{G'}$, where $Z_{G'}$ is the (scheme-theoretic) center of $G'$. Therefore, $\mathscr{C} := \R_{k'/k}(T')$ acts on $\R_{k'/k}(G')$ by conjugation, and this action factors naturally through the evident action $\R_{k'/k}(\xi)$ of $\R_{k'/k}(T'/Z_{G'})$. Beware that the map 
$$f: \R_{k'/k}(T') \rightarrow \R_{k'/k}(T'/Z_{G'})$$
is {\em not} surjective when $Z_{G'}$ has non-\'etale fiber over a factor field of $k'$ that is not \'etale over $k$. 

Suppose that we are given a factorization of $f$ as
\[
\R_{k'/k}(T') =: \mathscr{C} \xrightarrow{\phi} C \xrightarrow{\psi} \R_{k'/k}(T'/Z_{G'})
\]
with $C$ commutative pseudo-reductive. (Neither $\phi$ nor $\psi$ need be surjective.) Then $C$ acts on $\R_{k'/k}(G')$ through $\psi$ and the action of $\R_{k'/k}(T'/Z_{G'})$. Let $j: \mathscr{C} \hookrightarrow \R_{k'/k}(G')$ be the inclusion. One easily checks that the ``anti-diagonal'' map $\alpha: \mathscr{C} \rightarrow C \ltimes \R_{k'/k}(G')$ given by $(-\phi, j)$ is an isomorphism onto a central subgroup. Thus, we may form the cokernel
\[
G := (C \ltimes \R_{k'/k}(G'))/\mathscr{C}.
\]
This $G$ is pseudo-reductive \cite[Prop.\,1.4.3]{cgp}. A pseudo-reductive group is called {\em standard pseudo-reductive} if it is either commutative or constructed in the above manner \cite[Definition 1.4.4, Thm.\,4.1.1]{cgp}. If ${\rm{char}}(k) > 3$, then every pseudo-reductive group over $k$ is standard pseudo-reductive \cite[Thm.\,5.1.1(1)]{cgp}, so the reader who wishes to avoid extra complications in characteristics $2$ and $3$ may just concentrate on the standard case.

\begin{remark}
The $k$-group $G$ determines the data $(k'/k, G')$ and $\R_{k'/k}(G') \rightarrow G$ uniquely up to unique isomorphism by \cite[Props.\,4.1.4(1), 4.2.4, 5.1.7(1), A.5.14]{cgp}.
\end{remark}

In characteristics $2$ and $3$, there exist ``exotic'' pseudo-reductive groups beyond the standard case. Let us describe these now. The first class of such groups are the basic exotic pseudo-reductive groups, defined as follows. Pick $p \in \{2, 3\}$, and let $k$ be a field of characteristic $p$. Let $G$ be a connected semisimple $k$-group that is absolutely simple and simply connected with an edge of multiplicity $p$ in its absolute Dynkin diagram. Then there is a unique nontrivial factorization of the Frobenius isogeny $F_{G/k}: G \rightarrow G^{(p)}$ as
\[
G \xrightarrow{\pi} \overline{G} \rightarrow G^{(p)},
\]
for a specific non-central isogeny $\pi$ called the {\em very special isogeny}.
The group $\overline{G}$ is simply connected with root system dual to that of $G$ (so $\overline{G}$ is of the same type as $G$ except in the case when $G$ has type B$_n$, C$_n$ respectively with $n \geq 3$, in which case $\overline{G}$ has type C$_n$, B$_n$, respectively);
see \cite[\S7.1]{cgp} for more details.

Given a nontrivial finite extension $k'/k$ of fields of characteristic $p$ such that $(k')^p \subset k$, we let $f := \R_{k'/k}(\pi_{k'})$.
This map is {\em not} surjective, and its kernel is non-smooth and positive-dimensional. The canonical  $k$-subgroup $\overline{G} \hookrightarrow \R_{k'/k}(\overline{G}_{k'})$ lies in the image of $f$
(as it lies in the image of the $k$-subgroup $G \subset \R_{k'/k}(G_{k'})$ that is a Levi $k$-subgroup \cite[Prop.\,7.3.1]{cgp}), and 
although $f$ is not smooth it turns out that 
the preimage $\mathscr{G} := f^{-1}(\overline{G})$ is smooth and in fact even pseudo-reductive \cite[Thm.\,7.2.3]{cgp}. 
This is almost the general definition of a basic exotic pseudo-reductive group:

\begin{definition}\cite[Definition 7.2.6]{cgp}
\label{basicexotic}
Let $k$ be an imperfect field of characteristic $p \in \{2, 3\}$. A {\em basic exotic pseudo-reductive group} is a $k$-group $\mathscr{G}$ that is $k$-isomorphic to a $k$-group scheme of the form $f^{-1}(\overline{G})$ where:
\begin{itemize}
\item[(i)] $f = \R_{k'/k}(\pi')$ for a nontrivial finite extension $k'/k$ such that $(k')^p \subset k$, where $\pi': G' \rightarrow \overline{G}'$ is the very special isogeny for a connected semisimple $k'$-group $G'$ that is absolutely simple and simply connected with Dynkin diagram containing an edge of multiplicity $p$;
\item[(ii)] the subgroup $\overline{G} \subset \R_{k'/k}(\overline{G}')$ is a Levi $k$-subgroup;
\item[(iii)] $f^{-1}(\overline{G})_{k_s}$ contains a Levi $k_s$-subgroup of the group $\R_{k'/k}(G')_{k_s} = \R_{k'_s/k_s}(G'_{k'_s})$.
\end{itemize}
\end{definition}

The pseudo-reductivity of such $\mathscr{G}$ reduces to that of the construction preceding Definition \ref{basicexotic} due to \cite[Lem.\,7.2.1]{cgp}.

\begin{remark}
\label{canonicalsurjection}
Condition (iii) in Definition \ref{basicexotic} is the most subtle. Several alternative equivalent formulations of this condition are given in \cite[Prop.\,7.3.1]{cgp}, two of which are that $f^{-1}(\overline{G})$ is $k$-smooth and that $\overline{G} \subset {\rm{im}}(f)$. In particular, that result tells us that the pseudo-reductive group $\mathscr{G}$ in Definition \ref{basicexotic} admits a natural surjective map 
\begin{equation}\label{fggexotic}
f: \mathscr{G} \twoheadrightarrow \overline{G}
\end{equation}
 to a simply connected group; this map has  kernel $\R_{k'/k}(\ker(\pi'))$ that is non-smooth and  positive dimensional; see the remarks just before the statement of \cite[Thm.\,7.2.3]{cgp}.
\end{remark}

\begin{remark}
The $k$-group $G$ uniquely determines the data $(k'/k, G', \overline{G}, f)$ up to unique isomorphism, by \cite[Prop.\,7.2.7(1), (2)]{cgp}.
\end{remark}

Before we go on, let us prove a result about tori in basic exotic pseudo-reductive groups that we will need in the sequel. Let $\mathscr{G}$ be a basic exotic pseudo-reductive group over a field $k$. Then, as observed in Remark \ref{canonicalsurjection} above, there is a canonical surjection $f: \mathscr{G} \twoheadrightarrow \overline{G}$ to a simply connected group with positive-dimensional non-smooth kernel. There is a bijection between the sets of maximal $k$-tori of $\overline{G}$ and maximal $k$-tori of $\mathscr{G}$ defined as follows. Given a maximal $k$-torus $\overline{T} \subset \overline{G}$, take $\mathscr{T}$ to be the unique maximal $k$-torus in $f^{-1}(\overline{T})$. That there is a unique such torus and that the map $\overline{T} \mapsto \mathscr{T}$ is a bijection between the sets of maximal $k$-tori is \cite[Cor.\,7.3.4]{cgp}. That corollary also says that the map $\mathscr{T} \rightarrow \overline{T}$ is an isogeny. Now we need the following result, which is in the same spirit as \cite[Prop.\,7.3.3]{cgp}.

\begin{lemma}
\label{pointsofcartan}
Let $k$ be an imperfect field of characteristic $p \in \{2, 3\}$ such that $[k: k^p]$ is finite. Let $\mathscr{G}$ be a basic exotic pseudo-reductive group over $k$ arising from the finite nontrivial purely inseparable extension $k' := k^{1/p}$, and let $f: \mathscr{G} \twoheadrightarrow \overline{G}$ be the canonical map onto a simply connected group over $k$. 

Let $\overline{T} \subset \overline{G}$ be a maximal $k$-torus, and let $\mathscr{T} \subset f^{-1}(\overline{T})$ be the corresponding maximal $k$-torus of $\mathscr{G}$. Then the smooth connected commutative Cartan subgroup $Z_{\mathscr{G}}(\mathscr{T})$ maps onto $\overline{T}$ and the induced map ${\rm{H}}^i(k, Z_{\mathscr{G}}(\mathscr{T})) \rightarrow {\rm{H}}^i(k, \overline{T})$ is an isomorphism for all $i$.
\end{lemma}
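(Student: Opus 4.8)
The plan is to realize everything inside $\R_{k'/k}(G')$, identify the Cartan subgroup $Z_{\mathscr{G}}(\mathscr{T})$ explicitly, and then reduce the cohomological assertion to a statement about the kernel of the map $Z_{\mathscr{G}}(\mathscr{T}) \to \overline{T}$.

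First I would set up the ambient picture. By the construction recalled before Definition \ref{basicexotic} and in Remark \ref{canonicalsurjection}, inside $\R_{k'/k}(G')$ one has $\mathscr{G} = f_0^{-1}(\overline{G})$, where $f_0 := \R_{k'/k}(\pi') \colon \R_{k'/k}(G') \to \R_{k'/k}(\overline{G}')$ is the Weil restriction of the very special isogeny $\pi' \colon G' \to \overline{G}'$, the subgroup $\overline{G} \subset \R_{k'/k}(\overline{G}')$ is the Levi $k$-subgroup of condition (ii) of Definition \ref{basicexotic}, and $f = f_0|_{\mathscr{G}}$. Let $T' \subset G'$ be the maximal $k'$-torus given by the image of $\mathscr{T}_{k'}$ under the counit $\R_{k'/k}(G')_{k'} \to G'$; then $\mathscr{T}_{k'} \cong T'$, $\mathscr{T} \subseteq \R_{k'/k}(T')$, and $\overline{T}' := \pi'(T')$ is a maximal $k'$-torus of $\overline{G}'$ with $\overline{T}_{k'} \cong \overline{T}'$ and with $\pi'|_{T'}$ identified with the base change to $k'$ of the $k$-isogeny $f|_{\mathscr{T}} \colon \mathscr{T} \to \overline{T}$ of \cite[Cor.\,7.3.4]{cgp}. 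Since that map is an isogeny, $\dim \mathscr{T} = \dim \overline{T}$ equals the rank of $G'$, which is also the dimension of a maximal torus of $\R_{k'/k}(G')$ because $k'/k$ is purely inseparable; hence $\mathscr{T}$ is a maximal torus of $\R_{k'/k}(G')$, with Cartan subgroup $Z_{\R_{k'/k}(G')}(\mathscr{T}) = \R_{k'/k}(T')$.

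It follows that $Z_{\mathscr{G}}(\mathscr{T}) = \mathscr{G} \cap \R_{k'/k}(T')$, and using $\overline{G} \cap \R_{k'/k}(\overline{T}') = \overline{T}$ — the left side being a commutative subgroup of the reductive group $\overline{G}$ that contains the maximal torus $\overline{T}$ — this is $(\R_{k'/k}(\pi'|_{T'}))^{-1}(\overline{T})$ inside $\R_{k'/k}(T')$, and the map of the lemma is $\R_{k'/k}(\pi'|_{T'})$ restricted to it. For surjectivity of $Z_{\mathscr{G}}(\mathscr{T}) \to \overline{T}$ I would check that $\R_{k'/k}(\pi'|_{T'}) \colon \R_{k'/k}(T') \to \R_{k'/k}(\overline{T}')$ is surjective as a map of fppf sheaves: the obstruction is a higher direct image of the infinitesimal multiplicative group $\ker(\pi'|_{T'})$ along $\Spec k' \to \Spec k$, and this sheaf vanishes because $k' = k^{1/p}$, so that every local class dies over the further finite fppf cover $\Spec k^{1/p^2} \to \Spec k$ (finite since $[k:k^p]$ is finite). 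Thus $Z_{\mathscr{G}}(\mathscr{T}) \to \overline{T}$ is surjective with kernel $N := \R_{k'/k}(\ker(\pi'|_{T'}))$, and $\ker(\pi'|_{T'})$ is the base change to $k'$ of the finite infinitesimal multiplicative-type $k$-group $\mu_0 := \ker(\mathscr{T} \to \overline{T})$.

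It then remains to deduce from the short exact sequence $1 \to N \to Z_{\mathscr{G}}(\mathscr{T}) \to \overline{T} \to 1$ of commutative $k$-groups that ${\rm H}^i(k, Z_{\mathscr{G}}(\mathscr{T})) \to {\rm H}^i(k, \overline{T})$ is bijective for all $i$, and this last cohomological step is what I expect to be the main obstacle. The idea is to play this sequence off against the $k$-rational subdiagram $1 \to \mu_0 \to \mathscr{T} \to \overline{T} \to 1$, which maps into it compatibly over $\overline{T}$ via the unit $\mu_0 \hookrightarrow N$ and the inclusion $\mathscr{T} \hookrightarrow Z_{\mathscr{G}}(\mathscr{T})$: comparing the two long exact sequences reduces the claim to a matching of the connecting maps out of ${\rm H}^\ast(k, \overline{T})$, and the key input is the behaviour of the restriction maps ${\rm H}^i(k, \mu_0) \to {\rm H}^i(k', (\mu_0)_{k'}) = {\rm H}^i(k, N)$ for $k' = k^{1/p}$ — already for $\mu_0 = \mu_p$ this restriction map is identically zero, which is the mechanism forcing the connecting maps to line up. Pinning down the cohomology of $N$ over the imperfect base $k$ is where the hypotheses $k' = k^{1/p}$ and $[k:k^p] < \infty$ are essential; the preceding geometric identifications are formal, and the relevant structural input on Cartan subgroups is in the spirit of \cite[Prop.\,7.3.3]{cgp}.
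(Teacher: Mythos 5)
There are two genuine gaps here, and the second one is the serious one. First, your mechanism for surjectivity of $Z_{\mathscr{G}}(\mathscr{T}) \to \overline{T}$ — namely that $\R_{k'/k}(\pi'|_{T'}) \colon \R_{k'/k}(T') \to \R_{k'/k}(\overline{T}')$ is fppf-surjective — is false. On the factors of $T'$ corresponding to short simple coroots this map is $\R_{k'/k}$ of the $p$-power map on $\Gm$, and already on $\overline{k}$-points the $p$-power map on $(\overline{k}\otimes_k k')^{\times}$ misses $1 + (\text{nilpotents})$ outside the image of Frobenius; this is the same non-surjectivity the paper records for $f = \R_{k'/k}(\pi')$ on the whole group, and base-changing further to $k^{1/p^2}$ does not repair it (the obstruction class on $\Spec(A\otimes_k k')$ must be killed by an fppf cover of $\Spec A$, not of $\Spec(A\otimes_k k')$). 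The surjectivity you want is only onto the Levi subtorus $\overline{T} \subsetneq \R_{k'/k}(\overline{T}')$, and the clean way to get it is structural: $f\colon \mathscr{G} \twoheadrightarrow \overline{G}$ is surjective and surjections of smooth connected affine groups carry Cartan subgroups onto Cartan subgroups, so $f(Z_{\mathscr{G}}(\mathscr{T})) = Z_{\overline{G}}(\overline{T}) = \overline{T}$.

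The larger gap is the cohomological endgame, which you acknowledge you have not carried out. Chasing the long exact sequence shows that the assertion ${\rm{H}}^i(k, Z_{\mathscr{G}}(\mathscr{T})) \xrightarrow{\sim} {\rm{H}}^i(k, \overline{T})$ for all $i$ is \emph{equivalent} to the vanishing ${\rm{H}}^i_{\fppf}(k, N) = 0$ for all $i$, where $N = \R_{k'/k}(\ker(\pi'|_{T'}))$ is positive-dimensional and non-smooth; your sketch of ``matching connecting maps'' can at best give surjectivity, never injectivity, without that vanishing. Moreover the identification ${\rm{H}}^i(k, N) = {\rm{H}}^i(k', \ker(\pi'|_{T'}))$ that your mechanism rests on is unjustified: fppf pushforward along the finite flat map $\Spec k' \to \Spec k$ is not exact on non-smooth coefficients (e.g.\ $R^1\pi_*\mu_p \neq 0$ here, which is exactly the cokernel of the non-surjective $p$-power map above), so the étale-sheaf exactness argument used elsewhere in the paper for smooth coefficients does not transfer. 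The intended proof sidesteps all of this: since $Z_{\mathscr{G}}(\mathscr{T})$ and $\overline{T}$ are both \emph{smooth}, their cohomology is Galois cohomology of their $k_s$-points, so it suffices to show that $Z_{\mathscr{G}}(\mathscr{T})(k_s) \to \overline{T}(k_s)$ is a (Galois-equivariant) bijection. Injectivity comes from the bijectivity of $\mathscr{G}(k_s) \to \overline{G}(k_s)$ (\cite[Prop.\,7.3.3(1)]{cgp}, which is where $k' = k^{1/p}$ is used), and surjectivity from checking that any $g \in \mathscr{G}(k_s)$ mapping into $\overline{T}(k_s)$ centralizes the Zariski-dense subgroup $\mathscr{T}(k_s)$, again using injectivity of $f$ on points. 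I would redirect your argument along these lines rather than trying to control the fppf cohomology of $N$.
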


Note that if $[k: k^p] = p$, then every basic exotic pseudo-reductive $k$-group arises as in this lemma. That $Z_{\mathscr{G}}(\mathscr{T})$ is smooth connected commutative is part of \cite[Prop.\,1.2.4]{cgp}.

\begin{proof}
Since the maps $\mathscr{G} \rightarrow \overline{G}$ and $\mathscr{T} \rightarrow \overline{T}$ are surjective, it follows that $f(Z_{\mathscr{G}}(\mathscr{T})) = Z_{\overline{G}}(\overline{T}) = \overline{T}$. (Surjectivity holds because surjective maps of connected linear algebraic groups always map Cartan subgroups onto Cartan subgroups.) 
Since the map $Z_{\mathscr{G}}(\mathscr{T})(k_s) \rightarrow \overline{T}(k_s)$ is Galois-equivariant, in order to prove the lemma it suffices to show that this map is an isomorphism; clearly this implies that the two groups have the same Galois cohomology. 

It is harmless to rename $k_s$ as $k$ (since the map $k^{1/p}\otimes_k k_s \rightarrow k_s^{1/p}$ is an isomorphism), so we may and do assume that $k = k_s$. By \cite[Prop.\,7.3.3(1)]{cgp} (this is where we use that $k' = k^{1/p}$), the map $\mathscr{G}(k) \rightarrow \overline{G}(k)$ is an isomorphism, so the map $Z_{\mathscr{G}}(\mathscr{T})(k) \rightarrow \overline{T}(k)$ is injective. In order to show that it is surjective, we need to show that if $g \in \mathscr{G}(k)$ maps to $\overline{T}(k)$, then $g$ centralizes $\mathscr{T}$. Since $\mathscr{T}(k)$ is Zariski dense in $\mathscr{T}$ (because $k = k_s$), it suffices to show that $g$ centralizes $\mathscr{T}(k)$. Let $t \in \mathscr{T}(k)$. We need to show that $g$ centralizes $t$. We have $f(gtg^{-1}) = f(g)f(t)f(g)^{-1} = 1$ because $f(g), f(t) \in \mathscr{T}(k)$. Since $f$ is injective on $k$-points, this implies that $gtg^{-1} = 1$; that is, $g$ centralizes $t$.
\end{proof}

\begin{remark}
While the $k$-subgroup $Z_{\mathscr{G}}(\mathscr{T}) \subset f^{-1}(\overline{T})$ surjects onto $\overline{T}$, it is {\em not} true that $Z_{\mathscr{G}}(\mathscr{T}) = f^{-1}(\overline{T})$, since $\ker(f) \not \subset Z_{\mathscr{G}}(\mathscr{T})$, as can be seen using \cite[Rem.\,7.2.8]{cgp} over $k_s$.
\end{remark}

The definition of a {\em generalized standard} pseudo-reductive group is analogous to that of a standard pseudo-reductive group as follows. Let $k$ be a field, $k'/k$ a nonzero finite reduced $k$-algebra (that is, a finite product of finite field extensions of $k$). Let $G'$ be a $k'$-group, each of whose fibers over the factor fields of $k'$ are either connected semisimple, absolutely simple, and simply connected, or (this only happens when char$(k) \in \{2, 3\}$) are basic exotic pseudo-reductive. Let $T' \subset G'$ be a maximal $k'$-torus, and let $\mathscr{C} := \R_{k'/k}(Z_{G'}(T'))$ be the Weil restriction of the associated Cartan subgroup of $G'$. Then $\mathscr{C}$ acts on $\R_{k'/k}(G')$ by conjugation, and this action is trivial on the commutative group $\mathscr{C}$. 

Suppose we are given a commutative pseudo-reductive group $C$ acting on $\R_{k'/k}(G')$ so that the action of $C$ on $\mathscr{C}$ is trivial and a map $\phi: \mathscr{C} \rightarrow C$ such that the actions of $\mathscr{C}$ and $C$ on $\R_{k'/k}(G')$ are compatible via $\phi$. Let $j: \mathscr{C} \hookrightarrow \R_{k'/k}(G')$ denote the inclusion. One easily checks that the map $(-\phi, j): \mathscr{C} \rightarrow C \ltimes \R_{k'/k}(G')$ is an isomorphism onto a central subgroup of $C \ltimes \R_{k'/k}(G')$. Thus we may form the quotient $G := (C \ltimes \R_{k'/k}(G'))/\mathscr{C}$. Then $G$ is pseudo-reductive \cite[Prop.\,1.4.3]{cgp}.

\begin{definition}\cite[Definition 10.1.9]{cgp}
\label{generalized standard}
Let $k$ be a field, and assume that $[k: k^2] \leq 2$ if char$(k) = 2$. A {\em generalized standard pseudo-reductive group} is a pseudo-reductive group that is either commutative or constructed in the above manner.
\end{definition}

The assumption on $[k: k^2]$ when char$(k) = 2$ is made because the ``correct'' notion (given in \cite[Def.\,9.1.5, Def.\,9.1.7]{cp}) is different if char$(k) = 2$ and $[k: k^2] > 2$, even though the above definition makes perfectly good sense without this assumption. This assumption will not matter for us, since it holds for all local and global function fields. We have kept it, however, in order to be consistent with \cite{cgp}. 

Let us also remark that one can make the definition more analogous to the definition of standard groups, by noting that our assumptions on $\mathscr{C}$ and $C$ are equivalent to the existence of a factorization
\[
\mathscr{C} \xrightarrow{\phi} C \xrightarrow{\psi} Z_{\R_{k'/k}(G'), \mathscr{C}},
\]
where $Z_{\R_{k'/k}(G'), \mathscr{C}}$
is the maximal smooth $k$-subgroup scheme (see \cite[Lemma C.4.1]{cgp} for the notion of the maximal smooth subgroup) of the affine finite type $k$-group scheme $\underline{{\rm{Aut}}}_{\R_{k'/k}(G'), \mathscr{C}}$ which represents the functor of automorphisms of $\R_{k'/k}(G')$ that are trivial on $\mathscr{C}$. (See \cite[Lemma 10.1.7]{cgp} for the representability of this latter functor by an affine $k$-group scheme of finite type, and \cite[Thm.\,1.3.9]{cgp} for its identification with $\R_{k'/k}(T'/Z_{G'})$ in the standard setting.) We have avoided this alternative formulation, however, because we will not need it. If char$(k) > 2$ then every pseudo-reductive group is generalized standard \cite[Thm.\,5.1.1]{cgp}.

\begin{remark}
\label{choiceoftorus}
The standard and generalized standard constructions above depend upon beginning with a $k'$-group $G'$ whose fibers over the factor fields of $k'$ are each either (i) connected semisimple, absolutely simple, and simply connected, or (ii) basic exotic, and then making a choice of maximal $k'$-torus $T' \subset G'$ in order to carry out the construction. 

An important point is that having fixed the $k'$-group $G'$ (which, together with $k'/k$, is actually unique up to unique isomorphism due to \cite[Prop.\,10.2.4]{cgp}), we may choose {\em whichever} maximal $k'$-torus we want in the sense that any choice admits a canonically associated variant of $\phi: \mathscr{C} \rightarrow C$ also giving rise to $G$, by \cite[Prop.\,10.2.2(3)]{cgp}. This flexibility in choosing the torus $T'$ will be extremely useful to us, particularly in the proofs of Theorems \ref{tamagawaformula} and \ref{pseudoredcomplexexact}.
\end{remark}

Finally, there is one extra complication that can show up when char$(k) = 2$ (with $[k: k^2] = 2$; the situation is much more complicated when $[k: k^2] > 2$, but fortunately this never happens for local or global function fields): the possibility of ``basic non-reduced'' pseudo-reductive groups \cite[Def.\,10.1.2]{cgp}. We will not say much about the structure of these groups here, because we will not need them too often. For such $G$, 
with $n$ the dimension of its maximal tori, there is a surjective $k$-homomorphism
\begin{equation}\label{fggnonred}
f: G \twoheadrightarrow \overline{G}:= \R_{k^{1/2}/k}({\rm{Sp}}_{2n})
\end{equation}
for which the associated $k^{1/2}$-map $G_{k^{1/2}} \to {\rm{Sp}}_{2n}$ descends the maximal geometric reductive quotient over
$\overline{k}$ and satisfies some other properties that we shall refer to as they arise later, giving appropriate references to \cite{cgp}. 
This map $f$ will serve a role for us similar to that of the map (\ref{fggexotic}) in the basic exotic case. 
The term ``non-reduced'' refers to the fact that the root system of $G_{k_s}$ relative to a maximal $k_s$-torus is the non-reduced root system BC$_n$. 

Here is the main classification result for pseudo-reductive groups in \cite{cgp} allowing char$(k) = 2$. A more general structure theorem, without any restrictions on $[k: k^2]$ when char$(k) = 2$, is given by \cite[\S 1.6, Structure Theorem]{cp}. We will have no need for that more general result, since the condition $[k: k^2] = 2$ holds for all local and global function fields of characteristic $2$.

\begin{theorem}{\rm{\cite[Thm.\,5.1.1 and Prop.\,10.1.4]{cgp}}}
\label{predclassification}
Let $G$ be a pseudo-reductive group over a field $k$, and assume that $[k: k^2] \leq 2$ if ${\rm{char}}(k) = 2$. Then $G$ is generalized standard except possibly if $k$ is imperfect of characteristic $2$, and in the latter case, if $G$ is not generalized standard then $G = G_1 \times G_2$ where $G_1$ is a generalized standard pseudo-reductive group and $G_2 = \R_{K/k}(H)$ for a nonzero finite reduced $k$-algebra $K$ and a $K$-group $H$ whose fiber over each factor field is basic non-reduced.
\end{theorem}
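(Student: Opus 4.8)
This is the central structure theorem of \cite{cgp}, so in the body of the paper there is nothing genuinely new to prove: one simply invokes \cite[Thm.\,5.1.1, Prop.\,10.1.4]{cgp} as a black box, and all of the later arguments that use it (e.g.\ the proofs of Theorems \ref{tamagawaformula} and \ref{pseudoredcomplexexact}, together with Remark \ref{choiceoftorus}) only need the trichotomy as stated. Nonetheless, for the reader's orientation, here is the shape of the CGP argument.

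The first step is to reduce to an absolutely pseudo-simple group. One shows that the derived group $\mathscr{D}(G)$ of a pseudo-reductive group is again pseudo-reductive, that $G$ is recovered from $\mathscr{D}(G)$ together with the commutative pseudo-reductive Cartan subgroup $Z_G(T)$ for a maximal $k$-torus $T$ and a compatibility datum between the two, and that over $k_s$ the group $\mathscr{D}(G_{k_s})$ is (up to controlled central-isogeny ambiguity) a product of pseudo-simple factors permuted by $\Gal(k_s/k)$. So everything hinges on classifying absolutely pseudo-simple groups over $k_s$ and then descending and reassembling the Cartan, which is precisely the ``$(C,\phi,\psi)$'' input in the (generalized) standard construction recalled above.

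The second step is the analysis of an absolutely pseudo-simple group $G$ over $k_s$ via a maximal torus $T$: one studies the root system $\Phi = \Phi(G,T)$, the root groups $U_a$, and the rank-$1$ and rank-$2$ subgroups they generate. Each $U_a$ is naturally a module over an intermediate field $k_a$ between $k$ and $k_s$ depending only on the $\Gal$-orbit and the length of $a$, and $G$ is standard exactly when all of these fields and modules are the minimal possible ones. Outside characteristics $2$ and $3$ this minimality is automatic, giving the standard construction. In characteristics $2$ and $3$ it can fail precisely when $\Phi$ has an edge of multiplicity $p$, and the failure is governed by the very special isogeny $\pi$, producing the basic exotic groups; in characteristic $2$ with $[k:k^2]=2$ there is the further possibility that the root system of $G_{k_s}$ is the non-reduced system ${\rm{BC}}_n$, producing basic non-reduced groups and, upon reassembly, the product decomposition $G = G_1 \times G_2$ of \cite[Prop.\,10.1.4]{cgp}.

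The hard part of this program — and the reason the proof in \cite{cgp} runs to book length — is the delicate rank-$2$ computation in characteristics $2$ and $3$ needed to pin down exactly which configurations of root groups occur and to match each with a (generalized) standard or basic-exotic/basic-non-reduced model; this is also where the hypothesis $[k:k^2]\le 2$ for ${\rm{char}}(k)=2$ enters, as it keeps the basic non-reduced phenomena under control and makes the stated trichotomy exhaustive (without it one would need the refined framework of \cite{cp}). For the present paper none of these internal difficulties are re-examined: the theorem is quoted verbatim, the flexibility in the choice of maximal torus noted in Remark \ref{choiceoftorus} being the one auxiliary fact extracted from the classification machinery.
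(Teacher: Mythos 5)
Your proposal is correct and matches the paper exactly: Theorem \ref{predclassification} is stated as a direct quotation of \cite[Thm.\,5.1.1, Prop.\,10.1.4]{cgp}, the paper supplies no proof of its own, and treating the classification as a black box is precisely what is done. (Your orientation sketch of the CGP argument is broadly accurate, though note that the field $k_a$ attached to a root group is a purely \emph{inseparable} finite extension of $k$, not an intermediate field between $k$ and $k_s$.)
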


\section{Tamagawa numbers of exotic groups: a general approach}
\label{sectionexoticgeneral}

The purpose of the present section and the next one is to show that basic non-reduced groups (these only exist in characteristic $2$) and basic exotic groups (these only exist in characteristics $2$ and $3$) have Tamagawa number $1$. This will play a crucial role in proving Theorems \ref{tamagawaformula} and \ref{innerinvariance} in these low characteristics, since such groups form some of the elemental building blocks of pseudo-reductive groups over such fields. The reader who does not mind ignoring characteristics $2$ and $3$ (or who simply first wishes to read the proof of Theorems \ref{tamagawaformula} and \ref{innerinvariance}) may skip this section and the next one.

The goal of this section and the next is to prove the following result.

\begin{proposition}
\label{basicexotictam=1}
Let $k$ be a global function field of characteristic $p \in \{2, 3\}$, and let $G$ be either a basic exotic $k$-group or a basic non-reduced $k$-group $($${\rm{char}}(k) = 2$ in the latter case$)$. Then $\tau(G) = 1$.
\end{proposition}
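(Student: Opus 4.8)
The plan is to reduce the computation of $\tau(G)$ for basic exotic or basic non-reduced $G$ to the known Tamagawa numbers of semisimple simply connected groups (which are $1$ by the Weil conjecture, now a theorem of Gaitsgory--Lurie over function fields). The key point is that each such $G$ comes equipped with a canonical surjection $f \colon G \twoheadrightarrow \overline{G}$ onto a semisimple simply connected group --- the map \eqref{fggexotic} in the basic exotic case (with $\overline{G}$ absolutely simple simply connected over $k$) and the map \eqref{fggnonred} in the basic non-reduced case (with $\overline{G} = \R_{k^{1/2}/k}(\mathrm{Sp}_{2n})$, so simply connected as a Weil restriction of a simply connected group). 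The kernel $N := \ker(f)$ is an infinitesimal (in particular unipotent, finite, and certainly wound, being connected with trivial reduced subgroup) commutative $k$-group scheme: it is $\R_{k'/k}(\ker(\pi'))$ in the basic exotic case and an analogous infinitesimal group in the non-reduced case. Since $\overline{G}$ is simply connected, $\tau(\overline{G}) = 1$; so it suffices to understand how Tamagawa numbers behave along the exact sequence $1 \to N \to G \xrightarrow{f} \overline{G} \to 1$.

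First I would establish the general principle that for an exact sequence $1 \to N \to G \to \overline{G} \to 1$ of (possibly non-smooth) affine $k$-group schemes of finite type with $G, \overline{G}$ smooth connected and $N$ a \emph{finite} connected (hence infinitesimal, hence of $p$-power order) commutative group scheme, one has $\tau(G) = \tau(\overline{G})$. The mechanism is the usual one: an invariant top form on $\overline{G}$ pulls back (compatibly with a chosen invariant form on the Lie algebra of $N$, up to the relevant normalization) to an invariant top form on $G$, because $f$ is smooth (its kernel $N$ is infinitesimal, so $f$ is étale-locally on the target the projection $\overline{G} \times N \to \overline{G}$ at the level of smooth loci --- more precisely $f$ is faithfully flat with infinitesimal kernel, hence smooth of relative dimension $0$). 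Comparing local measures: for each place $v$, the map $G(k_v) \to \overline{G}(k_v)$ has image of finite index and the local volumes are related by a factor involving $\#N$-type quantities and the local measure of $N$. The subtlety is that $N$ is a non-smooth group scheme, so its "points" contribute through cohomology; one must track the terms $\#\mathrm{H}^0(k_v, N)$ and $\#\mathrm{H}^1(k_v, N)$ and the failure of surjectivity of $G(k_v) \to \overline{G}(k_v)$. These local discrepancies are governed by the long exact fppf cohomology sequence, and globally they organize into a product formula: $\prod_v$ of the local factors equals a ratio of global cohomological invariants of $N$, which for an infinitesimal commutative $k$-group over a global function field must work out to give $\tau(G) = \tau(\overline{G}) \cdot 1 = 1$. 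Here is where I would invoke the Euler--Poincaré / product formula machinery for finite commutative group schemes over global function fields (Poitou--Tate duality for finite flat group schemes, in the form of \v{C}esnavi\v{c}ius or the author's extension), to show the product of local correction factors is $1$.

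Concretely, the cleanest route is probably: (1) choose a maximal $k$-torus $\overline{T} \subset \overline{G}$ and the corresponding maximal $k$-torus $\mathscr{T} \subset G$; (2) use Lemma \ref{pointsofcartan} (in the basic exotic case, suitably adapted to the non-reduced case via the cited properties of \eqref{fggnonred}) to see that the Cartan subgroup $Z_G(\mathscr{T})$ maps isomorphically on cohomology to $\overline{T}$; (3) compare the integration of the Tamagawa measures on $G$ and $\overline{G}$ against the Cartan via a Weyl-integration-type or Bruhat-cell decomposition, reducing the discrepancy in local volumes to a ratio of local measures of $Z_G(\mathscr{T})$ versus $\overline{T}$, which by step (2) and the fact that these Cartans are commutative pseudo-reductive groups whose Tamagawa-type contributions are understood via Tate duality, telescopes to $1$; (4) conclude $\tau(G) = \tau(\overline{G}) = 1$.

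The main obstacle, I expect, is step (3) --- carefully bookkeeping the local integration and showing that the product over all $v$ of the local "defect" factors (coming from non-smoothness of $\ker f$ and the consequent discrepancy between $G(k_v)$ and $\overline{G}(k_v)$, together with the convergence-factor normalizations baked into the definition of Tamagawa measure) is exactly $1$. This is essentially a local-to-global product formula for an infinitesimal group scheme, and making it rigorous requires either a direct manipulation of Haar measures on $G(k_v)$ via the Néron-type integral models, or an appeal to the behavior of Tamagawa measures under isogenies with infinitesimal kernel. A secondary difficulty is that $\overline{G}$ in the basic exotic case can have type $\mathrm{B}_n$ or $\mathrm{C}_n$ dual to the type of the Levi of $G$, and in the non-reduced case $\overline{G}$ is a Weil restriction, so one must make sure the input "$\tau = 1$ for semisimple simply connected" applies in exactly the needed generality --- but this is fine, since the Weil conjecture over function fields covers all semisimple simply connected groups, and Weil restriction of a simply connected group along a finite extension is again semisimple simply connected with Tamagawa number $1$ (Tamagawa numbers being invariant under Weil restriction by the defining choice of convergence factors).
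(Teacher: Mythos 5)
Your proposal rests on a premise that is false: the kernel of the canonical surjection $f\colon G \twoheadrightarrow \overline{G}$ is \emph{not} finite or infinitesimal. In the basic exotic case $\ker(f) = \R_{k'/k}(\ker(\pi'))$, and while $\ker(\pi')$ is infinitesimal over $k'$, its Weil restriction along the purely inseparable extension $k'/k$ is non-smooth and \emph{positive-dimensional} (see Remark \ref{canonicalsurjection}); in particular $\dim G > \dim \overline{G}$, so $f$ is not smooth of relative dimension $0$, a top-degree form on $\overline{G}$ does not pull back to a top-degree form on $G$, and your proposed ``general principle'' for quotients by finite connected kernels (together with the Euler--Poincar\'e/Poitou--Tate product formula for finite flat group schemes) simply does not apply. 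The same dimension jump is visible in the short root groups: those of $G$ are $\R_{k'/k}(\Ga)$, of dimension $[k':k]>1$, mapping onto the one-dimensional root groups of $\overline{G}$. Relatedly, your picture of $G(k_v)\to\overline{G}(k_v)$ having ``image of finite index'' with corrections from ${\rm{H}}^1(k_v,N)$ is not the right one: the decisive fact (from \cite[Props.\,7.3.3, 9.9.4]{cgp}) is that $f$ induces \emph{bijections} on $k$-, $k_v$-, and $\A$-points, so there is no cohomological defect at all, and the entire problem becomes the purely measure-theoretic one of showing that the two Haar measures $f_{\A}^{-1}(\mu_{\overline{G}})$ and $\mu_G$ on the single group $G(\A)$ coincide, i.e., that their (a priori unknown positive) ratio is $1$.

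Your ``concretely'' paragraph gestures toward the right mechanism but misses two essential points. First, one cannot compare the measures by restricting to an open subscheme of $G$ (adelic points of an open subscheme need not be open, and can have measure zero), so the comparison must be done place by place on the open cells and then assembled by a product formula; the unipotent factors contribute $(q^{1-g})^{\dim U^{\pm}-\dim\overline{U}^{\pm}}$ because split unipotent groups have Tamagawa number $1$, and the product of the local ratios of the invariant forms is $1$ by the product formula. Second, reducing to a \emph{maximal} torus and Lemma \ref{pointsofcartan} is not enough: basic exotic groups over a global function field need not be quasi-split, so one must instead produce a sufficiently large \emph{split} torus $S$ (Proposition \ref{suitablesplittorus}) for which $Z_G(S)$ and $Z_{\overline{G}}(\overline{S})$ can be shown directly to have Tamagawa number $1$ --- this requires the explicit structural analysis identifying $\mathscr{D}(Z_G(S))$ with a Weil restriction of forms of ${\rm{SL}}_2$ and $Z_G(S)^{\rm{ab}}$ with a product of copies of $\Gm$ and $\R_{k'/k}(\Gm)$, and is where the real work of \S\ref{exotictam=1section} lies. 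Without replacing the false isogeny-with-infinitesimal-kernel framework by the bijectivity-on-points framework, and without the construction and analysis of $S$, the proposal does not yield a proof.
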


In order to prove Proposition \ref{basicexotictam=1}, we leverage the canonical map $f: G \twoheadrightarrow \overline{G}$
from (\ref{fggexotic}) and (\ref{fggnonred}). This map induces an isomorphism on $k$-points and a topological isomorphism on $k_v$-points and on $\A_k$-points \cite[Props.\,7.3.3, 9.9.4]{cgp}. Let $f_{\A}: G(\A) \xrightarrow{\sim} \overline{G}(\A)$ denote the isomorphism on adelic points induced by $f$. Let $\mu_G$ denote the Tamagawa measure on $G(\A)$, and similarly for $\mu_{\overline{G}}$. The group $G$ is its own
derived group: this follows from \cite[Thm.\,7.2.3(1)]{cgp} applied over $k_s$ in the basic exotic case,
and from \cite[Thm.\,9.8.1(4), Rem.\,9.8.2]{cgp} in the basic non-reduced case.
Hence, $\widehat{G}(k_s) = 0$, so $G$ is algebraically unimodular and thus $G(\A)$ is unimodular as a topological group \cite[Ch.\,I, \S 5.8]{oesterle}.
The measure $f^{-1}(\mu_{\overline{G}})$ is a positive multiple of $\mu_G$, since both are Haar measures on $G(\A)$. 

The Tamagawa number of $G$ is defined to be the volume of $G(k)\backslash G(\A)$ with respect to the quotient measure induced by $\mu_G$, and similarly for $\overline{G}$. We know $\tau(\overline{G}) = 1$ (by the now-established conjecture of Weil on Tamagawa numbers of simply connected groups \cite{lurie} for the basic exotic case, and the invariance of Tamagawa numbers under Weil restriction \cite[Chap.\,II, \S 1.3, Thm.\,(d)]{oesterle} together with the more classical evaluation $\tau({\rm{Sp}}_{2n})=1$ over global function fields 
for the basic non-reduced case). Thus, Proposition \ref{basicexotictam=1} is equivalent to the assertion that this multiplier is $1$; i.e., that $f^{-1}(\mu_{\overline{G}}) = \mu_G$. This is the content of the following proposition, which therefore implies Proposition \ref{basicexotictam=1}.

\begin{proposition}
\label{measuresagreebasic} 
Let $k$ be a global function field of characteristic $p \in \{2, 3\}$.
For $G$ as in Proposition $\ref{basicexotictam=1}$, 
via the isomorphism $f_{\A}: G(\A) \xrightarrow{\sim} \overline{G}(\A)$ the Tamagawa measure on $\overline{G}$ pulls back to the Tamagawa measure on $G$. That is, $f^{-1}(\mu_{\overline{G}}) = \mu_G$. In particular, $\tau(G) = 1$.
\end{proposition}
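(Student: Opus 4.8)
The plan is to compute the ratio between $f^{-1}(\mu_{\overline G})$ and $\mu_G$ by working place by place, reducing a global statement to a local computation and then showing that the local discrepancies cancel by the product formula. Concretely, the Tamagawa measure on $G(\A)$ is built from a nonzero top-degree invariant differential form $\omega_G$ on $G$ over $k$ (no convergence factors are needed since $G$ is its own derived group, so $\widehat G(k_s)=0$ and the product $\prod_v \mu_{v,\omega_G}$ converges), and likewise from $\omega_{\overline G}$ on $\overline G$. Choose $\omega_{\overline G}$ arbitrarily and set $\omega_G$ to be a $k$-rational invariant top form on $G$. Since $f\colon G\to\overline G$ is an isogeny of the same dimension $d=\dim G=\dim\overline G$, the pullback $f^*\omega_{\overline G}$ is a top-degree invariant form on $G$ over $k$, hence $f^*\omega_{\overline G}=c\cdot\omega_G$ for some $c\in k^\times$. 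By the product formula $\prod_v |c|_v=1$, so it suffices to show that for \emph{every} place $v$, the isomorphism $f_v\colon G(k_v)\xrightarrow{\sim}\overline G(k_v)$ carries $\mu_{v,\omega_G}$ to $|c|_v\cdot\mu_{v,\omega_{\overline G}}$; equivalently, that the ``naive'' local measures defined by $\omega_G$ and by $f^*\omega_{\overline G}$ already agree under $f_v$ up to the factor $|c|_v$ — which is automatic from the change-of-variables formula for the homeomorphism $f_v$ \emph{provided} the relevant local integrals behave as in the smooth case.

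The subtlety — and I expect this to be the main obstacle — is that $f$ is \emph{not} flat: its kernel $\R_{k'/k}(\ker\pi')$ is non-smooth and positive-dimensional, so $f$ is far from étale and the usual clean statement ``$f$ étale $\Rightarrow$ local measures match'' does not literally apply. The way around this is to use that although $f$ is not flat, the induced map on $k_v$-points is a \emph{homeomorphism} (indeed an isomorphism of topological groups, by \cite[Props.\,7.3.3, 9.9.4]{cgp}), and more is true: on a dense open where things are nice, $f$ restricts to something to which change of variables applies. I would argue as follows. Over $k_v$, pick a maximal $k_v$-torus and use the open Bruhat-type cell, or more simply reduce to a statement about the Cartan subgroup: by Lemma \ref{pointsofcartan} (applied over $k_v$, noting $[k_v:k_v^p]=p$ so $G_{k_v}$ has the required form), the Cartan $Z_G(\mathscr T)$ maps isomorphically on $k_v$-points to $\overline T$, and one can try to propagate the measure-matching from the Cartan and root groups out to all of $G(k_v)$ via the open cell $U^-\times Z_G(\mathscr T)\times U^+$. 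On each root group $f$ is an isomorphism onto its image intertwining the relevant forms, and on the Cartan the content is exactly that the isogeny $Z_G(\mathscr T)\to\overline T$ induces a measure-preserving (up to $|c|_v$) map on $k_v$-points, which again follows from it being an isomorphism on $k_v$-points together with compatibility of invariant forms. This cell argument localizes the problem to pieces on which $f$ genuinely is an isomorphism of smooth schemes on points.

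Alternatively — and this may be cleaner — I would invoke the standard principle that the local Tamagawa measure attached to $G(k_v)$ depends only on the group $G(k_v)$ together with the chosen invariant form, via the recipe of \cite[Ch.\,I]{oesterle}, and that for a morphism $f$ inducing a topological isomorphism $G(k_v)\xrightarrow{\sim}\overline G(k_v)$ and satisfying $f^*\omega_{\overline G}=c\,\omega_G$, the pushforward of $\mu_{v,\omega_G}$ is $|c|_v\,\mu_{v,\omega_{\overline G}}$ whenever $f$ is \emph{smooth on a dense open subscheme whose complement has positive codimension} — because the local measure is insensitive to lower-dimensional loci, and on the smooth open locus ordinary change of variables applies. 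The very special isogeny $\pi'$ and hence $f$ is an isomorphism over the generic point (it is a purely inseparable isogeny), so $f$ is an isomorphism over a dense open $V\subset\overline G$, and $G\setminus f^{-1}(V)$ has codimension $\geq 1$; integrating $\omega_G$ over $G(k_v)$ thus reduces to integrating over $f^{-1}(V)(k_v)$, where change of variables gives exactly $|c|_v\,\mu_{v,\omega_{\overline G}}$. Summing over all $v$ and applying $\prod_v|c|_v=1$ yields $f^{-1}(\mu_{\overline G})=\mu_G$, hence $\tau(G)=\tau(\overline G)=1$. The one technical point requiring care is justifying that the local integral is unaffected by removing the (positive-codimension, but possibly non-smooth) locus $G\setminus f^{-1}(V)$; this should follow from the fact that $G\setminus f^{-1}(V)$ is a proper closed subscheme of the smooth variety $G$, hence its $k_v$-points have measure zero for the smooth measure $\mu_{v,\omega_G}$.
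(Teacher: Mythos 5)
Your proposal rests on a false structural premise: $f$ is \emph{not} an isogeny and $\dim G \neq \dim\overline{G}$. The kernel of $f$ is $\R_{k'/k}(\ker\pi')$, which is positive-dimensional (e.g.\ $\R_{k'/k}(\mu_p)$ for $k'/k$ purely inseparable of degree $p$ has dimension $p-1$), so $\dim G = \dim\overline{G} + \dim\ker(f) > \dim\overline{G}$. Consequently $f^*\omega_{\overline{G}}$ is a form of degree $\dim\overline{G} < \dim G$ and the identity $f^*\omega_{\overline{G}} = c\,\omega_G$ with $c\in k^\times$ is meaningless; there is no single global constant whose product-formula cancellation settles the matter. (Indeed the correct answer for the product of the local ratios is $(q^{1-g})^{\dim G - \dim\overline{G}}$, which is absorbed by the normalization $(q^{1-g})^{\dim H}$ in the definition of Tamagawa measure — not $1$.) The same issue kills your second route: $f$ has positive-dimensional fibers \emph{everywhere}, so there is no dense open over which $f$ is an isomorphism of schemes (a purely inseparable isogeny such as Frobenius is already not an isomorphism over any open set, and $f$ is worse, being non-flat with positive-dimensional non-smooth kernel). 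Change of variables for a smooth morphism of equal dimensions simply never applies here, and being a homeomorphism on $k_v$-points does not by itself relate measures defined by top-degree forms on schemes of different dimensions — the short root groups already illustrate this, since there $f$ is modelled on $\R_{k'/k}(\Ga)\to\Ga$, a bijection on $k$-points from a $[k':k]$-dimensional group to a $1$-dimensional one.

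Your first-paragraph cell strategy is the right skeleton (it is what the paper does), but you have buried the actual content in the phrase ``the isogeny $Z_G(\mathscr{T})\to\overline{T}$ induces a measure-preserving map ... which follows from it being an isomorphism on $k_v$-points together with compatibility of invariant forms.'' It does not follow: the local ratio $\lambda_{Z,v}$ between $\mu^v_{Z_G(S)}$ and the pullback of $\mu^v_{Z_{\overline{G}}(\overline{S})}$ is a genuinely nontrivial place-dependent constant, and the only way to evaluate $\prod_v\lambda_{Z,v}$ is the \emph{global} input $\tau(Z_G(S)) = \tau(Z_{\overline{G}}(\overline{S}))$ (two proportional Haar measures giving the same fundamental domain the same volume must agree), plus an identification of the character groups of the two Cartans so that the convergence factors and normalizations match. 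Establishing that equality of Tamagawa numbers of the Cartans — by constructing a sufficiently large split torus $S$ and analyzing $\mathscr{D}(Z_G(S))$ and $Z_G(S)^{\mathrm{ab}}$ root by root — is the bulk of the work (the paper's \S 5) and is entirely absent from your argument. The unipotent pieces are handled similarly: $\prod_v\lambda_{\pm,v}$ is computed from $\tau(U^{\pm})=\tau(\overline{U}^{\pm})=1$ for split unipotent groups, not from any local isomorphism of forms.
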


Since the two measures $f^{-1}(\mu_{\overline{G}})$ and $\mu_G$ are positive multiples of one another, in order to prove Proposition \ref{measuresagreebasic}, it would suffice to show that they agree on some nonempty open subset of $G(\A)$. This is roughly the idea, though not strictly what we will do. Rather, we will compare local measures on the $k_v$-points corresponding to the ``open cell'' inside $G$ coming from a suitable split $k$-torus $S \subset G$ and then use our knowledge of the Tamagawa numbers of the factors in the open cell decomposition to compare the corresponding product measures. In fact, one cannot hope to directly compare the measures by restricting to the adelic points of an open subscheme because given a smooth connected affine $k$-group $G$ and a nonempty affine open subscheme $U \subset G$, the subset $U(\A) \subset G(\A)$ is {\em not} generally open, and may even have measure $0$, and its natural topology may be distinct from the subspace topology inherited from $G(\A)$. (For example, take $G = \Ga$, $U = \Gm \subset \Ga$.) Nevertheless, we will be able to work on each local factor and then compute a product over all places to compare the two measures $f^{-1}(\mu_{\overline{G}})$ and $\mu_G$ on $G(\A)$.
In fact, we will reduce this comparison to the computation of the Tamagawa number of the centralizer of $S$ as in the following proposition.

\begin{proposition}
\label{toruscentralizertam=1}
Let $k$ be a global function field of characteristic $p \in \{2, 3\}$, and let $G$ be either a basic exotic $k$-group or a basic non-reduced $k$-group $($${\rm{char}}(k) = 2$ in the latter case$)$. Let $S \subset G$ be a split $k$-torus, and let $\overline{S} := f(S) \subset \overline{G}$ denote the associated split $k$-torus in $\overline{G}$. Suppose that $\tau(Z_G(S)) = \tau(Z_{\overline{G}}(\overline{S}))$. Then Proposition $\ref{measuresagreebasic}$ holds for $G$. In particular, $\tau(G) = 1$.
\end{proposition}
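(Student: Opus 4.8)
The plan is to reduce the comparison of the two Haar measures $f^{-1}(\mu_{\overline{G}})$ and $\mu_G$ on $G(\A)$ to a local-global argument organized around the open cell attached to a split torus $S \subset G$. First I would recall that, since $G$ is perfect and hence algebraically unimodular, the Tamagawa measures in question require no convergence factors; the ratio $f^{-1}(\mu_{\overline{G}})/\mu_G$ is a single positive real constant $c$ that I want to show equals $1$. To pin down $c$, choose a nonzero left-invariant top form $\overline{\omega}$ on $\overline{G}$ and let $\omega := f^*\overline{\omega}$, a nonzero left-invariant top form on $G$ (it is nonzero because $f$ is an isomorphism on Lie algebras away from the non-smooth kernel — more precisely, $f$ is separable on the relevant locus, which I would justify via \cite[Prop.\,7.3.3]{cgp} in the basic exotic case and \cite[Prop.\,9.9.4]{cgp} in the basic non-reduced case, these giving the topological isomorphism on $k_v$- and $\A$-points). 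Then $c = \prod_v c_v$, where $c_v$ is the ratio of the local measure $|\omega|_v$ to the pullback under $f$ of $|\overline{\omega}|_v$ on $\overline{G}(k_v)$.

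Next I would use the open cell. Pick a split maximal $k$-torus $S$ in $G$ (the hypothesis already hands us a split $S$; one takes it maximal among split tori, which exists since $G$ is quasi-split over the function field $k$ by the classification, or one simply works with the given $S$ if it is maximal split — I would arrange $S$ so that $Z_G(S)$ is a Cartan-type subgroup). Over $G$ there is the standard ``big cell'' decomposition: the multiplication map $U^- \times Z_G(S) \times U^+ \to G$ is an open immersion onto a dense open subscheme $\Omega \subset G$, where $U^\pm$ are the unipotent radicals of a pair of opposite pseudo-parabolic subgroups. The form $\omega$ factors, up to a nonzero constant in $k^\times$ (which disappears globally by the product formula), as a product of an invariant form on $U^-$, an invariant form on $Z_G(S)$, and an invariant form on $U^+$, and similarly for $\overline{\omega}$ on the corresponding cell $\overline{\Omega} = \overline{U}^- \times Z_{\overline{G}}(\overline{S}) \times \overline{U}^+$ in $\overline{G}$. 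Crucially, $f$ carries $U^\pm$ isomorphically onto $\overline{U}^\pm$ (the very special isogeny, resp. the map of \eqref{fggnonred}, is an isomorphism on unipotent radicals of pseudo-parabolics — this is part of \cite[Thm.\,7.2.3]{cgp}/\cite[\S 9.8]{cgp}), while on the Cartan factor it restricts to the map $Z_G(S) \to Z_{\overline{G}}(\overline{S})$. Hence $c_v = d_v \cdot (\text{ratio on } Z_G(S))_v$, where $d_v$ is the product of the local measure-ratios on $U^-$ and $U^+$, and since $f$ is an isomorphism of schemes on those unipotent factors respecting the chosen invariant forms up to a global constant, $\prod_v d_v = 1$. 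Therefore $c = \prod_v (\text{ratio on } Z_G(S))_v$, which is exactly $\tau(Z_G(S))/\tau(Z_{\overline{G}}(\overline{S}))$ — and this equals $1$ by hypothesis. Combined with $\tau(\overline{G}) = 1$, this gives $\tau(G) = 1$.

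The main obstacle I anticipate is not the formal structure above but the measure-theoretic bookkeeping needed to legitimately pass from ``the forms agree on the open cells up to a global unit'' to ``the product measures on $G(\A)$ agree.'' As the discussion preceding the proposition emphasizes, $\Omega(\A)$ is not open in $G(\A)$ and can have measure zero, so one cannot simply restrict. The correct move is to work one place at a time: for each $v$, the cell identity gives $|\omega|_v$ on $\Omega(k_v)$ as a genuine product measure on $U^-(k_v) \times Z_G(S)(k_v) \times U^+(k_v)$, the complement $G(k_v)\setminus\Omega(k_v)$ has $|\omega|_v$-measure zero (it is a proper closed subvariety), and the analogous statement holds over $\overline{G}$; then $c_v$ is literally the product of the three local ratios, which is finite and positive. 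The subtlety is only global convergence of $\prod_v d_v$ and $\prod_v(\text{Cartan ratio})_v$, which is handled because, away from finitely many $v$, all the groups involved extend to smooth affine group schemes over $\calO_v$ with the chosen forms extending to generators of the top differentials, forcing $d_v = 1$ and $(\text{Cartan ratio})_v = 1$ for almost all $v$ (this is the same spreading-out input used in defining Tamagawa measure). Once that is in place, the identity $c = \tau(Z_G(S))/\tau(Z_{\overline{G}}(\overline{S}))$ follows by definition of Tamagawa number applied to the Cartan subgroups, and the hypothesis closes the argument.
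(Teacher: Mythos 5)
Your overall skeleton --- compare $f^{-1}(\mu_{\overline{G}})$ and $\mu_G$ place by place via the open cell $U^-\times Z_G(S)\times U^+$, absorb the discrepancy of the differential forms into a constant in $k^{\times}$ killed by the product formula, and feed the Cartan hypothesis into the remaining factor --- is exactly the paper's strategy. But two of your key assertions are wrong as stated, and fixing them is where most of the actual work in the paper lies. First, $f$ does \emph{not} carry $U^{\pm}$ isomorphically onto $\overline{U}^{\pm}$ as schemes: on the root groups attached to short roots in the basic exotic case the induced map is $\R_{k'/k}(\Ga)\to\Ga$, realized on $k$-points as the $p$-th power map $k^{1/p}\to k$ (and in the non-reduced case it is $(x,y)\mapsto \alpha x^2+y$). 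These are bijections on rational points of separable extensions of $k$ but not scheme isomorphisms, and in particular $\dim U^{\pm}>\dim\overline{U}^{\pm}$. Consequently your claim $\prod_v d_v=1$ cannot be right; because the Tamagawa measure carries the normalization $(q^{1-g})^{\dim}$, the correct identity is $\prod_v\lambda_{\pm,v}=(q^{1-g})^{\dim U^{\pm}-\dim\overline{U}^{\pm}}$, and the target for the whole comparison is $\prod_v c_v=(q^{1-g})^{\dim G-\dim\overline{G}}$ rather than $1$. The unipotent identity is proved not from a scheme isomorphism but from $\tau(U^{\pm})=\tau(\overline{U}^{\pm})=1$ for split unipotent groups, transported through the homeomorphism on adelic points.

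Second, your step ``the product of the local Cartan ratios is exactly $\tau(Z_G(S))/\tau(Z_{\overline{G}}(\overline{S}))$'' hides the genuinely delicate point. The Cartan subgroups generally have nontrivial characters, so their Tamagawa measures involve the convergence factors ${\rm{L}}_v(\widehat{Z_G(S)},1)$, the residue $\rho$, and $\#\coker\theta$; the bare product of local ratios need not even converge. To convert the hypothesis $\tau(Z_G(S))=\tau(Z_{\overline{G}}(\overline{S}))$ into the needed statement $\prod_v\lambda_{Z,v}=(q^{1-g})^{\dim Z_G(S)-\dim Z_{\overline{G}}(\overline{S})}$ one must show that $f$ identifies the character groups $\widehat{Z_{\overline{G}}(\overline{S})}(k_s)\xrightarrow{\sim}\widehat{Z_G(S)}(k_s)$, so that all the L-factors and normalizations match and can be cancelled against an absolutely convergent product. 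This is the content of the paper's Lemmas \ref{Rk'/kmupchars} and \ref{ZGcharssame} (the former showing $\R_{F'/F}(\mu_p)$ has no nontrivial characters, which enters through the structure of $\ker f$), and it is entirely absent from your proposal. Your local measure-zero argument for the complement of the cell is fine (the paper instead observes that two Haar measures agreeing on a nonempty open set agree everywhere), but without the dimension bookkeeping and the character-group identification the argument does not close.
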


Note that if we take $S = 1$, then $Z_G(S) = G$ and $Z_{\overline{G}}(\overline{S}) = \overline{G}$, so we obtain a tautology. Thus, for Proposition \ref{toruscentralizertam=1} to be useful, we need a large split torus so that the group $Z_G(S)$ is something we already understand. Fortunately, over global function fields basic exotic groups and basic non-reduced groups always have relatively large split tori, as we shall see in \S \ref{exotictam=1section}. 

(In fact, as we shall see, basic exotic groups over global fields which are built out of simply connected groups of type F$_4$ or G$_2$ always have a split maximal torus. This is not generally true for groups built out of types B$_n$ or C$_n$, but these still have rather large $k$-rank over a global field $k$; see Proposition \ref{suitablesplittorus}. Basic non-reduced groups always have a split maximal $k$-torus \cite[Thm.\,9.9.3(1)]{cgp}.)

We now turn to the proof of Proposition \ref{toruscentralizertam=1}, which we will use to prove the equivalent Propositions \ref{basicexotictam=1} and \ref{measuresagreebasic} in \S \ref{exotictam=1section}. Note that the map $S \twoheadrightarrow \overline{S}$ is an isogeny, since $\ker(f)$ contains no nontrivial torus by \cite[Thms.\,1.6.2(2), 9.8.1(3), Prop.\,7.2.7(2)]{cgp}. The informal idea of the proof is to use ``open cell'' decompositions associated to $(G, S)$ and $(\overline{G}, \overline{S})$ in order to compare the Tamagawa measures on $G(\A)$ and $\overline{G}(\A)$ by comparing them on these ``open cells'', but strictly speaking this is not quite what we will do.

To define these open cells, we need to review some ``dynamic constructions''. 
Let $K$ be an arbitrary field, $H$ a linear algebraic $K$-group (i.e., a smooth affine $K$-group). Fix a cocharacter $\beta: \Gm \rightarrow H$ (possibly trivial). Then we define a subfunctor $U_H(\beta) \subset H$ by the formula
\[
U_H(\beta)(R) := \{h \in H(R) \mid \lim_{t\to 0} \beta(t)h\beta(t)^{-1} = 1\}
\]
for $K$-algebras $R$. Let us explain what we mean by this. The map $(\Gm)_R \rightarrow H_R$ defined by $t \mapsto \beta(t)h\beta(t)^{-1}$ may or may not extend to a map $c_{\beta}: \A^1_R \rightarrow H_R$. If it does, then it extends uniquely, since $H_R$ is separated over $R$. Then we define $\lim_{t \to 0} \beta(t)h\beta(t)^{-1} := c_{\beta}(0)$.

The subfunctor $U_H(\beta) \subset H$ is represented by a split smooth connected unipotent closed $K$-subgroup scheme of $H$ (which we also denote by $U_H(\beta)$) \cite[Lemma 2.1.5, Prop.\,2.1.10]{cgp}. Let $Z_H(\beta)$ denote the (scheme-theoretic) centralizer in $H$ of $\beta(\Gm)$.
Then the multiplication map
\begin{equation}\label{opencellH}
m_H: U_H(\beta^{-1}) \times Z_H(\beta) \times U_H(\beta) \rightarrow H
\end{equation}
is an open immersion \cite[Prop.\,2.1.8(3)]{cgp}. The image of this map is called the {\em open cell} in $H$ associated to $\beta$, or the {\em open cell} of the pair $(H, \beta)$.

We need to generalize our situation somewhat. Recall that in the basic exotic case over a {\em general} field $k$ of characteristic $p \in \{2, 3\}$, $G$ is constructed from a triple of data $(k'/k, G', \overline{G})$, where $k'/k$ is a nontrivial finite extension contained in $k^{1/p}$, $G'$ is a connected semisimple $k'$-group that is absolutely simple and simply connected with Dynkin diagram containing an edge of multiplicity $p$, and $\overline{G} \subset \R_{k'/k}(\overline{G}')$ is a Levi $k$-subgroup with $\overline{G}'$ the very special quotient of $G'$. For $k$ of characteristic $2$ with $[k: k^2] = 2$, a basic non-reduced group $G$ over $k$ depends only on a positive integer $n$ (and $G$ is then pseudo-split with root system ${\rm{BC}}_n$) \cite[Thm.\,9.9.3(1)]{cgp}.

Let $k$ be an {\em arbitrary} imperfect field of characteristic $p \in \{2, 3\}$ such that $k' := k^{1/p}$ is a {\em finite} extension of $k$. In the basic exotic case, let $G$ denote the basic exotic group built out of a triple $(k'/k, G', \overline{G})$, so $f: G \twoheadrightarrow \overline{G}$ is the quotient map onto a simply connected semisimple $k$-group. In the basic non-reduced case in characteristic $2$ with $k' = k^{1/2}$ quadratic over $k$, let $G$ be the basic non-reduced $k$-group with root system of type ${\rm{BC}}_n$ (such $G$ is unique up to isomorphism). This data behaves well with respect to suitable separable extensions as follows. In the basic exotic case, if $K/k$ is a separable extension field such that $k^{1/p} \otimes_k K \rightarrow K^{1/p}$ is an isomorphism (e.g. $K = k_v$ for a global field $k$), then $G_K$ is the basic exotic group built from the data $(K'/K, G'_{K'}, \overline{G}_K)$. In the basic non-reduced case, under the same hypothesis $G_K$ is the basic non-reduced $K$-group with root system ${\rm{BC}}_n$. 

Now we discuss the ``open cell'' associated to a split torus $S \subset G$ (really to a suitable cocharacter of that torus) in this general setting. Let $S \subset G$ be a split $k$-torus, and $\overline{S} := f(S) \subset \overline{G}$ the associated split torus in $\overline{G}$. We may choose a cocharacter $\beta: \Gm \rightarrow S$ so that $Z_G(\beta) = Z_G(S)$ (and then automatically $Z_{\overline{G}}(\overline{\beta}) = Z_{\overline{G}}(\overline{S})$, where $\overline{\beta} := f \circ \beta$). Indeed, this amounts to choosing $\beta$ to be ``sufficiently generic''; namely, one chooses $\beta$ that is
not killed by any of the finitely many nontrivial $S$-weights for the adjoint action of $S$ on Lie$(G)$. 

Let us denote the groups $U_G(\beta)$ and $U_G(\beta^{-1})$ by $U^+$ and $U^-$ respectively, and define $\overline{U}^+, \overline{U}^- \subset \overline{G}$ similarly, so the $k$-groups $U^+, U^-, \overline{U}^+, \overline{U}^-$ are split unipotent. As instances of (\ref{opencellH}), the multiplication maps
\[
m_G: U^- \times Z_G(S) \times U^+ \rightarrow G
\]
\[
m_{\overline{G}}: \overline{U}^- \times Z_{\overline{G}}(\overline{S}) \times \overline{U}^+ \rightarrow \overline{G}
\]
are open immersions. We refer to these open subschemes of $G$ and $\overline{G}$ as the respective
{\em open cells} in $G$ and $\overline{G}$ (they typically depend on a choice of cocharacter $\beta: \Gm \rightarrow S$, though for maximal split $S$ one can give descriptions in terms of minimal pseudo-parabolic $k$-subgroups containing $S$
\cite[C.2.2--C.2.5]{cgp}).

The maps $U^- \rightarrow \overline{U}^-$, $U^+ \rightarrow \overline{U}^+$, $Z_G(S) \rightarrow Z_{\overline{G}}(\overline{S})$ induced by $f$
are surjective \cite[Cor.\,2.1.9]{cgp}.  Here is a refinement, an analogue for these subgroups of the results \cite[Props.\,7.3.3, 9.9.4]{cgp} at the level of the entire group:

\begin{lemma}
\label{opencellpoints}
Let $k$ be an imperfect field of characteristic $p \in \{2, 3\}$ such that $[k : k^p]$ is finite, and let $G$ be either a basic exotic $k$-group built of the data $(k'/k, G', \overline{G})$ with $k' := k^{1/p}$ or else suppose that $p = 2$, $[k: k^2] = 2$, and that $G$ is a basic non-reduced $k$-group. 

Let $f: G \twoheadrightarrow \overline{G}$ denote the canonical map onto a simply connected quotient (see Remark $\ref{canonicalsurjection}$). Let $\beta: \Gm \rightarrow G$ be a cocharacter, $\overline{\beta} := f \circ \beta: \Gm \rightarrow \overline{G}$, and let $Z := Z_G(\beta)$, $U^+ := U_G(\beta)$, $U^- := U_G(\beta^{-1})$ and define $\overline{Z}, \overline{U}^+, \overline{U}^- \subset \overline{G}$ analogously using $\overline{\beta}$.
\begin{itemize}
\item[(i)] The maps $Z(k) \rightarrow \overline{Z}(k)$, $U^+(k) \rightarrow \overline{U}^+(k)$, and $U^-(k) \rightarrow \overline{U}^-(k)$ induced by $f$ are isomorphisms.
\item[(ii)] If $k$ is complete with respect to an absolute value, then the maps in (i) are topological isomorphisms.
\item[(iii)] If $k$ is a global function field then the maps $Z(k_v) \rightarrow \overline{Z}(k_v)$, $U^+(k_v) \rightarrow \overline{U}^+(k_v)$, and $U^-(k_v) \rightarrow \overline{U}^-(k_v)$ are topological isomorphisms, as are the maps $Z(\A_k) \rightarrow \overline{Z}(\A_k)$, $U^+(\A_k) \rightarrow \overline{U}^+(\A_k)$, and $U^-(\A_k) \rightarrow \overline{U}^-(\A_k)$.
\end{itemize}
\end{lemma}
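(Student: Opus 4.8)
The plan is to peel off the topology and the descent, reducing the whole statement to a surjectivity assertion over separably closed fields, and there to feed in the explicit root‑group structure of $G$ and the behaviour of $f$ on root groups from \cite{cgp} (together with the Cartan statement of Lemma \ref{pointsofcartan}).

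\medskip
\noindent\emph{Reduction.} All the maps in (i)--(iii) are restrictions of $f$, and $f$ induces a bijection on $k$-points, a topological isomorphism on $k_v$-points, and a topological isomorphism on $\A_k$-points by \cite[Props.\,7.3.3, 9.9.4]{cgp}. Since $Z := Z_G(\beta)$, $U^+ := U_G(\beta)$ and $U^- := U_G(\beta^{-1})$ are closed subschemes of $G$ (and $\overline{Z},\overline{U}^{\pm}$ are closed in $\overline{G}$), each map is automatically injective, and in the complete and adelic settings it is a homeomorphism onto its image; hence it suffices to prove that each of the three maps is \emph{surjective} on $k$-points, on $k_v$-points for all $v$, and on $\A_k$-points. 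The adelic case follows from surjectivity over every $k_v$ together with the compatibility of integral structures implicit in \cite[Props.\,7.3.3, 9.9.4]{cgp}. Next, for $F$ equal to $k$ or to a completion $k_v$, the group $G_{F_s}$ is again basic exotic over $F_s$ built from $F_s^{1/p}$ (resp.\ basic non-reduced over $F_s$), the formation of $U_G(\beta)$ and $Z_G(\beta)$ commutes with base change, and the maps in question are $\Gal(F_s/F)$-equivariant with $H(F) = H(F_s)^{\Gal(F_s/F)}$ for each relevant $H$; so a $\Gal(F_s/F)$-equivariant bijection over $F_s$ restricts to a bijection over $F$. Thus it is enough to prove: for $F$ a separably closed imperfect field of characteristic $p\in\{2,3\}$ with $[F:F^p]$ finite, and $G$ basic exotic over $F$ with $F' = F^{1/p}$ (or basic non-reduced over $F$), the map $f$ carries $U_G(\beta)(F)$, $U_G(\beta^{-1})(F)$ and $Z_G(\beta)(F)$ onto $U_{\overline G}(\overline\beta)(F)$, $U_{\overline G}(\overline\beta^{-1})(F)$ and $Z_{\overline G}(\overline\beta)(F)$, where $\overline\beta := f\circ\beta$.

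\medskip
\noindent\emph{Over $F$ separably closed.} Here $G$ has a split maximal torus $\mathcal T$, and since all maximal tori of $G$ are $G(F)$-conjugate while the surjectivity assertions are unchanged under replacing $\beta$ by a $G(F)$-conjugate, we may assume $\beta\in X_*(\mathcal T)$. Put $\overline{\mathcal T} := f(\mathcal T)$, a split maximal torus of $\overline G$ with $f|_{\mathcal T}$ an isogeny onto it (\cite[Cor.\,7.3.4]{cgp}, resp.\ its analogue in \cite[\S 9.9]{cgp}). Relative to these tori, $U_G(\beta)$ is directly spanned by the root groups $U_{(a)}$ with $\langle a,\beta\rangle>0$, $U_G(\beta^{-1})$ by those with $\langle a,\beta\rangle<0$, and $Z_G(\beta)$ is the pseudo-reductive subgroup with maximal torus $\mathcal T$ and root system $\{a:\langle a,\beta\rangle=0\}$ (\cite[\S\S 2.1, 3.3]{cgp}), and likewise for $\overline G$. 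The isogeny $f|_{\mathcal T}$ identifies the root systems of $(G,\mathcal T)$ and $(\overline G,\overline{\mathcal T})$ up to positive rescaling (with the long/short swap in types $\mathrm B_n$/$\mathrm C_n$ for basic exotic $G$, and with the doubled roots of $\mathrm{BC}_n$ mapping into the roots of $\mathrm C_n$ in the basic non-reduced case), so in particular $\langle a,\beta\rangle$ and $\langle\overline a,\overline\beta\rangle$ have the same sign and $f$ matches up the indexing sets for the three subgroups. The structure theory describes $f$ on each root group explicitly: it carries $U_{(a)}$ onto $\overline U_{(\overline a)}$ by a map which on $F$-points is either the identity $\Ga(F)=F\to F=\Ga(F)$ or the bijection $\R_{F'/F}(\Ga)(F) = F' \xrightarrow{x\mapsto x^p} F = \Ga(F)$ (bijective since $F' = F^{1/p}$), and similarly for basic non-reduced $G$ (\cite[\S 7.3]{cgp}, resp.\ \cite[\S\S 9.8--9.9]{cgp}); this is the root-group incarnation of \cite[Props.\,7.3.3, 9.9.4]{cgp}. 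Finally $f$ is bijective on $F$-points of the Cartan subgroups $Z_G(\mathcal T)\to Z_{\overline G}(\overline{\mathcal T})=\overline{\mathcal T}$: this is Lemma \ref{pointsofcartan} with $i=0$ for basic exotic $G$, and the analogous statement in \cite[\S 9.9]{cgp} for basic non-reduced $G$.

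\medskip
\noindent Surjectivity is now immediate: $U_{\overline G}(\overline\beta)(F)$ is generated by the groups $\overline U_{(\overline a)}(F)$ with $\langle\overline a,\overline\beta\rangle>0$ (it is in fact their product), and each such group equals $f(U_{(a)}(F))\subset f(U_G(\beta)(F))$; the same argument applies to $U_G(\beta^{-1})$, while for $Z_G(\beta)$ one uses in addition that $Z_{\overline G}(\overline\beta)(F)$ is generated by $\overline{\mathcal T}(F)=f(Z_G(\mathcal T)(F))$ together with the $\overline U_{(\overline a)}(F)$ for $\langle\overline a,\overline\beta\rangle=0$ (the Bruhat-type decomposition of the pseudo-reductive group $Z_{\overline G}(\overline\beta)$ over the separably closed field $F$, \cite[\S\S C.2, 3.3]{cgp}), and every one of these generators lies in $f(Z_G(\beta)(F))$. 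Combined with the injectivity already noted, all three maps are bijective over $F$, which finishes the reduction and hence the lemma. The main obstacle is precisely that there is no purely formal shortcut through the open cells: although $f(\Omega_G) = \Omega_{\overline G}$, one does \emph{not} have $f^{-1}(\Omega_{\overline G}) = \Omega_G$, because $\ker f = \R_{k'/k}(\ker\pi')$ is positive-dimensional, non-smooth, and not contained in $P_G(\beta)$ (its Lie algebra meets both the $\beta$-positive and $\beta$-negative root spaces), so that $f^{-1}(\Omega_{\overline G}) = \ker(f)\cdot\Omega_G$ strictly contains $\Omega_G$ and the extra $F$-points cannot be ruled out abstractly; one is forced into the explicit root-group computation underlying \cite[Props.\,7.3.3, 9.9.4]{cgp}, and the only genuinely new work is the bookkeeping matching root groups of $G$ with those of $\overline G$ (with the non-reduced root system $\mathrm{BC}_n$ of a basic non-reduced group requiring a little extra care).
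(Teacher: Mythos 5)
Your proposal is correct, and for the unipotent pieces $U^{\pm}$ it is essentially the paper's argument: reduce by Galois descent to the split case, write $U_G(\beta)$ as a direct product of root groups compatibly with the corresponding decomposition of $U_{\overline{G}}(\overline{\beta})$, and use the explicit description of $f$ on root groups (an isomorphism on long root groups, the $p$-power bijection $k^{1/p} \to k$ on short ones, and the map $(x,y) \mapsto \alpha x^2 + y$ in the basic non-reduced case). Where you genuinely diverge is the centralizer $Z = Z_G(\beta)$: you prove surjectivity of $Z(F) \to \overline{Z}(F)$ by generating the split reductive group $Z_{\overline{G}}(\overline{\beta})(F)$ by $\overline{\mathcal{T}}(F)$ and the zero-weight root groups, which forces you to invoke Lemma \ref{pointsofcartan} (and its non-reduced analogue) for the Cartan together with a Bruhat-type generation statement. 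The paper instead uses a softer argument requiring no structure theory of $Z_G(\beta)$ at all: given $\overline{z} \in \overline{Z}(k)$, take the unique $g \in G(k)$ with $f(g) = \overline{z}$ (bijectivity of $f$ on points of the ambient group) and check that $g$ centralizes $R = \beta(\Gm)$ by testing against the Zariski-dense set $R(k_s)$ and using injectivity of $f$ on $k_s$-points; this is more economical and, unlike your route, carries over essentially verbatim to $\A_k$-points via $H(\A_k) = H(\A_L)^{\Gal(L/k)}$, which is how the paper disposes of (iii). That last point is also the one loose end in your write-up: deducing the adelic bijection from surjectivity over each $k_v$ requires knowing that suitable integral models satisfy $\mathscr{Z}(\calO_v) \twoheadrightarrow \overline{\mathscr{Z}}(\calO_v)$ for almost all $v$, which does not follow formally from the local statements; it can be extracted from the same root-group computation (the $p$-power map is bijective on $\calO_v^{1/p} \to \calO_v$), or avoided entirely by the Galois-invariants trick. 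Your closing remark that one cannot argue through $f^{-1}$ of the open cell, because $\ker(f)$ is positive-dimensional and not contained in $P_G(\beta)$, is an accurate diagnosis of why some explicit computation is unavoidable here.
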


\begin{proof}
First we note that the topological assertions are all automatic once we prove that the maps are set-theoretic bijections. For example, suppose that the map $Z(k) \rightarrow \overline{Z}(k)$ is a bijection. If $k$ is complete with respect to an absolute value, then since the topology on $Z(k)$ is the subspace topology inherited from $G(k)$, and likewise for $\overline{Z}(k) \subset \overline{G}(k)$, the fact that $G(k) \rightarrow \overline{G}(k)$ is a homeomorphism \cite[Prop.\,7.3.3(ii), 9.9.4(2)]{cgp} will then imply that the map $Z(k) \rightarrow \overline{Z}(k)$ is a topological isomorphism. Similarly, in (iii) one merely has to show that the maps are bijections (since a closed immersion $Y \hookrightarrow X$ of affine $k$-schemes of finite type induces a closed embedding of topological spaces $Y(\A) \rightarrow X(\A)$).

Let us also observe that all of the maps in (i)--(iii) are injective because the analogous maps for the entire group are bijections by \cite[Props.\,7.3.3, 9.9.4]{cgp}. Let $R := \beta(\Gm)$, a split torus in $G$, and let $\overline{R} := f(R) = \overline{\beta}(\Gm) \subset \overline{G}$.

We first prove (i) and (ii). We may replace $k$ with a finite Galois extension $K$ and thereby assume that $G$ contains a split maximal torus $T$ containing $R$. Indeed, this preserves all hypotheses because $k^{1/p} \otimes_k K \xrightarrow{\sim} K^{1/p}$, and showing that the maps are bijective on $K$-points implies the same on $k$-points by taking Galois invariants.

We first prove that the map $Z(k) \rightarrow \overline{Z}(k)$ is surjective. Suppose that $\overline{z} \in \overline{Z}(k)$. By \cite[Props.\,7.3.3, 9.9.4]{cgp}, there exists a unique $g \in G(k)$ such that $f(g) = \overline{z}$. We need to show that $g \in Z(k)$. That is, we need to show that $g$ centralizes $R := \beta(\Gm)$. Since $R$ is smooth, its $k_s$-points are Zariski dense, so by replacing $k$ with $k_s$, it suffices to show that $g$ centralizes $R(k)$. So let $r \in R(k)$. Then $f(grg^{-1}) = f(g)f(r)f(g)^{-1} = 1$ because $f(g)$ centralizes $f(r) \in \overline{R}$. But $f$ induces an inclusion on $k$-points, so $grg^{-1} = 1$. That is, $g$ centralizes $r$.

Next we show that the maps on $k$-points of the ``positive'' and ``negative'' unipotent groups are bijections. For the chosen split maximal torus $T \subset G$ containing $R$, let $\overline{T} := f(T)$ be the associated split maximal torus of $\overline{G}$. Let $\Phi := \Phi(G, T)$ and $\overline{\Phi} := \Phi(\overline{G}, \overline{T})$ denote the corresponding root systems. 

Let us first treat the basic exotic case. The map $f$ induces a bijection between the dual root systems $\Phi$ and $\overline{\Phi}$ that switches long and short roots. Indeed, let $\Phi_>$ and $\Phi_<$ denote the long and short roots of $\Phi$, respectively, and similarly for $\overline{\Phi}$. For $\overline{a} \in \overline{\Phi}_<$, the root $a := \overline{a} \circ f$ of $T$ lies in $\Phi_>$, while for $\overline{a} \in \overline{\Phi}_>$, we have $\overline{a} \circ f = pa$ for a root $a \in \Phi_<$, and this correspondence yields a bijection between the roots of $\Phi$ and those of $\overline{\Phi}$ \cite[Prop.\,7.1.5]{cgp}.

By \cite[Cor.\,3.3.12]{cgp}, multiplication induces an isomorphism of $k$-schemes (not necessarily of $k$-groups)
\begin{equation}
\label{k-schemeU^+isomeqn1}
\prod_{\substack{a \in \Phi \\ \langle a, \beta \rangle > 0}} U_a \rightarrow U^+
\end{equation}
for {\em any} ordering of the elements of $\{ a \in \Phi \mid \langle a, \beta \rangle > 0\}$, where $U_a$ denotes the $a$-root group for $(G, T)$. Taking the corresponding ordering of $\{ \overline{a} \in \overline{\Phi} \mid \langle \overline{a}, \overline{\beta} \rangle > 0\}$ similarly yields a $k$-scheme isomorphism
\begin{equation}
\label{k-schemeU^+isomeqn2}
\prod_{\substack{\overline{a} \in \overline{\Phi} \\ \langle \overline{a}, \overline{\beta} \rangle > 0}} \overline{U}_{\overline{a}} \rightarrow \overline{U}^+
\end{equation}
(where $\overline{U}_{\overline{a}} \subset \overline{G}$ is the $\overline{a}$-root group), and the isomorphisms (\ref{k-schemeU^+isomeqn1}) and (\ref{k-schemeU^+isomeqn2}) are compatible via $f$. It therefore suffices to show that $f$ induces bijections $U_a(k) \rightarrow \overline{U}_{\overline{a}}(k)$ for each $a \in \Phi$ (where $a = \overline{a} \circ f$).

If $\pi': G' \rightarrow \overline{G}'$ denotes the very special isogeny, then $\ker(\pi')$ is a height-$1$ normal $k'$-subgroup scheme whose Lie algebra intersects ${\rm{Lie}}(U'_a)$ trivially if $a \in \Phi$ is long and contains ${\rm{Lie}}(U'_a)$ if $a$ is short. If $a$ is a long root, therefore, then $f$ induces an isomorphism $U_a \rightarrow \overline{U}_{\overline{a}}$, whereas if $a$ is short, then the map $U_a = \R_{k'/k}(U'_a) \rightarrow \overline{U}_{\overline{a}}$ induced by $f$ is identified with the map $\R_{k'/k}(\Ga) \rightarrow \Ga$ that on $k$-points is the $p$th power map $k' = k^{1/p} \rightarrow k$ (see \cite[Prop.\,7.1.5(1), (2)]{cgp}). This latter map is clearly an isomorphism. This proves (i) and (ii) for the map $U^+(k) \rightarrow \overline{U}^+(k)$ when $G$ is basic exotic. The proof for $U^-(k) \rightarrow \overline{U}^-(k)$ is exactly the same. 

The proof for basic non-reduced groups is similar, once again breaking up the positive and negative unipotent groups as a product of root groups to reduce the assertion to proving that one obtains isomorphisms between rational points of root groups. By \cite[Thm.\,9.8.1(3)]{cgp}, the map induced by $f$ between each pair of root groups is either an isomorphism or else is the map 
\[
\R_{k'/k}(\Ga) \times \Ga \rightarrow \R_{k'/k}(\Ga)
\]
defined by $(x, y) \mapsto \alpha x^2 + y$ for some $\alpha \in k' - k$. Since $[k': k] = 2$, this map is a bijection on $k$-points.

To prove (iii), we first note that the assertion about $k_v$-points follows from (ii) once we note that all hypotheses are preserved when we extend scalars from $k$ to $k_v$, since $k_v \otimes_k k^{1/p} \xrightarrow{\sim} k_v^{1/p}$. The proof that the map on $\A_k$-points is bijective is exactly the same as the proof of (i), since $H(\A_k) = H(\A_L)^{{\rm{Gal}}(L/k)}$ for any finite Galois extension $L/k$ and any finite type affine $k$-scheme $H$.
\end{proof}

Now we return to the setting when $k$ is a global function field. Before continuing further, let us recall precisely how the Tamagawa measure and Tamagawa number of a connected linear algebraic group are defined. For a more detailed discussion, see \cite[Chap.\,I]{oesterle}. Let $H$ be a connected linear algebraic over the global function field $k$. Choose a nonzero top-degree left-invariant differential form $\omega_H$ on $H$. For each place $v$ of $k$, let $\mu_v$ denote the Haar measure on $k_v$ which gives $\calO_v$ measure $1$. Then $\omega_H$ together with $\mu_v$ induces a left Haar measure $\mu_H^v$ on $H(k_v)$.

Consider the continuous finite-dimensional $\C$-linear representation $V := \C \otimes \widehat{H}(k_s)$
of ${\rm{Gal}}(k_s/k)$. Then we have for each place $v$ of $k$ the local L-factor L$_v(\widehat{H}, s)$ associated to this representation:
\[
{\rm{L}}_v(\widehat{H}, s) := {\rm{det}}(1 - q_v^{-s}F_v\mid V^{I_v})
\]
where $q_v$ is the size of the residue field of $\calO_v$, $F_v$ is a lift of the conjugacy class of any Frobenius element at $v$, and $I_v$ is the inertia group of $v$. Then the product $L(\widehat{H}, s) := \prod_v {\rm{L}}_v(\widehat{H}, s)$ is an Artin L-function, and it converges for Re$(s) \gg 0$ and extends to a meromorphic function on all of $\C$ with a pole at $s = 1$ of order $r := {\rm{dim}}_{\C} V^{{\rm{Gal}}(k_s/k)} = {\rm{rk}}(\widehat{H}(k))$. Let $\rho_H := \lim_{s \to 1} (s-1)^rL(\widehat{H}, s)$.

For $a := (a_v) \in \A^{\times}$, define $|\!|a|\!| := \prod_v |a_v|_v$. Note that $|\!|a|\!| \in q^{\Z}$, where $q$ is the size
of the finite constant field of $k$. In order to define the Tamagawa number of $H$, note first that any $\chi \in \widehat{H}(k)$ induces a map $H(\A) \rightarrow \A^{\times}$, whence a map $|\!|\chi|\!|: H(\A) \rightarrow q^{\Z} \subset \mathbf{R}_{>0}$. In this way, we obtain a continuous map
\[
\theta_H: H(\A) \rightarrow \Hom(\widehat{H}(k), q^{\Z})
\]
whose image has finite index in $\Hom(\widehat{H}(k), q^{\Z})$. Let $H(\A)_1 := \ker(\theta_H)$. Note that $H(\A)_1$ is an open subset of $H(\A)$, hence we may restrict a left Haar measure on $H(\A)$ to $H(\A)_1$, and the measure is determined by this restriction.

Then the Tamagawa measure on $H(\A)$ is defined to be the product measure
\begin{equation}
\label{Tammeasuredef}
\mu_H := \frac{(q^{1-g})^{{\rm{dim}}(H)} \rho_H^{-1}} {({\rm{log}}(q))^{{\rm{rk}}(\widehat{H}(k))} \cdot \# \coker (\theta_H)} \prod_v \left({\rm{L}}_v(\widehat{H}, 1) \cdot \mu_H^v\right),
\end{equation}
where $k$ is the function field of the smooth proper geometrically connected genus-$g$ curve $X$ over $\F_q$. This definition is independent of our choice $\omega_H$ of nonzero left-invariant top-degree form on $H$, since any other such form is of the form $\lambda \cdot \omega_H$ for some $\lambda \in k^{\times}$ (so the two resulting measures have ratio $\prod_v |\lambda|_v = 1$ by the product formula). The infinite product measure on the right side of (\ref{Tammeasuredef}) converges absolutely, so that the order of the product is irrelevant. By this absolute convergence we mean the following. Spread $H$ out to a smooth affine $\calO_{\Sigma}$-model $\mathscr{H}$ for some finite nonempty set $\Sigma$ of places of $k$. Then the infinite product
\[
\prod_{v \notin \Sigma} {\rm{L}}_v(\widehat{H}, 1) \cdot \mu_H^v(\mathscr{H}(\calO_v))
\]
converges absolutely. This follows from \cite[Ch.\,I, \S 1.7 and \S 2.5, Prop.\,]{oesterle}.
The {\em Tamagawa number} $\tau(H)$ of $H$ is defined to be the volume of $H(k)\backslash H(\A)_1$ with respect to the quotient measure induced by the measure $\mu_H$ on the unimodular group $H(\A)_1$. ($H(\A)_1$ is unimodular by \cite[Ch.\,I, \S 5.8]{oesterle}.)

This definition simplifies greatly when $\widehat{H}(k_s) = 0$: in that case, we have
\begin{equation}
\label{Tammeasuredefnochars}
\mu_H = (q^{1-g})^{{\rm{dim}}(H)} \prod_v \mu_H^v,
\end{equation}
and $\tau(H)$ is the volume of $H(k)\backslash H(\A)$ with respect to the quotient measure induced by the measure $\mu_H$ on the unimodular group $H(\A)$.

Before proceeding further, we require the following lemma.

\begin{lemma}
\label{productmeasurepullback}
Let $X, Y$ be affine schemes of finite type over a global field $k$. Let $g: X \rightarrow Y$ be a $k$-scheme morphism that induces a homeomorphism $g_{\A}: X(\A_k) \xrightarrow{\sim} Y(\A_k)$. Assume that $X(\A_k) \neq \emptyset$. Suppose given for each place $v$ of $k$ a Borel measure $\mu_v$ on $Y(k_v)$ such that the restricted product measure $\prod_v \mu_v$ on $Y(\A_k)$ is absolutely convergent. For each place $v$ of $k$, let $g_v: X(k_v) \rightarrow Y(k_v)$ denote the natural map induced by $g$. 

Then $g_v$ is a homeomorphism for all $v$, the restricted product measure $\prod_v g_v^{-1}(\mu_v)$ on $X(\A_k)$ is absolutely convergent, and 
\[
g_{\A}^{-1}\left(\prod_v \mu_v\right) = \prod_v g_v^{-1}(\mu_v)
\]
as measures on $X(\A_k)$.
\end{lemma}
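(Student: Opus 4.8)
The plan is to exploit two facts: $g_\A$ is computed place by place, and the ``correct'' integral model of $X$ for forming $\prod_v g_v^{-1}(\mu_v)$ is the one pulled back from an integral model of $Y$ through $g^{-1}$. First I would check that each $g_v$ is a homeomorphism. Since $g$ is a $k$-morphism, $g_\A((x_v)_v)=(g_v(x_v))_v$, and — granting the $g_v$ are bijective — $g_\A^{-1}$ is likewise componentwise. Bijectivity of $g_v$ is forced by that of $g_\A$: because $X(\A)\neq\emptyset$ and the restricted-product condition constrains only cofinitely many places, every point of $X(k_v)$ occurs as the $v$-component of some adele, and similarly for $Y(k_v)$ via $Y(\A)=g_\A(X(\A))$, so replacing a single component transports injectivity and surjectivity of $g_\A$ to $g_v$. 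The map $g_v$ is continuous; for its inverse, fix $x^0\in X(\A)$ and observe that sending $x\in X(k_v)$ to the adele agreeing with $x^0$ off $v$ and equal to $x$ at $v$ is a continuous section $s_v$ of the projection $\pi_v\colon X(\A)\to X(k_v)$, whence $g_v^{-1}=\pi_v\circ g_\A^{-1}\circ s_v^Y$ with $s_v^Y$ the analogous section for $Y$. Then $g_v^{-1}(\mu_v)$, the Borel measure $A\mapsto\mu_v(g_v(A))$ on $X(k_v)$, is well defined.

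Next I would set up a compatible model. Spread $g$ out to $\mathscr g\colon\mathscr X\to\mathscr Y$ over some $\calO_\Sigma$, so that $Y(\A)$ and $X(\A)$ are the restricted products of the pairs $(Y(k_v),\mathscr Y(\calO_v))$ and $(X(k_v),\mathscr X(\calO_v))$. Put $K_v:=g_v^{-1}(\mathscr Y(\calO_v))$ for $v\notin\Sigma$; then $\mathscr X(\calO_v)\subseteq K_v$ since $\mathscr g$ carries $\calO_v$-points to $\calO_v$-points. Conversely, as $g_\A^{-1}$ is componentwise, the preimage of the open set $\prod_{v\in\Sigma}Y(k_v)\times\prod_{v\notin\Sigma}\mathscr Y(\calO_v)$ is the \emph{full} product $\prod_{v\in\Sigma}X(k_v)\times\prod_{v\notin\Sigma}K_v$, which must be contained in $X(\A)$; but a tuple with $v$-component in $K_v$ for $v\notin\Sigma$ lies in $X(\A)$ only when its $v$-component lies in $\mathscr X(\calO_v)$ for all but finitely many $v$, so $K_v=\mathscr X(\calO_v)$ for all but finitely many $v$. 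Enlarging $\Sigma$, assume this for all $v\notin\Sigma$. Then $(g_v^{-1}(\mu_v))(\mathscr X(\calO_v))=\mu_v(g_v(\mathscr X(\calO_v)))=\mu_v(\mathscr Y(\calO_v))$, so absolute convergence of $\prod_{v\notin\Sigma}\mu_v(\mathscr Y(\calO_v))$ — the hypothesis on $\prod_v\mu_v$ — is exactly absolute convergence of $\prod_{v\notin\Sigma}(g_v^{-1}(\mu_v))(\mathscr X(\calO_v))$, giving the asserted absolute convergence of $\prod_v g_v^{-1}(\mu_v)$. This matching of integral models is the one genuinely non-formal point: an adelic homeomorphism need not carry the integral points of a chosen model of $X$ onto those of a chosen model of $Y$ — the maps $f\colon G\twoheadrightarrow\overline G$ to which this lemma is applied are highly non-injective — which is why one must build the model of $X$ out of that of $Y$.

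Finally I would prove the pullback formula. Both $g_\A^{-1}(\prod_v\mu_v)$ and $\prod_v g_v^{-1}(\mu_v)$ are $\sigma$-finite Borel measures on $X(\A)$, which is exhausted by cylinder sets $B=\prod_{v\in T}A_v\times\prod_{v\notin T}\mathscr X(\calO_v)$ with $T\supseteq\Sigma$ finite and $A_v\subseteq X(k_v)$ Borel of finite $\mu_v$-measure; these form a $\pi$-system generating the Borel $\sigma$-algebra, so it suffices to compare the two measures on such $B$. By definition of the restricted-product measure,
\[
\Big(\textstyle\prod_v g_v^{-1}(\mu_v)\Big)(B)=\prod_{v\in T}\mu_v\big(g_v(A_v)\big)\cdot\prod_{v\notin T}\mu_v\big(\mathscr Y(\calO_v)\big),
\]
while $g_\A(B)=\prod_{v\in T}g_v(A_v)\times\prod_{v\notin T}\mathscr Y(\calO_v)$ (using $g_v(\mathscr X(\calO_v))=\mathscr Y(\calO_v)$), so by definition of $\prod_v\mu_v$ the value $\big(g_\A^{-1}(\prod_v\mu_v)\big)(B)=\big(\prod_v\mu_v\big)(g_\A(B))$ is the same product. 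Hence the two measures agree, which completes the proof. The remaining routine point, that a restricted product measure is determined by, and behaves well under a componentwise homeomorphism on, these cylinder sets, is the usual monotone-class/uniqueness argument and I would only sketch it.
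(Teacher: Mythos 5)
Your proof is correct and follows essentially the same route as the paper: spread $g$ out to integral models over $\calO_\Sigma$, show that the $\calO_v$-points of the two models correspond under $g_v$ for all but finitely many $v$, and then deduce both the convergence and the equality of measures directly from the definition of the restricted product measure on cylinder sets. The only (cosmetic) difference is in how the models are matched: you pull a basic open subset of $Y(\A_k)$ back and observe that the resulting full product $\prod_{v\in\Sigma}X(k_v)\times\prod_{v\notin\Sigma}K_v$ must lie inside the restricted product $X(\A_k)$, forcing $K_v=\mathscr X(\calO_v)$ for almost all $v$, whereas the paper pushes a basic open subset of $X(\A_k)$ forward and uses that its image is open in $Y(\A_k)$ and hence contains a basic open set.
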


\begin{proof}
For some finite nonempty set $\Sigma$ of places of $k$, we can choose affine $\calO_{\Sigma}$-schemes $\mathscr{X}$ and $\mathscr{Y}$ of finite type with respective generic fibers $X$ and $Y$ such that $g$ spreads out to a map of $\calO_{\Sigma}$-schemes $g': \mathscr{X} \rightarrow \mathscr{Y}$. Both assertions of the lemma follow immediately from the definition of restricted product measures once we show that $g_v$ is a homeomorphism for all $v$ and $\mathscr{X}(\calO_v) \rightarrow \mathscr{Y}(\calO_v)$ is bijective for all but finitely many $v$. Since $g_{\A}$ is a homeomorphism and $X(\A) \neq \emptyset$, each $g_v$ is clearly a homeomorphism. Since $\mathscr{X}, \mathscr{Y}$ are affine, therefore, $g'$ induces inclusions $\mathscr{X}(\calO_v) \hookrightarrow \mathscr{Y}(\calO_v)$ for all $v \notin \Sigma$, and so we only need to show that these maps are surjective for all but finitely many $v$.

In order to show this, choose $x \in X(\A)$. (There exists such $x$ because $X(\A) \neq \emptyset$ by assumption.) The point $x_v \in X(k_v)$ lies in $\mathscr{X}(\calO_v)$ for all but finitely many $v \notin \Sigma$, so we can choose some finite set $\Sigma'$ of places of $k$ that contains $\Sigma$ 
such that $x \in W := \prod_{v \in \Sigma'} X(k_v) \times \prod_{v \notin \Sigma'} \mathscr{X}(\calO_v)$. Since $g_{\A}$ is a homeomorphism, $g_{\A}(W) \subset Y(\A)$ is nonempty open, so 
\[
g(x) \in \prod_{v \in \Sigma''} U_v \times \prod_{v \notin \Sigma''} \mathscr{Y}(\calO_v) \subset g_{\A}(W)
\]
for some finite set $\Sigma''$ of places of $k$ containing $\Sigma$ and some nonempty open subsets $U_v \subset Y(k_v)$ $(v \in \Sigma''$). It follows that $\mathscr{Y}(\calO_v) \subset g'(\mathscr{X}(\calO_v))$ for $v \notin \Sigma' \cup \Sigma''$.
\end{proof}

Now fix nonzero top-degree left-invariant differential forms $\omega_{Z_G(S)}$, $\omega_{U^-}$, $\omega_{U^+}$, $\omega_{Z_{\overline{G}}(\overline{S})}$, $\omega_{\overline{U}^-}$, $\omega_{\overline{U}^+}$ on the groups $Z_G(S)$, $U^-$, $U^+$, $Z_{\overline{G}}(\overline{S})$, $\overline{U}^-$, $\overline{U}^+$, respectively. Let $f_{+, v}: U^+(k_v) \xrightarrow{\sim} \overline{U}^+(k_v)$ denote the topological isomorphism induced by $f$, and similarly for $f_{-, v}: U^-(k_v) \xrightarrow{\sim} \overline{U}^-(k_v)$ and $f_{Z, v}: Z_G(S)(k_v) \xrightarrow{\sim} Z_{\overline{G}}(\overline{S})(k_v)$.

Since any two left Haar measures on a locally compact group are positive multiples of one another, we have for each place $v$ of $k$
\[
f_{+, v}^{-1}(\mu_{\overline{U}^+}^v) = \lambda_{+, v} \cdot \mu_{U^+}^v,
\]
\[
f_{-, v}^{-1}(\mu_{\overline{U}^-}^v) = \lambda_{-, v} \cdot \mu_{U^-}^v,
\]
\begin{equation}
\label{lambda_vdef}
f_{Z, v}^{-1} (\mu_{Z_{\overline{G}}(\overline{S})}^v) = \lambda_{Z, v} \cdot \mu_{Z_G(S)}^v.
\end{equation}
for some constants $\lambda_{+, v}$, $\lambda_{-, v}$, $\lambda_{Z, v} > 0$. We first analyze the constants $\lambda_{\pm, v}$.

\begin{lemma}
\label{prodlambda_pmv}
We have 
\[
\prod_v \lambda_{+, v} = (q^{1-g})^{{\rm{dim}}(U^+) - {\rm{dim}}(\overline{U}^+)},
\]
\[
\prod_v \lambda_{-, v} = (q^{1-g})^{{\rm{dim}}(U^-) - {\rm{dim}}(\overline{U}^-)},
\]
and the products converge absolutely.
\end{lemma}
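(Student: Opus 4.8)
The plan is to exploit two facts. First, since $U^{+},U^{-},\overline{U}^{+},\overline{U}^{-}$ are split unipotent $k$-groups, they have no nontrivial characters over $k_s$ (any homomorphism from a unipotent group to $\Gm$ is trivial), so by (\ref{Tammeasuredefnochars}) their Tamagawa measures are $\mu_{U^{+}}=(q^{1-g})^{\dim U^{+}}\prod_v\mu_{U^{+}}^v$ and $\mu_{\overline{U}^{+}}=(q^{1-g})^{\dim\overline{U}^{+}}\prod_v\mu_{\overline{U}^{+}}^v$ (and similarly for $U^{-}$, $\overline{U}^{-}$); moreover $\tau(U^{+})=\tau(\overline{U}^{+})=1$, since $\tau(\Ga)=1$ and Tamagawa numbers are multiplicative in short exact sequences, so any split unipotent group has Tamagawa number $1$. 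Second, by Lemma \ref{opencellpoints}(i),(iii) the map $f$ induces a bijection $U^{+}(k)\xrightarrow{\sim}\overline{U}^{+}(k)$ and a topological group isomorphism $f_{+,\A}\colon U^{+}(\A)\xrightarrow{\sim}\overline{U}^{+}(\A)$ compatible with the inclusions of rational points, and likewise on the $U^{-}$ side.

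The first step I would carry out is to prove $f_{+,\A}^{-1}(\mu_{\overline{U}^{+}})=\mu_{U^{+}}$. Indeed $f_{+,\A}$ descends to a bijection $U^{+}(\A)/U^{+}(k)\xrightarrow{\sim}\overline{U}^{+}(\A)/\overline{U}^{+}(k)$, so the Haar measure $f_{+,\A}^{-1}(\mu_{\overline{U}^{+}})$ on $U^{+}(\A)$ assigns $U^{+}(\A)/U^{+}(k)$ the volume $\tau(\overline{U}^{+})=1=\tau(U^{+})$; since a Haar measure on the unimodular group $U^{+}(\A)$ is determined by this covolume, it must equal $\mu_{U^{+}}$. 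The argument for $U^{-}$ is identical.

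Next I would show $\lambda_{+,v}=1$ for all but finitely many $v$, so that $\prod_v\lambda_{+,v}$ converges trivially. Spread $U^{+}$, $\overline{U}^{+}$ and $f$ out to smooth affine $\calO_{\Sigma}$-group schemes $\mathscr{U}^{+}$, $\overline{\mathscr{U}}^{+}$ and a map $f'$ between them, choosing the invariant top-degree forms so as to extend to generators of the rank-one top exterior powers of $\Omega^{1}_{\mathscr{U}^{+}/\calO_{\Sigma}}$ and $\Omega^{1}_{\overline{\mathscr{U}}^{+}/\calO_{\Sigma}}$ (after enlarging $\Sigma$), so that $\mu_{U^{+}}^v(\mathscr{U}^{+}(\calO_v))=\mu_{\overline{U}^{+}}^v(\overline{\mathscr{U}}^{+}(\calO_v))=1$ for $v\notin\Sigma$. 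Exactly the argument from the proof of Lemma \ref{productmeasurepullback} (using that $f_{+,\A}$ is a homeomorphism and $U^{+}(\A)\neq\emptyset$) shows $\mathscr{U}^{+}(\calO_v)\to\overline{\mathscr{U}}^{+}(\calO_v)$ is bijective for all but finitely many $v$, and for such $v$ we get $f_v(\mathscr{U}^{+}(\calO_v))=\overline{\mathscr{U}}^{+}(\calO_v)$, hence $\lambda_{+,v}=\mu_{\overline{U}^{+}}^v(\overline{\mathscr{U}}^{+}(\calO_v))=1$.

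Finally I would combine everything. Lemma \ref{productmeasurepullback} applied to $f\colon U^{+}\to\overline{U}^{+}$ gives $f_{+,\A}^{-1}\!\big(\prod_v\mu_{\overline{U}^{+}}^v\big)=\prod_v f_{+,v}^{-1}(\mu_{\overline{U}^{+}}^v)=\prod_v\lambda_{+,v}\,\mu_{U^{+}}^v$; together with the second step and the Tamagawa measure formulas this yields $(q^{1-g})^{\dim\overline{U}^{+}}\prod_v\lambda_{+,v}\,\mu_{U^{+}}^v=\mu_{U^{+}}=(q^{1-g})^{\dim U^{+}}\prod_v\mu_{U^{+}}^v$ as measures on $U^{+}(\A)$, and since $\lambda_{+,v}=1$ off a finite set, evaluating both sides on a standard basic open subset extracts $\prod_v\lambda_{+,v}=(q^{1-g})^{\dim U^{+}-\dim\overline{U}^{+}}$, with absolute convergence already noted; the case of $U^{-}$ is the same. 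The step I expect to require the most care is precisely this passage from an equality of restricted product measures to an equality of scalars, together with the companion claim that $\lambda_{+,v}=1$ for almost all $v$ — i.e. the spreading-out bookkeeping of the third step; everything else is formal once one knows $\tau(U^{\pm})=\tau(\overline{U}^{\pm})=1$ and the rational/adelic identifications of Lemma \ref{opencellpoints}. (Alternatively one could decompose $U^{\pm}$, $\overline{U}^{\pm}$ into root groups as in the proof of Lemma \ref{opencellpoints} and compute $\prod_v\lambda_{a,v}$ one root group at a time, reducing to the $p$-power map $\R_{k^{1/p}/k}(\Ga)\to\Ga$ and invoking invariance of Tamagawa measure under Weil restriction along $k^{1/p}/k$; but the global argument above is cleaner.)
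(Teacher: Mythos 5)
Your proposal is correct and follows essentially the same route as the paper: both arguments reduce the identity to the equality $\tau(U^{\pm})=\tau(\overline{U}^{\pm})=1$ for split unipotent groups, reformulate this as $f_{\pm,\A}^{-1}(\mu_{\overline{U}^{\pm}})=\mu_{U^{\pm}}$ using the bijectivity on $k$-points and the topological isomorphism on adelic points from Lemma \ref{opencellpoints}, and then unwind the definition (\ref{Tammeasuredefnochars}) together with Lemma \ref{productmeasurepullback} to extract the stated product formula. You merely make explicit some steps the paper leaves implicit (the covolume argument identifying the two Haar measures, the spreading-out showing $\lambda_{\pm,v}=1$ for almost all $v$, and the evaluation on a basic open set), all of which are sound.
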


\begin{proof}
These equations are basically just saying that $\tau(U^+) = \tau(\overline{U}^+)$ and $\tau(U^-) = \tau(\overline{U}^-)$, and both of these equalities hold because all of these groups are split unipotent, hence have Tamagawa number $1$.

Indeed, let us treat the ``$+$'' equality, the other being exactly the same but replacing ``$+$'' everywhere with ``$-$''. Since $f$ induces a topological isomorphism $f_{+, \A}: U^+(\A) \xrightarrow{\sim} \overline{U}^+(\A)$ that restricts to a bijection on $k$-points, the assertion that $\tau(U^+) = \tau(\overline{U}^+)$ is the same as saying that $f_{+, \A}^{-1}(\mu_{\overline{U}^+}) = \mu_{U^+}$. By using the definition (\ref{Tammeasuredefnochars}) of Tamagawa measure for groups with no nontrivial $k_s$-characters (such as unipotent groups), the definition (\ref{lambda_vdef}) of the $\lambda_{+, v}$, Lemma \ref{productmeasurepullback}, and the fact that the Tamagawa measure converges absolutely, we see that this in turn is equivalent to the first equality in the lemma.
\end{proof}

Next we turn to a study of the constants $\lambda_{Z, v}$. This is somewhat more complicated because the groups $Z_G(S)$ and $Z_{\overline{G}}(\overline{S})$ in general may have nontrivial characters. The key point is to identify their character groups. We turn to this task now.

\begin{lemma}
\label{Rk'/kmupchars}
Let $F'/F$ be a finite nontrivial purely inseparable extension of fields of characteristic $p$. Then $\R_{F'/F}(\mu_p)$ has no nontrivial $F$-characters.
\end{lemma}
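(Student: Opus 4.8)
The plan is to compute $\widehat{\R_{F'/F}(\mu_p)}(F)$ directly, using the universal property of Weil restriction together with the simple structure of characters of $\mu_p$. First I would note that for any $F$-algebra $A$, a homomorphism $\R_{F'/F}(\mu_p) \to \Gm$ over $A$ is determined by where it sends $A$-points functorially; more precisely, $\Hom_{A\text{-gp}}(\R_{F'/F}(\mu_p)_A, (\Gm)_A)$. Since $F'/F$ is purely inseparable, $F' \otimes_F A$ is a (possibly non-reduced) $A$-algebra, and the key observation is that one can compute $\widehat{\R_{F'/F}(\mu_p)}$ after a faithfully flat base change, in particular after base change to $\overline{F}$. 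Over $\overline{F}$ the extension $F'/F$ becomes $\overline{F}[t]/(t^{p^n} - 1) \cong \overline{F}[\epsilon]/(\epsilon^{p^n})$ for suitable $n$ (since $F'/F$ is purely inseparable of degree a power of $p$), so the problem reduces to understanding characters of $\R_{\overline{F}[\epsilon]/(\epsilon^{p^n})/\overline{F}}(\mu_p)$.

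The main step is then to show that $\R_{F'/F}(\mu_p)$ has no nontrivial characters even over $\overline{F}$ — equivalently, that $\widehat{\R_{F'/F}(\mu_p)}(\overline{F}) = 0$. Here the cleanest route is to observe that $\R_{F'/F}(\mu_p)$ is a (commutative) unipotent-by-$\mu_p$-ish group; more concretely, Weil restriction along a purely inseparable extension of a multiplicative-type group is an extension of a finite multiplicative-type group by a split unipotent group. Indeed, the inclusion $\mu_p \hookrightarrow \Gm$ gives $\R_{F'/F}(\mu_p) \hookrightarrow \R_{F'/F}(\Gm)$, and the norm map exhibits $\R_{F'/F}(\Gm)$ as an extension of $\Gm$ by a connected group, whose unipotent part (after base change to $\overline{F}$) accounts for the "infinitesimal" directions of $F' \otimes \overline{F}$. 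A character $\chi: \R_{F'/F}(\mu_p) \to \Gm$ has image a finite subgroup scheme of $\Gm$, hence of multiplicative type, so $\chi$ kills the maximal unipotent subgroup; one then checks that the maximal multiplicative-type quotient of $\R_{F'/F}(\mu_p)$ is trivial (again a base-change-to-$\overline{F}$ computation, where $\R_{\overline{F}[\epsilon]/(\epsilon^{p^n})/\overline{F}}(\mu_p)$ is an iterated extension of $\mu_p$ by $\alpha_p$'s and $\Ga$'s with trivial maximal torus and trivial maximal multiplicative quotient). Alternatively and perhaps most simply: any character $\chi$ factors through the quotient of $\R_{F'/F}(\mu_p)$ by its maximal unipotent $F$-subgroup scheme, so it suffices to see that quotient is trivial; this can be read off from \cite[Prop.\,A.5.11 or the structure of Weil restrictions of $\mu_p$]{cgp}, or deduced by noting $\R_{F'/F}(\mu_p)$ is killed by $p$ and is infinitesimal-unipotent modulo $\mu_p$ with the $\mu_p$ itself not a quotient.

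The step I expect to be the main obstacle is verifying cleanly that the \emph{maximal multiplicative-type quotient} of $\R_{F'/F}(\mu_p)$ is trivial — i.e., ruling out a surjection onto a nontrivial $\mu_{p^m}$. The slick way around this is to use that characters are insensitive to nilpotents and to pass to the reduced/geometric situation: $\widehat{\R_{F'/F}(\mu_p)}(F)$ injects into $\widehat{\R_{F'/F}(\mu_p)}(\overline{F})$, and over $\overline{F}$ one has an explicit presentation of $F' \otimes_F \overline{F}$ as a truncated polynomial ring, reducing to showing $\Hom_{\overline{F}\text{-gp}}(\R_{\overline{F}[\epsilon]/(\epsilon^N)/\overline{F}}(\mu_p), \Gm) = 0$ for $N = p^n$; this last group is zero because $\R_{\overline{F}[\epsilon]/(\epsilon^N)/\overline{F}}(\mu_p)$ has connected component an infinitesimal group scheme (so admits no nontrivial map to the smooth $\Gm$ from that part) while its étale part is $\mu_p$, whose only map to $\Gm$ landing in a torsion subgroup that can be a quotient would force a section contradicting purely inseparable descent. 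In the write-up I would likely shortcut all of this by invoking that a nontrivial character of $\R_{F'/F}(\mu_p)$ would yield a nontrivial character of the smooth connected group $\R_{F'/F}(\Gm)^{\mathrm{red}}$ trivial on the image of $\R_{F'/F}(\mu_p)$... but the genuinely robust argument is the base-change-to-$\overline{F}$ plus truncated-polynomial-ring computation, and that is the part requiring care.
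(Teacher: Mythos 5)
Your strategy breaks at its very first step: the lemma is not a geometric statement, and it becomes \emph{false} after base change to $\overline{F}$. It is true that $\widehat{\R_{F'/F}(\mu_p)}(F)$ injects into $\widehat{\R_{F'/F}(\mu_p)}(\overline{F})$, but the latter group is nonzero, so your proposed "main step" cannot be carried out. Indeed, write $A := F' \otimes_F \overline{F}$, a non-reduced local Artinian $\overline{F}$-algebra; the reduction map $A \rightarrow A/\mathrm{nil}(A) = \overline{F}$ is split by the structure map $\overline{F} \rightarrow A$, so $\R_{F'/F}(\mu_p)_{\overline{F}} = \R_{A/\overline{F}}(\mu_p)$ surjects onto $\mu_p$ with a section, and composing with $\mu_p \hookrightarrow \Gm$ gives a nontrivial character over $\overline{F}$. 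Concretely, for $A = \overline{F}[\epsilon]/(\epsilon^p)$ one has $(\sum_i x_i \epsilon^i)^p = x_0^p$, so $\R_{A/\overline{F}}(\mu_p) \simeq \mu_p \times (1 + \epsilon A)$, the second factor being a \emph{smooth connected unipotent} group of dimension $p-1$. This also shows that several of your structural claims are wrong: the identity component is not infinitesimal (it is positive-dimensional with infinitely many $\overline{F}$-points), $\mu_p$ is not an ``\'etale part'' (it is infinitesimal in characteristic $p$), and the maximal multiplicative-type quotient over $\overline{F}$ is $\mu_p$, not trivial. No care with the truncated-polynomial-ring computation can rescue the argument, because the assertion you are reducing to is false.

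The lemma is genuinely arithmetic: it uses that $F'$ is a \emph{field}. The paper's proof stays over $F$. It first shows the cokernel $U$ of the canonical inclusion $i: \mu_p \hookrightarrow \R_{F'/F}(\mu_p)$ is unipotent (because $\R_{F'/F}(\Gm)/\Gm$ is $p$-power torsion, smooth, connected, hence unipotent), so any character is determined by its restriction to $\mu_p$; if some character restricted nontrivially, $i$ would admit a section and $\R_{F'/F}(\mu_p) \simeq \mu_p \times U$. The resulting inclusion $U \hookrightarrow \R_{F'/F}(\mu_p)$ corresponds, by the universal property of Weil restriction, to an $F'$-homomorphism $U_{F'} \rightarrow \mu_p$, which vanishes because $U_{F'}$ is unipotent over the \emph{field} $F'$; this forces $U = 1$, contradicting the non-reducedness of $F' \otimes_F F'$. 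Note that this adjunction step is exactly what fails after base change to $\overline{F}$: there the base ring becomes the non-reduced $A$, over which unipotent groups do admit nontrivial characters (e.g.\ $x \mapsto 1 + \epsilon x$ on $\Ga$ over $\overline{F}[\epsilon]/(\epsilon^2)$), which is why the splitting exists geometrically but not over $F$.
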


\begin{proof}
We first show that the cokernel of the canonical inclusion $i: \mu_p \hookrightarrow \R_{F'/F}(\mu_p)$ is unipotent. We have an inclusion $\R_{F'/F}(\mu_p)/\mu_p \hookrightarrow \R_{F'/F}(\Gm)/\Gm$, so it suffices to show that $\R_{F'/F}(\Gm)/\Gm$ is unipotent. But this group is $p$-power torsion, since for any $F$-algebra $A$, 
$$[p^n](\R_{F'/F}(\Gm)(A)) = ((A \otimes _F F')^{\times})^{p^n} \subset A^{\times} = \Gm(A)$$
if $n$ is chosen so large that $(F')^{p^n} \subset F$. Since the
smooth connected affine group 
$$\R_{F'/F}(\Gm)/\Gm$$
is killed by a power of $p$, it is unipotent.

Let $U := \R_{F'/F}(\mu_p)/\mu_p$, so $\widehat{U}(F) = 0$ because unipotent groups have no nontrivial characters over a field. It follows that the map $$\widehat{i}: \widehat{\R_{F'/F}(\mu_p)}(F) \rightarrow \widehat{\mu_p}(F) = \Z/p\Z$$ is an inclusion, so it suffices to show that this map is $0$. Suppose to the contrary that it is not $0$, so it is surjective. 
Any homomorphism $\R_{F'/F}(\mu_p) \to \Gm$ lands in $\mu_p$,
and now the character of $\mu_p$ given by the inclusion $\mu_p \hookrightarrow \Gm$ lies in the image of $\widehat{i}$,
so $i$ has a section.

This means that we have an $F$-group splitting $\R_{F'/F}(\mu_p) = \mu_p \times U$. Thus we get an inclusion $U \hookrightarrow \R_{F'/F}(\mu_p)$, which via the universal property of Weil restriction corresponds to an
$F'$-homomorphism $U_{F'} \rightarrow \mu_p$. Since $U_{F'}$ is unipotent, this
latter map must vanish. Hence, the inclusion of $U$ into $\R_{F'/F}(\mu_p)$ vanishes,
so $U=1$ and thus $i$ is an isomorphism. This is false because $i$ isn't an equality on $F'$-points: 
$\mu_p(F') = 1$ but $\R_{F'/F}(\mu_p)(F') = \mu_p(F' \otimes_F F') \neq 1$ because $F' \otimes_F F'$ is non-reduced.
\end{proof}

Now we may identify the character groups of $Z_G(S)$ and $Z_{\overline{G}}(\overline{S})$.

\begin{lemma}
\label{ZGcharssame}
Let $f_Z: Z_G(S) \twoheadrightarrow Z_{\overline{G}}(\overline{S})$ denote the map induced by $f$. Then the maps on character groups $\widehat{Z_{\overline{G}}(\overline{S})}(k) \rightarrow \widehat{Z_G(S)}(k)$ and $\widehat{Z_{\overline{G}}(\overline{S})}(k_s) \rightarrow \widehat{Z_G(S)}(k_s)$ induced by $f_Z$ are isomorphisms.
\end{lemma}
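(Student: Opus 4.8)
The plan is to show that the kernel and cokernel of $f_Z \colon Z_G(S) \twoheadrightarrow Z_{\overline{G}}(\overline{S})$ contribute nothing to character groups, by identifying $\ker(f_Z)$ with $\R_{k'/k}(\ker \pi')$ (intersected with the relevant Cartan subgroup) and analyzing its structure. Concretely, working over $k_s$ (which is harmless, since an isomorphism on $k_s$-characters descends to one on $k$-characters by taking Galois invariants), I would first reduce to the case where $S$ is a split maximal torus $T$ of $G$, so that $\mathscr{T} := T$, $\overline{T} := f(T)$, and $Z_G(T) = Z_G(\mathscr{T})$ is the smooth connected commutative Cartan subgroup. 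The point of this reduction is that the centralizer of a general split $S$ is built from that of a split maximal torus containing $S$ together with the root groups $U_a$ with $\langle a, \beta\rangle = 0$, and on the latter the map $f$ induces isomorphisms on $k_s$-points (hence on characters, since root groups of Cartan type are split unipotent or Weil restrictions thereof, with no characters) by the root-group analysis in the proof of Lemma \ref{opencellpoints}; alternatively one can argue directly that $Z_G(S)$ and $Z_{\overline{G}}(\overline{S})$ themselves fit into the same kind of short exact sequence as $G$ and $\overline{G}$ do.

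The main step is then the following. In the basic exotic case, $\ker(f_Z)$ is contained in $\R_{k'/k}(\ker \pi')$, and by \cite[Prop.\,7.3.3]{cgp} (or the computation in Lemma \ref{pointsofcartan}) this kernel, as a subgroup of the Cartan, is precisely $\R_{k'/k}(T' \cap \ker \pi')$ where $T' \cap \ker \pi'$ is an infinitesimal multiplicative-type $k'$-group scheme, in fact (after unwinding the structure of the very special isogeny) a product of copies of $\mu_p$ over the factor fields of $k'$; thus $\ker(f_Z)$ is $k_s$-isomorphic to a product of groups $\R_{k'_s/k_s}(\mu_p)$. By Lemma \ref{Rk'/kmupchars}, each such factor has no nontrivial characters, so $\widehat{\ker(f_Z)}(k_s) = 0$. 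Since $Z_G(S)$ and $Z_{\overline{G}}(\overline{S})$ are smooth connected and $f_Z$ is surjective with this kernel $N$, the left-exact sequence
\[
0 \longrightarrow \widehat{Z_{\overline{G}}(\overline{S})}(k_s) \longrightarrow \widehat{Z_G(S)}(k_s) \longrightarrow \widehat{N}(k_s)
\]
together with $\widehat{N}(k_s) = 0$ shows the first map is an isomorphism (injectivity is automatic from surjectivity of $f_Z$, and we have just killed the obstruction to surjectivity). The basic non-reduced case is handled the same way, using \cite[Thm.\,9.8.1]{cgp} to see that the relevant kernel is again (up to $k_s$-isomorphism) a product of copies of $\R_{k^{1/2}/k}(\mu_2)$, so Lemma \ref{Rk'/kmupchars} applies verbatim.

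The hard part will be pinning down the precise structure of $\ker(f_Z)$ as a $k_s$-group — i.e., verifying that inside the Cartan subgroup the kernel of $f$ really is a product of Weil restrictions of $\mu_p$ rather than some more complicated infinitesimal multiplicative-type group scheme — and confirming that the reduction from a general split $S$ to a split maximal torus does not introduce characters. Both points are ultimately bookkeeping with the explicit description of the very special isogeny and its kernel in \cite[\S7.1, \S7.2]{cgp} (respectively \cite[\S9.8]{cgp}), combined with the fact that $\R_{k'/k}$ commutes with formation of kernels and that the formation of $\ker \pi'$ restricted to a maximal torus is explicitly a multiplicative-type group scheme; I expect no genuine difficulty beyond careful reference-chasing, and once the structure of $\ker(f_Z)$ is identified, Lemma \ref{Rk'/kmupchars} does all the real work.
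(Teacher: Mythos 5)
Your overall strategy coincides with the paper's: reduce to $k_s$, use the left-exact sequence $0 \to \widehat{Z_{\overline{G}}(\overline{S})}(k_s) \to \widehat{Z_G(S)}(k_s) \to \widehat{N}(k_s)$ for $N := \ker(f_Z) = Z_{\ker f}(S)$, and show $\widehat{N}(k_s) = 0$ by splitting $N$ into pieces of the form $\R_{k'/k}(\mu_p)$ (killed by Lemma \ref{Rk'/kmupchars}) and unipotent pieces. However, the step you defer as ``bookkeeping'' is exactly where the real work lies, and two of your structural claims are off. First, for a general split $S$ (the lemma is needed for non-maximal $S$, e.g.\ the $S$ of Proposition \ref{suitablesplittorus}), $N$ is \emph{not} $\R_{k'/k}(T' \cap \ker\pi')$: it also contains the $S$-centralizers of the groups $\R_{k'/k}(\ker(\mathrm{F}_{U'_b/k'}))$ for short roots $b$ trivial on $S$. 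These are unipotent, so they contribute no characters, but they must appear in the decomposition; there is no genuine reduction to the maximal-torus case, and none is needed. Second, in the basic non-reduced case $(\ker f)_{k_s}$ is directly spanned by \emph{unipotent} group schemes (\cite[Thm.\,9.8.1(4)]{cgp}); there are no $\R_{k^{1/2}/k}(\mu_2)$ factors, so Lemma \ref{Rk'/kmupchars} is not what does the work there.

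The more substantive gap is that the decomposition of $N$ is only readily available as a $T$-equivariant \emph{open immersion} $\prod_{b \in \Delta_<}\R_{k'/k}(b^{\vee}(\mu_p)) \times \prod_{b\in\Phi_<}\R_{k'/k}(\ker(\mathrm{F}_{U'_b/k'})) \hookrightarrow \ker(f)$, whose $S$-centralizer gives an open subscheme of the non-smooth, possibly disconnected group scheme $N$ — not a priori all of $N$. Knowing $\chi$ vanishes on an open neighborhood of the identity does not immediately give $\chi = 1$. The paper closes this by translating over $\overline{k}$ to get $N^0 \subset \ker(\chi)$, so $\chi$ factors through the finite \'etale quotient $N/N^0$; since $\ker(\pi')$ lies in the kernel of the relative Frobenius, $N$ is $p$-torsion, hence so is $N/N^0$, and an \'etale $p$-torsion group has no nontrivial characters because $\Gm[p] = \mu_p$ is infinitesimal. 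If you instead insist on upgrading the open immersion to an isomorphism onto $N$, you must actually prove the direct-spanning statement for the $S$-centralizer of $\R_{k'/k}(\ker\pi')$, which is not a mere citation. Either route must be carried out; as written, your proposal asserts the conclusion of this step without an argument.
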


\begin{proof}
It suffices to prove the assertion about characters over $k_s$, since the result over $k$ then follows by taking Galois-invariants. Let $H := \ker(f_Z) = Z_{\ker(f)}(S)$, so we have an exact sequence
\[
1 \longrightarrow H \longrightarrow Z_G(S) \xlongrightarrow{f_Z} Z_{\overline{G}}(\overline{S}) \longrightarrow 1.
\]
This induces an exact sequence
\[
0 \longrightarrow \widehat{Z_{\overline{G}}(\overline{S})}(k_s) \xlongrightarrow{\widehat{f_Z}} \widehat{Z_G(S)}(k_s) \longrightarrow \widehat{H}(k_s),
\]
so it suffices to show that $\widehat{H}(k_s) = 0$.

First suppose that $G$ is basic non-reduced. Let $T$ be a maximal $k_s$-torus containing $S$. Then $(\ker f)_{k_s}$ is directly spanned (under multiplication) by unipotent group schemes normalized by $T_{k_s}$, 
due to \cite[Thm.\,9.8.1(4)]{cgp} (since in the notation of that result, $f$ is the map $\xi_G$ that is identified with $i_G$
by \cite[Thm.\,9.8.1(3)]{cgp}).  Hence, $H = (\ker f) \cap Z_G(S)$ is directly spanned over $k_s$ by the $S_{k_s}$-centralizers of
those unipotent group schemes, so $H_{k_s}$ is also directly spanned by unipotent group schemes. 
The group scheme $H_{k_s}$ therefore has no nontrivial characters, as desired.

Now suppose that $G$ is basic exotic. Recall that for $k' := k^{1/p}$ we have $\ker(f) = \ker(\R_{k'/k}(\pi'))$, where $\pi': G' \twoheadrightarrow \overline{G}'$ is the very special isogeny associated to the simply connected group $G'$ (and $G$ is the preimage of a Levi $k$-subgroup $\overline{G} \subset \R_{k'/k}(\overline{G}')$). 
Since $\ker \pi'$ lies inside the kernel of the relative Frobenius isogeny $F_{G'/k'}:G' \to {G'}^{(p)}$, 
which in turn is killed by the $p$-power map
(as this is inherited from an elementary computation for ${\rm{GL}}_N$ and its relative Frobenius isogeny), it follows
that $\ker(f)$ is killed by the $p$-power map and hence the same holds for $H = (\ker f) \cap Z_G(S)$.

We want to show that any $k_s$-homomorphism $\chi:H_{k_s} \to \Gm$ is trivial. There is no harm in checking the analogous
vanishing over any finite separable extension of $k$, which in turn we can increase further and rename as $k$
to arrive at the case that $\chi$ is a $k$-homomorphism and $G$ admits a split maximal $k$-torus $T \subset G$
containing $S$.  Let $T'$ be the corresponding maximal $k'$-torus $T_{k'} \subset G_{k'}^{\rm{red}} = G'$,
which identifies $\Phi := \Phi(G, T)$ with $\Phi(G', T')$ via the equality $\widehat{T}(k) = \widehat{T'}(k)$
\cite[Prop.\,7.2.7(1),(2)]{cgp}. 

It suffices to show that an open subscheme 
$\Omega$ of $H$ containing the identity point is contained in $\ker(\chi)$. Indeed, by translation
over $\overline{k}$ it then follows that $H^0 \subset \ker(\chi)$, so
$\chi$ factors through the finite \'etale $k$-group $H/H^0$. 
But $H$ is $p$-torsion, so the same holds for the \'etale quotient $H/H^0$.
Hence, this quotient has no nontrivial characters over $k_s$ since $\Gm[p] = \mu_p$ is
infinitesimal, so $\chi = 1$ as desired.  Thus, it suffices to show that $\chi$ vanishes on an open subscheme of $H$ containing 1.
 
By definition, $\ker(\pi')$ is the $k'$-subgroup scheme of $G'$ of height 1 with $p$-Lie algebra inside $\mathfrak{g}'$ 
given by 
\[
{\rm{Lie}}(\ker \pi') = \left(\bigoplus_{b \in \Delta_<} {\rm{Lie}}(b^{\vee}(\Gm))\right) \bigoplus \left(\bigoplus_{b \in \Phi_<} \mathfrak{g}'_b\right),
\]
where $\mathfrak{g}'_b$ is the $b$-weight space for the adjoint action of $T'$ on $\mathfrak{g}'$, 
$\Phi_<$ denotes the set of short roots, and $\Delta_< = \Delta \cap \Phi_<(G', T')$ for some basis $\Delta$ of the root system $\Phi$.

Multiplication yields a $T$-equivariant (with respect to the $T$-conjugation action) open immersion
\[
\prod_{b \in \Delta_<} \R_{k'/k}(b^{\vee}(\mu_p)) \times \prod_{b \in \Phi_<} \R_{k'/k}(\ker({\rm{F}}_{U_b/k})) \hookrightarrow \ker(f),
\]
where $\Ga \simeq U_b \subset G'$ is the $b$-root group (i.e., the maximal smooth connected $k'$-subgroup of $G'$ normalized by $T'$ for which the adjoint action of $T'$ is the character $b$).  The $S$-centralizers are exactly the scheme-theoretic intersections
with $Z_G(S)$, so the induced map between $S$-centralizers is an open immersion.  We want to compute this latter map.

The $S$-centralizer of the right side is $(\ker f) \cap Z_G(S) = H$, and on the left side the $S$-centralizer may be computed factor by factor (since each factor is preserved by $T$-conjugation).
The first product on the left side clearly commutes with the $S$-action (since even $T$-conjugation on it is trivial).
In the second product, the factors are unipotent group schemes, hence have unipotent $S$-centralizers. (In fact, the factors indexed by roots that are trivial on $S$ have trivial $S$-action and 
hence are their own $S$-centralizers, while the other factors have trivial $S$-centralizers, but we do not need this.)

We now have an open immersion
\[
\prod_{\substack{b \in \Delta_<}} \R_{k'/k}(b^{\vee}(\mu_p)) \times \prod_{\substack{b \in \Phi_<}} \left( Z_G(S) \cap \R_{k'/k}(\ker({\rm{F}}_{U_b/k})) \right) \hookrightarrow H, 
\]
so it suffices to show that $\chi$ vanishes on this open subscheme of $H$. Such vanishing is clear on the unipotent groups $Z_G(S) \cap \R_{k'/k}(\ker({\rm{F}}_{U_b/k}))$, and the vanishing on each $\R_{k'/k}(b^{\vee}(\mu_p))$ follows from Lemma \ref{Rk'/kmupchars}.
\end{proof}

We are now ready to prove the analogue of Lemma \ref{prodlambda_pmv} for the $\lambda_{Z, v}$ under the assumption that $\tau(Z_G(S)) = \tau(Z_{\overline{G}}(\overline{S}))$.

\begin{lemma}
\label{prodlambdazv}
If $\tau(Z_G(S)) = \tau(Z_{\overline{G}}(\overline{S}))$, then
\[
\prod_v \lambda_{Z, v} = (q^{1-g})^{{\rm{dim}}(Z_G(S)) - {\rm{dim}}(Z_{\overline{G}}(\overline{S}))}
\]
and the product converges absolutely.
\end{lemma}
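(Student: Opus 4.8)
The plan is to derive the identity, and the absolute convergence, from the hypothesis $\tau(Z_G(S)) = \tau(Z_{\overline{G}}(\overline{S}))$ together with Lemma \ref{ZGcharssame}, which tells us that $f_Z$ identifies the character groups of $Z_G(S)$ and $Z_{\overline{G}}(\overline{S})$ over both $k$ and $k_s$. Throughout, fix the cocharacter $\beta \colon \Gm \to S$ so that $Z_G(\beta) = Z_G(S)$ (hence $Z_{\overline{G}}(\overline{\beta}) = Z_{\overline{G}}(\overline{S})$), so that Lemma \ref{opencellpoints} applies with $Z = Z_G(S)$, $\overline{Z} = Z_{\overline{G}}(\overline{S})$.

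First I would record the group-theoretic input. By Lemma \ref{opencellpoints}, $f_Z$ induces a topological group isomorphism $f_{Z, \A} \colon Z_G(S)(\A) \xrightarrow{\sim} Z_{\overline{G}}(\overline{S})(\A)$ carrying $Z_G(S)(k)$ bijectively onto $Z_{\overline{G}}(\overline{S})(k)$. Since $\widehat{f_Z} \colon \widehat{Z_{\overline{G}}(\overline{S})}(k) \xrightarrow{\sim} \widehat{Z_G(S)}(k)$ is an isomorphism (Lemma \ref{ZGcharssame}) and $\| \psi \circ f_Z \| = \| \psi \| \circ f_{Z, \A}$ for every $\psi \in \widehat{Z_{\overline{G}}(\overline{S})}(k)$, the isomorphism $f_{Z, \A}$ also carries the norm-$1$ subgroup $Z_G(S)(\A)_1$ onto $Z_{\overline{G}}(\overline{S})(\A)_1$, hence descends to a homeomorphism $Z_G(S)(k) \backslash Z_G(S)(\A)_1 \xrightarrow{\sim} Z_{\overline{G}}(\overline{S})(k) \backslash Z_{\overline{G}}(\overline{S})(\A)_1$. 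Both $f_{Z, \A}^{-1}(\mu_{Z_{\overline{G}}(\overline{S})})$ and $\mu_{Z_G(S)}$ are Haar measures on $Z_G(S)(\A)$, so $f_{Z, \A}^{-1}(\mu_{Z_{\overline{G}}(\overline{S})}) = c \cdot \mu_{Z_G(S)}$ for a unique $c > 0$; comparing the (finite and nonzero, the finiteness being \cite[Thm.\,1.3.6]{conrad}) volumes of the two coset spaces via this homeomorphism gives $c \, \tau(Z_G(S)) = \tau(Z_{\overline{G}}(\overline{S}))$, whence $c = 1$ by hypothesis. It thus remains to express $c$ in terms of $\prod_v \lambda_{Z, v}$.

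For that I would expand both measures via the definition (\ref{Tammeasuredef}). Because the Galois representations $\C \otimes \widehat{Z_G(S)}(k_s)$ and $\C \otimes \widehat{Z_{\overline{G}}(\overline{S})}(k_s)$ are isomorphic via $\widehat{f_Z}$ (Lemma \ref{ZGcharssame}), the local $L$-factors ${\rm{L}}_v(\widehat{Z_G(S)}, \cdot)$ and ${\rm{L}}_v(\widehat{Z_{\overline{G}}(\overline{S})}, \cdot)$ agree for all $v$, as do the quantities $\rho$ and ${\rm{rk}}(\widehat{(\cdot)}(k))$; moreover the commuting square relating $\theta_{Z_G(S)}$, $\theta_{Z_{\overline{G}}(\overline{S})}$, $f_{Z, \A}$, and $\widehat{f_Z}$ shows $\# \coker(\theta_{Z_G(S)}) = \# \coker(\theta_{Z_{\overline{G}}(\overline{S})})$. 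Hence the global scalar preceding the restricted product in (\ref{Tammeasuredef}) for $Z_{\overline{G}}(\overline{S})$ equals $(q^{1 - g})^{\dim Z_{\overline{G}}(\overline{S}) - \dim Z_G(S)}$ times the one for $Z_G(S)$. Now apply Lemma \ref{productmeasurepullback} to $g = f_Z$ with the local measures ${\rm{L}}_v(\widehat{Z_{\overline{G}}(\overline{S})}, 1) \mu_{Z_{\overline{G}}(\overline{S})}^v$ (whose restricted product converges absolutely, being a factor of $\mu_{Z_{\overline{G}}(\overline{S})}$): using (\ref{lambda_vdef}) and the equality of local $L$-factors, this shows that $\prod_v {\rm{L}}_v(\widehat{Z_G(S)}, 1) \lambda_{Z, v} \mu_{Z_G(S)}^v$ converges absolutely. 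Comparing this with the absolutely convergent product $\prod_v {\rm{L}}_v(\widehat{Z_G(S)}, 1) \mu_{Z_G(S)}^v$ (a factor of $\mu_{Z_G(S)}$) --- e.g.\ by evaluating both on sets $\prod_{v \in \Sigma} E_v \times \prod_{v \notin \Sigma} \mathscr{Z}(\calO_v)$ for a smooth affine $\calO_{\Sigma}$-model $\mathscr{Z}$ of $Z_G(S)$ --- shows that $\prod_v \lambda_{Z, v}$ converges absolutely and that $\prod_v {\rm{L}}_v(\widehat{Z_G(S)}, 1) \lambda_{Z, v} \mu_{Z_G(S)}^v = \big( \prod_v \lambda_{Z, v} \big) \prod_v {\rm{L}}_v(\widehat{Z_G(S)}, 1) \mu_{Z_G(S)}^v$.

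Combining all of this, $f_{Z, \A}^{-1}(\mu_{Z_{\overline{G}}(\overline{S})}) = (q^{1 - g})^{\dim Z_{\overline{G}}(\overline{S}) - \dim Z_G(S)} \big( \prod_v \lambda_{Z, v} \big) \mu_{Z_G(S)}$, so $c = (q^{1 - g})^{\dim Z_{\overline{G}}(\overline{S}) - \dim Z_G(S)} \prod_v \lambda_{Z, v}$; together with $c = 1$ this yields $\prod_v \lambda_{Z, v} = (q^{1 - g})^{\dim Z_G(S) - \dim Z_{\overline{G}}(\overline{S})}$, as claimed. The genuinely delicate points are the bookkeeping of the auxiliary factors in (\ref{Tammeasuredef}) --- above all the invariance of $\# \coker(\theta)$ under $f_Z$ --- and the fact that $\prod_v \lambda_{Z, v}$ converges at all, which is not clear a priori and is exactly what Lemma \ref{productmeasurepullback} provides; apart from carrying the nontrivial characters along, the argument runs parallel to that of Lemma \ref{prodlambda_pmv}.
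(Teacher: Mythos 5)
Your proposal is correct and follows essentially the same route as the paper's proof: use the hypothesis to show that $f_{Z,\A}^{-1}(\mu_{Z_{\overline{G}}(\overline{S})}) = \mu_{Z_G(S)}$ (via the induced homeomorphism of the coset spaces of norm-$1$ points), expand both sides by the definition (\ref{Tammeasuredef}), match all auxiliary factors using Lemma \ref{ZGcharssame}, and extract $\prod_v \lambda_{Z,v}$ by comparing with the absolutely convergent restricted product $\prod_v {\rm{L}}_v(\widehat{Z_G(S)},1)\,\mu^v_{Z_G(S)}$. The only cosmetic difference is that you first name the proportionality constant $c$ and prove $c=1$ before computing it, whereas the paper asserts the equality of measures directly and then expands.
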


\begin{proof}
The map $f_Z: Z_G(S) \rightarrow Z_{\overline{G}}(\overline{S})$ induces an isomorphism on $k$-points and topological isomorphisms on $k_v$- and $\A$-points. This last isomorphism restricts to a topological isomorphism $Z_G(S)(\A)_1 \xrightarrow{\sim} Z_{\overline{G}}(\overline{S})(\A)_1$ on norm-$1$ adelic points, thanks to Lemma \ref{ZGcharssame}. Since $\tau(Z_G(S)) = \tau(Z_{\overline{G}}(\overline{S}))$, the two left Haar measures $\mu_{Z_G(S)}$ and $f_Z^{-1}(\mu_{Z_{\overline{G}}(\overline{S})})$ on $Z_G(S)(\A)$ must agree, as they are multiples of one another that give the coset space $Z_G(S)(k) \backslash Z_G(S)(\A)_1$ of the {\em open} subset $Z_G(S)(\A)_1 \subset Z_G(S)(\A)$ the same finite volume with respect to the corresponding quotient measures, since $\tau(Z_G(S)) = \tau(Z_{\overline{G}}(\overline{S}))$. That is,
\begin{equation}
\label{Tamcomparisoneqn1}
\mu_{Z_G(S)} = f_Z^{-1}\left(\mu_{Z_{\overline{G}}(\overline{S})}\right).
\end{equation}

Now we compute both sides of (\ref{Tamcomparisoneqn1}). We use the notation of (\ref{Tammeasuredef}). We have absolutely convergent measures on $Z_G(S)(\A)$
\begin{equation}
\label{tamzgeqn1}
\mu_{Z_G(S)} = \frac{(q^{1-g})^{{\rm{dim}}(Z_G(S))} \rho_{Z_G(S)}^{-1} } {({\rm{log}}(q))^{{\rm{rk}}(\widehat{{Z_G(S)}}(k))} \cdot \# \coker \theta_{Z_G(S)}} 
\prod_v {\rm{L}}_v(\widehat{Z_G(S)}, 1) \cdot \mu_{Z_G(S)}^v,
\end{equation}
and using Lemma \ref{productmeasurepullback},
\begin{align}
\label{tamzgeqn2}
f_Z^{-1}\left(\mu_{Z_{\overline{G}}(\overline{S})}\right) 
 & = & \frac{(q^{1-g})^{{\rm{dim}}(Z_{\overline{G}}(S))} \rho_{Z_{\overline{G}}(S)}^{-1} } {({\rm{log}}(q))^{{\rm{rk}}(\widehat{Z_{\overline{G}}(\overline{S})}(k))}\cdot \# \coker \theta_{Z_{\overline{G}}(\overline{S})}}
 \prod_v {\rm{L}}_v(\widehat{Z_{\overline{G}}(S)}, 1) \cdot f_{Z, v}^{-1}\left( \mu_{Z_{\overline{G}}(S)}^v\right) \notag \\
& = & \frac{(q^{1-g})^{{\rm{dim}}(Z_{\overline{G}}(S))} \rho_{Z_{\overline{G}}(S)}^{-1} } {({\rm{log}}(q))^{{\rm{rk}}(\widehat{Z_{\overline{G}}(\overline{S})}(k))}\cdot \# \coker \theta_{Z_{\overline{G}}(\overline{S})}}
\prod_v {\rm{L}}_v(\widehat{Z_{\overline{G}}(S)}, 1) \cdot \lambda_{Z, v} \cdot \mu_{Z_G(S)}^v,
\end{align}
where in the last equality we have used the definition (\ref{lambda_vdef}) of the $\lambda_{Z, v}$.

By Lemma \ref{ZGcharssame}, we have a commuting diagram in which the vertical isomorphisms are induced by $f_Z$ and $\widehat{f_Z}$
\[
\begin{tikzcd}
Z_G(S)(\A) \isoarrow{d} \arrow{r}{\theta_{Z_G(S)}} & \Hom(\widehat{Z_G(S)}(k), q^{\Z}) \isoarrow{d} \\
Z_{\overline{G}}(\overline{S})(\A) \arrow{r}{\theta_{Z_{\overline{G}}(\overline{S})}} & \Hom(\widehat{Z_{\overline{G}}(\overline{S})}(k), q^{\Z})
\end{tikzcd}
\]
This shows that
\[
\# \coker \theta_{Z_G(S)} = \# \coker \theta_{Z_{\overline{G}}(\overline{S})}.
\]
Lemma \ref{ZGcharssame} also shows that 
$$\rho_{Z_G(S)} = \rho_{Z_{\overline{G}}(\overline{S})},\,\,\, {\rm{rk}}(\widehat{Z_G(S)}(k)) = {\rm{rk}}(\widehat{Z_{\overline{G}}(\overline{S})}(k)),\,\,\,
{\rm{L}}_v(\widehat{Z_G(S)}, 1) = {\rm{L}}_v(\widehat{Z_{\overline{G}}(\overline{S})}, 1)$$ 
for all $v$. Thus, using (\ref{tamzgeqn1}) and (\ref{tamzgeqn2}), (\ref{Tamcomparisoneqn1}) implies that
\begin{align*}
(q^{1-g})^{{\rm{dim}}(Z_G(S)) - {\rm{dim}}(Z_{\overline{G}}(\overline{S}))} \prod_v {\rm{L}}_v(\widehat{Z_G(S)}, 1) \cdot \mu_{Z_G(S)}^v 
= \prod_v \lambda_{Z, v} \cdot {\rm{L}}_v(\widehat{Z_G(S)}, 1) \cdot \mu_{Z_G(S)}^v
\end{align*}
as measures on $Z_G(S)(\A)$. (Even though the factors ${\rm{L}}_v(\widehat{Z_G(S)}, 1)$ appear on both sides, we cannot cancel them because then the product measures will not converge, so the resulting measures would not make sense.) Since the product on the right converges {\em without} the factors $\lambda_{Z, v}$, we may ``cancel'' the product measure $\prod_v {\rm{L}}_v(\widehat{Z_G(S)}, 1) \cdot \mu_{Z_G(S)}^v$ to deduce the lemma.
\end{proof}

We may now prove Proposition \ref{toruscentralizertam=1}. 

\begin{proof}[Proof of Proposition $\ref{toruscentralizertam=1}$]
Choose nonzero top-degree left-invariant differential forms $\omega_G$ and $\omega_{\overline{G}}$ on $G$ and $\overline{G}$ respectively. Since $f: G \rightarrow \overline{G}$ induces a topological isomorphism $f_v: G(k_v) \xrightarrow{\sim} \overline{G}(k_v)$ on $k_v$-points for each place $v$ of $k$, we have
\begin{equation}
\label{cvdef}
f_v^{-1}(\mu_{\overline{G}}^v) = c_v\cdot \mu_G^v
\end{equation}
for some constants $c_v > 0$, since both sides of this equation are left Haar measures on $G(k_v)$. Since $G$ and $\overline{G}$ are perfect groups, they have no nontrivial characters over $k_s$. Hence, using the definition (\ref{Tammeasuredefnochars}) of Tamagawa measure in this case as well as Lemma \ref{productmeasurepullback}, we see that $\prod_v c_v$ converges absolutely, and to show that $f_{\A}^{-1}(\mu_{\overline{G}}) = \mu_G$ (and therefore prove Proposition \ref{toruscentralizertam=1}) is equivalent to the equality
\begin{equation}
\label{doesthisholdtam}
\prod_v c_v \stackrel{?}{=} (q^{1-g})^{{\rm{dim}}(G) - {\rm{dim}}(\overline{G})}.
\end{equation}

Let $X, \overline{X}$ denote the respective open cells that we have defined on $G, \overline{G}$ in the discussion preceding Lemma \ref{opencellpoints}. Since any nowhere-vanishing top-degree differential form on a smooth $k$-scheme generates the invertible sheaf of top-degree differential forms, there are nowhere-vanishing regular functions $F$ and $\overline{F}$ on $X$ and $\overline{X}$ respectively, such that
\[
\omega_G|_X = F\cdot \omega_{U^-} \cdot \omega_{Z_G(S)} \cdot \omega_{U^+},
\]
\[
\omega_{\overline{G}}|_{\overline{X}} = \overline{F} \cdot \omega_{\overline{U}^-} \cdot \omega_{Z_{\overline{G}}(\overline{S})} \cdot \omega_{\overline{U}^+}.
\]
Thus, using the definition (\ref{lambda_vdef}) of the constants $\lambda_{+, v}, \lambda_{-, v}, \lambda_{Z, v} > 0$, we have an equality of measures on the nonempty open subset $X(k_v) \subset G(k_v)$, hence on all of $G(k_v)$ since both sides are left Haar measures on $G(k_v)$:
\[
f_v^{-1}(\mu_{\overline{G}}^v) = \left|\frac{\overline{F}\circ f}{F}\right|_v\cdot \lambda_{+, v} \cdot \lambda_{Z, v} \cdot \lambda_{-, v} \cdot \mu_G^v.
\]
Therefore, by the definition (\ref{cvdef}) of the $c_v$, we obtain
\[
c_v = \left|\frac{\overline{F}\circ f}{F}\right|_v\cdot \lambda_{+, v} \cdot \lambda_{Z, v} \cdot \lambda_{-, v}.
\]
In particular, for each $v$ the function $\left|\frac{\overline{F}\circ f}{F}\right|_v$ is a nonzero constant on $G(k_v)$. But $F$ and $\overline{F}$ are nowhere-vanishing, so we can evaluate at $1 \in G(k_v)$ to compute this constant. That is, $|(\overline{F} \circ f)/F|_v = |r|_v$, where $r := \overline{F}(1)/F(1) \in k^{\times}$; note that this is {\em independent} of $v$! We have found $r \in k^{\times}$ such that
\[
c_v = |r|_v\cdot \lambda_{+, v} \cdot \lambda_{-, v} \cdot \lambda_{Z, v}
\]
for every place $v$ of $k$. Using Lemmas \ref{prodlambda_pmv}, \ref{prodlambdazv}, and the product formula, we therefore obtain
\begin{align*}
\prod_v c_v & = \prod_v |r|_v \cdot \prod_v \lambda_{+, v} \cdot \prod_v \lambda_{Z, v} \cdot \prod_v \lambda_{-, v} \\
& = 1 \cdot (q^{1-g})^{{\rm{dim}}(U^+) - {\rm{dim}}(\overline{U}^+)} (q^{1-g})^{{\rm{dim}}(Z_G(S)) - {\rm{dim}}(Z_{\overline{G}}(\overline{S}))} (q^{1-g})^{{\rm{dim}}(U^-) - {\rm{dim}}(\overline{U}^-)} \\
& = (q^{1-g})^{{\rm{dim}}(G) - {\rm{dim}}(\overline{G})}.
\end{align*}
This is (\ref{doesthisholdtam}), so the proof of Proposition \ref{toruscentralizertam=1} is complete. 
\end{proof}

\section{Tamagawa numbers of exotic groups: computations}
\label{exotictam=1section}

In this section we will prove Proposition \ref{measuresagreebasic} by constructing suitable split tori as in Proposition \ref{toruscentralizertam=1}. First suppose that $G$ is a basic non-reduced group over the global function field $k$ of characteristic $2$. Let $k' := k^{1/2}$ and $\overline{G} = \R_{k'/k}({\rm{Sp}}_{2n})$ with $f: G \twoheadrightarrow \overline{G}$ the canonical map as in \S \ref{sectionexoticgeneral}. There is a bijection between the sets of
maximal $k$-tori of $G$ and maximal $k'$-tori of ${\rm{Sp}}_{2n}$ defined by sending a maximal $k'$-torus $T' \subset {\rm{Sp}}_{2n}$ to the unique maximal $k$-torus $T \subset f^{-1}(\R_{k'/k}(T'))$, and the map $T \rightarrow \R_{k'/k}(T')$ induced by $f$ yields an isomorphism onto the maximal $k$-torus of $\R_{k'/k}(T')$ \cite[Cor.\,9.9.5]{cgp}.

In conjunction with Proposition \ref{toruscentralizertam=1}, the following result completes the proof of Proposition \ref{basicexotictam=1} (or equivalently, Proposition \ref{measuresagreebasic}) for basic non-reduced $G$.

\begin{proposition}
\label{Z_G(T)nonreduced}
Let $G$ be a basic non-reduced group over a global function field $k$ of characteristic $2$, with $f: G \twoheadrightarrow \overline{G}$ the canonical map as above. For a split maximal $k$-torus $T \subset G$,
the natural map $Z_G(T) \to Z_{\overline{G}}(\overline{T})$ is
an isomorphism.  In particular, $\tau(Z_G(T)) = \tau(Z_{\overline{G}}(\overline{T}))$.
\end{proposition}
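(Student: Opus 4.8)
\emph{Proof proposal.} The plan is to prove the stronger assertion that the scheme-theoretic kernel of the natural map $f|_{Z_G(T)} \colon Z_G(T) \to Z_{\overline{G}}(\overline{T})$ is trivial; the ``in particular'' clause about Tamagawa numbers is then immediate, since $Z_G(T) \cong Z_{\overline{G}}(\overline{T})$ as $k$-groups. First I would observe that $f|_{Z_G(T)}$ is surjective: as $f$ is a surjection of connected linear algebraic groups it carries the Cartan subgroup $Z_G(T)$ onto a Cartan subgroup of $\overline{G}$, and this image contains $f(T) = \overline{T}$, so by uniqueness of the Cartan subgroup containing a given maximal torus it equals $Z_{\overline{G}}(\overline{T})$. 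Since $Z_G(T)$ and $Z_{\overline{G}}(\overline{T})$ are smooth, connected, commutative (Cartan subgroups of pseudo-reductive groups, by \cite[Prop.\,1.2.4]{cgp}) and affine, a surjective $k$-homomorphism between them with trivial scheme-theoretic kernel is an isomorphism (a surjection of smooth affine $k$-groups is faithfully flat, and a faithfully flat monomorphism is an isomorphism). So the entire statement reduces to showing that the $k$-subgroup scheme $K := \ker(f) \cap Z_G(T) = Z_{\ker(f)}(T)$ of $G$ is trivial.

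Triviality of $K$ may be checked after the faithfully flat base change to $k_s$, so I would rename $k_s$ as $k$ and assume $k = k_s$, with $G$ pseudo-split, $T$ a split maximal $k$-torus, and $k' = k^{1/2}$. The essential input is the explicit description of $\ker(f)$ in \cite[Thm.\,9.8.1(3),(4)]{cgp} (recalling that $f$ is the map $\xi_G$, there identified with $i_G$): $\ker(f)$ is directly spanned, in any order, by $T$-stable unipotent $k$-subgroup schemes, each contained in a root group of $(G,T)$, so that $T$ acts on each such factor — and hence on $\Lie(\ker f)$ — through nonzero characters (nonzero integer multiples of roots). Consequently the zero $T$-weight space of $\Lie(\ker f)$ vanishes, and therefore $\Lie(K) = \Lie(\ker f)^{T} = 0$. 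Since $\dim_k \Lie(K) \geq \dim K$, this forces $K$ to be a finite $k$-group scheme.

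To conclude, I would show $K$ is infinitesimal with trivial Lie algebra. For the first point, $K(k) \subseteq \ker(f)(k) = \ker\bigl(f\colon G(k)\to \overline{G}(k)\bigr)$, which is trivial because $f$ is bijective on $k$-points \cite[Prop.\,9.9.4]{cgp}; hence the maximal \'etale quotient $\pi_0(K)$ has trivial group of $k$-points over the separably closed field $k$, so $\pi_0(K) = 1$ and $K = K^0$ is infinitesimal. Together with $\Lie(K) = 0$ from the previous paragraph, the standard fact that an infinitesimal group scheme with vanishing Lie algebra is trivial gives $K = 1$, as desired. (One could instead mimic the dévissage in the proof of Lemma \ref{ZGcharssame}, noting that $K$ is directly spanned by the $T$-centralizers of the factors of $\ker(f)$ and that each such factor, lying in a root group with nonzero $T$-weights, has trivial $T$-fixed subscheme; but the Lie-algebra argument is shorter.)

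The only nonformal point — the main obstacle — is the structural input in the second paragraph: that $\ker(f)$ for a basic non-reduced $G$ is built entirely out of root groups and so carries no nonzero $T$-fixed part, in contrast with the basic exotic case, where $\Lie(\ker \pi')$ contains the coroot lines $\bigoplus_{b\in\Delta_<}\Lie(b^{\vee}(\Gm)) \subset \Lie(T')$ (which is exactly why Lemma \ref{pointsofcartan} there yields only an isomorphism on cohomology, not on group schemes). This is precisely what \cite[Thm.\,9.8.1]{cgp} provides, and once it is invoked the remainder is a short dévissage.
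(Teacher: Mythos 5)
Your argument is correct in substance but reaches the isomorphism by a different mechanism than the paper. The paper's proof is essentially a one-line appeal to the explicit description of the Cartan subgroup: after conjugating so that $\overline{T} = \R_{k^{1/2}/k}(D)$ for the diagonal torus $D \subset {\rm{Sp}}_{2n}$, it cites \cite[Thm.\,9.8.1(2),(4), Rem.\,9.8.2]{cgp} to identify $Z_G(T)$ with $\R_{k^{1/2}/k}(D)$ compatibly with $f$. You instead extract the statement from part (4) alone: the kernel of $f$ is directly spanned by $T$-stable pieces sitting inside root groups, so all $T$-weights on $\Lie(\ker f)$ are nonzero, whence $K := Z_{\ker f}(T)$ has trivial Lie algebra and trivial group of separable points. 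This is precisely the d\'evissage the paper itself runs in the non-reduced case of Lemma \ref{ZGcharssame} for a general split torus $S$, specialized to $S=T$ maximal, where the $T$-centralizers of the spanning pieces become trivial rather than merely unipotent. Your route is slightly longer but more self-contained (it does not need the explicit identification of the Cartan), and you correctly isolate the one nonformal input and the genuine contrast with the basic exotic case, where $\Lie(\ker\pi')$ does contain coroot lines and only a cohomological statement survives.

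One step needs repair. The inference ``$K(k_s)=1$, hence $\pi_0(K)$ has no $k_s$-points, hence $\pi_0(K)=1$ and $K=K^0$'' is not valid for a general finite group scheme over a separably closed imperfect field: the fibers of $K(k_s) \to \pi_0(K)(k_s)$ are torsors under the infinitesimal $K^0$, and such torsors can be nontrivial (a nonsplit extension of $\Z/p\Z$ by $\mu_p$ classified by a non-$p$th-power in $k_s^{\times}/(k_s^{\times})^p$ has trivial group of $k_s$-points but \'etale quotient $\Z/p\Z$). The fix is just to reverse the order of your two observations: $\Lie(K)=0$ forces $K^0=1$ (an infinitesimal group scheme with vanishing Lie algebra is trivial, by Nakayama), so $K$ is \'etale, and only then does $K(k_s)=1$ give $K=1$. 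With that reordering the proof is complete.
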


\begin{proof}
Since all split maximal tori in $G$ are $G(k)$-conjugate to each other, and likewise for $\overline{G}$ and 
for ${\rm{Sp}}_{2n}$ over $k'$, with both maps $G(k) \to \overline{G}(k) \to {\rm{Sp}}_{2n}(k^{1/2})$ bijective,
we can arrange that $\overline{T} = {\rm{R}}_{k^{1/2}/k}(D)$ for the diagonal split maximal $k^{1/2}$-torus $D \subset {\rm{Sp}}_{2n}$.
By \cite[Thm.\,9.8.1(2),(4)]{cgp} and \cite[Rem.\,9.8.2]{cgp}, the surjective map $Z_G(T) \to Z_{\overline{G}}(\overline{T})$
between Cartan $k$-subgroups is an isomorphism.
\end{proof}

Now we prove Proposition \ref{measuresagreebasic} for basic exotic groups by constructing split tori satisfying the hypotheses of Proposition \ref{toruscentralizertam=1}.
First we fix the same notation from the start of \S\ref{sectionexoticgeneral}. So $k$ is a global function field of characteristic $p \in \{2, 3\}$, $k' := k^{1/p}$, $G'$ is a connected semisimple $k'$-group that is absolutely simple and simply connected
with absolute Dynkin diagram containing an edge of multiplicity $p$, and we have the very special isogeny $\pi': G' \rightarrow \overline{G}'$ and the induced map $\R_{k'/k}(\pi'): \R_{k'/k}(G') \rightarrow \R_{k'/k}(\overline{G}')$ denoted as $f$. 
Let  $\overline{G} \subset \R_{k'/k}(\overline{G}')$ be a Levi $k$-subgroup (necessarily simply connected with root system dual to that of $G'$)
such that for $G := f^{-1}(\overline{G})$, the $k_s$-group $G_{k_s}$ contains a Levi $k_s$-subgroup of
$\R_{k'/k}(G')_{k_s}$ (it is equivalent that $\overline{G} \subset {\rm{im}}(f)$, and also equivalent that $G$ is $k$-smooth
 \cite[Prop.\,7.3.1]{cgp}), so $G$ is a basic exotic pseudo-reductive $k$-group.

The following proposition will allow us to treat the various possible types of such $G'$ almost uniformly.

\begin{proposition}
\label{suitablesplittorus}
Let $G$ be a basic exotic group over a global function field $k$, and $f: G \twoheadrightarrow \overline{G}$ its canonical surjection onto a simply connected $k$-group $($see Remark $\ref{canonicalsurjection}$$)$. Assume that for some maximal $k$-torus $\overline{T} \subset \overline{G}$ and basis $\overline{\Delta}$
of the root system $\Phi(\overline{G}_{k_s}, \overline{T}_{k_s})$, there is a $($possibly empty$)$ subset $\overline{A} \subset \overline{\Delta}$ of pairwise orthogonal roots such that the $($scheme-theoretic$)$ center $Z_{\overline{G}}$ of $\overline{G}$ is contained in $\prod_{\overline{a} \in \overline{A}} \overline{a}^{\vee}(\Gm)$.

Then there is such a triple $(\overline{T}, \overline{\Delta}, \overline{A})$ for which the $k_s$-torus $(\cap_{\overline{a} \in \overline{A}} \ker(\overline{a}))^0_{{\rm{red}}} \subset \overline{T}_{k_s}$ descends to a 
split $k$-subtorus $\overline{S} \subset \overline{T}$. For the corresponding split subtorus $S \subset T$
via the isogeny $T \to \overline{T}$, we have $\tau(Z_G(S)) = \tau(Z_{\overline{G}}(\overline{S})) = 1$. 
\end{proposition}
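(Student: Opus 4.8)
The plan is to exhibit the triple $(\overline{T},\overline{\Delta},\overline{A})$ explicitly in terms of the absolute type of $G'$ (equivalently of $\overline{G}$), exploiting the arithmetic of $\overline{G}$ over a global function field, and then to compute the two Tamagawa numbers. It is natural to carry this out as part of a simultaneous induction on $\dim G$ with Proposition \ref{basicexotictam=1}, so that any basic exotic or basic non-reduced $k$-group of strictly smaller dimension may be assumed to have Tamagawa number $1$. First I record the reduction: once $\overline{S}$ is $k$-split, the torus $S\subset T$ corresponding to it is $k$-split as well, because $T\to\overline{T}$ is an isogeny (\cite[Cor.\,7.3.4]{cgp}), hence so is $S\to\overline{S}$, and an isogeny of $k$-tori onto a split torus has split source (the Galois action on $X_*(S)$ preserves the finite-index sublattice $X_*(\overline{S})$ on which it is trivial, so it is of finite order and acts trivially after $\otimes\Q$, hence trivially). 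So it suffices to (i) choose the triple so that $\overline{S}$ is $k$-split and (ii) prove $\tau(Z_{\overline{G}}(\overline{S}))=\tau(Z_G(S))=1$.

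For (i), note $\overline{G}$ is semisimple, simply connected and absolutely simple of type $\mathrm{C}_n$, $\mathrm{B}_n$, $\mathrm{F}_4$ or $\mathrm{G}_2$, i.e.\ one of $\mathrm{Sp}_{2n}$, $\mathrm{Spin}_{2n+1}$, a form of $\mathrm{F}_4$, or a form of $\mathrm{G}_2$. Since $k$ has no archimedean places, and anisotropic octonion and Albert algebras, as well as anisotropic quadratic forms of rank $\ge 5$, do not exist over non-archimedean local fields, the Hasse principle shows that $\mathrm{F}_4$- and $\mathrm{G}_2$-forms over $k$ are split and that in the $\mathrm{B}_n$-case the defining $(2n{+}1)$-dimensional quadratic form has Witt index $\ge n-1$; and $\mathrm{Sp}_{2n}$ is always split. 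Take $\overline{T}$ a maximal $k$-torus containing a maximal $k$-split torus, choose $\overline{\Delta}$ compatibly with the relative root system, and take $\overline{A}\subset\overline{\Delta}$ to be a Galois-stable, pairwise-orthogonal set that contains every anisotropic simple root and whose coroot images together contain $Z_{\overline{G}}$; the existence of such $\overline{A}$ combines the hypothesis of the proposition with the explicit root-datum combinatorics (e.g.\ $Z_{\mathrm{Sp}_{2n}}=\mu_2$ lies in the product of the coroot images of $e_1-e_2,e_3-e_4,\dots$ together with $2e_n$ when $n$ is odd, while $Z_{\mathrm{Spin}_{2n+1}}=\mu_2$ lies in the coroot image of the short simple root at the end of the diagram, which in the nonsplit case is the unique anisotropic simple root). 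Then $\overline{S}=(\cap_{\overline{a}\in\overline{A}}\ker\overline{a})^0_{\mathrm{red}}$ is contained in the maximal $k$-split torus of $\overline{G}$ (all anisotropic roots already kill it), hence descends to a $k$-split torus.

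For the first half of (ii): $Z_{\overline{G}}(\overline{S})$, being the centralizer of a split torus in a connected reductive $k$-group, is a Levi $k$-factor of a parabolic $k$-subgroup of $\overline{G}$, so it is connected reductive with simply connected derived group (Levi subgroups of a simply connected group have simply connected derived group); hence $\mathrm{Pic}(Z_{\overline{G}}(\overline{S}))=0$, and its radical is the $k$-split torus $\overline{S}$. Over a global function field, the simply connected semisimple group $Z_{\overline{G}}(\overline{S})_{\mathrm{der}}$ — and every inner twist of it — has trivial $\mathrm{H}^1$ (Harder's Hasse principle together with triviality of $\mathrm{H}^1$ of simply connected groups over the non-archimedean completions), while $\Sha^1$ of the split radical vanishes by Hilbert 90; inserting this into the cohomology sequence of $1\to Z_{\overline{G}}(\overline{S})_{\mathrm{der}}\to Z_{\overline{G}}(\overline{S})\to(\text{torus})\to 1$ and twisting gives $\Sha(Z_{\overline{G}}(\overline{S}))=1$. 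Sansuc's formula (\ref{sansucformula}), valid over global function fields granting the now-established Weil conjecture, then yields $\tau(Z_{\overline{G}}(\overline{S}))=\#\Pic/\#\Sha=1$.

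The hard part is $\tau(Z_G(S))=1$. The plan is to analyze $Z_G(S)$ through the description of Levi factors of pseudo-parabolic $k$-subgroups of the basic exotic group $G$ in \cite{cgp} (using the freedom in choosing the torus, Remark \ref{choiceoftorus}, and the behaviour of root groups under the very special isogeny recorded in the proof of Lemma \ref{opencellpoints}), showing that for the $S$ above $Z_G(S)$ is built — up to a central isogeny involving only the split torus directions — from a $k$-split torus together with pseudo-simple factors each of which is $\mathrm{SL}_2$, a Weil restriction $\R_{k'/k}$ of a simply connected absolutely simple group, a $k$-form $\mathrm{SL}_1(Q)$ of $\mathrm{SL}_2$, or a basic exotic (resp.\ basic non-reduced) $k$-group of dimension strictly less than $\dim G$. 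Each of these has Tamagawa number $1$: the split torus immediately from the product-measure definition, the $\mathrm{SL}_1(Q)$ and the Weil restrictions from the Weil conjecture together with invariance of $\tau$ under Weil restriction, and the smaller basic exotic or basic non-reduced factors by the inductive hypothesis. Assembling them by multiplicativity of $\tau$ over direct products, and handling the remaining central isogeny exactly as in the reductive computation above (vanishing of the relevant $\Sha$'s over global function fields), gives $\tau(Z_G(S))=1$, hence $\tau(Z_G(S))=\tau(Z_{\overline{G}}(\overline{S}))=1$. I expect the main obstacle to be precisely this structural bookkeeping — in particular, verifying that the chosen $\overline{A}$ leaves no leftover non-split commutative pseudo-reductive factor in $Z_G(S)$ (which would not obviously have Tamagawa number $1$, the commutative case of Theorem \ref{tamagawaformula} not yet being available) and that the central isogenies do not alter $\tau$ — both of which rest on the arithmetic of simply connected groups over global function fields used above.
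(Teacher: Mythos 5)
The decisive gap is in the claim $\tau(Z_G(S))=1$, and it is exactly the point you flag at the end as "the main obstacle": the proposal does not overcome it, and the route you sketch for it would not work. First, the proposed factor list for $Z_G(S)$ is off. Because the roots in $\overline{A}$ are pairwise orthogonal, the $T_{k_s}$-equivariant open-cell description of $(G_{k_s},T_{k_s})$ from \cite[Rem.\,7.2.8]{cgp}, combined with the fact that the only roots of $(G,T)$ killing $S$ are $\pm a$ with $a\in A$, shows that $Z_G(S)_{k_s}$ is \emph{literally} the semidirect product of $\prod_{b\in\Delta_<-A}\R_{k'_s/k_s}(b^{\vee}(\Gm))\times\prod_{b\in\Delta_>-A}b^{\vee}(\Gm)$ against $\prod_{a\in A_<}\R_{k'_s/k_s}({\rm{SL}}_a)\times\prod_{a\in A_>}{\rm{SL}}_a$: only rank-one semisimple pieces occur, no smaller basic exotic or basic non-reduced groups arise, no induction on $\dim G$ is needed, and there is no central isogeny to handle. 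Second, your fallback of treating "the remaining central isogeny exactly as in the reductive computation" is not legitimate: $Z_G(S)$ is pseudo-reductive, not reductive, and comparing $\tau$ across central isogenies (or extensions) of pseudo-reductive groups is precisely the kind of statement this paper is in the business of proving (Theorem \ref{tamagawaformula}), not an available input. Third, and most importantly, the commutative quotient has to be controlled by hand: one shows that the image of $S$ in $Z_G(S)^{{\rm{ab}}}$ is its maximal $k$-torus (a coroot computation resting on the orthogonality of $A$), so this commutative pseudo-reductive group has \emph{split} maximal $k$-torus; a descent lemma for homomorphisms between commutative pseudo-reductive groups with split maximal tori then promotes the $k_s$-isomorphism $Z_G(S)^{{\rm{ab}}}_{k_s}\simeq(\Gm^{n_1}\times\R_{k'/k}(\Gm)^{n_2})_{k_s}$ to one over $k$, giving $\tau(Z_G(S)^{{\rm{ab}}})=1$ directly without Theorem \ref{tamagawaformula}; Oesterl\'e's multiplicativity for $1\to\mathscr{D}(Z_G(S))\to Z_G(S)\to Z_G(S)^{{\rm{ab}}}\to 1$, together with $\mathscr{D}(Z_G(S))=\R_{K/k}(H)$ for $H$ an \'etale form of ${\rm{SL}}_2$, then finishes. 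None of this is supplied or replaceable by the induction you propose.

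The remaining parts of the proposal are essentially sound but take a different (and in one place shakier) route from the paper. For the splitness of $\overline{S}$, the paper does not classify forms via relative root systems and Witt indices (which, for type ${\rm{B}}_n$ in characteristic $2$, would require care with quadratic-form theory); instead it uses Proposition \ref{connectingmap=bijection} to show that ${\rm{H}}^1(k,\overline{H}_0^{{\rm{ad}}})\to{\rm{H}}^1(k,\overline{G}_0^{{\rm{ad}}})$ is bijective for the diagonal ${\rm{SL}}_2$ (or trivial group) $\overline{H}_0$ supported on $\overline{A}_0$, so that every $k$-form of $\overline{G}_0$ is a twist by a cocycle valued in the normalizer of the torus inside $\overline{H}_0^{{\rm{ad}}}$, which visibly fixes the split subtorus $\overline{S}_0$ centralizing $\overline{H}_0$. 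Your deduction that $S$ is split once $\overline{S}$ is, and your computation of $\tau(Z_{\overline{G}}(\overline{S}))=1$ via the simply connected derived group (packaged through Sansuc's formula rather than direct multiplicativity), are both fine.
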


\begin{remark}
All maximal tori in $\overline{G}$ are $\overline{G}(k_s)$-conjugate, and
$N_{\overline{G}}(\overline{T})(k_s)$-conjugation (which has no effect on $Z_{\overline{G}}$)
 acts transitively on the set of bases of the root system, so the hypothesis concerning
the existence of $\overline{A}$ is independent of the choice of $\overline{T}$ and $\overline{\Delta}$.
Note in particular that the existence of $\overline{A}$ only involves $\overline{G}_{k_s}$ and so is sufficient to check
after making a preliminary finite separable extension of $k$ to split $\overline{T}$.
\end{remark}

Before turning to the proof of Proposition \ref{suitablesplittorus}, let us show how it allows us to quickly complete the proof of Proposition \ref{basicexotictam=1} (or equivalently, Proposition \ref{measuresagreebasic}) for basic exotic groups. 

\begin{proof}[Proof of Proposition $\ref{measuresagreebasic}$ $($assuming Proposition $\ref{suitablesplittorus}$$)$]
The proposition for basic non-reduced groups, as we have already noted, follows from Propositions \ref{toruscentralizertam=1} and \ref{Z_G(T)nonreduced}. It remains to treat the basic exotic case. 

We maintain the notation of Proposition \ref{suitablesplittorus}. The absolute Dynkin diagram of $\overline{G}$ is of type G$_2$ (in characteristic $3$) or F$_4$, B$_n$, or C$_n$ (in characteristic $2$; $n \geq 2$ in the last two cases). By Propositions \ref{toruscentralizertam=1} and \ref{suitablesplittorus}, it suffices to prove the existence of $\overline{A}$ as in Proposition \ref{suitablesplittorus} for the split group $\overline{G}_{k_s}$.

 In the F$_4$ and G$_2$ cases we have $Z_{\overline{G}} = 1$, so we may take $\overline{A} = \emptyset$. In the case that $\overline{G}$ is of type B$_n$, we have $Z_{\overline{G}} = \overline{a}^{\vee}(\mu_2)$ where $\overline{a}$ is the unique short root in any basis
 $\overline{\Delta}$, so we may take $\overline{A} = \{ \overline{a} \}$. Finally, when $\overline{G}$ is of type C$_n$
 we can take $\overline{A} \subset \overline{\Delta}$ to be the set of alternating roots in the associated Dynkin diagram beginning with the unique short root at the end of the diagram; the center is then $Z_{\overline{G}} = (\prod_{\overline{a} \in \overline{A}} \overline{a}^{\vee})(\mu_2) \simeq \mu_2$. (Note: this is {\em not} the same as $\prod_{\overline{a} \in \overline{A}} \overline{a}^{\vee}(\mu_2) \simeq \mu_2^{\# \overline{A}}$ when $n \geq 3$.)
 \end{proof}

It remains to prove Proposition \ref{suitablesplittorus}. This will occupy us for the remainder of this section. Before we do this, let us prove some general results on the cohomology of algebraic groups over global function fields that we shall require now and in the sequel. These results are presumably ``well-known'', but we prove them here for the reader's convenience.

\begin{proposition}
\label{torustrivialSha^2}
Let $H$ be a connected semisimple group over a global field $k$, and let $\Sigma$ be a finite set of places of $k$. If $\Sigma$ contains any real places, then further assume that $H$ is simply connected. Then there exists a maximal $k$-torus $T \subset H$ such that $\Sha^2(k, T) = 0$ and such that ${\rm{H}}^2(k_v, T) = 0$ for all $v \in \Sigma$.
\end{proposition}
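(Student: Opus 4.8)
The plan is to produce $T$ by a local-to-global construction: first choose a maximal torus over each completion in $\Sigma$ with vanishing $\mathrm{H}^2$, then find a single global maximal torus of $H$ that is ``close enough'' to those local choices, and finally rule out the Tate–Shafarevich obstruction using Poitou–Tate duality for tori. The key input for the local step is that for a torus $T$ over a non-archimedean local field $k_v$, Tate local duality gives $\mathrm{H}^2(k_v, T) \cong \widehat{T}(k_s)^{\Gamma_v}{}^*$ up to issues with the module being finitely generated; more usefully, $\mathrm{H}^2(k_v,T) = 0$ precisely when the cocharacter lattice $X_*(T)$ has no nonzero $\Gamma_v$-coinvariants, equivalently when $\widehat{T}$ has no nonzero $\Gamma_v$-invariant characters, i.e.\ $T$ has no nontrivial $k_v$-subtorus that is split — so it suffices to take $T_v$ to be a maximal $k_v$-torus that is \emph{anisotropic} or more generally has no split part, which certainly exists (e.g.\ the centralizer of a regular semisimple element generating a field extension of maximal degree). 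At a real place $v$, with $H$ assumed simply connected, one instead invokes the vanishing of $\mathrm{H}^2(\mathbf{R}, T) = \mathrm{H}^0(\mathbf{R}, T)$-type statements: for $H$ simply connected one can choose $T_v$ to be a \emph{fundamental} (maximal compact) torus, for which $\mathrm{H}^2(\mathbf{R},T_v)$ vanishes; this is the standard reason for the simply-connected hypothesis at real places.

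Next I would globalize. The scheme $\underline{\mathrm{Tor}}_H$ parametrizing maximal tori of $H$ is a smooth connected $k$-variety (an open subscheme of a Grassmannian-type parameter space, or described via regular semisimple elements of $\mathrm{Lie}(H)$), and it satisfies weak approximation — indeed $H/T_0$ for any fixed maximal torus $T_0$ is an open subvariety of an affine space after a suitable birational description, and in any case $H(k)$ acts transitively on maximal tori over $k_s$ with connected stabilizer, so $\underline{\mathrm{Tor}}_H$ is $k$-unirational and hence has the weak approximation property for the finite set $\Sigma$. Therefore I can find a maximal $k$-torus $T \subset H$ whose base change $T_{k_v}$ is $H(k_v)$-conjugate to (in particular isomorphic to) the chosen $T_v$ for every $v \in \Sigma$; this forces $\mathrm{H}^2(k_v, T) = \mathrm{H}^2(k_v, T_v) = 0$ for all $v \in \Sigma$. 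The reason $T_v$-isomorphism transfers is that $\mathrm{H}^2(k_v, -)$ depends only on the $\Gamma_v$-module $X_*(T)$, hence only on the $k_v$-isomorphism class of the torus.

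For the remaining condition $\Sha^2(k, T) = 0$, I would use the Poitou–Tate exact sequence for the torus $T$, which identifies $\Sha^2(k,T)$ with the Pontryagin dual of $\Sha^1(k, \widehat{T})$, where $\widehat{T}$ is the (finitely generated, discrete) character group regarded as a Galois module. So it suffices to choose $T$ additionally so that $\Sha^1(k, \widehat{T}) = 0$. This is again a condition one can force by a suitable choice of $T$: take $T$ so that the splitting field $L/k$ of $T$ (equivalently of $\widehat{T}$) is a field whose Galois group, together with the way $\widehat{T}$ sits inside the weight lattice, kills $\Sha^1$ — for instance it suffices that $\mathrm{Gal}(L/k)$ be generated by the decomposition groups of places split in a controlled way, or most cleanly, that $L/k$ be chosen so that every element of $\mathrm{Gal}(L/k)$ is a Frobenius at some unramified place (a Chebotarev-type statement), since then $\Sha^1(k, \widehat{T})$ injects into $\prod_v \mathrm{H}^1$ with enough terms to vanish by the definition of $\Sha^1$ and the Chebotarev density theorem; concretely one uses that a class in $\Sha^1(k,\widehat T)$ restricts to $0$ on every cyclic subgroup that is a decomposition group, and arranges (again via weak/strong approximation on $\underline{\mathrm{Tor}}_H$ combined with Chebotarev) that every cyclic subgroup of $\mathrm{Gal}(L/k)$ occurs as a decomposition group, whence $\Sha^1(k,\widehat T) = 0$ by a restriction–corestriction / cyclic-subgroup argument. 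These two choices (the local conditions at $\Sigma$, and the global splitting-field condition) are imposed simultaneously by a single application of weak approximation on $\underline{\mathrm{Tor}}_H$ at the finite set $\Sigma$ together with the further freedom to prescribe the torus at finitely many auxiliary places supplying the needed Frobenius elements.

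The main obstacle I expect is the simultaneous realization of the local shapes at $\Sigma$ and a ``large'' splitting field with controlled $\Sha^1(k,\widehat T)$: one must check that the weak-approximation-type property of $\underline{\mathrm{Tor}}_H$ really does let one prescribe both the $k_v$-isomorphism class of $T$ at the finitely many places of $\Sigma$ and enough of the global Galois action to make the cyclic-subgroup argument for $\Sha^1(k,\widehat T) = 0$ go through — in particular handling the interaction of the prescribed anisotropic (resp.\ fundamental, at real places) tori with the requirement that the global splitting field have all cyclic subgroups as decomposition groups. Everything else (Tate local duality giving the $\mathrm{H}^2$ vanishing criterion, Poitou–Tate identifying $\Sha^2(T)$ with $\Sha^1(\widehat T)^*$, unirationality of $\underline{\mathrm{Tor}}_H$) is standard.
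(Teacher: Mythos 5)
Your local step and globalization are essentially the paper's: anisotropic maximal tori at non-archimedean places (whose $\mathrm{H}^2$ vanishes by local duality, since $\mathrm{H}^2(k_v,T)$ is dual to $X^*(T)^{\Gamma_v}$), the special choice at real places for simply connected $H$ (this is \cite[\S 6.5, Lemma 6.18]{platonov}), and a result of Harder that produces a global maximal torus whose $k_v$-fiber is conjugate to the prescribed local one at the finitely many places of $\Sigma$ — which is exactly the weak-approximation property of the variety of maximal tori that you invoke. Up to that point the argument is sound.

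The genuine gap is in your treatment of $\Sha^2(k,T)=0$. First, there is nothing to ``arrange'': by Chebotarev, \emph{every} cyclic subgroup of $\Gal(L/k)$ is already the decomposition group of infinitely many unramified places, for any choice of $T$. Second, and fatally, the conclusion you want to draw from this is false: for a general Galois lattice $M=\widehat{T}$, a class in ${\rm{H}}^1(\Gal(L/k),M)$ whose restriction to every cyclic subgroup vanishes need \emph{not} be zero. The group $\ker\bigl({\rm{H}}^1(G,M)\to\prod_{C\ {\rm cyclic}}{\rm{H}}^1(C,M)\bigr)$ is nonzero for many $G$-lattices (it is precisely the obstruction group governing failures of the Hasse principle and weak approximation for tori), and restriction–corestriction only shows ${\rm{H}}^1(G,M)$ is killed by $\#G$, not that it is detected on cyclic subgroups. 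So your proposed route to $\Sha^1(k,\widehat{T})=0$ does not close. The missing idea is much simpler: enlarge $\Sigma$ so that it contains a non-archimedean place $v_0$ and use the anisotropic torus you already chose there. Anisotropy of $T_{k_{v_0}}$ forces $X_*(T)^{\Gal(L/k)}=0$, and by Tate–Nakayama $\Sha^2(k,T)$ embeds into $\widehat{{\rm{H}}}{}^0(\Gal(L/k),X_*(T))$, a quotient of $X_*(T)^{\Gal(L/k)}$; hence $\Sha^2(k,T)=0$ automatically. This is the remark of Harder that the paper cites, and it makes the single local condition at $v_0$ do all the global work, with no Chebotarev or auxiliary places needed.
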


\begin{proof}
Enlarge $\Sigma$ if necessary in order to assume that it contains a non-archimedean place of $k$. First we find for each $v \in \Sigma$ a maximal $k_v$-torus $T_v \subset H_{k_v}$ such that ${\rm{H}}^2(k_v, T_v) = 0$. If $k_v = \mathbf{R}$, then the existence of such $T_v$ is \cite[\S 6.5, Lemma 6.18]{platonov}. (This is where we need the simply connected assumption if $\Sigma$ contains real places.) If $v$ is non-archimedean, then $H_{k_v}$ contains an anisotropic maximal $k_v$-torus $T_v$ \cite[Lemma 2.4.1]{debacker}. By Tate local duality \cite[Thm.\,1.2.1]{rostateduality} (although for tori this result has been known for a long time), the anisotropicity of $T_v$ implies that ${\rm{H}}^2(k_v, T_v) = 0$.

By \cite[Lemma 5.5.3]{harder} (it is stated for number fields, but the proof works just as well over function fields), there exists a maximal $k$-torus $T \subset H$ such that for each $v \in \Sigma$, $T_{k_v}$ is $H(k_v)$-conjugate to $T_v$, so ${\rm{H}}^2(k_v, T) = 0$  for all $v \in \Sigma$ and $T_{k_v}$ is $k_v$-anisotropic for the non-archimedean places $v \in \Sigma$. By the comment on the top of page 408 of \cite{harder} (just after \cite[Lemma 5.5.4]{harder}; once again, the comment is made for number fields, but goes over without change to the function field setting), this anisotropicity implies that $\Sha^2(T) = 0$.
\end{proof}

The following result seems to have first been explicitly stated and proved as \cite[Thm.\,A]{thang}. We give a simpler proof below.

\begin{proposition}
\label{connectingmap=bijection}
Let $G$ be a connected semisimple group over a global function field $k$, and consider the central extension
\begin{equation}
\label{simplyconnectedcoverseq1}
1 \longrightarrow Z \longrightarrow \widetilde{G} \xlongrightarrow{\pi} G \longrightarrow 1
\end{equation}
with $\pi$ the simply connected cover of $G$. Then the connecting map $\delta: {\rm{H}}^1(k, G) \rightarrow {\rm{H}}^2(k, Z)$ is a bijection.
\end{proposition}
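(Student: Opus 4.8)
The plan is to analyze the long exact cohomology sequence attached to the central extension (\ref{simplyconnectedcoverseq1}), namely
\[
{\rm{H}}^1(k, \widetilde{G}) \longrightarrow {\rm{H}}^1(k, G) \xrightarrow{\delta} {\rm{H}}^2(k, Z) \longrightarrow {\rm{H}}^2(k, \widetilde{G}),
\]
and to show that $\delta$ is both injective and surjective by inputting the two standard arithmetic facts about simply connected groups over a global function field $k$: first, the Hasse principle holds for $\widetilde{G}$, i.e. ${\rm{H}}^1(k, \widetilde{G}) \hookrightarrow \prod_v {\rm{H}}^1(k_v, \widetilde{G})$ is injective (Harder), and second, by the affirmed Weil conjecture combined with the work of Harder one in fact has ${\rm{H}}^1(k, \widetilde{G}) = 1$ over global function fields (there are no nontrivial $\widetilde{G}$-torsors; this is Harder's theorem that $\Sha^1 = 1$ for simply connected groups over function fields together with the triviality of ${\rm{H}}^1(k_v, \widetilde{G})$ at all places, the latter being Kneser-type vanishing, valid since there are no real places). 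Granting ${\rm{H}}^1(k, \widetilde{G}) = 1$, surjectivity of $\widetilde{G}(k) \to G(k)$ over $k$ up to the relevant twists gives injectivity of $\delta$ directly from exactness: the fiber of $\delta$ over the base point is the image of ${\rm{H}}^1(k, \widetilde{G}) = 1$, hence trivial, and to get injectivity on all fibers one twists (\ref{simplyconnectedcoverseq1}) by a cocycle representing a class $\gamma \in {\rm{H}}^1(k, G)$; the twisted group ${}_\gamma\widetilde{G}$ is again semisimple simply connected (an inner twist of $\widetilde{G}$), so ${\rm{H}}^1(k, {}_\gamma\widetilde{G}) = 1$ as well, and the twisting formalism of \cite[Chap.\,I, \S 5]{serre} identifies the fiber of $\delta$ through $\gamma$ with the image of ${\rm{H}}^1(k, {}_\gamma\widetilde{G})$, which is a point.

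For surjectivity of $\delta$, by exactness it suffices to show that the map ${\rm{H}}^2(k, Z) \to {\rm{H}}^2(k, \widetilde{G})$ has trivial image, i.e. that every class in ${\rm{H}}^2(k, Z)$ dies in ${\rm{H}}^2(k, \widetilde{G})$. I would approach this by choosing a maximal $k$-torus $T \subset \widetilde{G}$ provided by Proposition \ref{torustrivialSha^2} (applied with $\Sigma = \emptyset$, so there is no archimedean obstruction to worry about over a function field), so that $\Sha^2(k, T) = 0$; note $Z \subset T$ since $Z$ is central, hence $Z$ equals the scheme-theoretic center and lies in every maximal torus. The commutative diagram relating the exact sequence $1 \to Z \to T \to T/Z \to 1$ to (\ref{simplyconnectedcoverseq1}), together with the local statement that ${\rm{H}}^2(k_v, \widetilde{G}) = 0$ for every place $v$ of a function field (again a standard vanishing, using that $\widetilde{G}$ is simply connected and $k_v$ is non-archimedean; this is where the absence of real places is used), reduces the vanishing of ${\rm{H}}^2(k, Z) \to {\rm{H}}^2(k, \widetilde{G})$ to a statement about the torus $T$: a class in ${\rm{H}}^2(k, Z)$ mapping to $0$ in each ${\rm{H}}^2(k_v, \widetilde{G})$ has image in ${\rm{H}}^2(k, T)$ lying in $\Sha^2(k, T) = 0$, hence is killed already in ${\rm{H}}^2(k, T)$ and a fortiori in ${\rm{H}}^2(k, \widetilde{G})$. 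One has to be slightly careful to promote ``dies in each ${\rm{H}}^2(k_v, T)$'' to ``lies in $\Sha^2(k, T)$'': the point is that the image of a class from ${\rm{H}}^2(k, Z)$ in ${\rm{H}}^2(k_v, T)$ factors through ${\rm{H}}^2(k_v, \widetilde{G}) = 0$ composed with nothing — rather, one uses that ${\rm{H}}^1(k_v, T/Z) \to {\rm{H}}^2(k_v, Z) \to {\rm{H}}^2(k_v, T)$ is exact and ${\rm{H}}^1(k_v, \widetilde{G}) \to {\rm{H}}^1(k_v, G)$ together with ${\rm{H}}^1(k_v, G) = {\rm{image}}({\rm{H}}^1(k_v, T/Z))$ arguments — so I would run the diagram chase carefully at this spot.

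The main obstacle I anticipate is the surjectivity half, specifically the bookkeeping in the diagram chase that relates ${\rm{H}}^2(k, Z)$, ${\rm{H}}^2(k, T)$, $\Sha^2(k, T)$, and the local vanishing of ${\rm{H}}^2(k_v, \widetilde{G})$; one must make sure the maximal torus $T$ chosen from Proposition \ref{torustrivialSha^2} contains $Z$ (automatic by centrality) and that the comparison of connecting maps for $1 \to Z \to T \to T/Z \to 1$ and for (\ref{simplyconnectedcoverseq1}) is compatible (it is, via the inclusion of exact sequences). The injectivity half is comparatively soft once one accepts ${\rm{H}}^1(k, \widetilde{G}) = 1$ for simply connected groups over function fields and the analogous vanishing for all inner twists. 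I would also remark that, alternatively, surjectivity of $\delta$ can be extracted from Tate--Nakayama duality for $T$ combined with the known description of $\Sha^2$, but the route through Proposition \ref{torustrivialSha^2} is self-contained given what is already in the paper.
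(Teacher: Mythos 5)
Your injectivity argument is essentially the paper's: twist the central extension by a cocycle representing one of the two classes, use the bijection ${\rm{H}}^1(k,G)\simeq{\rm{H}}^1(k,G_x)$ to reduce to the fiber over the base point, and invoke Harder's vanishing ${\rm{H}}^1(k,\widetilde{G}_x)=1$ for the (still simply connected) twisted cover. That half is fine.

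The surjectivity half has a genuine gap, concentrated exactly where you flagged uncertainty. First, the reduction ``by exactness it suffices to show that ${\rm{H}}^2(k,Z)\to{\rm{H}}^2(k,\widetilde{G})$ has trivial image'' is not available: $\widetilde{G}$ is non-commutative, so ${\rm{H}}^2(k,\widetilde{G})$ is not defined in the framework being used, and the long exact sequence attached to a central extension terminates at ${\rm{H}}^2(k,Z)$; there is no standard exactness statement identifying the image of $\delta$ as a kernel of a further map. Second, and more seriously, applying Proposition \ref{torustrivialSha^2} with $\Sigma=\emptyset$ is not enough. Given $x\in{\rm{H}}^2(k,Z)$, what one must show is that its image in ${\rm{H}}^2(k,\widetilde{T})$ vanishes (where $\widetilde{T}\subset\widetilde{G}$ is the chosen maximal torus containing $Z$); since $\Sha^2(k,\widetilde{T})=0$, this reduces to showing the image of $x_v$ in ${\rm{H}}^2(k_v,\widetilde{T})$ vanishes for every $v$. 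At the finitely many places where $x_v\neq 0$ there is no reason for this with your choice of torus: by local duality ${\rm{H}}^2(k_v,\widetilde{T})$ vanishes precisely when $\widetilde{T}_{k_v}$ is anisotropic, which a torus chosen without reference to $x$ will generally fail to be, and the appeal to ``${\rm{H}}^2(k_v,\widetilde{G})=0$'' cannot substitute for this since that object is undefined. Your fallback sketch (that every relevant local class factors through ${\rm{H}}^1(k_v,T/Z)$ for the fixed torus) is also not true in general.

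The repair is exactly the paper's argument: first show that $x_v=0$ for all but finitely many $v$ (this needs a citation such as \cite[Prop.\,5.2.2]{rostateduality}, since $Z$ may be infinitesimal and one is working with fppf cohomology), let $\Sigma$ be the finite exceptional set, and apply Proposition \ref{torustrivialSha^2} with \emph{that} $\Sigma$, so that in addition to $\Sha^2(k,\widetilde{T})=0$ one has ${\rm{H}}^2(k_v,\widetilde{T})=0$ for $v\in\Sigma$ (coming from local anisotropicity). Then the image of $x_v$ in ${\rm{H}}^2(k_v,\widetilde{T})$ vanishes for all $v$ for the two complementary reasons, hence the image of $x$ in ${\rm{H}}^2(k,\widetilde{T})$ vanishes, $x$ lifts to ${\rm{H}}^1(k,T)$ for $T=\pi(\widetilde{T})$, and pushing forward to ${\rm{H}}^1(k,G)$ exhibits $x$ in the image of $\delta$ by compatibility of connecting maps. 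With that modification your outline becomes the paper's proof.
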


\begin{proof}
For the injectivity, suppose we have $x, y \in {\rm{H}}^1(k, G)$ such that $\delta(x) = \delta(y)$. Since $Z \subset \widetilde{G}$ is central, $G$ acts on the whole sequence by conjugation, so we may twist the sequence (\ref{simplyconnectedcoverseq1}) by $x$ (more precisely, by a cocycle representing $x$) to obtain a new sequence
\begin{equation}
\label{simplyconnectedcoverseq2}
1 \longrightarrow Z \longrightarrow \widetilde{G}_x \longrightarrow G_x \longrightarrow 1.
\end{equation}
(See \cite[Chap.\,I, \S 5.3]{serre} for details on twisting.) Using the natural bijection ${\rm{H}}^1(k, G) \xrightarrow{\sim} {\rm{H}}^1(k, G_x)$, which sends $x$ to $1$ \cite[Chap.\,I, \S 5.3, Prop.\,35]{serre}, if we denote the image of $y$ by $y_x$, then we see that we have $\delta(y_x) = \delta(x_x) = \delta(1) = 1$, so we may assume that $x = 1$ and $\delta(y) = 1$ (since all hypotheses are preserved when passing to the twisted sequence (\ref{simplyconnectedcoverseq2})), and we need to show that $y = 1$. This follows from the vanishing of ${\rm{H}}^1(k, \widetilde{G}_x)$, which holds because $\widetilde{G}_x$ is simply connected \cite[Satz A]{harder1}.

To prove that the connecting map is surjective, suppose that $x \in {\rm{H}}^2(k, Z)$. Then $x_v \in {\rm{H}}^2(k_v, Z)$ vanishes for all but finitely many places $v$ of $k$ by \cite[Prop.\,5.2.2]{rostateduality}. Let $\Sigma$ denote the finite set of places $v$ such that $x_v \neq 0$. By Proposition \ref{torustrivialSha^2}, there is a maximal $k$-torus $\widetilde{T} \subset \widetilde{G}$ such that $\Sha^2(k, \widetilde{T}) = 0$ and ${\rm{H}}^2(k_v, \widetilde{T}) = 0$ for all $v \in \Sigma$. Let $T := \pi(\widetilde{T})$ be the associated maximal $k$-torus of $G$.

Consider the exact sequence
\[
0 \longrightarrow Z \xlongrightarrow{j} \widetilde{T} \longrightarrow T \longrightarrow 0.
\]
Then $0 = j(x) \in {\rm{H}}^2(k, \widetilde{T})$. Indeed, since $\Sha^2(\widetilde{T}) = 0$, it suffices to show that $j(x_v) = 0$ for all places $v$ of $k$. For $v \notin \Sigma$, we have $x_v = 0$, so this is clear, while for $v \in \Sigma$, we have ${\rm{H}}^2(k_v, \widetilde{T}) = 0$, so this is once again clear. It follows that $x = \delta'(\alpha)$ for some $\alpha \in {\rm{H}}^1(k, T)$, where $\delta': {\rm{H}}^1(k, T) \rightarrow {\rm{H}}^2(k, Z)$ is the connecting map. Using the commutative diagram
\[
\begin{tikzcd}
0 \arrow{r} & Z \arrow[d, equals] \arrow{r} & \widetilde{T} \arrow{d} \arrow{r} & T \arrow{d}{i} \arrow{r} & 0 \\
1 \arrow{r} & Z \arrow{r} & \widetilde{G} \arrow{r} & G \arrow{r} & 1
\end{tikzcd}
\]
we then see that $x = \delta(i(\alpha)) \in \delta({\rm{H}}^1(k, G))$.
\end{proof}

Now we turn to the proof of Proposition \ref{suitablesplittorus}. Let $\overline{G}_0$ be the split form of $\overline{G}$. Fix a split maximal $k$-torus $\overline{T}_0 \subset \overline{G}_0$ and a choice of basis $\overline{\Delta}_0 \subset \Phi(\overline{G}_0, \overline{T}_0)$, and let $\overline{A}_0 \subset \overline{\Delta}_0$ be a set of mutually orthogonal simple roots such that $Z_{\overline{G}_0} \subset \prod_{\overline{a}_0 \in \overline{A}_0} \overline{a}_0^{\vee}(\Gm)$. For $\overline{a}_0 \in \Phi(\overline{G}_0, \overline{T}_0)$, let SL$_{\overline{a}_0} := \mathscr{D}(Z_{\overline{G}_0}((\overline{T}_0)_{\overline{a}_0}))$, where $(\overline{T}_0)_{\overline{a}_0} := \ker(\overline{a}_0)^0_{{\rm{red}}} \subset \overline{T}_0$. The group SL$_{\overline{a}_0}$ is isomorphic to SL$_2$, since $\overline{G}_0$ is simply connected.

Since the elements of $\overline{A}_0$ are mutually orthogonal, the root groups $U_{\pm \overline{a}_0}$ commute with the root groups $U_{\pm \overline{b}_0}$ for distinct $\overline{a}_0, \overline{b}_0 \in \overline{A}_0$. Since SL$_{\overline{a}_0}$ is generated by 
the corresponding root groups $U_{\pm \overline{a}_0}$, it follows that the groups $\{{\rm{SL}}_{\overline{a}_0}\}_{\overline{a}_0 \in \overline{A}_0}$
commute with  each other. Therefore, the multiplication map
\[
m: \prod_{ \overline{a}_0 \in \overline{A}_0} {\rm{SL}}_{\overline{a}_0} \rightarrow \overline{G}_0
\]
is a homomorphism. It restricts to an inclusion on an open neighborhood of 1 due to the open cell decomposition of the
{\em simply connected} $\overline{G}_0$ (since $\overline{T}_0$ is the direct product of the 1-parameter subgroups
arising from the coroots associated to $\overline{\Delta}_0$), so $m$ is birational onto its image and hence is an inclusion of $k$-groups. 

The $k$-group $Z_{\overline{G}_0} \subset \overline{G}_0$ is either trivial or $\mu_2$, as may be checked in the separate cases when $\overline{G}_0$ is of type ${\rm{F}}_4$ or ${\rm{G}}_2$ (trivial) or ${\rm{B}}_n$ or ${\rm{C}}_n$ ($\mu_2$). We have 
\[
Z_{\overline{G}_0} \subset \prod_{ \overline{a}_0 \in \overline{A}_0} {\rm{SL}}_{\overline{a}_0},
\]
so
\[
Z_{\overline{G}_0} \subset \prod_{ \overline{a}_0 \in \overline{A}_0} \overline{a}_0^{\vee}(\mu_2).
\]
The $k$-group $Z_{\overline{G}_0}$ is either trivial or $\mu_2$, and in the latter case it is the diagonally embedded
$\mu_2$ along the factors corresponding to the subset $\overline{R}_0 \subset \overline{A}_0$ of elements $\overline{a}_0 \in \overline{A}_0$
for which the projection from $Z_{\overline{G}_0}$ to the $\overline{a}_0$-factor is nontrivial.  
Hence, if we define $\overline{H}_0$ to be the copy of ${\rm{SL}}_2$ diagonally embedded
along $\overline{R}_0$ when $\overline{G}_0$ is of type B or C, and define
$\overline{H}_0 = 1$ for types F$_4$ and G$_2$ (in effect, $\overline{R}_0$ is empty), we have $Z_{\overline{H}_0} = Z_{\overline{G}_0}$.

Now the $k$-forms of $\overline{G}_0$ (such as $\overline{G}$) are classified by the set ${\rm{H}}^1(k, {\rm{Aut}}_{\overline{G}_0/k})$. The Dynkin diagram of $\overline{G}_0$, which is either F$_4$, G$_2$, B$_n$, or C$_n$, has no nontrivial automorphisms, hence ${\rm{Aut}}_{\overline{G}_0/k} = \overline{G}_0^{{\rm{ad}}} := \overline{G}_0/Z_{\overline{G}_0}$. Consider the commutative diagram of exact sequences
\[
\begin{tikzcd}
1 \arrow{r} & Z_{\overline{H}_0} \arrow[d, equals] \arrow{r} & \overline{H}_0 \arrow[d, hookrightarrow] \arrow{r} & \overline{H}_0^{{\rm{ad}}} \arrow{r} \arrow{d} & 1 \\
1 \arrow{r} & Z_{\overline{G}_0} \arrow{r} & \overline{G}_0 \arrow{r} & \overline{G}_0^{{\rm{ad}}} \arrow{r} & 1
\end{tikzcd}
\]
The connecting maps associated to the top and bottom sequences induce a commutative diagram
\[
\begin{tikzcd}
{\rm{H}}^1(k, \overline{H}_0^{{\rm{ad}}}) \arrow{r}{\delta} \arrow{d} & {\rm{H}}^2(k, Z_{\overline{H}_0}) \arrow[d, equals] \\
{\rm{H}}^1(k, \overline{G}_0^{{\rm{ad}}}) \arrow{r}{\delta} & {\rm{H}}^2(k, Z_{\overline{G}_0})
\end{tikzcd}
\]
Since $\overline{H}_0$ and $\overline{G}_0$ are simply connected and $k$ is a global function field, the well-known Proposition \ref{connectingmap=bijection} implies that the connecting maps above are bijections. Therefore, the left vertical map is also a bijection.

It follows that {\em any} $k$-form of $\overline{G}_0$, such as $\overline{G}$, is obtained by twisting $\overline{G}_0$ by the conjugation action of a $1$-cocycle valued in $\overline{H}_0$. Since every class in ${\rm{H}}^1(k, \overline{H}_0^{{\rm{ad}}})$
is represented by one valued in the normalizer of the split maximal torus \cite[Ch.\,III, \S2.2, Lemma 1]{serre}, 
 which in turn lies inside $N_{\overline{G}_0^{\rm{ad}}}(\overline{T}_0^{\rm{ad}})$ (since the reflections
 for the elements in $\overline{A}_0$ pairwise commute due to the pairwise-orthogonality condition), 
 there is a continuous $1$-cocycle $\zeta: {\rm{Gal}}(k_s/k) \rightarrow \overline{H}_0(k_s)$ normalizing $\overline{T}_0$ such that 
$(\overline{G}, \overline{T})$ is the $k_s/k$-form of $(\overline{G}_0, \overline{T}_0)$ using the $\zeta$-twisted Galois action
\[
{}^{\sigma'}g := \zeta_{\sigma} \cdot {}^{\sigma} g \cdot \zeta_{\sigma}^{-1}.
\]
In particular, for any split $k$-torus $\overline{S}_0 \subset \overline{T}_0$ that {\em commutes} with the $k$-group $\overline{H}_0$, $(\overline{S}_0)_{k_s}$ descends to a {\em split} $k$-torus $\overline{S} \subset \overline{G}$. We claim that the torus 
$$\overline{S}_0 := \left(\bigcap_{\overline{a}_0 \in \overline{A}_0} \ker(\overline{a}_0)\right)^0_{{\rm{red}}}$$
is such a $k$-torus. Assuming this, we get a corresponding split torus $S \subset T$ via the isogeny $T \to \overline{T}$,
and it remains to check that $Z_G(S)$ and $Z_{\overline{S}}(\overline{S})$ both have Tamagawa number $1$. 

In order to prove that $\overline{S}_0$ commutes with $\overline{H}_0$, recall that $\overline{H}_0$ is generated by the root groups $U_{\pm \overline{a}_0}$ for $\overline{a}_0 \in \overline{A}_0$. Thus, it suffices to show that $\overline{S}_0$ commutes with each of these
root groups. But the conjugation action of $\overline{T}_0$ on the root group $U_{\pm \overline{a}_0} \simeq \Ga$ is given by the character $\pm \overline{a}_0$ which kills $\overline{S}_0$ by definition when $\overline{a}_0 \in \overline{A}_0$, so $\overline{S}_0$ centralizes $U_{\pm \overline{a}_0}$ for $a \in \overline{A}_0$ as desired.  Let $\overline{A} \subset \overline{\Delta} \subset X(\overline{T}_{k_s})$
correspond to $\overline{A}_0 \subset \overline{\Delta}_0 \subset X(\overline{T}_0) = X((\overline{T}_0)_{k_s})$ 
via the identification $\overline{T}_{k_s} \simeq (\overline{T}_0)_{k_s}$.  
We have now built the split $k$-subtori $\overline{S} \subset \overline{T}$ and $S \subset T$,
and we will show that $\tau(Z_G(S)) = 1 = \tau(Z_{\overline{G}}(\overline{S}))$. 

Let us first show that $\tau(Z_{\overline{G}}(\overline{S})) = 1$.
As a first step, we show that $\overline{S} \subset Z_{\overline{G}}(\overline{S})$ is the maximal central torus. 
Any central torus $\overline{S}' \subset Z_{\overline{G}}(\overline{S})$ is contained in every maximal torus, hence in 
$T$. The root groups $U_{\overline{a}} \subset \overline{G}_{k_s}$ are contained in $Z_{\overline{G}}(\overline{S})_{k_s}$ for $\overline{a} \in \overline{A}$, so $\overline{S}'_{k_s} \subset \ker(\overline{a})$ for such $\overline{a}$. Therefore, 
$$\overline{S}'_{k_s} \subset (\bigcap_{\overline{a} \in \overline{A}} \ker(\overline{a}))^0_{{\rm{red}}} = \overline{S}_{k_s},$$
so $\overline{S}$ is indeed the maximal central torus of $Z_{\overline{G}}(\overline{S})$. 
Since $Z_{\overline{G}}(\overline{S})$ is reductive, we therefore have $$Z_{\overline{G}}(\overline{S}) = \overline{S} \cdot \mathscr{D}(Z_{\overline{G}}(\overline{S})).$$

It follows that the abelianization $Z_{\overline{G}}(\overline{S})^{{\rm{ab}}}$ of $Z_{\overline{G}}(\overline{S})$ is a split torus. Further, the group $\mathscr{D}(Z_{\overline{G}}(\overline{S}))$ is simply connected (this is well-known, but isn't discussed in standard references;
a proof is given in the extensive hints for \cite[Exer.\,6.5.2(iv)]{conrad2}). Consider the exact sequence
\[
1 \longrightarrow \mathscr{D}(Z_{\overline{G}}(\overline{S})) \longrightarrow Z_{\overline{G}}(\overline{S}) \longrightarrow Z_{\overline{G}}(\overline{S})^{{\rm{ab}}} \longrightarrow 1.
\]
Since $\mathscr{D}(Z_{\overline{G}}(\overline{S}))$ is simply connected, the following properties hold: it has
no nontrivial characters, ${\rm{H}}^1(k, \mathscr{D}(Z_{\overline{G}}(\overline{S}))) = 1$ \cite[Satz A]{harder1}, and ${\rm{H}}^1(\A, \mathscr{D}(Z_{\overline{G}}(\overline{S}))) = 1$ (by Proposition \ref{adeliccohomologyprop} and \cite[Thm.\,4.7]{bruhattits}). The map $Z_{\overline{G}}(\overline{S})(\A) \rightarrow Z_{\overline{G}}(\overline{S})^{{\rm{ab}}}(\A)$ is therefore surjective and by \cite[Chap.\,III, \S 5.3, Thm.]{oesterle}, we have
\[
\tau(Z_{\overline{G}}(\overline{S})) = \tau(\mathscr{D}(Z_{\overline{G}}(\overline{S}))) \cdot \tau(Z_{\overline{G}}(\overline{S})^{{\rm{ab}}}).
\]
But $\tau(\mathscr{D}(Z_{\overline{G}}(\overline{S}))) = 1$ because $\mathscr{D}(Z_{\overline{G}}(\overline{S}))$ is simply connected, and $\tau(Z_{\overline{G}}(\overline{S})^{{\rm{ab}}}) = 1$ because $Z_{\overline{G}}(\overline{S})^{{\rm{ab}}}$ is a split torus. Therefore, $\tau(Z_{\overline{G}}(\overline{S})) = 1$.

It remains to show that $\tau(Z_G(S)) = 1$. Let $T_{k'} \subset G' = G_{k'}^{\rm{red}}$ be the maximal $k'$-torus arising from $T \subset G$. Then we have a natural identification $\Phi := \Phi(G'_{k'_s}, T'_{k'_s}) = \Phi(G_{k_s}, T_{k_s})$ \cite[Prop.\,7.2.7(2)]{cgp}. Let $\Delta \subset \Phi$ be the basis corresponding to $\overline{\Delta} \subset \overline{\Phi} := \Phi(\overline{G}_{k_s}, \overline{T}_{k_s})$ in the dual root system 
via the isogeny $T \to \overline{T}$ \cite[Prop.\,7.1.5]{cgp}, and let $A \subset \Delta$ be the subset corresponding to $\overline{A} \subset \overline{\Delta}$. 

We first claim that the only roots $b \in \Phi$ that are trivial on $S$ are those of the form $\pm a$ with $a \in A$. 
We may suppose that $b$ is positive with respect to $\Delta$, and we want to show $b \in A$. Since 
$\Delta$ is a $\Z$-basis of $X(T_{k_s})$ (as $G'_{k_s}$ is simply connected and $X(T_{k_s}) \simeq X(T'_{k_s})$) and $b$ is trivial on 
$$S = (\bigcap_{a \in A} \ker(a))^0_{{\rm{red}}},$$ 
we have $b = \sum_{a \in A} n_a a \in \Phi$ with $n_a \in \Z_{\geq 0}$. 
There is some $a_1$ for which $n_{a_1} > 0$. For any $a_2 \in A$ distinct from $a_1$, apply the reflection $r_{a_2}$ to get the root $r_{a_2}(b) \in \Phi$.
Since the elements of $A$ are mutually orthogonal, we have 
$$r_{a_2}(b) = -n_{a_2} a_2 + \sum_{a \in A-\{a_2\}} n_a a \in \Phi$$
with $a_1$-coefficient $n_{a_1} > 0$, so all coefficients must be non-negative.  This forces $-n_{a_2} \ge 0$, so $n_{a_2} \le 0$,
yet by design $n_{a_2} \ge 0$.  Thus, $n_{a_2} = 0$ for all $a_2 \in A$ distinct from $a_1$.
This yields $b = n_{a_1} a_1$ with $n_{a_1}$ a positive integer.  By reducedness of $\Phi$, we conclude that $b = a_1 \in A$ as desired.

For a root $a \in \Phi$, let $U'_a \subset G'_{k_s}$ denote the corresponding root group. Let $A_< := A \cap \Phi_<$ denote the set of 
short roots in $A$ and $A_> := A \cap \Phi_>$ denote the set of long roots in $A$. Using the $T_{k_s}$-equivariant
description of the open cell 
for $(G_{k_s}, T_{k_s})$ associated to $\Delta$ given in \cite[Remark 7.2.8]{cgp}, together with the fact that the only roots killing $S_{k_s}$ are the elements of $A$ and their negatives (as shown above), an elementary $S_{k_s}$-centralizer calculation
factorwise in the open cell for $(G_{k_s}, T_{k_s}, \Delta)$ yields that the open cell for $(Z_G(S)_{k_s}, T_{k_s}, \Delta)$ is
\[
\R_{k'_s/k_s}(U'_{-A_{<}}) \times U'_{-A_{>}} \times \prod_{b \in \Delta_<} \R_{k'_s/k_s}(b^{\vee}(\Gm)) \times \prod_{b \in \Delta_>} b^{\vee}(\Gm) \times \R_{k'_s/k_s}(U'_{A_{<}}) \times  U'_{A_{>}}
\]
for the product varieties (also subgroups, by pairwise orthogonality of roots in $A$) 
$$U'_{A_{>}} = \prod_{a \in A_{>}} U'_a,\,\,\,U'_{A_{<}} = \prod_{a \in A_{<}} U'_a,\,\,\,
U'_{-A_{>}} = \prod_{a \in A_{>}} U'_{-a},\,\,\,U'_{-A_{<}} = \prod_{a \in A_{<}} U'_{-a}.$$

Since root groups for a maximal torus in a pseudo-reductive group are normalized by the corresponding Cartan subgroup
and the groups $\{{\rm{SL}}_a\}_{a \in A}$ pairwise commute (by pairwise orthogonality of the roots in $A$), the semidirect product
\begin{equation}
\label{Z_G(S)opencelleqn1}
(\prod_{b \in \Delta_< - A} \R_{k'_s/k_s}(b^{\vee}(\Gm)) \times \prod_{b \in \Delta_> - A} b^{\vee}(\Gm)) \ltimes 
(\prod_{a \in A_<} \R_{k'_s/k_s}({\rm{SL}}_a) \times \prod_{a \in A_>} {\rm{SL}}_a)
\end{equation}
makes sense as a $k_s$-subgroup of $G_{k_s}$ yet its own open cell is an {\em open subvariety} of $Z_G(S)_{k_s}$ and hence this group coincides
with $Z_G(S)_{k_s}$!

The groups ${\rm{SL}}_a \subset G'$ are perfect, and so likewise for each $\R_{k'/k}({\rm{SL}}_a)$ \cite[Prop.\,2.2.4]{cpsurvey}, so since (\ref{Z_G(S)opencelleqn1}) equals $Z_G(S)_{k_s}$, we get 
\begin{equation}
\label{DZ_G(S)opencelleqn2}
\mathscr{D}(Z_G(S))_{k_s} = \prod_{a \in A_<} \R_{k'_s/k_s}({\rm{SL}}_a) \times \prod_{a \in A_>} {\rm{SL}}_a.
\end{equation}
If we define the finite reduced $k$-algebra $F := {k'}^{A_{<}} \times k^{A_{>}}$ (which is 0 when $A$ is empty), then this says 
that the $k$-groups $\mathscr{D}(Z_G(S))$ and ${\rm{R}}_{F/k}({\rm{SL}}_2)$ are $k_s/k$-forms of each other. We can thereby get a description of $\mathscr{D}(Z_G(S))$: it is $k$-isomorphic to $\R_{K/k}(H)$ for a finite reduced $k$-algebra $K$
and a smooth affine $K$-group $H$ whose fiber over each factor field of $K$ is a form of ${\rm{SL}}_2$. Indeed, we need to show that any $k_s/k$-descent datum on $\R_{F/k}({\rm{SL}}_2)$ arises uniquely from a $k_s/k$-descent datum on the $k$-algebra $F$ and a descent datum on ${\rm{SL}}_2$ over that. This is immediate from \cite[Prop.\,A.5.14]{cgp}.

Summarizing, we have shown that
\begin{equation}
\label{DZ=R(SL2)}
\mathscr{D}(Z_G(S)) = \R_{K/k}(H)
\end{equation}
for some (possibly $0$) finite reduced $k$-algebra $K$ and \'etale $K$-form $H$ of ${\rm{SL}}_2$. Next we turn to analyzing the group $Z_G(S)^{{\rm{ab}}} := Z_G(S)/\mathscr{D}(Z_G(S))$.

By (\ref{Z_G(S)opencelleqn1}) and (\ref{DZ_G(S)opencelleqn2}), we have a $k_s$-isomorphism
\begin{equation}
\label{psik_sisomeqn}
\psi: ((\Gm)^{n_1} \times (\R_{k'/k}(\Gm))^{n_2})_{k_s} \simeq Z_G(S)^{{\rm{ab}}}_{k_s}
\end{equation}
for some nonnegative integers $n_1, n_2$. The maximal torus of the commutative pseudo-reductive $k$-group $(\Gm)^{n_1} \times (\R_{k'/k}(\Gm))^{n_2}$ is split (since the maximal torus of $\R_{k'/k}(\Gm)$ is the canonically embedded $\Gm$, as $k'/k$ is purely inseparable). We claim that the maximal $k$-torus in the commutative pseudo-reductive $k$-group $Z_G(S)^{{\rm{ab}}}$ is also split. (This abelianization is pseudo-reductive by (\ref{psik_sisomeqn}) because pseudo-reductivity may be checked over $k_s$.)

In fact, we claim that the image of the split torus $S$ in this commutative group is its maximal $k$-torus. 
By (\ref{Z_G(S)opencelleqn1}) and (\ref{DZ_G(S)opencelleqn2}), it is equivalent to show that the image of the projection map from $S_{k_s} \subset Z_G(S)_{k_s}$ into the direct factor 
\[
\prod_{\substack{b \in \Delta_< -A_<}} \R_{k_s'/k_s}(b^{\vee}(\Gm)) \times \prod_{\substack{b \in \Delta_> - A_>}} b^{\vee}(\Gm) \subset \prod_{b \in \Delta_<} \R_{k'_s/k_s}(b^{\vee}(\Gm)) \times \prod_{b \in \Delta_>} b^{\vee}(\Gm) = Z_G(T)_{k_s}
\]
is its maximal $k_s$-torus. The maximal torus of the left side has dimension $\# \Delta - \# A$,
which is the same as the dimension of $S$, so it suffices to show that the kernel of the projection from $S_{k_s}$ into the factor group on the left is finite. This kernel is 
$$S_{k_s} \bigcap \prod_{a \in A} a^{\vee}(\Gm).$$ 
For any point $\prod_{a \in A} a^{\vee}(t_a) \in T$ with $t_a \in \Gm$, applying $b \in A$ to this element yields $b(b^{\vee}(t_b)) = t_b^2$
by mutual orthogonality of the elements of $A$. Hence, when that point belongs to $S$
we conclude from the triviality of $b|_S$ that $t_b^2 = 1$ for each $b \in A$.  This proves the desired finiteness of the intersection. 

We have shown that (\ref{psik_sisomeqn}) yields an isomorphism over $k_s$ between two commutative pseudo-reductive $k$-groups with split maximal $k$-tori. The following result ensures that this isomorphism is actually defined over $k$.

\begin{lemma}
Let $k$ be a field, and $C, C'$ commutative pseudo-reductive $k$-groups whose maximal $k$-tori are split. 
Then any $k_s$-homomorphism $\phi: C_{k_s} \rightarrow C'_{k_s}$ descends to a $k$-homomorphism $C \rightarrow C'$.
\end{lemma}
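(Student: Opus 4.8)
The plan is a Galois descent argument. A $k_s$-homomorphism $\phi\colon C_{k_s}\to C'_{k_s}$ descends to a $k$-homomorphism $C\to C'$ precisely when it is invariant under the natural semilinear action of ${\rm{Gal}}(k_s/k)$, i.e.\ when ${}^{\sigma}\phi=\phi$ for every $\sigma\in{\rm{Gal}}(k_s/k)$. Since $C'$ is commutative, the set $\Hom_{k_s}(C_{k_s},C'_{k_s})$ of $k_s$-group homomorphisms is an abelian group, so for each $\sigma$ we may form $\psi_\sigma:=({}^{\sigma}\phi)\cdot\phi^{-1}$, and it suffices to show that each $\psi_\sigma$ is the trivial homomorphism.

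First I would restrict to maximal tori. Let $T\subset C$ and $T'\subset C'$ be the maximal $k$-tori; these are unique (as $C,C'$ are commutative), hence $k$-rational, and split by hypothesis, and the formation of $T'$ commutes with the extension $k_s/k$. The image $\phi(T_{k_s})$ is the image of a torus under a homomorphism of affine algebraic groups, hence is a torus; being a subtorus of the commutative group $C'_{k_s}$, it lies in the maximal torus $(T')_{k_s}$ of $C'_{k_s}$, which is split. Thus $\phi$ restricts to a $k_s$-homomorphism $T_{k_s}\to (T')_{k_s}$ between split tori, and $\Hom_{k_s}(T_{k_s},(T')_{k_s})\cong\Hom_{\Z}(\widehat{T'}(k_s),\widehat{T}(k_s))$. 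Since $T$ and $T'$ are split, ${\rm{Gal}}(k_s/k)$ acts trivially on the character groups $\widehat{T}(k_s)$ and $\widehat{T'}(k_s)$, hence trivially on this $\Hom$ group; in particular ${}^{\sigma}(\phi|_{T_{k_s}})=\phi|_{T_{k_s}}$, so $\psi_\sigma|_{T_{k_s}}$ is trivial and $\psi_\sigma$ factors through a $k_s$-homomorphism $\overline{\psi}_\sigma\colon C_{k_s}/T_{k_s}\to C'_{k_s}$.

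Finally I would invoke pseudo-reductivity of the target. By the structure theory of smooth connected commutative affine groups over an algebraically closed field, $C_{\overline{k}}\cong T_{\overline{k}}\times U$ with $U$ unipotent, so $(C/T)_{\overline{k}}\cong U$ is unipotent and hence $C/T$ is unipotent; thus $C_{k_s}/T_{k_s}=(C/T)_{k_s}$ is a smooth connected unipotent $k_s$-group. Therefore the image of $\overline{\psi}_\sigma$ is a smooth connected unipotent $k_s$-subgroup of $C'_{k_s}$, necessarily normal since $C'_{k_s}$ is commutative, hence contained in $\mathscr{R}_{u,k_s}(C'_{k_s})$. But $C'$ is pseudo-reductive, so $C'_{k_s}$ is pseudo-reductive by \cite[Prop.\,1.1.9]{cgp}; that is, $\mathscr{R}_{u,k_s}(C'_{k_s})=1$. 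Hence $\overline{\psi}_\sigma$, and so $\psi_\sigma$, is trivial; as $\sigma$ was arbitrary, $\phi$ is ${\rm{Gal}}(k_s/k)$-invariant and descends to $k$. This argument is essentially routine: the only genuinely hypothesis-dependent steps are the splitness of $T$ and $T'$ (which is exactly what forces $\phi|_{T_{k_s}}$ to be Galois-invariant) and the pseudo-reductivity of $C'$ (which annihilates the residual homomorphism on $C_{k_s}/T_{k_s}$), and the only bookkeeping to watch is that $\phi(T_{k_s})$ really lands in the maximal torus of $C'_{k_s}$ and that $C/T$ is genuinely unipotent, both of which are standard. I do not anticipate a serious obstacle.
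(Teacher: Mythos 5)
Your proof is correct and follows essentially the same route as the paper: reduce Galois-invariance of $\phi$ to Galois-invariance of its restriction between the maximal tori, which holds because those tori are split. The only difference is that where you verify by hand that $\phi$ is determined by this restriction (the difference homomorphism kills the maximal torus, factors through the unipotent quotient, and has image inside the trivial $k_s$-unipotent radical of the pseudo-reductive target), the paper simply cites \cite[Prop.\,1.2.2]{cgp} for exactly that determination statement.
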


\begin{proof}
We need to show that ${}^{\sigma}\phi = \phi$ for any $\sigma \in {\rm{Gal}}(k_s/k)$. But a $k_s$-homomorphism between commutative pseudo-reductive $k_s$-groups is determined by its restriction between maximal $k_s$-tori \cite[Prop.\,1.2.2]{cgp}, so we only need to show that the map that $\phi$ induces between maximal $k_s$-tori is defined over $k$. But any $k_s$-homomorphism between split $k$-tori is defined over $k$,
so we are done.
\end{proof}

Summarizing, we have that
\[
Z_G(S)^{{\rm{ab}}} \simeq \Gm^{n_1} \times (\R_{k'/k}(\Gm))^{n_2}
\]
for some nonnegative integers $n_1, n_2$. In particular, 
\begin{equation}
\label{ZGabTam=1eqn1}
\tau(Z_G(S)^{{\rm{ab}}}) = 1.
\end{equation}

By (\ref{DZ=R(SL2)}), \cite[Chap.\,IV, \S 2.3, Cor.]{oesterle}, and \cite[Satz A]{harder1}, we have ${\rm{H}}^1(k, \mathscr{D}(Z_G(S))) = 1$. By \cite[Prop.\,2.2.4]{cpsurvey}, $\mathscr{D}(Z_G(S))$ is perfect, hence has no nontrivial characters. Finally, (\ref{Z_G(S)opencelleqn1}) and (\ref{DZ_G(S)opencelleqn2}) show that the map $Z_G(S)(\A) \rightarrow Z_G(S)^{{\rm{ab}}}(\A)$ is surjective. By \cite[Chap.\,III, \S 5.3, Thm.]{oesterle}, we therefore have
\[
\tau(Z_G(S)) = \tau(\mathscr{D}(Z_G(S))) \cdot \tau(Z_G(S)^{{\rm{ab}}}),
\]
so (\ref{ZGabTam=1eqn1}) therefore reduces us to showing that $\tau(\mathscr{D}(Z_G(S))) = 1$. Since Tamagawa numbers are invariant under Weil restriction \cite[Chap.\,II, \S 1.3, Thm.\,(d)]{oesterle} and simply connected groups have Tamagawa number $1$, 
we may conclude by (\ref{DZ=R(SL2)}). The proof of Proposition \ref{suitablesplittorus}, hence also that of Proposition \ref{basicexotictam=1}, is complete. \hfill \qed

\section{Tamagawa number formula and inner-twist invariance}
\label{sectiontamagawaformula}

In this section we prove Theorems \ref{tamagawaformula} and \ref{innerinvariance}.

\subsection{Tamagawa number formula}

First we treat the commutative case of Theorem \ref{tamagawaformula}, which is a consequence of Tate duality for affine commutative group schemes of finite type. Before giving the proof, we recall a couple of definitions. 

Let $G$ be a commutative group scheme over a global field $k$. Then we define $\Sha^i(G)$ and $\Che^i(G)$ to be the kernel and cokernel, respectively, of the map ${\rm{H}}^i(k, G) \rightarrow {\rm{H}}^i(\A, G)$. If $G$ is smooth and connected, then by \cite[Prop.\,5.2.2]{rostateduality}, ${\rm{H}}^i(\A, G) = \oplus_v {\rm{H}}^i(k_v, G)$ for $i > 0$, the sum being over all places $v$ of $k$. These definitions of $\Sha$ and $\Che$ therefore agree (for $i > 0$) with the ones given in \cite{oesterle}. This agreement is important because we shall use results from \cite{oesterle} involving these quantities in the proof below. We will actually give two formulas for Tamagawa numbers of commutative linear algebraic groups, one of them a simplification of a formula of Oesterl\'e's.

\begin{theorem}
\label{tamagawaformulacomm}
Let $G$ be a connected commutative linear algebraic group over a global field $k$. Then
\[
\tau(G) = \frac{\# \Che^1(G)\cdot \# \Sha^2(G)}{\# \Sha^1(G)} = \frac{\# \Ext^1(G, \Gm)}{\# \Sha^1(G)}.
\]
\end{theorem}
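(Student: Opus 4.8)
The plan is to prove \ref{tamagawaformulacomm} by establishing the two equalities separately, with global Tate duality doing the heavy lifting in both.

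For the first equality $\tau(G) = \#\Che^1(G)\cdot\#\Sha^2(G)/\#\Sha^1(G)$, I would start from Oesterl\'e's formula for the Tamagawa number of a smooth connected commutative linear algebraic group over a global field \cite[Ch.\,III]{oesterle}, which expresses $\tau(G)$ in terms of the cardinalities of various global and local cohomology groups attached to $G$ and to its $\Gm$-dual. One then reorganizes those terms using the global duality theorems of \cite{rostateduality} -- in particular the pairing identifying $\Sha^1(G)$ with the Pontryagin dual of $\Sha^2(\widehat{G})$, and exactness in the Poitou--Tate-type sequence -- together with the agreement, noted just before the statement of the theorem, between the groups $\Sha^i(G)$, $\Che^i(G)$ used here and those of \cite{oesterle}. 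The result of this bookkeeping is exactly $\#\Che^1(G)\cdot\#\Sha^2(G)/\#\Sha^1(G)$. Throughout one uses that the relevant groups are finite: $\Sha^1(G)$ and $\Sha^2(G)$ are finite by the cited work of Oesterl\'e, $\Ext^1(G,\Gm)$ is finite by \cite[Thm.\,1.1]{rospic}, and finiteness of $\Che^1(G)$ then follows from the second equality proved below.

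For the second equality it suffices, given finiteness, to show $\#\Che^1(G)\cdot\#\Sha^2(G) = \#\Ext^1(G,\Gm)$. Here I would invoke the generalized Poitou--Tate exact sequence of \cite[Thm.\,1.2.8]{rostateduality} and its continuation
\[
{\rm{H}}^1(k,G) \longrightarrow {\rm{H}}^1(\A,G) \longrightarrow {\rm{H}}^1(k,\widehat{G})^* \longrightarrow {\rm{H}}^2(k,G) \longrightarrow {\rm{H}}^2(\A,G).
\]
Exactness at ${\rm{H}}^1(\A,G)$ and at ${\rm{H}}^1(k,\widehat{G})^*$ identifies $\Che^1(G)$ with the kernel of ${\rm{H}}^1(k,\widehat{G})^* \to {\rm{H}}^2(k,G)$, while exactness at ${\rm{H}}^2(k,G)$ identifies the image of that map with $\Sha^2(G)$; counting yields $\#{\rm{H}}^1(k,\widehat{G}) = \#{\rm{H}}^1(k,\widehat{G})^* = \#\Che^1(G)\cdot\#\Sha^2(G)$. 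It then remains to identify ${\rm{H}}^1(k,\widehat{G})$ with $\Ext^1(G,\Gm)$, and for this I would use the functorial exact sequence $(\ref{H^1(G^)=kerExt})$: since every extension of the connected group $G$ by $\Gm$ is central, that sequence reads $0 \to {\rm{H}}^1(k,\widehat{G}) \to \Ext^1(G,\Gm) \to \Ext^1(G_{\overline{k}},(\Gm)_{\overline{k}})$, and its last term vanishes because over $\overline{k}$ a connected commutative linear algebraic group is a product of a torus and a split unipotent group, neither of which admits a nontrivial extension by $\Gm$ (for the unipotent factor this uses splitness over the perfect field $\overline{k}$). Hence $\Ext^1(G,\Gm) = {\rm{H}}^1(k,\widehat{G})$, which completes the argument (and in particular recovers the commutative case of Theorem \ref{tamagawaformula}).

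I expect the first step to be the main obstacle. The difficulty is not conceptual but bookkeeping: one must extract Oesterl\'e's formula in a usable form and match its ingredients with the clean cohomological quantities $\Che^1, \Sha^1, \Sha^2$, taking care that the normalizations of Tamagawa measure and of the duality pairings in \cite{oesterle} and \cite{rostateduality} are compatible. A genuine (not merely cosmetic) point is that the duality inputs needed for the simplification were originally available only in restricted settings, so one truly relies on the extension of global Tate duality to positive-dimensional unipotent coefficients over global function fields furnished by \cite{rostateduality}.
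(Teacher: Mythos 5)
Your proposal is correct and follows the paper's proof in essentially all respects: the first equality is Oesterl\'e's formula \cite[Ch.\,IV, Thm.\,3.2]{oesterle}, whose extra denominator factor $\#\ker(\lambda_G)$ (with $\lambda_G:\Che^2(G)\to\widehat{G}(k)^*$) is killed by exactness of the Poitou--Tate-type sequence of \cite[Thm.\,1.2.8]{rostateduality}, and the second equality is exactly your counting $\#{\rm{H}}^1(k,\widehat{G})^* = \#\Che^1(G)\cdot\#\Sha^2(G)$ extracted from that same sequence. The only (harmless) divergence is that you identify $\Ext^1(G,\Gm)$ with ${\rm{H}}^1(k,\widehat{G})$ via the sequence (\ref{H^1(G^)=kerExt}) together with the vanishing of $\Ext^1(G_{\overline{k}},(\Gm)_{\overline{k}})$, whereas the paper cites the derived-functor identification \cite[Cor.\,2.3.4]{rostateduality} directly.
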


\begin{proof}
Define $\widehat{G}$ to be the (possibly non-representable) Hom-functor $\mathscr{H}om(G, \Gm)$ on the category of $k$-schemes. By \cite[Ch.\,IV, Thm.\,3.2]{oesterle}, we have
\[
\tau(G) = \frac{\# \Che^1(G) \cdot \# \Sha^2(G)}{\# \Sha^1(G)\cdot \# \ker(\lambda_G)},
\]
where $\lambda_G: \Che^2(G) \rightarrow \widehat{G}(k)^*$ is the map obtained from the last three nonzero terms in the complex of \cite[Thm.\,1.2.8]{rostateduality}. 
The exactness of this complex (which is the content of \cite[Thm.\,1.2.8]{rostateduality}) yields that $\lambda_G$ is an isomorphism! This proves the first equality.

For the second equality, we need to show that $\# \Ext^1(G, \Gm) = \# \Che^1(G)\cdot \# \Sha^2(G)$. As we have already remarked, $\Ext^1(G, \Gm)$ agrees with the derived-functor $\Ext$ \cite[Prop.\,4.3]{rospic}. By \cite[Cor.\,2.3.4]{rostateduality}, which concerns the derived-functor Ext,
we have 
$$\Ext^1(G, \Gm) \simeq {\rm{H}}^1(k, \widehat{G}).$$ Hence, ${\rm{H}}^1(k, \widehat{G})$ is finite by \cite[Thm.\,1.1]{rospic}, and it suffices 
to show $\# {\rm{H}}^1(k, \widehat{G}) = \# \Che^1(G)\cdot \# \Sha^2(G)$. 
But \cite[Thm.\,1.2.8]{rostateduality} provides an exact sequence
\[
0 \longrightarrow \Che^1(G) \longrightarrow {\rm{H}}^1(k, \widehat{G})^* \longrightarrow \Sha^2(G) \longrightarrow 0,
\]
so the $\Q/\Z$-dual ${\rm{H}}^1(k, \widehat{G})^*$ of the finite group ${\rm{H}}^1(k, \widehat{G})$
has size $ \# \Che^1(G)\cdot \# \Sha^2(G)$.  Since any finite abelian group has the same size as its $\Q/\Z$-dual, we are done!
\end{proof}

Now we turn to the proof of Theorem \ref{tamagawaformula} for pseudo-reductive groups, which actually requires us to already know the result for commutative groups. In fact, the classification theorem (Theorem \ref{predclassification}) for pseudo-reductive groups explicitly classifies such groups modulo the commutative case, about which one can say very little. It is therefore essential that we have Theorem \ref{tamagawaformulacomm} for very general commutative groups; we shall use it below for groups that may not even be pseudo-reductive.

Now let $G$ be a pseudo-reductive group over a global function field $k$. Since the commutative case has already been handled by Theorem \ref{tamagawaformulacomm}, Theorem \ref{predclassification} allows us to assume that $G$ is non-commutative and is either generalized standard or a Weil restriction of a basic non-reduced group over a finite extension $k'$ of $k$ (in which case necessarily char$(k) = 2$ and $[k: k^2] = 2$). If ${\rm{char}}(k) > 3$, then $G$ must actually be a standard pseudo-reductive group \cite[Thm.\,5.1.1(1)]{cgp}, so the reader who wishes to avoid extra complications in characteristics $2$ and $3$ may just concentrate on the standard case. In particular, he may ignore all mention of basic exotic and basic non-reduced groups, as well as any invocations of Propositions \ref{basicexotictam=1} and \ref{measuresagreebasic}.

We first treat the generalized standard case. We keep the notation of \S \ref{classificationsection}, so $k'$ is a nonzero finite reduced $k$-algebra, $G'$ is a $k'$-group scheme all of whose fibers over the factor fields of $k'$ are either connected semisimple, absolutely simple, and simply connected, or else basic exotic pseudo-reductive (the latter case is only a possibility when char$(k) = p \in \{2, 3\}$, so the reader willing to ignore small characteristics may ignore it and assume that all fibers are of the former type), $T' \subset G'$ is a maximal $k'$-torus (and we are free to choose $T'$ to be any maximal $k'$-torus in $G'$ that we wish; see Remark \ref{choiceoftorus}), $\mathscr{C} := \R_{k'/k}(Z_{G'}(T'))$  is the Weil restriction of the corresponding Cartan subgroup of $G'$ (so $\mathscr{C}$ and $\R_{k'/k}(T')$ agree for the contribution from factor fields of $k'$ with simply connected fiber), $C$ is a commutative pseudo-reductive group acting on $\R_{k'/k}(G')$ so that the action is trivial on $\mathscr{C} \subset \R_{k'/k}(G')$, and there is a map
\[
\mathscr{C} \xrightarrow{\phi} C
\]
such that the $C$-action on $\R_{k'/k}(G')$ is compatible with the conjugation action of $\mathscr{C}$ via $\phi$. 

We have the anti-diagonal map $\mathscr{C} \rightarrow C \ltimes \R_{k'/k}(G')$ defined by $(-\phi, j)$, where $j: \mathscr{C} \hookrightarrow \R_{k'/k}(G')$ is the inclusion. This map is an isomorphism of $\mathscr{C}$ onto a central subgroup, and $G$ is the quotient
\[
G = (C \ltimes \R_{k'/k}(G'))/\mathscr{C}.
\]
As we remarked above, we are free to choose the maximal torus $T'$, which we do as follows. We have a $k'$-group $\overline{G}'$ that on each fiber over a factor field of $k'$ is simply connected, and such that there is a specific surjective map of $k'$-group schemes 
\begin{equation}
\label{map1}
f: G' \rightarrow \overline{G}'.
\end{equation}
Indeed, for any simply connected fiber $H'$ of $G'$, we take the corresponding fiber of $\overline{G}'$ to also be $H'$ and $f$ to be the identity map. For a basic exotic fiber $H'$ of $G'$, we have the canonical surjection $H' \twoheadrightarrow \overline{H}'$ to a simply connected group $\overline{H}'$ (Remark \ref{canonicalsurjection}), and we take the corresponding fiber of $G'$ to be $\overline{H}'$ and $f$ to be the above surjection. There is a bijection between maximal $k'$-tori of $G'$ and those of $\overline{G}'$ given by sending a maximal torus $\overline{T}'$ of $\overline{G}'$ to the unique maximal torus $T' \subset f^{-1}(\overline{T}')$, and the map $T' \rightarrow \overline{T}'$ is an isogeny. This assertion may be checked fiber by fiber, so there is only something to check on the basic exotic fibers, where it is \cite[Cor.\,7.3.4]{cgp} and was also discussed in \S \ref{classificationsection}. 

Now we use Proposition \ref{torustrivialSha^2} to choose a maximal $k'$-torus $\overline{T}' \subset \overline{G}'$ such that $\Sha^2(k', \overline{T}') = 0$, and we take $T' \subset G'$ to be the maximal torus corresponding to $\overline{T}'$ under the above bijection. The key point to note is that
\begin{equation}
\label{Sha^2=0forourcartan}
\Sha^2(k', Z_{G'}(T')) = 0.
\end{equation}
Indeed, this may be checked fiber by fiber. On the simply connected fibers, the centralizer of $T'$ agrees with  the fiber $T'$, which was chosen to have trivial $\Sha^2$. On the basic exotic fibers, the centralizer has the same (vanishing!) $\Sha^2$ as the torus $\overline{T}'$ in $\overline{G}'$, by Lemma \ref{pointsofcartan} (and the isomorphism $k^{1/p} \otimes_k k_v \simeq k_v^{1/p}$ and compatibility of the basic exotic construction with separable field extensions, such as $k_v/k$).

Before continuing, we need a lemma.

\begin{lemma}
\label{tamagawasimplekernel}
Suppose one has an exact sequence
\[
1 \longrightarrow H' \longrightarrow H \longrightarrow H'' \longrightarrow 1
\]
of connected linear algebraic groups over a global function field $k$ such that ${\rm{H}}^1(k, H') = 1$; ${\rm{H}}^1(k_v, H') = 1$ for all places $v$ of $k$; $\Ext^1(H', \Gm) = 1$; and $\tau(H') = 1$. Then $\tau(H)/\# \Ext^1(H, \Gm) = \tau(H'')/\# \Ext^1(H'', \Gm)$.
\end{lemma}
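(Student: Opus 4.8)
The plan is to combine the known multiplicativity of Tamagawa numbers in short exact sequences with the analogous multiplicativity of the orders of the groups $\Ext^1(-,\Gm)$, taking advantage of the strong vanishing hypotheses on $H'$. First I would address the Tamagawa numbers. Because $\mathrm{H}^1(k_v, H') = 1$ for all places $v$, the map $H(\A) \to H''(\A)$ is surjective (the obstruction to lifting an adelic point of $H''$ lies in $\prod_v \mathrm{H}^1(k_v, H')$ via Proposition \ref{adeliccohomologyprop}), so the hypotheses of Oesterl\'e's multiplicativity theorem \cite[Chap.\,III, \S 5.3, Thm.]{oesterle} are met; it gives $\tau(H) = \tau(H') \cdot \tau(H'')$, and since $\tau(H') = 1$ by hypothesis we get $\tau(H) = \tau(H'')$. (Strictly speaking one should check that the convergence-factor bookkeeping in Oesterl\'e's theorem is compatible with the hypothesis that $\widehat{H'}(k_s)$ is trivial — which follows from $\Ext^1(H',\Gm) = 1$ together with $\mathrm{H}^1(k,H') = 1$ forcing $H'$ to have no nontrivial characters over $k$, and over $k_s$ one uses that characters of $H'$ are defined over a finite separable extension while the same vanishing persists after such an extension; alternatively this is already implicit in the way $\tau(H')$ is normalized.)

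Next I would handle the $\Ext^1$ groups. Applying the functor $\calHom(-,\Gm)$-type long exact sequence — concretely, using the exact sequence relating Yoneda $\Ext$ groups and characters coming from the short exact sequence $1 \to H' \to H \to H'' \to 1$ — one obtains an exact sequence of pointed sets (in fact abelian groups, by the discussion in \S\ref{subsectionbeyondred} that each such $\Ext^1$ is a group via Baer sum)
\[
\widehat{H''}(k) \longrightarrow \widehat{H}(k) \longrightarrow \widehat{H'}(k) \longrightarrow \Ext^1(H'', \Gm) \longrightarrow \Ext^1(H, \Gm) \longrightarrow \Ext^1(H', \Gm).
\]
The hypothesis $\Ext^1(H',\Gm) = 1$ kills the last term, and $\widehat{H'}(k) = 0$ (which follows from $\mathrm{H}^1(k, H') = 1$ and $\Ext^1(H', \Gm) = 1$, as above, or more simply: a nontrivial character $H' \to \Gm$ would contradict either of these) kills the connecting term into $\Ext^1(H'',\Gm)$. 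Hence $\Ext^1(H'',\Gm) \to \Ext^1(H,\Gm)$ is an isomorphism, so $\#\Ext^1(H,\Gm) = \#\Ext^1(H'',\Gm)$ (both are finite by \cite[Thm.\,1.1]{rospic}). Dividing the two displayed equalities gives $\tau(H)/\#\Ext^1(H,\Gm) = \tau(H'')/\#\Ext^1(H'',\Gm)$.

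The main obstacle I anticipate is not the formal bookkeeping but making sure that the two multiplicativity statements are invoked under exactly the right hypotheses: for the Tamagawa side one needs surjectivity of $H(\A) \to H''(\A)$ (supplied by the local $\mathrm{H}^1$-vanishing) and the vanishing of $\mathrm{H}^1(\A, H') = \coprod_v \mathrm{H}^1(k_v, H')$ so that the adelic fibration is "clean", and for the $\Ext$ side one needs the right six-term exact sequence — one should either cite the fppf-sheaf $\Ext$ comparison \cite[Prop.\,4.3]{rospic} in the commutative case and the Yoneda picture in general, or argue directly that an extension of $H$ by $\Gm$ restricts trivially to $H'$ precisely when it descends to $H''$. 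Once these are pinned down, the proof is a two-line division. I would also double-check that $\Ext^1(H',\Gm)=1$ together with the other hypotheses genuinely forces $\widehat{H'}(k)=0$ (needed to kill the connecting map); this is where a careless argument could slip, since $\widehat{H'}(k)$ is the group of characters defined over $k$, and the cleanest justification is that $H'$ having a nontrivial $k$-character $\chi$ would produce a nonsplit or at least nontrivial class obstruction inconsistent with $\mathrm{H}^1(k,H')=1$ only after further argument — so it is safer to simply note that the hypotheses are only ever applied in the paper to groups $H'$ that are visibly perfect or visibly without characters, and to state the lemma's proof invoking $\widehat{H'}(k_s) = 0$ as an additional (harmless, automatically satisfied) part of the vanishing package.
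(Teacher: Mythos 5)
There is a genuine gap, and it sits exactly where you flagged your own unease. Your argument hinges on the claim that the hypotheses force $\widehat{H'}(k)=0$ (so that Oesterl\'e's theorem gives the clean equality $\tau(H)=\tau(H')\tau(H'')$ and the connecting map into $\Ext^1(H'',\Gm)$ dies). This claim is false: $H'=\Gm$ satisfies all four hypotheses of the lemma --- ${\rm{H}}^1(k,\Gm)={\rm{H}}^1(k_v,\Gm)=1$ by Hilbert 90, $\Ext^1(\Gm,\Gm)\subset\Pic(\Gm)=0$, and $\tau(\Gm)=1$ --- yet $\widehat{\Gm}(k)=\Z$. Your fallback suggestion (add $\widehat{H'}(k_s)=0$ as a harmless extra hypothesis) is not harmless either: in the proof of Theorem \ref{tamagawaformula} the lemma is applied to the second row of (\ref{diagram1}), where $H'=B$ is a Weil restriction of a product of split tori and split unipotent groups and has plenty of nontrivial characters. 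In that application neither $\tau(H)=\tau(H'')$ nor $\#\Ext^1(H,\Gm)=\#\Ext^1(H'',\Gm)$ holds separately; only the ratio is preserved, which is why the lemma is stated as an equality of ratios.

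The repair is to keep the correction term rather than kill it. Oesterl\'e's theorem, under $\Sha^1(H')=1$, surjectivity of $H(\A)\to H''(\A)$, and $\tau(H')=1$, gives $\tau(H)\cdot\#\coker(\widehat{j})=\tau(H'')$, where $\widehat{j}:\widehat{H}(k)\to\widehat{H'}(k)$ is restriction of characters. The six-term exact sequence you wrote down (this is \cite[Lemma 3.3]{rospic}) then yields, using only $\Ext^1(H',\Gm)=1$ and \emph{not} any vanishing of $\widehat{H'}(k)$, a short exact sequence
\[
0 \longrightarrow \coker(\widehat{j}) \longrightarrow \Ext^1(H'',\Gm) \longrightarrow \Ext^1(H,\Gm) \longrightarrow 0,
\]
so $\#\coker(\widehat{j})=\#\Ext^1(H'',\Gm)/\#\Ext^1(H,\Gm)$, and substituting this into Oesterl\'e's identity gives the claimed equality of ratios. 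So your overall architecture (Oesterl\'e plus the character/$\Ext$ exact sequence) is the right one and matches the paper; the error is in trying to split the conclusion into two separate equalities via a vanishing that the hypotheses do not provide.
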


The list of hypotheses on $H'$ looks rather peculiar, but will be fulfilled in the cases we need. This lemma should be thought of as saying that if the kernel $H'$ is sufficiently ``trivial'' (in the variety of ways listed in the lemma), then the Tamagawa numbers of $H$ and $H''$ are related in a very simple manner.

\begin{proof}
Our assumption that ${\rm{H}}^1(k_v, H') = 1$ for all places $v$ of $k$ implies by Proposition \ref{adeliccohomologyprop} that ${\rm{H}}^1(\A, H') = 1$, so the map $H(\A) \rightarrow H''(\A)$ is surjective. Since ${\rm{H}}^1(k, H') = 1$, we have $\Sha(H') = 1$. Applying \cite[Chap.\,III, Thm.\,5.3]{oesterle}, therefore, together with our assumption that $\tau(H') = 1$, implies that
\begin{equation}
\label{tamsimplepfeqn1}
\tau(H) \cdot \# \coker(\widehat{j}) = \tau(H''),
\end{equation}
where $j: H' \rightarrow H$ is the inclusion and $\widehat{j}: \widehat{H}(k) \rightarrow \widehat{H'}(k)$ is the induced map on $k$-rational character groups. Due to the vanishing of $\Ext^1(H', \Gm)$, \cite[Lemma 3.3]{rospic} furnishes an exact sequence
\[
0 \longrightarrow \coker(\widehat{j}) \longrightarrow \Ext^1(H'', \Gm) \longrightarrow \Ext^1(H, \Gm) \longrightarrow 0.
\]
This implies that $\# \coker(\widehat{j}) = \# \Ext^1(H'', \Gm)/\# \Ext^1(H, \Gm)$, which together with (\ref{tamsimplepfeqn1}) proves the lemma.
\end{proof}

Returning to the proof of Theorem \ref{tamagawaformula}, we now construct an inclusion $\mathscr{C} \hookrightarrow B$ of connected commutative linear algebraic $k$-groups such that $B$ satisfies all of the requirements of the $H'$ in Lemma \ref{tamagawasimplekernel}. That is: $\Ext^1(B, \Gm) = 1$, $\tau(B) = 1$, ${\rm{H}}^1(k, B) = 1$, and ${\rm{H}}^1(k_v, B) = 1$ for all places $v$ of $k$. We will also construct $B$ so that $\Sha^2(B) = 0$. To see that there exists an inclusion into such a $B$, note that for some finite extension $k'/k$, $\mathscr{C}_{k'}$ is the product of a split torus and a split unipotent group, since this holds over $\overline{k}$ (for any connected commutative linear algebraic group) and hence over some finite extension of $k$. The canonical inclusion $\mathscr{C} \hookrightarrow \R_{k'/k}(\mathscr{C}_{k'})$ does the job. Indeed, first note that the latter group has vanishing Pic, hence vanishing $\Ext^1(\cdot, \Gm)$. Since $\tau$, ${\rm{H}}^1(k, \cdot), {\rm{H}}^1(k_v, \cdot)$, and $\Sha^2$ are invariant under Weil restriction ($\tau$ by \cite[Chap.\,II, Thm.\,1.3(d)]{oesterle}, and the cohomology groups due to the exactness of pushforward through a finite map between categories of \'etale abelian sheaves), we only need to show that $\Gm$ and commutative split unipotent groups have Tamagawa number $1$, vanishing ${\rm{H}}^1$ (over any field), and vanishing $\Sha^2$. 

The vanishing of the ${\rm{H}}^1$'s is well-known and completely general, as is the vanishing of ${\rm{H}}^2(k', U')$ for commutative unipotent $U'$ (\cite[Prop.\,2.5.4]{rostateduality}, though we only require the split case, which is even more basic), while the vanishing of $\Sha^2(k', \Gm)$ follows from class field theory. The normalization factors in the definition of Tamagawa numbers are essentially defined so that $\tau(\Gm) = 1$ \cite[Chap.\,VII, \S \S 5-6]{weil}; indeed, this equality is equivalent to the formula for the pole of the Dedekind zeta function of $k$ at $1$ in the number field case, while in the function field case that we are interested in now, it is equivalent to computing the pole at $1$ of the zeta function of the (smooth proper geometrically connected) curve $X$ of which $k$ is the function field. To show that $\tau(U') = 1$ for $U'$ split unipotent, it suffices by \cite[Chap.\,III, Thm.\,5.3]{oesterle} to show that $\tau(\Ga) = 1$, which also holds essentially by definition \cite[Chap.\,I, \S 5.14, Example 1]{oesterle}.

Now consider the following pushout diagram, which makes sense because the top extension is central
\begin{equation}
\label{diagram1}
\begin{tikzcd}
1 \arrow{r} & \mathscr{C} \arrow{r} \arrow[d, hookrightarrow] & C \ltimes \R_{k'/k}(G') \arrow[d, hookrightarrow, "t"] \arrow{r}{\psi} & G \arrow{r} \arrow[d, equals] & 1 \\
1 \arrow{r} & B \arrow{d} \arrow{r}{i} & F \arrow{d} \arrow{r} & G \arrow{r} & 1 \\
& D \arrow[r, equals] & D &&
\end{tikzcd}
\end{equation}
where the central pushout $F$ and the commutative $k$-group $D$ are defined by this diagram, and the columns are short exact. Lemma \ref{tamagawasimplekernel} applied to the second row of (\ref{diagram1}) yields
\begin{equation}
\label{tamproofeqn2}
\frac{\tau(F)}{\# \Ext^1(F, \Gm)} = \frac{\tau(G)}{\# \Ext^1(G, \Gm)}.
\end{equation}
We claim that the map $\Sha(F) \rightarrow \Sha(G)$ is a bijection. Assuming this, (\ref{tamproofeqn2}) reduces Theorem \ref{tamagawaformula} for $G$ to the same theorem for $F$. So let us prove this claim.

First we prove injectivity. Since the extension
\begin{equation}
\label{bfgseq1}
1 \longrightarrow B \longrightarrow F \longrightarrow G \longrightarrow 1
\end{equation}
is central, the abelian group ${\rm{H}}^1(k, B)$ acts on the set ${\rm{H}}^1(k, F)$ and the orbits for this action are exactly the fibers of the map ${\rm{H}}^1(k, F) \rightarrow {\rm{H}}^1(k, B)$ \cite[Ch.\,I, \S 5.7, Prop.\,42]{serre}. Since ${\rm{H}}^1(k, B) = 0$ by construction, the map ${\rm{H}}^1(k, F) \rightarrow {\rm{H}}^1(k, B)$ is therefore injective. To prove surjectivity of $\Sha(F) \rightarrow \Sha(G)$, we first note that the centrality of (\ref{bfgseq1}) implies that there is a connecting map ${\rm{H}}^1(k, B) \rightarrow {\rm{H}}^2(k, B)$ inducing the usual long exact cohomology sequence of pointed sets up to ${\rm{H}}^2(k, B)$ \cite[Ch.\,I, \S 5.7, Prop.\,43]{serre}. The vanishing of $\Sha^2(B)$ therefore implies that any element of $\Sha^1(G)$ lifts to ${\rm{H}}^1(k, F)$. But since ${\rm{H}}^1(k_v, B) = 0$ for every place $v$ of $k$, this lift must actually lie in $\Sha^1(F)$, hence the map $\Sha(F) \rightarrow \Sha(G)$ is surjective. This proves the claim.

We are therefore reduced to proving Theorem \ref{tamagawaformula} for the group $F$. The group $\R_{k'/k}(G')$ sits as a normal subgroup of $F$ with commutative cokernel. That is, we have an exact sequence
\begin{equation}
\label{exactseq2}
1 \longrightarrow \R_{k'/k}(G') \longrightarrow F \longrightarrow E \longrightarrow 1
\end{equation}
with $E$ a connected commutative linear algebraic group.

\begin{lemma}
\label{Ext^1vanishessc}
Let $k$ be a field, $H$ a connected linear algebraic $k$-group that is perfect $($i.e., $H = \mathscr{D}H$$)$, and assume that the geometric reductive quotient $(H_{\overline{k}})^{{\rm{red}}}$ is simply connected. Then $\Ext^1(H, \Gm) = 1$.
\end{lemma}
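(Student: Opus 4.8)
The plan is to reduce to the case where $k$ is algebraically closed, where $\Ext^1(H,\Gm)$ agrees with $\Pic(H)$ and $H$ decomposes, as a scheme, into a product of an affine space with a semisimple simply connected group, whose Picard group vanishes. Concretely, I will first show that the base-change map $\Ext^1(H,\Gm)\to\Ext^1(H_{\overline{k}},(\Gm)_{\overline{k}})$ is injective, and then compute $\Ext^1$ over $\overline{k}$.

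For the injectivity, I would first record that $\widehat{H}=1$ as a functor on $k$-algebras. Indeed $H=\mathscr{D}H$, and since formation of the derived group of a smooth affine group scheme with connected fibers commutes with arbitrary base change (see \cite[\S A.2]{cgp}), we get $\mathscr{D}(H_R)=H_R$ for every $k$-algebra $R$; hence every $R$-homomorphism $H_R\to\Gm$, which must kill $\mathscr{D}(H_R)$ as $\Gm$ is commutative, is trivial. Now I would run the argument of Remark \ref{compatiblewithtateduality} (which uses only centrality of the extension, not commutativity of the group, and for connected $H$ every extension of $H$ by $\Gm$ is automatically central): a central extension of $H$ by $\Gm$ that splits over $\overline{k}$ is an fppf form of the trivial extension $\Gm\times H$, and the set of such forms is classified by ${\rm{H}}^1(k,\widehat{H})={\rm{H}}^1(k,1)=1$. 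Hence
\[
\Ext^1(H,\Gm)\longrightarrow\Ext^1(H_{\overline{k}},(\Gm)_{\overline{k}})
\]
is injective. Since both $H=\mathscr{D}H$ and simple-connectedness of the geometric reductive quotient are preserved under extension of the base field, it suffices to treat the case $k=\overline{k}$.

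So suppose $k=\overline{k}$. Because $k$ is perfect we have $\Ext^1(H,\Gm)=\Pic(H)$ (see the paragraph preceding \cite[Thm.~1.5]{rospic}), so the task is to show $\Pic(H)=1$. Put $U:=\mathscr{R}_{u}(H)$; then $\overline{H}:=H/U$ is connected reductive, and in fact semisimple since it is perfect (the abelianization torus of a perfect connected reductive group is trivial), and it is simply connected by hypothesis. Over the algebraically closed field $k$ the unipotent group $U$ is split, hence admits a filtration with successive quotients isomorphic to $\Ga$; since the base $\overline{H}$ is affine and ${\rm{H}}^1(-,\Ga)$ vanishes on affine schemes, a dévissage up this filtration shows that the $U$-torsor $H\to\overline{H}$ is trivial, so $H\cong\A^n_k\times\overline{H}$ as $k$-schemes with $n=\dim U$. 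As $\overline{H}$ is smooth over $k$, hence regular, $\A^1$-invariance of the Picard group of a regular Noetherian scheme gives $\Pic(H)=\Pic(\A^n_k\times\overline{H})=\Pic(\overline{H})$. Finally, $\Pic(G)=1$ for every semisimple simply connected group $G$ over a field (a classical fact; cf.\ \cite{sansuc}), so $\Pic(H)=1$, as desired.

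The only real subtlety is the reduction to $\overline{k}$: one must be sure that $\widehat{H}=1$ and that the form-counting of Remark \ref{compatiblewithtateduality} is valid for the non-commutative (but connected, hence central) group $H$. Everything after that is a routine dévissage together with the standard vanishing of the Picard group of a semisimple simply connected group.
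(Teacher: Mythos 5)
Your argument is correct, but it is genuinely different from the one in the paper. The paper never leaves the ground field in an essential way: given a central extension $1 \to \Gm \to E \to H \to 1$, it shows that $\mathscr{D}E \to H$ is an isomorphism (a statement that may be \emph{checked} over $\overline{k}$, where the kernel $\mu = \mathscr{D}E \cap \Gm$ is seen to be finite and central, hence trivial because $(H_{\overline{k}})^{\red}$ admits no nontrivial finite connected central covers); the inverse $H \xrightarrow{\sim} \mathscr{D}E \subset E$ is then a section defined over $k$, so no descent of a splitting is ever needed. You instead descend the splitting from $\overline{k}$ via the twisted-forms argument of Remark \ref{compatiblewithtateduality}, and then compute over $\overline{k}$ using $\Ext^1 = \Pic$, the scheme-theoretic decomposition $H_{\overline{k}} \simeq \A^n \times (H_{\overline{k}})^{\red}$, homotopy invariance of $\Pic$, and the classical vanishing of $\Pic$ for simply connected semisimple groups. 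What your route buys is a conceptual factorization (injectivity of base change plus a geometric Picard computation); what it costs is the need for the auxiliary fact that $\widehat{H}(R)=1$ for \emph{every} $k$-algebra $R$, which the paper's argument avoids entirely.

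On that auxiliary fact: your justification is the one slightly shaky point. The statement ``formation of the derived group commutes with arbitrary base change'' is not what \cite[\S A.2]{cgp} provides (that reference treats extension of the ground \emph{field}), and indeed $\mathscr{D}(H_R)$ is not a standard object for general $R$. The correct argument is that, since $H = \mathscr{D}H$, a finite product of commutator maps $q: (H\times H)^N \to H$ is surjective onto $H$, hence schematically dominant ($\mathcal{O}(H) \hookrightarrow \mathcal{O}((H\times H)^N)$ as $H$ is reduced); this injectivity of coordinate rings is preserved by $\otimes_k R$ because $k$ is a field, so $q_R$ remains an epimorphism of affine schemes, and any homomorphism $\chi: H_R \to \Gm$ satisfies $\chi \circ q_R = 1$ and is therefore trivial. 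With that repair (or by simply quoting the paper's own proof strategy for this point), your proof goes through.
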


\begin{proof}
Suppose that we have a (necessarily central) extension
\[
1 \longrightarrow \Gm \longrightarrow E \longrightarrow H \longrightarrow 1.
\]
We will show that it admits a section, hence splits. Indeed, we will show that the map $\mathscr{D}E \rightarrow H$ is an isomorphism. For this, we may extend scalars to $\overline{k}$. First, since $H = \mathscr{D}H$, we have an exact sequence
\[
1 \longrightarrow \mu \longrightarrow \mathscr{D}E \xlongrightarrow{\pi} H \longrightarrow 1.
\]
We claim that $\mu$ is finite. Indeed, more generally, for any connected linear algebraic group $G$ over $k$, and any central $\mathbf{G}_m \subset G$, we claim $\mathscr{D}G \cap \mathbf{G}_m$ is finite. To see this, let $U$ be the unipotent radical of $G$. Then $\mathscr{D}G \cap \mathbf{G}_m \subset \mathbf{G}_m \hookrightarrow G/U$, hence we may assume that $G$ is reductive. But then the claim follows from the well-known fact that for any reductive $G$ and its maximal central torus $Z$, $\mathscr{D}G \cap Z$ is finite. This proves the finiteness of $\mu$.

It only remains to show that $\mu = 1$. But since $\mu \subset \Gm$, we have $\mu \hookrightarrow \mathscr{D}E/U$, where $U \subset \mathscr{D}E$ is the unipotent radical. Hence we have an exact sequence
\[
1 \longrightarrow \mu \longrightarrow \mathscr{D}E/U \xlongrightarrow{\pi} H/\pi(U) \longrightarrow 1.
\]
Since $\mathscr{D}E/U$ is reductive, so is $H/\pi(U)$, which is therefore the geometric reductive quotient of $H$, hence simply connected by assumption. It therefore has no nontrivial finite central covers by a connected linear algebraic group, so $\mu = 1$ as claimed.
\end{proof}

\begin{lemma}
\label{R_{k'/k}(G')trivial}
Let $k$ be a global function field, $k'/k$ a finite reduced $k$-algebra, $G'$ a $k'$-group such that the fiber of $G'$ over each factor field of $k'$ is either connected, semisimple, absolutely simple, and simply connected, or else basic exotic pseudo-reductive. Then $\R_{k'/k}(G')$ is equal to its own derived group, $\tau(\R_{k'/k}(G')) = 1$, ${\rm{H}}^1(k, \R_{k'/k}(G')) = 1$, ${\rm{H}}^1(k_v, \R_{k'/k}(G')) = 1$ for each place $v$ of $k$, and $\Ext^1(\R_{k'/k}(G'), \Gm) = 1$.
\end{lemma}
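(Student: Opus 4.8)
The plan is to reduce immediately to the case that $k'$ is a field, and then to obtain each of the five assertions by transporting the corresponding known facts about the simply connected and basic exotic fibers of $G'$ through Weil restriction. Writing $k'=\prod_i k'_i$ with each $k'_i$ a field, we have $\R_{k'/k}(G')=\prod_i\R_{k'_i/k}(G'_i)$, and each of the five properties is stable under finite direct products (for $\Ext^1(-,\Gm)$ using that the factors, being perfect, have no nontrivial characters), so I may assume $k'$ is a field; it is then a global function field finite over $k$ and $G'$ is either connected semisimple, absolutely simple and simply connected, or basic exotic. In the latter case I will use that a global function field $F$ has $[F:F^p]=p$, so that $k'':=(k')^{1/p}$ is the only nontrivial purely inseparable finite extension available in Definition \ref{basicexotic}; hence $G'$ arises with this $k''$, and the same persists after completing $k'$ at any place, so the results of \cite{cgp} requiring ``$k''=(k')^{1/p}$'' — in particular \cite[Prop.\,7.3.3]{cgp} — are available over $k'$ and over all its completions.

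\emph{Perfectness and Tamagawa number.} The group $G'$ is its own derived group — classically for $G'$ simply connected semisimple, and by \cite[Thm.\,7.2.3(1)]{cgp} (applied over $k'_s$, as in \S\ref{sectionexoticgeneral}) for $G'$ basic exotic — and this passes to $\R_{k'/k}(G')$ by \cite[Prop.\,2.2.4]{cpsurvey}. For the Tamagawa number, invariance of $\tau$ under Weil restriction \cite[Chap.\,II, \S1.3, Thm.\,(d)]{oesterle} reduces us to $\tau_{k'}(G')=1$, which is the now-established Weil conjecture \cite{lurie} when $G'$ is simply connected and is Proposition \ref{basicexotictam=1} (over the global function field $k'$) when $G'$ is basic exotic.

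\emph{Cohomology vanishing.} By Shapiro's lemma (exactness of pushforward along the finite map $\Spec k'\to\Spec k$, as already used above for Weil restrictions) one has $\mathrm{H}^1(k,\R_{k'/k}(G'))=\mathrm{H}^1(k',G')$ and $\mathrm{H}^1(k_v,\R_{k'/k}(G'))=\prod_{w\mid v}\mathrm{H}^1(k'_w,G')$, the product over places $w$ of $k'$ above $v$; so it suffices to show $\mathrm{H}^1(K,G')=1$ for $K=k'$ and for each local field $K=k'_w$. When $G'$ is simply connected semisimple this is Harder's theorem \cite[Satz A]{harder1} over the global function field $k'$ and the Bruhat--Tits vanishing \cite[Thm.\,4.7]{bruhattits} over the local fields $k'_w$. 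When $G'$ is basic exotic, the canonical surjection $f\colon G'\twoheadrightarrow\overline{G}'$ onto a simply connected group induces, by \cite[Prop.\,7.3.3(1)]{cgp} (applicable over $K_s$ exactly as in the proof of Lemma \ref{pointsofcartan}, and over the completions by compatibility of the basic exotic construction with such completions), a $\Gal(K_s/K)$-equivariant group isomorphism $G'(K_s)\xrightarrow{\sim}\overline{G}'(K_s)$, hence a bijection $\mathrm{H}^1(K,G')\xrightarrow{\sim}\mathrm{H}^1(K,\overline{G}')$; the target vanishes by the simply connected case just treated.

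\emph{Vanishing of $\Ext^1$, and the main obstacle.} Here I will apply Lemma \ref{Ext^1vanishessc} to $H:=\R_{k'/k}(G')$: it is perfect by the above, and its maximal geometric reductive quotient is simply connected. Indeed, over $\overline{k}$ the group $\R_{k'/k}(G')$ is a finite product of Weil restrictions $\R_{A/\overline{k}}(G'_A)$ along local Artinian $\overline{k}$-algebras $A$ with residue field $\overline{k}$, each of which has maximal reductive quotient the maximal geometric reductive quotient of the corresponding fiber of $G'$ (the kernel of the reduction map being smooth connected unipotent); and that geometric reductive quotient is simply connected — trivially when the fiber is already simply connected, and when it is basic exotic because, by Definition \ref{basicexotic}(iii), such a group contains over the separable closure a Levi subgroup that is a form of the simply connected group underlying its construction. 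Lemma \ref{Ext^1vanishessc} then yields $\Ext^1(\R_{k'/k}(G'),\Gm)=1$, completing the proof. The only genuinely nonformal inputs are these two basic-exotic facts — that $f\colon G'\to\overline{G}'$ is an isomorphism on $K_s$-points, i.e.\ \cite[Prop.\,7.3.3(1)]{cgp}, hence on $\mathrm{H}^1$, and that the maximal geometric reductive quotient of a basic exotic group is simply connected; everything else is bookkeeping with Weil restriction together with Proposition \ref{basicexotictam=1}, already established in \S\S\ref{sectionexoticgeneral}--\ref{exotictam=1section}.
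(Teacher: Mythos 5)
Your proof follows the paper's own argument almost step for step: the reduction to $k'$ a field, the invariance of $\tau$ under Weil restriction combined with Proposition \ref{basicexotictam=1} and the Weil conjecture, the Shapiro-type reduction of the ${\rm{H}}^1$-vanishing to Harder and Bruhat--Tits via \cite[Prop.\,7.3.3(1)]{cgp}, and the application of Lemma \ref{Ext^1vanishessc} after identifying the geometric reductive quotient (your computation over $\overline{k}$ via the local Artinian factor algebras is a cosmetic variant of the paper's reduction to the purely inseparable case over $k_s$). Your explicit observation that $[F:F^p]=p$ for a global or local function field forces the inseparable extension in the basic exotic data to be $(k')^{1/p}$, so that \cite[Prop.\,7.3.3]{cgp} is applicable over $k'$ and all its completions, is a point the paper leaves implicit and is worth making.

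The one step that is not secured as written is the perfectness of $\R_{k'/k}(G')$ when $G'$ is basic exotic. You argue that $G'$ is its own derived group and that "this passes to $\R_{k'/k}(G')$ by \cite[Prop.\,2.2.4]{cpsurvey}." But Weil restriction does not preserve perfectness in general: $\R_{k'/k}({\rm{PGL}}_p)$ for a nontrivial purely inseparable finite extension $k'/k$ in characteristic $p$ is the standard counterexample (and is precisely the phenomenon that motivates the standard construction), so there is no transfer-of-perfectness principle to invoke. The cited result \cite[Prop.\,2.2.4]{cpsurvey} concerns Weil restrictions of absolutely simple simply connected semisimple groups, which is the only way the paper ever uses it; for the basic exotic fibers the paper instead appeals to \cite[Prop.\,8.1.2]{cgp}, which directly asserts perfectness of $\R_{k'/k}(G')$ in that case. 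The conclusion you want is true, but this step needs that reference (or an actual argument exploiting the structure of basic exotic groups) rather than the general principle you invoke; as stated it is a gap.
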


\begin{proof}
Let us verify the various assertions one by one. We first need to show that $\tau(\R_{k'/k}(G'))$ $= 1$. We may work on each factor field of $k'$ and thereby assume that $k'$ is a field. Since Tamagawa numbers are invariant under Weil restriction \cite[Chap.\,II, Thm.\,1.3(d)]{oesterle}, it suffices to show that $\tau(G') = 1$ when $G'$ is either simply connected or basic exotic pseudo-reductive. For simply connected $G'$, this is the content of Weil's Tamagawa number conjecture, recently resolved over function fields by Gaitsgory and Lurie \cite{lurie}. For basic exotic groups, this is part of Proposition \ref{basicexotictam=1}.

Next we check the vanishing of ${\rm{H}}^1(k, \R_{k'/k}(G'))$ and 
\[
{\rm{H}}^1(k_v, \R_{k'/k}(G')) = \prod_{w \mid v} {\rm{H}}^1(k'_w, \R_{k'_w/k_v}(G'))
\]
where the product is over all places $w$ of $k'$ lying above $v$. By \cite[Chap.\,IV, \S 2.3, Cor.]{oesterle}, ${\rm{H}}^1(k, \R_{k'/k}(G')) = {\rm{H}}^1(k', G')$ and similarly for the cohomology groups of the local fields, so we may assume that $k' = k$ in order to show that these sets are trivial. We may also work fiber by fiber and thereby assume that $G'$ is either simply connected or basic exotic pseudo-reductive. In the simply connected case, the desired vanishing is \cite[Thm.\,4.7]{bruhattits} for non-archimedean local fields, and \cite[Satz A]{harder1} for global function fields. The basic exotic case follows from the simply connected one and \cite[Prop.\,7.3.3(1)]{cgp}.

Next we check that $\R_{k'/k}(G')$ is perfect (i.e., equal to its own derived group). For this, we may assume that $k'/k$ is a finite extension of fields, and that $G'$ is either simply connected or basic exotic pseudo-reductive. The perfectness of $\R_{k'/k}(G')$ for simply connected $G'$ is \cite[Prop.\,2.2.4]{cpsurvey}. The perfectness of this group for basic exotic $G'$ is part of \cite[Prop.\,8.1.2]{cgp}.

Finally, it remains to check that $\Ext^1(\R_{k'/k}(G'), \Gm) = 1$. Since $\R_{k'/k}(G')$ is perfect, it suffices by Lemma \ref{Ext^1vanishessc} to show that $\R_{k'/k}(G')$ has simply connected geometric reductive quotient. This may be checked fiberwise, so we may assume that $k'/k$ is a finite extension of fields, and that $G'$ is either simply connected or basic exotic pseudo-reductive. We may extend scalars to $k_s$ and work on fibers to thereby arrange that $k'/k$ is a finite purely inseparable extension. 

Consider the map $\R_{k'/k}(G')_{k'} \rightarrow G'$ defined functorially on $k'$-algebras $R'$ by the map $G'(R' \otimes_k k') \rightarrow G'(R')$ induced by the ring homomorphism $R' \otimes_k k' \rightarrow R'$ defined on pure tensors by $r' \otimes \lambda' \mapsto \lambda'r'$. This map is surjective with smooth connected unipotent kernel \cite[\S A.3.6]{oesterle}, so $\R_{k'/k}(G')$ has the same geometric reductive quotient as $G'$ (when $k'/k$ is finite and purely inseparable). Thus, we are done when $G'$ is simply connected, and it only remains to show that a basic exotic group $H$ over a field $F$ has simply connected geometric reductive quotient. But since by definition (Definition \ref{basicexotic}(iii)), $H_{F_s}$ contains a Levi $F_s$-subgroup of $\R_{F'_s/F_s}(H'_{F'_s})$, with $H'$ simply connected and $F'_s/F_s$ a purely inseparable finite extension, it suffices to show that $\R_{F'_s/F_s}(H'_{F'_s})$ has simply connected geometric reductive quotient, and this follows from the argument given above.
\end{proof}

We record the following lemma here for future use.

\begin{lemma}(\cite[Prop.\,6.4.1]{conrad})
\label{standardetaletwist}
Let $k$ be a field, $k'/k$ a nonzero finite reduced $k$-algebra, and $G'$ a $k'$-group such that each fiber $G_i'$ of $G'$ on a factor field $k'_i$ of $k'$ is either connected semisimple, absolutely simple, and simply connected, or else basic exotic pseudo-reductive. Let $G := \R_{k'/k}(G')$. Then any \'etale $k$-form $H$ of $G$ is of the same type. That is, $H \simeq \R_{k''/k}(G'')$ for some nonzero finite reduced $k$-algebra $k''$ and $k''$-group $G''$ all of whose fibers are either connected semisimple, absolutely simple, and simply connected, or else basic exotic pseudo-reductive.
\end{lemma}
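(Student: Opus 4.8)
This is \cite[Prop.\,6.4.1]{conrad}; we sketch the strategy one would follow.

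The plan is to match up \'etale (equivalently, $k_s/k$-) forms of $G = \R_{k'/k}(G')$ with \'etale forms of the pair $(k'/k, G')$, exploiting that $G$ canonically recovers the data $(k'/k, G')$ together with the Weil-restriction isomorphism $\R_{k'/k}(G') \xrightarrow{\sim} G$. First I would cite this recovery: it is \cite[Props.\,4.2.4, A.5.14, 10.2.4]{cgp} for the simply connected fibers and \cite[Prop.\,7.2.7(1),(2)]{cgp} for the basic exotic ones. Applying it over $k_s$, where $k' \otimes_k k_s$ is a finite product of (rigid) purely inseparable field extensions of $k_s$, identifies the automorphism group $\underline{\mathrm{Aut}}_{G_{k_s}/k_s}(k_s)$ ---which, together with its $\Gal(k_s/k)$-action, governs \'etale forms of $G$ through the usual bijection with $\mathrm{H}^1(\Gal(k_s/k), \underline{\mathrm{Aut}}_{G_{k_s}/k_s}(k_s))$--- with the group of automorphisms of $(k'/k, G') \otimes_k k_s$: that is, $k_s$-algebra automorphisms of $k' \otimes_k k_s$ (necessarily permutations of the factor fields) equipped with compatible $G'$-group automorphisms, the diagram automorphisms of the semisimple simply connected fibers also contributing here, while the basic exotic fibers ---of types ${\rm{B}}$, ${\rm{C}}$, ${\rm{F}}_4$, ${\rm{G}}_2$--- contribute none. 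This yields a bijection between \'etale $k$-forms of $G$ and $k_s/k$-forms of the pair $(k'/k, G')$.

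Next I would unwind what such a form of the pair is: a finite reduced $k$-algebra $k''$ together with a $k''$-group $G''$ satisfying $(k''/k, G'') \otimes_k k_s \cong (k'/k, G') \otimes_k k_s$. Here $k''$ is automatically finite and reduced, being $k_s$-locally isomorphic to the finite reduced $k$-algebra $k'$ (finiteness and reducedness both descend along the faithfully flat map $k \to k_s$). Since twisting commutes with Weil restriction, the form of $G$ attached to a cocycle representing the class of $H$ is precisely $\R_{k''/k}(G'')$, so $H \cong \R_{k''/k}(G'')$.

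Finally I would check that every fiber $G''_i$ of $G''$ over a factor field $k''_i$ of $k''$ is either connected semisimple, absolutely simple and simply connected, or basic exotic pseudo-reductive. Writing $k''_i \otimes_k k_s = \prod_\phi \ell_\phi$, each $\ell_\phi$ is a separably closed field (a purely inseparable extension of the separably closed field $k_s$) and is separable algebraic over $k''_i$, while the fiber of $G'' \otimes_k k_s$ over $\ell_\phi$ is isomorphic to a fiber of $G' \otimes_k k_s$ and so is of one of the two allowed types over the separably closed field $\ell_\phi$. Since simple simply connectedness can be checked over a separable closure, and whether a smooth connected affine group is basic exotic can be checked over $k_s$ by \cite[Thm.\,7.2.3, Prop.\,7.2.7]{cgp}, it follows that $G''_i$ is of the required type over $k''_i$. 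The hard part will be the first step ---establishing the recovery-based description of $\underline{\mathrm{Aut}}_{G_{k_s}/k_s}(k_s)$ and carefully tracking how $\Gal(k_s/k)$ permutes the factor fields of $k' \otimes_k k_s$ and acts on their separable and purely inseparable parts; once that bookkeeping is done, the remainder is routine Galois descent.
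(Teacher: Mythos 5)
The paper gives no proof of this lemma at all—it simply cites \cite[Prop.\,6.4.1]{conrad}—and your sketch is the standard descent argument underlying that reference: \'etale forms of $\R_{k'/k}(G')$ correspond to forms of the pair $(k'/k, G')$ via the rigidity statement \cite[Prop.\,A.5.14]{cgp}, which is exactly the mechanism the paper itself deploys in a special case in the proof of Proposition \ref{suitablesplittorus}. Your outline is correct and takes the expected approach.
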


Returning to the proof of Theorem \ref{tamagawaformula} for the group $F$ appearing in (\ref{exactseq2}), Lemmas \ref{R_{k'/k}(G')trivial} and \ref{tamagawasimplekernel} imply that
\begin{equation}
\label{tau/exteqn3}
\frac{\tau(F)}{\# \Ext^1(F, \Gm)} = \frac{\tau(E)}{\# \Ext^1(E, \Gm)}.
\end{equation}
Since $E$ is commutative, Theorem \ref{tamagawaformulacomm} therefore implies that in order to show that $\tau(F) = \# \Ext^1(F, \Gm)/\# \Sha(F)$, and thereby complete the proof of Theorem \ref{tamagawaformula} except for the yet-to-be-treated basic non-reduced case, it suffices to prove that the map $\Sha(F) \rightarrow \Sha(E)$ is a bijection. 

The injectivity of this map is easy (though based on deep results), as follows. By the usual twisting argument (see, e.g., the first paragraph of the proof of Proposition \ref{connectingmap=bijection}), it suffices to show that ${\rm{H}}^1(k, H) = 1$ for any \'etale $k$-form $H$ of $\R_{k'/k}(G')$. In fact, by Lemma \ref{standardetaletwist}, any such $H$ is itself of the form $\R_{k''/k}(H'')$ for some finite reduced $k$-algebra $k''$ and some $k''$-group $H''$ all of whose fibers are either simply connected or basic exotic. Thus $H$ has vanishing ${\rm{H}}^1$ by an argument we have already seen: we may work on each factor field of $k''$ and thereby assume that $k''$ is a field. \cite[Ch.\,IV, \S 2.3, Cor.\,]{oesterle} then reduces us to showing that ${\rm{H}}^1(k'', H'') = 1$ so we may assume that $k'' = k$. If $H''$ is simply connected, then the desired vanishing is \cite[Satz A]{harder1}, while if $H''$ is basic exotic, then it follows from the same result together with the fact that the canonical map $H'' \rightarrow \overline{H}''$ onto a simply connected group (see Remark \ref{canonicalsurjection}) induces a bijection ${\rm{H}}^1(k'', H'') \rightarrow {\rm{H}}^1(k'', \overline{H}'')$ \cite[Prop.\,7.3.3(1)]{cgp}.

The proof that the map $\Sha(F) \rightarrow \Sha(E)$ is surjective is trickier. This is where our careful choice of maximal torus $T' \subset G'$ via Proposition \ref{torustrivialSha^2} comes in. First note that it suffices to lift any element of $\Sha(E)$ to ${\rm{H}}^1(k, F)$, as this lift is then necessarily in $\Sha(F)$ due to the vanishing of ${\rm{H}}^1(k_v, \R_{k'/k}(G'))$ that we have already seen. Let $M$ be the commutative $k$-group scheme defined by the following pushout diagram
\[
\begin{tikzcd}
\mathscr{C} \arrow{r} \arrow[d, hookrightarrow] & C \times \mathscr{C} \arrow[d, hookrightarrow] \\
B \arrow{r} & M
\end{tikzcd}
\]
where the top horizontal map is the anti-diagonal map $(-\phi, {\rm{id}})$. 

Since the action of $C$ on $\R_{k'/k}(G')$ is trivial on $\mathscr{C}$ by definition, we get a commutative diagram with exact rows
\[
\begin{tikzcd}
1 \arrow{r} & \mathscr{C} \arrow{d}{j} \arrow{r} & M \arrow{r} \arrow{d} & E \arrow{r} \arrow[d, equals] & 1 \\
1 \arrow{r} & \R_{k'/k}(G') \arrow{r} & F \arrow{r} & E \arrow{r} & 1
\end{tikzcd}
\]
where the map $M \rightarrow F$ is induced by the two maps $i: B \rightarrow F$ and 
\[
C \times \mathscr{C} \xrightarrow{{\rm{id}}\times j} C \ltimes \R_{k'/k}(G') \xrightarrow{t} F
\]
(see (\ref{diagram1}); recall that $j$ is the inclusion map). In order to show that every element of $\Sha(E)$ lifts to ${\rm{H}}^1(k, F)$, therefore, it suffices to show they all lift to ${\rm{H}}^1(k, M)$. But for this it suffices to show that $\Sha^2(k, \mathscr{C}) = 0$. Since $\mathscr{C} = \R_{k'/k}(Z_{G'}(T'))$, where $T'$ was the torus chosen at the beginning of this proof, and since pushforward by a finite map is an exact functor between categories of abelian \'etale sheaves, we have $\Sha^2(k, \mathscr{C}) = \Sha^2(k', Z_{G'}(T'))$, and this vanishes by (\ref{Sha^2=0forourcartan}). The proof of Theorem \ref{tamagawaformula} is now complete for generalized standard pseudo-reductive groups.

Theorem \ref{predclassification} therefore implies that the proof of Theorem \ref{tamagawaformula} is complete for fields of characteristic greater than $2$. If ${\rm{char}}(k) = 2$, we may assume that $G = \R_{k'/k}(G')$ for a global field $k$ of characteristic $2$, a finite reduced $k$-algebra $k'/k$, and a basic non-reduced $k'$-group $G'$. We will show that all three of the quantities appearing in Theorem \ref{tamagawaformula} are $1$. That is: $\Sha(G) = 1$, $\Ext^1(G, \Gm) = 1$, and $\tau(G) = 1$. By Lemma \ref{Ext^1vanishessc}, $\Ext^1(G, \Gm)$ vanishes if $G$ is perfect and $G_{\overline{k}}^{{\rm{red}}}$ is semisimple and simply connected. The perfectness is part of \cite[Ex.\,10.1.3]{cgp}. To identify $G_{\overline{k}}^{{\rm{red}}}$ we can make a finite separable extension on $k$ and thereby arrange that each factor field of $k'$ is purely inseparable over $k$. Passing to $k'$ a field without loss of generality, by \cite[Ex.\,10.1.3]{cgp}, $G_{k_s}$ has root system BC$_n$ for some $n \geq 1$, so $G_{\overline{k}}^{{\rm{red}}} \simeq {\rm{Sp}}_{2n}$ by the end of \cite[Thm.\,2.3.10]{cgp}.

By \cite[Ch.\,IV, \S 2.3, Cor.\, and Ch.\,II, \S 1.3, Thm.\,(d)]{oesterle}, for $\Sha$ and $\tau$ we can replace $(G, k)$ with $(G', k')$ and thereby assume that $G$ is basic non-reduced. The vanishing of $\Sha(G)$ then follows from \cite[Prop.\,9.9.4(1)]{cgp}. So it only remains to show that $\tau(G) = 1$ for basic non-reduced $G$. This is part of Proposition \ref{basicexotictam=1}. The proof of Theorem \ref{tamagawaformula} is now complete!

\begin{remark}
\label{PicneqExtexamples}
Upon comparing Theorem \ref{tamagawaformula} with Sansuc's formula (\ref{sansucformula}), it is natural to ask whether $\Ext^1(G, \Gm) = \Pic(G)$ for all pseudo-reductive groups. The answer in general, even for commutative pseudo-reductive groups, is no. One may construct counterexamples as follows.

Let $U$ be a wound form of $\Ga$. If either $p \neq 2$ or $p = 2$ but $U$ is not $k$-isomorphic to a group of the form $\{Y^2 = X + aX^2\} \subset \Ga \times \Ga$ for some $a \in k - k^2$, then for some finite separable extension $k'/k$, $\Ext^1_{k'}(U, \Gm) \neq 0$, by \cite[Props.\,5.5, 5.7]{rospic}. Renaming $k'$ as $k$, our assumptions about $U$ also imply that $\Ext^1(U, \Gm) \neq \Pic(U)$ \cite[Prop.\,5.12]{rospic}.

At any rate, by passing to a finite separable extension, we obtain (in any characteristic) a form $U$ of $\Ga$ over a global function field $k$ such that $0 \subsetneq \Ext^1(U, \Gm) \subsetneq \Pic(U)$. Let $E$ be a group scheme representing a nontrivial extension of $U$ by $\Gm$. That is, the (nontrivial) extension class is given by an exact sequence
\begin{equation}
\label{commpredexeqn}
0 \longrightarrow \Gm \longrightarrow E \xlongrightarrow{\pi} U \longrightarrow 0.
\end{equation}
Note that $E$ is commutative by \cite[Lemma 4.2]{rospic}. We claim that $E$ is pseudo-reductive.

Indeed, if not, then $E$ contains a nontrivial smooth connected unipotent $k$-subgroup $W$. Since $W$ is unipotent, $W \cap \Gm = 0$, so $\pi$ induces an inclusion $W \rightarrow U$, which is necessarily an isomorphism because $U$ is $1$-dimensional, connected, and smooth. But this then yields a splitting of (\ref{commpredexeqn}), violating the nontriviality of that extension. So $E$ is commutative pseudo-reductive. But we claim that $\Ext^1(E, \Gm) \neq \Pic(E)$.

Suppose to the contrary that $\Ext^1(E, \Gm) = \Pic(E)$. Then we have a commutative diagram
\[
\begin{tikzcd}
\widehat{\Gm}(k) \arrow{r} \arrow[d, equals] & \Ext^1(U, \Gm) \arrow{r} \arrow[d, hookrightarrow] & \Ext^1(E, \Gm) \arrow[d, equals] \arrow{r} & 0 \\
\widehat{\Gm}(k) \arrow{r} & \Pic(U) \arrow{r} & \Pic(E) &
\end{tikzcd}
\]
in which the rows are exact by \cite[Lemma 3.3]{rospic} and \cite[Cor.\,6.11]{sansuc} (the corollary is stated for exact sequences of reductive groups, but the proof only requires that the group on the left -- in this case $\Gm$ -- be reductive), and because $\Ext^1(\Gm, \Gm) \subset \Pic(\Gm) = 0$. A simple diagram chase then shows that the middle vertical inclusion is actually an isomorphism, contrary to our choice of $U$.
\end{remark}

\subsection{Invariance under inner twist}

Now we turn to the proof of Theorem \ref{innerinvariance}, which, though simpler, proceeds along similar lines to that of Theorem \ref{tamagawaformula}. We first prove the inner-twisting invariance of $\Ext^1(G, \Gm)$ (Proposition \ref{extinvtinnertwist}), which holds for smooth connected group schemes over arbitrary fields. This will require some preparatory lemmas.

\begin{lemma}
\label{mapofcenterssurj}
If $G$ is a smooth connected group scheme over a field $k$, and 
\[
1 \longrightarrow \Gm \longrightarrow E \xlongrightarrow{\pi} G \longrightarrow 1
\]
is an extension of $G$ by $\Gm$, then the induced map $Z_E \rightarrow Z_G$ from the center of $E$ to the center of $G$ is fppf surjective.
\end{lemma}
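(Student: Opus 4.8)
The plan is to show that the central extension $1 \to \Gm \to E \xrightarrow{\pi} G \to 1$ restricts to an fppf-surjective map $Z_E \to Z_G$. The key observation is that $\Gm \subset E$ is central, so $\Gm \subset Z_E$, and hence $Z_E$ contains the scheme-theoretic preimage $\pi^{-1}(Z_G)^{E}$ of $Z_G$ under $\pi$ that centralizes $E$; more precisely, I would argue that $Z_E = \pi^{-1}(Z_G) \cap Z_E$ surjects onto $Z_G$ provided every point of $Z_G$ lifts (fppf-locally) to a point of $E$ that centralizes $E$. The natural candidate for $Z_E$ as a functor is: for a $k$-algebra $R$ and $g \in Z_G(R)$, ask whether the $\Gm$-torsor $\pi^{-1}(g)$ over $\Spec R$ admits a section commuting with all of $E$. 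So the crux is a lifting/centralizing argument.

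First I would set up the functorial description. Since $\pi$ is faithfully flat with kernel $\Gm$, for any $k$-algebra $R$ and $g \in G(R)$, the fiber $\pi^{-1}(g)$ is a $\Gm$-torsor over $\Spec R$, hence fppf-locally trivial; choosing an fppf cover $R \to R'$ over which it is trivial, we get $\tilde g \in E(R')$ with $\pi(\tilde g) = g$. Now suppose $g \in Z_G(R)$. Then for any $R$-algebra $S$ and any $e \in E(S \otimes_R R')$, the commutator $\tilde g e \tilde g^{-1} e^{-1}$ maps to $g \pi(e) g^{-1} \pi(e)^{-1} = 1$ in $G$, hence lies in $\Gm(S \otimes_R R')$. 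This defines a map $E \to \Gm$ (after the base change to $R'$), namely $e \mapsto [\tilde g, e]$; since $\Gm$ is commutative and this is a crossed-homomorphism-type expression in $e$ that is trivial on the central $\Gm$ and multiplicative modulo the centrality, one checks it is in fact a homomorphism $E_{R'} \to (\Gm)_{R'}$. The point is then that $E$ is its own derived group in the relevant sense, or more precisely that $\widehat{E}$ is controlled: one needs that this homomorphism $E_{R'} \to (\Gm)_{R'}$ is trivial, or can be killed by adjusting $\tilde g$ by an element of $\Gm(R')$.

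The cleanest route: the obstruction to $\tilde g$ centralizing $E_{R'}$ is precisely the homomorphism $c_{\tilde g} : E_{R'} \to (\Gm)_{R'}$ just described, and $c_{\tilde g}$ depends on the choice of lift $\tilde g$ only up to the trivial modification coming from $\Gm$ being central (replacing $\tilde g$ by $z \tilde g$ with $z \in \Gm$ changes $c_{\tilde g}$ by $e \mapsto [z,e] = 1$, so $c_{\tilde g}$ is independent of the lift). Moreover $c_{\tilde g}$ is insensitive to further fppf base change, so it descends to a homomorphism $G$-equivariantly... more carefully, since $c_{\tilde g}$ is canonically attached to $g \in Z_G(R)$ and behaves well under base change, it defines a homomorphism of fppf sheaves on $\Spec R$ from $E_R$ to $(\Gm)_R$, i.e.\ an element of $\widehat{E}(R)$. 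Now I would invoke that $E$ has the same characters as $G$: the inflation $\widehat{G} \hookrightarrow \widehat{E}$ — restriction along $\pi$ — need not be surjective in general, but the relevant $c_{\tilde g}$ is trivial on $\Gm \subset E$ by construction, so $c_{\tilde g}$ factors through $\pi$, giving $c_{\tilde g} \in \widehat{G}(R)$; and then $c_{\tilde g}$ is a homomorphism $G_R \to (\Gm)_R$ vanishing at the section $g$ only if... hmm. Actually the direct argument is: $c_{\tilde g}$ factors as $G_R \to (\Gm)_R$, and since $g \in Z_G(R)$ we additionally get that this character, restricted along the cocharacter-type data, is trivial. At this point I would instead argue by descent: the functor $R \mapsto \{g \in Z_G(R) : c_{\tilde g} = 1\}$ is represented by a closed subscheme $Z$ of $Z_G$ (zero locus of a section), $Z$ contains $Z_E$'s image, and I claim $Z = Z_G$ because $c_{\tilde g}$, being a homomorphism $G \to \Gm$ that is moreover built to vanish whenever $\tilde g$ is replaced appropriately, must be trivial — indeed $c_{\tilde g}(g') $ for $g' \in G$ lying in the image of $\tilde g$'s...

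I expect the main obstacle to be precisely pinning down why the obstruction character $c_{\tilde g}: G \to \Gm$ is trivial, i.e.\ why the lift $\tilde g$ of a central $g$ can be chosen to be central in $E$ (not merely in $E$ modulo $\Gm$). The key input should be that $\Gm$ is linearly reductive / that $E$ is connected, so $\widehat{E}$ is a finitely generated free abelian group with trivial arithmetic subtleties, combined with the fact that $c_{\tilde g}$ vanishes on $\Gm$ and hence factors through $G$; then one uses that $c_{\tilde g}$ is a commutator-valued map and commutators are trivial on the center, forcing $c_{\tilde g}$ to be trivial on $Z_G$, and then (by a connectedness/rigidity argument for the family of characters parametrized by $Z_G$, which is connected and contains the identity at which $c$ is trivial) trivial identically. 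Once triviality of $c_{\tilde g}$ is established, $\tilde g \in Z_E(R')$, and since $g \in Z_G(R)$ was arbitrary and $R \to R'$ was an fppf cover, the map $Z_E \to Z_G$ is fppf-surjective, completing the proof. I would present this using the representability of centralizers for affine group schemes and descent, keeping the character-triviality step as the technical heart.
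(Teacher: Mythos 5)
Your setup is sound, and after transposing it is the same commutator pairing the paper uses: both arguments reduce to showing that the biadditive map $G \times Z_G \to \Gm$, $(\pi(e),\pi(z)) \mapsto eze^{-1}z^{-1}$ (well defined because $\Gm$ is central in $E$), vanishes identically. But the step you yourself flag as ``the technical heart'' --- the triviality of the obstruction character $c_{\tilde g}$ --- is exactly where the real work lies, and none of your proposed justifications closes it. ``Commutators are trivial on the center'' is circular: for $z \in Z_G$, $c_{\tilde g}(\tilde z)=[\tilde g,\tilde z]$ is precisely the pairing whose vanishing is the content of the lemma, and central extensions by $\Gm$ with nontrivial commutator pairing on lifts of central elements do exist (e.g.\ Mumford's theta groups, where the base of the extension is a finite group scheme --- so the smoothness and connectedness of $G$ must enter somewhere). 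The ``connectedness/rigidity'' argument fails on both ends: $Z_G$ need not be connected or smooth (for semisimple $G$ it is $\mu_n$), and the target $\widehat{G}=\calHom(G,\Gm)$ need not be \'etale or rigid in positive characteristic when $G$ has unipotent quotients --- which is the main case of interest in this paper --- so a family of characters parametrized by a connected base need not be constant. Finally, the lemma is stated for arbitrary smooth connected group schemes, so abelian-variety quotients of $G$ must also be dealt with, which your sketch does not address.

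The paper curries the pairing in the opposite direction: it produces a homomorphism $\phi\colon G \to \calHom(Z_G,\Gm)$ out of the smooth connected group $G$, and kills it by filtering $Z_G$ --- via Chevalley's theorem, the unipotent radical, the structure of central subgroups of reductive groups, and an infinitesimal d\'evissage on the abelian-variety part --- into pieces whose $\Gm$-duals are \'etale, finite, or zero, so that any homomorphism from a smooth connected scheme into them is trivial. That is the direction in which the smoothness and connectedness hypotheses actually sit; your transposed version $Z_G \to \widehat{G}$ places the burden on $Z_G$ and on $\widehat{G}$, neither of which has the needed properties in general. To repair your proof you would essentially have to import the paper's structural analysis of $Z_G$.
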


\begin{proof}
First note that $\Gm \subset E$ is {\em central}. Indeed, conjugation induces a map $E \rightarrow {\rm{Aut}}_{\Gm/k}$, and the latter is an \'etale $k$-scheme (in fact, it is $\Z/2\Z$), hence, since $E$ is connected, the map must be trivial. That is, $\Gm \subset E$ is central. Consider the bilinear map $E \times \pi^{-1}(Z_G) \rightarrow \Gm$ obtained by taking commutators; that is, the map is given by $(e, z) \mapsto eze^{-1}z^{-1}$. Since $\Gm \subset E$ is central, this descends to a bilinear map $G \times Z_G \rightarrow \Gm$. The statement that $\pi^{-1}(Z_G) \subset E$ is central is equivalent to the triviality of this bilinear map.

The bilinear map above yields a homomorphism $\phi: G \rightarrow \calHom(Z_G, \Gm)$, which we want to show to be trivial. In order to do this, we may assume that $k = \overline{k}$. Then by Chevalley's Theorem, $G$ is an extension
\[
1 \longrightarrow L \longrightarrow G \longrightarrow A \longrightarrow 1
\]
with $L$ a connected linear algebraic $k$-group and $A$ an abelian variety over $k$. Let $U \subset L$ be the $k$-unipotent radical. Then $U$ is normal in $G$ because it is preserved by all $k$-automorphisms of the normal subgroup $L$, hence preserved by $G(k)$-conjugation, and $G(k)$ is Zariski dense in $G$ because $G$ is smooth. For any $g \in G(k)$, we have $\phi(g)|_{U \cap Z_G} = 1$, since subgroups of unipotent groups admit no nontrivial homomorphisms to $\Gm$ over a field. Therefore, $E(k)$ centralizes $\pi^{-1}(U\cap Z_G)$, hence so does $E$, since $E(k)$ is Zariski dense in $E$ because $E$ is smooth and $k = k_s$. It follows that $\phi$ factors as a map (which we still call $\phi$, by abuse of notation) $\phi: G \rightarrow \calHom(Z_G/(U \cap Z_G), \Gm) \subset \calHom(Z_G, \Gm)$, and we want to show that this map is trivial.

Let $\overline{G} := G/U$. Then $\overline{Z_{G}} := Z_G/(U \cap Z_G) \subset Z_{\overline{G}}$. The group $\overline{G}$ is an extension
\[
1 \longrightarrow H \longrightarrow \overline{G} \longrightarrow A \longrightarrow 1
\]
with $H$ reductive (and $A$ still an abelian variety). Therefore, we have an exact sequence
\[
0 \longrightarrow Z_1 \longrightarrow \overline{Z_G} \longrightarrow Z_2 \longrightarrow 0
\]
where $Z_1 \subset Z_{H}$ is a central $k$-subgroup of the reductive group $H$, and $Z_2 \subset A$ is a closed $k$-subgroup. 

We claim that the composition $G \xrightarrow{\phi} \calHom(\overline{Z_G}, \Gm) \rightarrow \calHom(Z_1, \Gm)$ is trivial. Indeed, since $Z_1$ is a central $k$-subgroup of the reductive $k$-group $H$, it fits into an exact sequence
\[
0 \longrightarrow T \longrightarrow Z_1 \longrightarrow F \longrightarrow 0
\]
with $T$ a $k$-torus and $F$ a finite commutative $k$-group scheme. The composition $G \rightarrow \calHom(Z_1, \Gm) \rightarrow \calHom(T, \Gm)$ is trivial, since $G$ is connected while $\calHom(T, \Gm)$ is represented by an \'etale $k$-group scheme. Therefore, the map $G \rightarrow \calHom(Z_1, \Gm)$ factors through a map $G \rightarrow \calHom(F, \Gm)$. But $\calHom(F, \Gm)$ is represented by a finite commutative $k$-group scheme (the Cartier dual of $F$), so since $G$ is smooth and connected, this map too is trivial, hence the map $G \rightarrow \calHom(Z_1, \Gm)$ is trivial, as claimed.

The map $\phi$ therefore factors as a map $\phi: G \rightarrow \calHom(Z_2, \Gm)$. Arguing as above, it suffices to filter $Z_2$ by $k$-group schemes $F_i$ such that the sheaves $\calHom(F_i, \Gm)$ are represented by finite $k$-group schemes. In fact, one can show by a simple trick with prime-to-characteristic torsion that $Z_2$ is an extension of a finite group scheme by an abelian variety, but we do not need this. By \cite[VII$_{\rm{A}}$, Prop.\,8.3]{sga3}, there is an infinitesimal subgroup scheme $I \subset Z_2$ such that $\overline{Z}_2 := Z_2/I$ is smooth. The sheaf $\calHom(I, \Gm)$ is the Cartier dual of $I$, hence represented by a finite commutative $k$-group scheme. Further, $A/I$ is still an abelian variety, so $\overline{Z}_2^0$ is an abelian variety, hence $\calHom(\overline{Z}_2^0, \Gm) = 0$. (That there is no nontrivial homomorphism over any base from an abelian scheme to $\Gm$ follows from the assertion over a field together with \cite[Prop.\,6.1]{git}.) Finally, $\overline{Z}_2/\overline{Z}_2^0$ is a finite (\'etale) commutative $k$-group scheme, hence its Cartier dual is a finite $k$-group scheme. This completes the proof of the lemma.
\end{proof}

\begin{lemma}
\label{conjinducesid}
For a smooth connected group scheme $G$ over a field $k$ and $g \in (G/Z_G)(k)$, ``conjugation'' by $g$ on $G$ induces the identity map on $\Ext^1(G, \Gm)$.
\end{lemma}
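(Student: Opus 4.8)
The plan is to use the Yoneda description of $\Ext^1(G,\Gm)$ recalled in the introduction: every class is represented by an (automatically central) extension $1 \to \Gm \to E \xrightarrow{\pi} G \to 1$, and under the isomorphism $\Ext^1_{{\rm{Yon}}}(G,\Gm) \xrightarrow{\sim} \Ext^1(G,\Gm)$ the map induced by the automorphism $c_g$ of $G$ (``conjugation'' by $g \in (G/Z_G)(k)$) is pullback of extensions, $[E] \mapsto [c_g^*E]$. So it suffices to exhibit, for each such $E$, an isomorphism of extensions $c_g^*E \cong E$.

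The key step is to lift $g$ to an automorphism of $E$. Since $G$ is connected, $E$ is connected, so $\Gm \subset E$ is central; hence $E/\Gm = G$ and $\ker(Z_E \to Z_G) = Z_E \cap \Gm = \Gm$. By Lemma \ref{mapofcenterssurj} the map $Z_E \to Z_G$ is moreover fppf surjective, so $Z_E/\Gm = Z_G$ and therefore $E/Z_E = G/Z_G$. Conjugation defines a map $E/Z_E \to {\rm{Aut}}_{E/k}$ covering the conjugation map $G/Z_G \to {\rm{Aut}}_{G/k}$; feeding in the $k$-point $g$ of $G/Z_G = E/Z_E$ yields a $k$-automorphism $\theta_g$ of $E$ with $\pi \circ \theta_g = c_g \circ \pi$ and $\theta_g|_{\Gm} = {\rm{id}}$.

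Granting $\theta_g$, the rest is formal. The map $c_g^*E = \{(h,e) \in G \times E \mid c_g(h) = \pi(e)\} \to E$ sending $(h,e) \mapsto \theta_g^{-1}(e)$ is a homomorphism; it carries the subgroup $\Gm$ identically to $\Gm$, and since $\pi(\theta_g^{-1}(e)) = c_g^{-1}(\pi(e)) = h$ on $c_g^*E$ it is compatible with the projections to $G$; its inverse is $e \mapsto (\pi(e), \theta_g(e))$. Thus it is an isomorphism of extensions $c_g^*E \xrightarrow{\sim} E$, so $[c_g^*E] = [E]$ for every $E$, i.e. $c_g^*$ is the identity on $\Ext^1(G,\Gm)$.

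The only point with any content is the lifting step, and there the essential input is Lemma \ref{mapofcenterssurj}: without fppf surjectivity of $Z_E \to Z_G$ one finds only that conjugation on $E$ by a lift of a central element $z \in Z_G$ differs from the identity by a character $G \to \Gm$, which in general need not vanish, so the conjugation action on $E$ would not descend along $G \twoheadrightarrow G/Z_G$. Once that surjectivity is in hand, everything reduces to bookkeeping with pullbacks of extensions and the identification $\Ext^1_{{\rm{Yon}}}(G,\Gm) \cong \Ext^1(G,\Gm)$.
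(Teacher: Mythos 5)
Your proof is correct and is essentially the paper's argument: both hinge on Lemma \ref{mapofcenterssurj} to descend the conjugation action to $G/Z_G$ on the total space of a central extension, and then exhibit an explicit isomorphism between $E$ and its pullback along $c_g$. The only cosmetic difference is that you lift $g$ to an automorphism $\theta_g$ of $E$ itself and use $\theta_g^{-1}$ to trivialize $c_g^*E$, whereas the paper applies the lemma to the pulled-back extension $E'$ and uses the map $e' \mapsto F(g^{-1}e'g)$.
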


\begin{proof}
Consider an extension $E$ of $G$ by $\Gm$, and pull it back by ``conjugation'' by $g$:
\[
\begin{tikzcd}
1 \arrow{r} & \Gm \arrow{r} \arrow[d, equals] & E' \arrow[dr, phantom, "\square"] \arrow{r}{\pi'} \arrow{d}{F} & G \arrow{r} \arrow{d}{x \mapsto gxg^{-1}} & 1 \\
1 \arrow{r} & \Gm \arrow{r} & E \arrow{r}{\pi} & G \arrow{r} & 1
\end{tikzcd}
\]
Since $E'$ is a central extension of $G$, $G$ acts on $E'$ by ``conjugation''. This descends to an action by $G/Z_G$ by Lemma \ref{mapofcenterssurj}. By abuse of notation, we still denote this action using multiplication notation. Then one easily checks that the map $E' \rightarrow E$ defined by $e' \mapsto F(g^{-1}e'g)$ is an isomorphism of extensions.
\end{proof}

\begin{proposition}
\label{extinvtinnertwist}
Let $G$ be a smooth connected group scheme over a field $k$, and let $G'$ be an inner form of $G$. Then the groups $\Ext^1(G, \Gm)$ and $\Ext^1(G', \Gm)$ are canonically isomorphic, by an isomorphism depending only upon the cohomology class in ${\rm{H}}^1(k, G/Z_G)$ by which one twists to get $G'$.
\end{proposition}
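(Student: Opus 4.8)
The plan is to realize the comparison isomorphism by twisting extensions, using Lemma \ref{mapofcenterssurj} to get an action of $G/Z_G$ on extensions and Lemma \ref{conjinducesid} to get canonicity.

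Since $G$ is smooth, so is $G/Z_G$, and the class defining $G'$ is represented by a continuous cocycle $z\colon \Gal(k_s/k)\to (G/Z_G)(k_s)$; fix one, so that $G'={}_zG$ is the twist of $G$ by $z$ along the conjugation map $G/Z_G\to {\rm{Aut}}_{G/k}$. Given a central extension $1\to\Gm\to E\xrightarrow{\pi}G\to 1$, the proof of Lemma \ref{mapofcenterssurj} shows that $\pi^{-1}(Z_G)$ is central in $E$; consequently $E/\pi^{-1}(Z_G)=G/Z_G$ acts on $E$ by conjugation, covering the conjugation action on $G$ and acting trivially on the central $\Gm$. Twisting $E$ by $z$ through this action (descent is effective since $E$ is quasi-projective over $k$, being affine over $G$) yields a $k$-group scheme ${}_zE$, which is a central extension of ${}_zG=G'$ by ${}_z\Gm=\Gm$. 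Twisting commutes with fibre products and central quotients, hence with Baer sums, so $[E]\mapsto [{}_zE]$ is a group homomorphism $\Ext^1(G,\Gm)\to\Ext^1(G',\Gm)$ (via the identification of $\Ext^1$ with Yoneda $\Ext$ recalled in the introduction). Twisting back along the cocycle $z^{-1}$, now valued in $(G'/Z_{G'})(k_s)$ — legitimate since $G'/Z_{G'}={}_z(G/Z_G)$ — furnishes a two-sided inverse, so the homomorphism is an isomorphism; it is the identity when $z$ is trivial.

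It remains to check that the isomorphism depends only on the class of $z$. If $z'$ is cohomologous to $z$ via a coboundary attached to $h\in (G/Z_G)(k_s)$, there is a canonical $k$-isomorphism $\lambda_h\colon {}_{z'}G\xrightarrow{\sim}{}_zG=G'$ which over $k_s$ is conjugation by $h$, together with a lift to a $k$-isomorphism of extensions ${}_{z'}E\xrightarrow{\sim}\lambda_h^{*}({}_zE)$ (functoriality of twisting applied to the conjugation-by-$h$ automorphism of $E_{k_s}$). Hence $[E]\mapsto[{}_{z'}E]$ equals $\lambda_h^{*}$ composed with $[E]\mapsto[{}_zE]$, so once the inner form $G'$ is fixed (determined up to canonical isomorphism by $[z]$), the two representatives $z,z'$ induce the same map on $\Ext^1$. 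The only residual issue is that $h$ is pinned down merely up to left multiplication by $(G/Z_G)(k)$; replacing $h$ by $gh$ with $g\in (G/Z_G)(k)$ changes $\lambda_h$ by conjugation by the $k$-point $g$, which induces the identity on $\Ext^1$ by Lemma \ref{conjinducesid}. Thus the resulting isomorphism $\Ext^1(G,\Gm)\xrightarrow{\sim}\Ext^1(G',\Gm)$ is canonical and depends only on the class in ${\rm{H}}^1(k,G/Z_G)$.

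The main obstacle is precisely the bookkeeping of the last paragraph: making rigorous the canonical isomorphism ${}_{z'}G\cong G'$ and its lift to extensions in terms of conjugation by $h$, and isolating exactly the ambiguity $(G/Z_G)(k)$ in $h$ so that Lemma \ref{conjinducesid} removes it. (An equivalent route feeds Lemmas \ref{mapofcenterssurj} and \ref{conjinducesid} into the low-degree exact sequence of the descent spectral sequence for $\Spec k_s\to\Spec k$ relating $\Ext^1(G,\Gm)$, $\Ext^1(G_{k_s},\Gm)^{\Gal(k_s/k)}$ and ${\rm{H}}^{\bullet}(k,\widehat G)$, using that inner automorphisms act trivially on $\widehat G$ — a homomorphism to $\Gm$ is conjugation-invariant — and, by Lemma \ref{conjinducesid}, on $\Ext^1(G_{k_s},\Gm)$, while the obstruction terms in ${\rm{H}}^2(k,\widehat G)$ match because the twisting discrepancy is conjugation by elements projecting into $Z_G$, which act trivially on any such $E$ by Lemma \ref{mapofcenterssurj}.)
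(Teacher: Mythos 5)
Your proposal is correct and follows essentially the same route as the paper: twist the central extension via the $G/Z_G$-action supplied by Lemma \ref{mapofcenterssurj}, invert by twisting back, and kill the dependence on the chosen cocycle with Lemma \ref{conjinducesid}. The one small imprecision is that the residual ambiguity in $h$ is left multiplication by a $k$-point of the \emph{twisted} group $(G'/Z_{G'})(k)$ (not $(G/Z_G)(k)$), so Lemma \ref{conjinducesid} should be applied to $G'$; this is exactly the point the paper flags, and it does not affect the argument.
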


\begin{proof}
Let $\alpha \in {\rm{H}}^1(k, G/Z_G)$, and fix a torsor/cocycle $x$ (whichever language you are more comfortable with) representing $\alpha$. Suppose we have an extension
\begin{equation}
\label{extensioneqn3}
1 \longrightarrow \Gm \longrightarrow E \longrightarrow G \longrightarrow 1
\end{equation}
of $G$ by $\Gm$. Since the extension is central, $G$ acts on $E$ by ``conjugation''. Since $Z_E \rightarrow Z_G$ is surjective by Lemma \ref{mapofcenterssurj}, $G/Z_G$ acts on $E$ by ``conjugation'', so we may twist (\ref{extensioneqn3}) by $x$ (see \cite[Ch.\,I, \S5.3]{serre}) to obtain a new sequence
\[
1 \longrightarrow \Gm \longrightarrow E_x \longrightarrow G_x \longrightarrow 1.
\]
This map $\Ext^1(G, \Gm) \rightarrow \Ext^1(G_x, \Gm)$ is easily checked to be a well-defined homomorphism. It is an isomorphism, as may be seen by twisting back from $G_x$ to $G$. 

This yields an isomorphism from $\Ext^1(G, \Gm)$ to $\Ext^1(G_x, \Gm)$, but why is this isomorphism canonical, i.e., independent of the class of $x$? Given another torsor $y$ representing $\alpha$, the groups $G_x$ and $G_y$ are $k$-isomorphic, but noncanonically so. This isomorphism is canonical, however, up to ``conjugation'' by an element of $(G_x/Z_{G_x})(k)$. (This is {\em not} the same thing as conjugation by an element of $G_x(k)$ in general!) Such conjugation induces the identity map on $\Ext^1(G_x, \Gm)$, by Lemma \ref{conjinducesid}. Hence the isomorphism on Ext groups is independent of the choice of torsor representing $\alpha$.
\end{proof}

We will now prove the inner invariance of $\tau(G)$, and that of $\# \Sha(G)$ then follows from the already-proved Theorem \ref{tamagawaformula} and Proposition \ref{extinvtinnertwist}. We begin with our pseudo-reductive $k$-group $G$, which we may assume to be either generalized standard or totally non-reduced (since inner twisting -- and in fact any $k_s$-automorphism of $G$ -- preserves the product decomposition in Theorem \ref{predclassification}, because the non-reduced component represents the maximal totally non-reduced subgroup, and the other component is the maximal reduced subgroup). First we treat the totally non-reduced case (which only occurs in characteristic $2$, so the reader willing to ignore this case may skip this paragraph). By Proposition \ref{measuresagreebasic} and the invariance of Tamagawa numbers under Weil restriction \cite[Ch.II, Thm.\,1.3(d)]{oesterle}, totally non-reduced pseudo-reductive groups have Tamagawa number $1$, so it suffices to show that an inner form of such a group is still totally non-reduced pseudo-reductive. In fact, this holds for any \'etale form, as being totally non-reduced is a condition on the root system of $G_{k_s}$, while pseudo-reductivity may be checked over $k_s$ \cite[Prop.\,1.1.9(1)]{cgp}.

Now assume that $G$ is a generalized standard pseudo-reductive group over a global function field $k$. Let us keep the notation from the proof of Theorem \ref{tamagawaformula} for generalized standard groups. Let $x \in Z^1(k, G/Z_G)$ be a $1$-cocycle. We need to show that $\tau(G_x) = \tau(G)$. Since the extension in the second row of (\ref{diagram1}) is central, $G$ acts on $F$ by conjugation. In order to show that $G/Z_G$ also acts on $F$, we need to show that the map $Z_F \rightarrow Z_G$ is fppf surjective. This is the content of the following lemma.

\begin{lemma}
\label{centerssurj}
In the notation above, the map $Z_F \rightarrow Z_G$ is fppf surjective.
\end{lemma}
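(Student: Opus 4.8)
The plan is to mimic the proof of Lemma~\ref{mapofcenterssurj}, but now for the central extension $1 \to B \to F \to G \to 1$ (the second row of (\ref{diagram1})), using what is already known about the pieces. Recall that $F$ sits in the pushout diagram (\ref{diagram1}): the central extension $1 \to \mathscr{C} \to C \ltimes \R_{k'/k}(G') \to G \to 1$ pushed out along $\mathscr{C} \hookrightarrow B$. Since $B$ is commutative and the extension $1 \to B \to F \to G \to 1$ is central, conjugation gives a bilinear commutator pairing $G \times Z_G \to B$, and fppf surjectivity of $Z_F \to Z_G$ is equivalent to the vanishing of the induced homomorphism $\varphi\colon G \to \calHom(Z_G, B)$. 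So the whole problem reduces to showing $\varphi = 1$, and since that is an identity of $k$-group maps we may (and do) pass to $k_s$, or even $\overline{k}$.

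First I would reduce the target. The group $B$ was constructed as $\R_{k'/k}(\mathscr{C}_{k'})$ with $\mathscr{C}_{k'}$ a product of a split torus and a split unipotent group, so $B$ is an iterated extension of copies of $\Gm$ and $\Ga$ (after a finite extension of the base). Since $G$ is connected, any homomorphism $G \to \Ga$ is trivial (over a field unipotent quotients of connected groups pick up no additive characters from a ``commutator'' pairing — more precisely, $\calHom(Z_G,\Ga)$ is a vector group, and a connected group maps trivially to it only if\dots), so I should instead just observe that the relevant commutator map $G \times Z_G \to B$ factors through the maximal multiplicative-type quotient of $B$ after killing the unipotent part, because the unipotent part of $B$ receives no nontrivial map. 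Concretely: $B^{\mathrm{unip}}$-valued characters of $Z_G$ pulled back to $G$ vanish since $G$ is connected and $\calHom(Z_G, B^{\mathrm{unip}})$ is a vector group (or more carefully, filter $B$ and run the argument of Lemma~\ref{mapofcenterssurj} dealing with each graded piece, which is $\Gm$ or $\Ga$). This reduces us to a $\Gm$-valued commutator pairing, i.e.\ exactly to showing the map $G \to \calHom(Z_G, \Gm)$ coming from a $\Gm$-subquotient of $F$ is trivial — and for that one can invoke Lemma~\ref{mapofcenterssurj} directly applied to the relevant sub-extension, or repeat its proof verbatim (Chevalley's theorem is not needed here since $G$ is already linear algebraic, so one only needs: $\calHom(Z_G,\Gm)$ is étale, $G$ connected, done, after first quotienting by $\R_{u,k}(G)$ to kill the unipotent-radical contribution as in that lemma).

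Actually the cleanest route, which I would take, is this: factor $Z_F \to Z_G$ through the pushout. The top row of (\ref{diagram1}) has kernel $\mathscr{C} = \R_{k'/k}(Z_{G'}(T'))$, and $Z_F$ is the pushout of $Z_{C \ltimes \R_{k'/k}(G')}$ along $\mathscr{C} \to B$. Since $\mathscr{C}$ is central in $C \ltimes \R_{k'/k}(G')$, we have $\mathscr{C} \subset Z_{C \ltimes \R_{k'/k}(G')}$, and the surjectivity of $Z_{C \ltimes \R_{k'/k}(G')} \to Z_G$ needs to be established — but that is again an instance of Lemma~\ref{mapofcenterssurj} applied to the central extension $1 \to \mathscr{C} \to C \ltimes \R_{k'/k}(G') \to G \to 1$ (note $\mathscr{C}$ is commutative, so that lemma's proof, suitably adapted from $\Gm$-coefficients to $\mathscr{C}$-coefficients by filtering $\mathscr{C}$, applies). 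Once $Z_{C \ltimes \R_{k'/k}(G')} \to Z_G$ is fppf surjective, pushing out along $\mathscr{C} \to B$ preserves this: the image of $Z_F$ in $G$ contains the image of $Z_{C \ltimes \R_{k'/k}(G')}$, which is already all of $Z_G$. So $Z_F \twoheadrightarrow Z_G$ fppf, as desired.

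The main obstacle, and the only real content, is justifying that Lemma~\ref{mapofcenterssurj}'s argument generalizes from $\Gm$-coefficients to $B$- (resp.\ $\mathscr{C}$-) coefficients — i.e.\ that a commutator homomorphism $G \to \calHom(Z_G, B)$ vanishes when $B$ is a connected commutative linear algebraic group all of whose multiplicative-type subquotients have étale character-dual and whose unipotent part contributes nothing. This is handled exactly as in Lemma~\ref{mapofcenterssurj} by dévissage on $B$ (first kill $\R_{u,k}(G)\cap Z_G$ as there, then filter $B$: torus part gives étale $\calHom$-target, finite part gives finite $\calHom$-target, both receive only trivial maps from connected $G$), so I do not expect genuine difficulty — just care in checking $B$ has the requisite structure, which it does by construction as $\R_{k'/k}$ of a split torus times split unipotent group. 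I would present the proof as: reduce to vanishing of the commutator map $G \to \calHom(Z_G,B)$, pass to $\overline{k}$, filter $B$ and $G$ as in Lemma~\ref{mapofcenterssurj}, and conclude.
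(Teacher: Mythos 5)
Your reduction to the surjectivity of $Z_{C \ltimes \R_{k'/k}(G')} \rightarrow Z_G$ is sound and matches the paper's first step, but the way you propose to prove that surjectivity — adapting Lemma \ref{mapofcenterssurj} by d\'evissage on the coefficient group $\mathscr{C}$ (or $B$) — has a genuine gap at exactly the point you flag as "the only real content." Filtering the target and using left-exactness of $\calHom(Z_G,-)$ does reduce you to showing that the commutator homomorphism $G \rightarrow \calHom(Z_G, Q)$ vanishes for each graded piece $Q$, and for $Q = \Gm$ the argument of Lemma \ref{mapofcenterssurj} applies. But $\mathscr{C} = \R_{k'/k}(Z_{G'}(T'))$ genuinely has $\Ga$-graded pieces over $\overline{k}$ whenever $k'/k$ is not \'etale (already $\R_{k'/k}(\Gm)/\Gm$ is a nontrivial unipotent group), and for $Q = \Ga$ the argument breaks: killing the multiplicative part of $Z_G$ leaves a pairing of $G$ against a unipotent quotient of $Z_G$ valued in $\Ga$, and $\calHom$ of a unipotent group into $\Ga$ is neither \'etale nor finite, so connectedness of $G$ gives you nothing. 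This is not a repairable technicality: the $\Ga$-analogue of Lemma \ref{mapofcenterssurj} is simply false. The Heisenberg extension $1 \rightarrow \Ga \rightarrow H \rightarrow \Ga^2 \rightarrow 1$ is a central extension of the connected group $G = \Ga^2 = Z_G$ by $\Ga$ in which $\pi^{-1}(Z_G) = H$ is not commutative, so the commutator pairing $G \times Z_G \rightarrow \Ga$ is nontrivial. Hence no formal d\'evissage can establish the vanishing you need; it must come from the specific structure of $C \ltimes \R_{k'/k}(G')$.

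The paper's proof supplies exactly the two structural inputs your proposal is missing. First, $\R_{k'/k}(G')$ is perfect, so the restriction of the commutator pairing to $1 \times \R_{k'/k}(G')$ is a homomorphism from a perfect group into the commutative sheaf $\calHom(\psi^{-1}(Z_G), \mathscr{C})$ and is therefore trivial; thus $1 \times \R_{k'/k}(G')$ commutes with $\psi^{-1}(Z_G)$. Second, $\mathscr{C}$ is its own centralizer in $\R_{k'/k}(G')$ (because a Cartan subgroup is its own centralizer and Weil restriction preserves centralizers of smooth subgroups, \cite[Prop.\,A.5.15(1)]{cgp}); combined with the first point and the triviality of the $C$-action on $\mathscr{C}$, this forces any $(c,r) \in \psi^{-1}(Z_G)$ to have $r \in \mathscr{C}$ and then to be central. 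If you want to salvage your write-up, replace the coefficient d\'evissage by these two facts; everything else in your outline (the descent of the pairing, passing to $\overline{k}$, the pushout bookkeeping for $F$) is fine.
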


\begin{proof}
It suffices to show that the map $Z_{C \ltimes \R_{k'/k}(G')} \rightarrow Z_G$ is fppf surjective. We have a bilinear map
\[
(C \ltimes \R_{k'/k}(G')) \times \psi^{-1}(Z_G) \rightarrow \mathscr{C}
\]
defined by taking commutators. That is, the map is defined by $(x, y) \mapsto xyx^{-1}y^{-1}$. We want to show that this map is trivial. Since $\R_{k'/k}(G')$ is equal to its own derived group, as we saw immediately after the proof of Lemma \ref{Ext^1vanishessc}, the induced map of fppf sheaves $1 \times \R_{k'/k}(G') \rightarrow \calHom(\psi^{-1}(Z_G), \mathscr{C})$ is trivial because the image is commutative. Therefore, $1 \times \R_{k'/k}(G')$ commutes with $\psi^{-1}(Z_G)$.

Now suppose that $x = (c, r) \in \psi^{-1}(Z_G) \subset C \ltimes \R_{k'/k}(G')$. (More precisely, $x$ is an $A$-valued point of this group for some $k$-algebra $A$.) Let $(1, r') \in 1 \times \mathscr{C} \subset C \ltimes \R_{k'/k}(G')$. As we have just seen, $x$ commutes with $(1, r')$. Computing the $\R_{k'/k}(G')$-component of both sides of
\[
(c, r)(1, r') = (1, r')(c, r),
\]
and using the fact that the $C$-action on $\R_{k'/k}(G')$ restricts to the trivial action on $\mathscr{C}$, we find that
\[
rr' = r'r.
\]
Since $r' \in 1 \times \mathscr{C}$ was arbitrary, we find that $r$ centralizes $\mathscr{C} \subset \R_{k'/k}(G')$. But we claim that 
\begin{equation}
\label{Z_R(C)=C}
Z_{\R_{k'/k}(G')}(\mathscr{C}) = \mathscr{C},
\end{equation}
which would imply that $r \in \mathscr{C}$. This would then imply that $x$ commutes with $C \times 1$. Since $x \in \psi^{-1}(Z_G)$ was arbitrary, it would then follow that $\psi^{-1}(Z_G)$ commutes with $C \times 1$. Since we have already seen that it commutes with $1 \times \R_{k'/k}(G')$, it would then follow that $\psi^{-1}(Z_G) \subset C \ltimes \R_{k'/k}(G')$ is central, which is what we want to show.

In order to prove (\ref{Z_R(C)=C}), we recall that $\mathscr{C} = \R_{k'/k}(D')$, where $D' = Z_{G'}(T')$ for some maximal $k'$-torus $T' \subset G'$. The Cartan subgroup $D'$ of $G'$ is its own centralizer in $G'$, since if $g' \in G'$ centralizes $D'$, then in particular it centralizes $T' \subset D'$, hence $g' \in D'$. Since $D'$ is smooth, \cite[Prop.\,A.5.15(1)]{cgp} implies that the centralizer in $\R_{k'/k}(G')$ of $\mathscr{C} = \R_{k'/k}(D')$ is $\R_{k'/k}(D') = \mathscr{C}$, as claimed.
\end{proof}

By Lemma \ref{centerssurj},
we may twist the second row of (\ref{diagram1}) by a cocycle $x \in Z^1(k, G/Z_G)$ to obtain a new central extension
\[
1 \longrightarrow B \longrightarrow F_x \longrightarrow G_x \longrightarrow 1.
\]
Note that $F_x$ is just an \'etale twist of $F$, not necessarily an inner form. As in the deduction of (\ref{tamproofeqn2}), Lemma \ref{tamagawasimplekernel} implies that
\[
\frac{\tau(G_x)}{\# \Ext^1(G_x, \Gm)} = \frac{\tau(F_x)}{\# \Ext^1(F_x, \Gm)}.
\]
Comparing this with (\ref{tamproofeqn2}), it suffices to show that $\# \Ext^1(G_x, \Gm) = \# \Ext^1(G, \Gm)$ and that 
\begin{equation}
\label{tau/exteqn}
\frac{\tau(F_x)}{\# \Ext^1(F_x, \Gm)} \stackrel{?}{=} \frac{\tau(F)}{\# \Ext^1(F, \Gm)}.
\end{equation}
(In fact, we have $\# \Ext^1(F_x, \Gm) = \# \Ext^1(F, \Gm)$ and $\tau(F_x) = \tau(F)$, but we will not need this.) The first equality follows from Proposition \ref{extinvtinnertwist}, so it only remains to prove (\ref{tau/exteqn}).

The ``conjugation'' action of $G$ on $F$ preserves the normal subgroup $\R_{k'/k}(G') \subset F$, hence also induces an action on the quotient $E$, and this action is trivial because $E$ is commutative. Thus, the exact sequence (\ref{exactseq2}) also gets twisted by $x$:
\[
1 \longrightarrow (\R_{k'/k}(G'))_x \longrightarrow F_x \longrightarrow E \longrightarrow 1.
\]
By Lemma \ref{standardetaletwist}, the group $(\R_{k'/k}(G'))_x$ is of the form $\R_{k''/k}(G'')$ for some finite reduced $k$-algebra $k''$ and some $k''$-group $G''$ whose fibers are all either simply connected or basic exotic pseudo-reductive. Therefore, by Lemmas \ref{tamagawasimplekernel} and \ref{R_{k'/k}(G')trivial}, we have
\[
\frac{\tau(F_x)}{\# \Ext^1(F_x, \Gm)} = \frac{\tau(E)}{\# \Ext^1(E, \Gm)}.
\]
Comparing this with (\ref{tau/exteqn3}) proves (\ref{tau/exteqn}), and therefore completes the proof of Theorem \ref{innerinvariance}. \hfill \qed

\section{An exact sequence for pseudo-reductive groups}
\label{sectionexactsequence}

In this section we prove Theorem \ref{pseudoredcomplexexact}. The commutative case is part of Tate duality \cite[Thm.\,1.2.8]{rostateduality} by Remark \ref{compatiblewithtateduality}, so assume that $G$ is a non-commutative pseudo-reductive $k$-group. Proposition \ref{adeliccohomologyprop} tells us that for a smooth connected group $H$ over a global field $k$, the natural map ${\rm{H}}^1(\A, H) \rightarrow \prod_v {\rm{H}}^1(k_v, H)$ induces a bijection ${\rm{H}}^1(\A, H) \rightarrow \coprod_v {\rm{H}}^1(k_v, H)$. The same holds for ${\rm{H}}^i$ for all $i > 0$ if $H$ is also commutative by \cite[Prop.\,5.2.2]{rostateduality}.

By Theorem \ref{predclassification}, we may assume that the pseudo-reductive $G$ is either generalized standard or $G = \R_{k'/k}(G')$ for a nonzero finite reduced $k$-algebra $k'$ and a $k'$-group $G'$ fiber $G'_i$ over each factor field $k'_i$ of $k'$ is basic non-reduced. In the latter case, we claim that ${\rm{H}}^1(\A, G) = 1$, so the desired exactness is clear. Indeed, we may work on each factor field of $k'$ and thereby assume that $k'$ is a field. By \cite[Ch.\,IV, \S 2.3, Cor.]{oesterle}, we have ${\rm{H}}^1(\A_k, \R_{k'/k}(G')) = {\rm{H}}^1(\A_{k'}, G')$, so we may assume that $G$ is basic non-reduced. Then \cite[Prop.\,9.9.4(1)]{cgp} implies that ${\rm{H}}^1(\A, G) = 1$.

So we may and do assume that $G$ is generalized standard and non-commutative. Then $G$ is built in the usual manner as follows. There is a nonzero finite reduced $k$-algebra $k'$ and a $k'$-group $G'$ with fibers that are each either connected semisimple and simply connected semisimple or basic exotic pseudo-reductive, a maximal $k'$-torus $T'$ which we are free to choose (see Remark \ref{choiceoftorus}), a commutative pseudo-reductive $k$-group $C$ acting on $\R_{k'/k}(G')$ whose action is trivial on $\mathscr{C} := \R_{k'/k}(Z_{G'}(T')) \subset \R_{k'/k}(G')$, and a map $\phi: \mathscr{C} \rightarrow C$ such that the action of $C$ on $\R_{k'/k}(G')$ is compatible with the conjugation action of $\mathscr{C}$ on $\R_{k'/k}(G')$ via $\phi$, and a central extension
\begin{equation}
\label{extension4}
1 \longrightarrow \mathscr{C} \longrightarrow C \ltimes \R_{k'/k}(G') \longrightarrow G \longrightarrow 1,
\end{equation}
where the first map is $(-\phi, j)$, with $j: \mathscr{C} \hookrightarrow \R_{k'/k}(G')$ the inclusion. 

Pick $\alpha \in {\rm{H}}^1(\A, G)$ such that $\alpha \mapsto 0 \in \Ext^1(G, \Gm)^*$. We need to show that $\alpha$ lifts to ${\rm{H}}^1(k, G)$. Having fixed $\alpha$, we now choose the maximal $k'$-torus $T'$ above as follows. Recall that we have a canonical map $f: G' \rightarrow \overline{G}'$ onto a $k'$-group with simply connected semisimple fibers. Indeed, we take $\overline{G}'$ to be the same as $G'$ on the simply connected fibers of $G'$ over Spec$(k')$, and we take it to be the canonical simply connected quotient on the basic exotic fibers; see Remark \ref{canonicalsurjection}. Then there is a bijection between the maximal $k'$-tori of $\overline{G}'$ and those of $G'$ defined by sending a maximal torus $\overline{T}' \subset \overline{G}'$ to the unique maximal $k'$-torus in $f^{-1}(\overline{T}')$ \cite[Cor.\,7.3.4]{cgp}. 

Let $S$ be the finite set of places $v$ of $k$ such that $1 \neq \alpha_v \in {\rm{H}}^1(k_v, G)$. By Proposition \ref{torustrivialSha^2}, there is a maximal $k'$-torus $\overline{T}' \subset \overline{G}'$ such that $\Sha^2(k', \overline{T}') = 0$ and ${\rm{H}}^2(k'_w, \overline{T}') = 0$ for all places $w$ of $v$ lying above a place of $S$. We take $T' \subset G'$ to be the maximal torus corresponding to $\overline{T}'$ under the above bijection. Then we claim that
\begin{equation}
\label{Sha^2=0eqn2}
\Sha^2(k, \mathscr{C}) = 0
\end{equation}
and
\begin{equation}
\label{H^2=0inS}
{\rm{H}}^2(k_v, \mathscr{C}) = 0 \mbox{ if } \alpha_v \neq 1.
\end{equation}

Indeed, we may proceed fiber by fiber over Spec$(k')$ and thereby temporarily assume that $k'$ is a field and $G'$ is either simply connected or basic exotic. We first note that ${\rm{H}}^2(k_v, \mathscr{C}) = {\rm{H}}^2(k_v, \R_{k'/k}(Z_{G'}(T'))) = \prod_{w \mid v} {\rm{H}}^2(k'_w, Z_{G'}(T'))$ since finite pushforward is an exact functor between categories of abelian \'etale sheaves; similarly, $\Sha^2(k, \mathscr{C}) = \Sha^2(k', Z_{G'}(T'))$. Now the simply connected case is immediate from our choice of $T'$, since $Z_{G'}(T') = T' = \overline{T}'$ on those fibers. In the basic exotic case, we invoke Lemma \ref{pointsofcartan}, whose hypotheses are preserved when passing from $k'$ to $k'_w$, since this is a separable extension. The claim then follows from the corresponding property for the torus $\overline{T}'$ arranged by design.

Because the extension (\ref{extension4}) is central, there is a connecting map $\gamma: {\rm{H}}^1(\A, G) \rightarrow {\rm{H}}^2(\A, \mathscr{C})$ having the usual properties. Then $\gamma(\alpha) = 0$. Indeed, for $v$ such that $\alpha_v = 1$ the image clearly vanishes, while for all other $v$ it vanishes by (\ref{H^2=0inS}). Therefore, $\alpha$ lifts to a class $\beta \in {\rm{H}}^1(\A, C \ltimes \R_{k'/k}(G'))$. We claim that we may choose $\beta$ so that $\beta \mapsto 0 \in \Ext^1(C \ltimes \R_{k'/k}(G'), \Gm)^*$. Assuming this, if Theorem \ref{pseudoredcomplexexact} holds for $C \ltimes \R_{k'/k}(G')$, then $\beta$ lifts to ${\rm{H}}^1(k, C \ltimes \R_{k'/k}(G'))$, hence $\alpha$ lifts to ${\rm{H}}^1(k, G)$. Modulo the claim, Theorem \ref{pseudoredcomplexexact} for $G$ therefore reduces to Theorem \ref{pseudoredcomplexexact} for $C \ltimes \R_{k'/k}(G')$.

In order to prove the claim, consider the following commutative diagram with exact rows
\begin{equation}
\label{diagram3}
\begin{tikzcd}
{\rm{H}}^1(\A, \mathscr{C}) \arrow{r}{j} \arrow[d, twoheadrightarrow, "g"] & {\rm{H}}^1(\A, C \ltimes \R_{k'/k}(G')) \arrow{d}{h} \arrow{r} & {\rm{H}}^1(\A, G) \arrow{d} \\
\Ext^1(\mathscr{C}, \Gm)^* \arrow{r}{j^*} & \Ext^1(C \ltimes \R_{k'/k}(G'), \Gm)^* \arrow{r} & \Ext^1(G, \Gm)^*
\end{tikzcd}
\end{equation}
where the exactness of the bottom row follows from \cite[Lemma 3.3]{rospic}. The first vertical arrow in this diagram is surjective. Indeed, by the compatibility of the map ${\rm{H}}^1(\A, \mathscr{C}) \rightarrow \Ext^1(\mathscr{C}, \Gm)^*$ with the map arising in Tate duality ${\rm{H}}^1(\A, \mathscr{C}) \rightarrow {\rm{H}}^1(k, \widehat{\mathscr{C}})^*$ (Remark \ref{compatiblewithtateduality}), it suffices to show that this latter map is surjective. By global Tate duality for $\mathscr{C}$ \cite[Thm.\,1.2.8]{rostateduality}, the cokernel of this map is isomorphic to $\Sha^2(\mathscr{C})$, which vanishes by (\ref{Sha^2=0eqn2}).

A simple diagram chase in diagram (\ref{diagram3}) using the fact that $\alpha \mapsto 0 \in \Ext^1(G, \Gm)^*$ now shows that there exists $\gamma \in {\rm{H}}^1(\A, \mathscr{C})$ such that $h(j(\gamma)) = h(\beta)$. Since the extension (\ref{extension4}) is central, the group ${\rm{H}}^1(\A, \mathscr{C})$ acts on the set ${\rm{H}}^1(\A, C \ltimes \R_{k'/k}(G'))$, and the fibers of the map $\sigma$ are the orbits of this action (\cite[Chap.\,I, \S 5.7, Prop.\,42]{serre} and Proposition \ref{adeliccohomologyprop}). Let us denote this action by $*$. Then we claim that $h(-\gamma * \beta) = 0$, which will prove our claim that we may choose $\beta$ lifting $\alpha$ so that $h(\beta) = 0$, and thereby reduce us to proving Theorem \ref{pseudoredcomplexexact} for the group $C \ltimes \R_{k'/k}(G')$. Since the map $h \circ j$ is a group homomorphism (because it equals $j^* \circ g$), this equality follows from the following lemma.

\begin{lemma}
\label{actionExt*}
Let $\epsilon \in {\rm{H}}^1(\A, \mathscr{C})$, $\zeta \in {\rm{H}}^1(\A, C \ltimes \R_{k'/k}(G'))$. Then $h(\epsilon * \zeta) = h(j(\epsilon)) + h(\zeta)$.
\end{lemma}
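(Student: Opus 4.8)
Write $M := C \ltimes \R_{k'/k}(G')$, so that $h$ is the map of $(\ref{H^1Extcomplex})$ attached to $M$, the map $j$ is induced by the central inclusion $\mathscr{C} \hookrightarrow M$ appearing in $(\ref{extension4})$, and $*$ is the action of ${\rm{H}}^1(\A, \mathscr{C})$ on ${\rm{H}}^1(\A, M)$ coming from the central extension $(\ref{extension4})$. The plan is to fix an extension class $[E] \in \Ext^1(M, \Gm)$, represented by a central extension $1 \to \Gm \to E \xrightarrow{\pi} M \to 1$ (automatically represented by a smooth connected affine $k$-group, as recalled in the introduction), and to pair both sides of the asserted identity against $[E]$. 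By Proposition $\ref{adeliccohomologyprop}$ (and $\cite[\text{Prop.\,}5.2.2]{rostateduality}$ for the commutative group $\mathscr{C}$), the classes $\epsilon$, $\zeta$, $\epsilon * \zeta$, $j(\epsilon)$ decompose into local components almost all of which are trivial, the operations $*$ and $j$ are computed place by place, and $\langle h(-),[E]\rangle = \sum_v {\rm{inv}}_v \circ \partial_{E, v}$, where $\partial_{E, v} \colon {\rm{H}}^1(k_v, M) \to {\rm{H}}^2(k_v, \Gm)$ is the connecting map of the (central) base change $E_{k_v}$, which exists by $\cite[\text{Ch.\,I, \S5.7}]{serre}$. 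Thus it suffices to prove, over an arbitrary local field $K := k_v$ with $\Gamma := {\rm{Gal}}(K_s/K)$, the identity $\partial_E(\epsilon_v * \zeta_v) = \partial_E(j(\epsilon_v)) + \partial_E(\zeta_v)$ in ${\rm{H}}^2(K, \Gm)$; summing the local invariants then yields the lemma.

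For this local statement I would argue with explicit cocycles. Represent $\epsilon_v$ by an abelian $1$-cocycle $(c_\sigma)_{\sigma \in \Gamma}$ valued in $\mathscr{C}(K_s)$ and $\zeta_v$ by a $1$-cocycle $(z_\sigma)$ valued in $M(K_s)$. Since $\mathscr{C}$ is central in $M$ (the extension $(\ref{extension4})$ being central), the pointwise product $(c_\sigma z_\sigma)$ is again a $1$-cocycle valued in $M(K_s)$, it represents $\epsilon_v * \zeta_v$ by $\cite[\text{Ch.\,I, \S5.7, Prop.\,42}]{serre}$, while $(c_\sigma)$ represents $j(\epsilon_v)$. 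Choose lifts $\widetilde{z}_\sigma, \widetilde{c}_\sigma \in E(K_s)$ of $z_\sigma$ and $c_\sigma$ (using $\widetilde{c}_\sigma \widetilde{z}_\sigma$ as a lift of $c_\sigma z_\sigma$). The standard formula for the connecting map of a central extension then gives $\partial_E(\zeta_v) = [b]$, $\partial_E(j(\epsilon_v)) = [a]$ and $\partial_E(\epsilon_v * \zeta_v) = [d]$, where $b_{\sigma, \tau} = \widetilde{z}_\sigma\, \sigma(\widetilde{z}_\tau)\, \widetilde{z}_{\sigma\tau}^{-1}$, $a_{\sigma, \tau} = \widetilde{c}_\sigma\, \sigma(\widetilde{c}_\tau)\, \widetilde{c}_{\sigma\tau}^{-1}$ and $d_{\sigma, \tau} = (\widetilde{c}_\sigma \widetilde{z}_\sigma)\, \sigma(\widetilde{c}_\tau \widetilde{z}_\tau)\, (\widetilde{c}_{\sigma\tau}\widetilde{z}_{\sigma\tau})^{-1}$, all valued in $\ker(\pi) = \Gm(K_s)$.

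The crux is that $\pi^{-1}(\mathscr{C}) \subseteq E$ is central in $E$. Indeed, by Lemma $\ref{mapofcenterssurj}$ the map $Z_E \to Z_M$ is fppf surjective; since $\mathscr{C} \subseteq Z_M$ and $\ker(\pi) = \Gm \subseteq Z_E$, the inclusion $Z_E \cap \pi^{-1}(\mathscr{C}) \hookrightarrow \pi^{-1}(\mathscr{C})$ is a morphism between two fppf-sheaf extensions of $\mathscr{C}$ by $\Gm$ which is the identity on sub and quotient, hence an isomorphism; so $\pi^{-1}(\mathscr{C}) \subseteq Z_E$. Consequently each $\sigma(\widetilde{c}_\tau) \in \pi^{-1}(\mathscr{C})(K_s)$ is central in $E(K_s)$. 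In the expression for $d_{\sigma, \tau}$ we may therefore move $\sigma(\widetilde{c}_\tau)$ to the far left past $\widetilde{z}_\sigma$, and then move the now-central element $b_{\sigma, \tau} = \widetilde{z}_\sigma \sigma(\widetilde{z}_\tau) \widetilde{z}_{\sigma\tau}^{-1}$ out, obtaining $d_{\sigma, \tau} = a_{\sigma, \tau}\, b_{\sigma, \tau}$ in the abelian group $\Gm(K_s)$. Hence $\partial_E(\epsilon_v * \zeta_v) = [a] + [b] = \partial_E(j(\epsilon_v)) + \partial_E(\zeta_v)$, the required local identity.

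The only non-formal ingredient is the centrality of $\pi^{-1}(\mathscr{C})$ in $E$ — equivalently, the vanishing of the commutator pairing $M \times \mathscr{C} \to \Gm$ attached to $E$. Without it, the discrepancy between $\partial_E(\epsilon_v * \zeta_v)$ and $\partial_E(j(\epsilon_v)) + \partial_E(\zeta_v)$ would be the cup product of $\epsilon_v$ with the image of $\zeta_v$ in ${\rm{H}}^1(k_v, C)$ against that pairing, and the sum of its local invariants over $v$ need not vanish for adelic (as opposed to global) classes; so the lemma genuinely relies on Lemma $\ref{mapofcenterssurj}$, which in turn uses the smoothness and connectedness of $M$. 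Everything else in the argument is routine bookkeeping with $1$- and $2$-cocycles and the passage between adelic and local cohomology afforded by Proposition $\ref{adeliccohomologyprop}$.
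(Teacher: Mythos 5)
Your proof is correct, and its overall architecture is the same as the paper's: pair against a central extension $E$ of $C \ltimes \R_{k'/k}(G')$ by $\Gm$, reduce via Proposition \ref{adeliccohomologyprop} to a local cocycle identity for the connecting maps, and observe that the whole computation hinges on the centrality of $\pi^{-1}(\mathscr{C})$ in $E$. Where you genuinely diverge is in how that centrality is established. The paper proves it directly inside the lemma: it first shows $\pi^{-1}(T)$ is central for the maximal torus $T \subset \mathscr{C}$ (connectedness of $E$ plus \'etaleness of the automorphism scheme of a torus), then checks that the commutator map $\phi_e \colon \pi^{-1}(\mathscr{C})_{\overline{k}} \to (\Gm)_{\overline{k}}$ is a homomorphism killing $\pi^{-1}(T)$, hence factors through the unipotent quotient $\mathscr{C}/T$ and therefore vanishes. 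You instead invoke Lemma \ref{mapofcenterssurj} (fppf surjectivity of $Z_E \to Z_{C \ltimes \R_{k'/k}(G')}$) and run a short five-lemma argument: $Z_E \cap \pi^{-1}(\mathscr{C})$ and $\pi^{-1}(\mathscr{C})$ are both fppf extensions of $\mathscr{C}$ by $\Gm$, and the inclusion between them is the identity on sub and quotient, so $\pi^{-1}(\mathscr{C}) \subseteq Z_E$. This deduction is valid — the hypotheses of Lemma \ref{mapofcenterssurj} hold since $C \ltimes \R_{k'/k}(G')$ is smooth and connected, the image of $\mathscr{C}$ is central because $(\ref{extension4})$ is a central extension, and $\Gm = \ker(\pi)$ lies in $Z_E$ — and in effect it outsources to Lemma \ref{mapofcenterssurj} the same commutator-pairing analysis that the paper redoes by hand for $\mathscr{C}$. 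Your route is shorter and yields the cleaner general statement that the preimage of any central subgroup scheme is central; the paper's route keeps the lemma self-contained modulo only the structure of $\mathscr{C}$ as an extension of a unipotent group by a torus. The remaining cocycle bookkeeping in your write-up (representing $\epsilon_v * \zeta_v$ by the pointwise product $c_\sigma z_\sigma$ and factoring the resulting $2$-cocycle as $a_{\sigma,\tau} b_{\sigma,\tau}$ using the centrality just established) matches the paper's computation exactly.
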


\begin{proof}
Let $E \in \Ext^1(C \ltimes \R_{k'/k}(G'), \Gm)$; this is a central extension
\[
1 \longrightarrow \Gm \longrightarrow E \longrightarrow C \ltimes \R_{k'/k}(G') \longrightarrow 1
\]
Let $\delta$ denote the connecting map ${\rm{H}}^1(C \ltimes \R_{k'/k}(G')) \rightarrow {\rm{H}}^2(\Gm)$. Then the pairing between $E$ and an element $\alpha \in {\rm{H}}^1(\A, C \ltimes \R_{k'/k}(G'))$ is given by adding up the elements $\delta(\alpha_v) \in {\rm{H}}^2(k_v, \Gm) \simeq \Q/\Z$ for all places $v$ of $k$. Thus, by definition of $h$, the assertion reduces to a local one. That is, we need to show that for $\epsilon \in {\rm{H}}^1(k_v, \mathscr{C})$, $\zeta \in {\rm{H}}^1(k_v, C \ltimes \R_{k'/k}(G'))$, $\delta(\epsilon * \zeta) = \delta(j(\epsilon)) + \delta(\zeta)$. 

Consider the following pullback diagram with exact rows
\[
\begin{tikzcd}
1 \arrow{r} & \Gm \arrow[d, equals] \arrow{r} & E' \arrow{r} \arrow[d, hookrightarrow] \arrow[dr, phantom, "\square"] & \mathscr{C} \arrow{r} \arrow[d, hookrightarrow, "j"] & 1 \\
1 \arrow{r} & \Gm \arrow{r} & E \arrow{r}{\pi} & C \ltimes \R_{k'/k}(G') \arrow{r} & 1
\end{tikzcd}
\]
where $E'$ is defined by the above pullback diagram. Let $\delta': {\rm{H}}^1(\mathscr{C}) \rightarrow {\rm{H}}^2(\Gm)$ denote the connecting map for the top sequence in the diagram. Then the equality we need to show is equivalent to $\delta(\epsilon * \zeta) = \delta'(\epsilon) + \delta(\zeta)$. If we choose $1$-cocycles $\zeta_{\sigma}, \epsilon_{\sigma}$ representing $\zeta$ and $\epsilon$ (where the notation means that $\epsilon_{\sigma} \in \mathscr{C}(k_s)$ is the image of $\sigma \in \Gal((k_v)_s/k_v)$ under the chosen cocycle representing $\epsilon$), then by definition $\epsilon * \zeta$ is represented by the $1$-cocycle $\epsilon_{\sigma}\zeta_{\sigma}$ (see \cite[Chap.\,I, \S 5.7]{serre}). Now $\delta(\zeta_{\sigma})$ is computed by lifting $\zeta_{\sigma}$ to a $1$-cochain valued in $E(k_s)$, and then taking its differential to get a $2$-cocycle valued in $\Gm$, and similarly for $\delta'$. The desired equality is therefore immediate  if we know that $E' = \pi^{-1}(\mathscr{C})$ is central in $E$ (by choosing a lift $\overline{\epsilon}_{\sigma} \in E'(k_s)$ of $\epsilon_{\sigma}$, a lift $\overline{\zeta}_{\sigma} \in E(k_s)$ of $\zeta_{\sigma}$, and then using the lift $\overline{\epsilon}_{\sigma} \overline{\zeta}_{\sigma} \in E(k_s)$ of $\epsilon_{\sigma} \zeta_{\sigma}$).

To prove this centrality, let $T \subset \mathscr{C}$ be the maximal torus. Then $\pi^{-1}(T) \subset E$ is central. Indeed, it is a normal torus (since $T \subset \mathscr{C}$ is normal (even central) in $C \ltimes \R_{k'/k}(G')$), and the conjugation action of $E$ on it induces a map $E \rightarrow {\rm{Aut}}_{\pi^{-1}(T)/k}$ which is constant because $E$ is connected and the automorphism scheme of a torus is \'etale. Thus, $\pi^{-1}(T)$ is central in $E$.

The group $E(\overline{k})$ is Zariski dense in $E_{\overline{k}}$ (because $E$ is geometrically reduced, even smooth), so it suffices to check that $E(\overline{k})$ centralizes $E'_{\overline{k}}$. So fix $e \in E(\overline{k})$ and consider the commutator map $\phi_e: E'_{\overline{k}} \rightarrow E'_{\overline{k}}$ given by $e' \mapsto ee'e^{-1}e'^{-1}$. We want to show that this map is trivial. Since $\mathscr{C} \subset C \ltimes \R_{k'/k}(G')$ is central, the image of $\phi_e$ actually lives in $\ker(E'_{\overline{k}} \rightarrow \mathscr{C}_{\overline{k}}) = (\Gm)_{\overline{k}}$. We claim that $\phi_e: E'_{\overline{k}} \rightarrow (\Gm)_{\overline{k}}$ is a homomorphism. Assuming this, if we let $U := \mathscr{C}/T$, a unipotent group, then since $\pi^{-1}(T) \subset E'$ is central in $E$, $\phi_e$ descends to a homomorphism $\phi_e: U_{\overline{k}} \rightarrow (\Gm)_{\overline{k}}$, which is trivial because unipotent groups admit no nontrivial characters over a field.

It remains to prove the claim. This is a formal calculation once one notes that for any $e' \in E'$ (more precisely, $e' \in E'(R)$ for some $\overline{k}$-algebra $R$), $\phi_e(e') = ee'^{-1}e^{-1}e'^{-1} \in \Gm$ is central in $E$. Then we compute that for $c, d \in E'$,
\[
\phi_e(cd) = e(cd)e^{-1}(cd)^{-1} = ece^{-1}(ede^{-1}d^{-1})c^{-1} = ece^{-1}c^{-1}(ede^{-1}d^{-1}) = \phi_e(c)\phi_e(d)
\]
where in the penultimate inequality, we have used the centrality of $ede^{-1}d^{-1}$ observed above.
\end{proof}

We are therefore reduced to proving Theorem \ref{pseudoredcomplexexact} for the group $C \ltimes \R_{k'/k}(G')$. This follows immediately from Theorem \ref{pseudoredcomplexexact} for the commutative group $C$ together with the following lemma.

\begin{lemma}
\label{H^1(CtimesR)=H^1(C)}
The maps ${\rm{H}}^1(k, C \ltimes \R_{k'/k}(G')) \rightarrow {\rm{H}}^1(k, C)$ and ${\rm{H}}^1(\A, C \ltimes \R_{k'/k}(G')) \rightarrow {\rm{H}}^1(\A, C)$ induced by the projection $C \ltimes \R_{k'/k}(G') \rightarrow C$ are bijections.
\end{lemma}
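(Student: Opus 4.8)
The plan is to exploit that the projection $C \ltimes \R_{k'/k}(G') \twoheadrightarrow C$ is a \emph{split} surjection with kernel $\R_{k'/k}(G')$, and to feed Lemmas \ref{R_{k'/k}(G')trivial} and \ref{standardetaletwist} into the exact-sequence formalism for nonabelian cohomology recalled in \cite[Ch.\,I, \S 5]{serre} (and \cite[Appendix B]{conrad} over a general base). Surjectivity of both maps is immediate: the inclusion of $C$ as the first factor of $C \ltimes \R_{k'/k}(G')$ is a section of the projection, hence induces a section of the induced maps on ${\rm{H}}^1$, both over $k$ and over $\A$.

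For injectivity over $k$ I would argue exactly as in the proof of the injectivity of $\Sha(F)\to\Sha(E)$ within the proof of Theorem \ref{tamagawaformula}. Given $x,y \in {\rm{H}}^1(k, C\ltimes \R_{k'/k}(G'))$ with common image $[z] \in {\rm{H}}^1(k,C)$, fix a cocycle $z$ representing $[z]$ (valued in $C(k_s)$, as $C$ is smooth), lift it through the section to a cocycle $\tilde z$ valued in $(C\ltimes\R_{k'/k}(G'))(k_s)$, and twist the exact sequence $1 \to \R_{k'/k}(G') \to C\ltimes\R_{k'/k}(G') \to C \to 1$ by $\tilde z$. The twisting bijections carry the fiber of the original map over $[z]$ onto the fiber over the trivial class of the map ${\rm{H}}^1(k, {}_{\tilde z}(C\ltimes\R_{k'/k}(G'))) \to {\rm{H}}^1(k, {}_z C)$, and by exactness of the cohomology sequence of pointed sets this latter fiber is the image of ${\rm{H}}^1(k, {}_{\tilde z}\R_{k'/k}(G'))$. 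Since ${}_{\tilde z}\R_{k'/k}(G')$ is a $k_s/k$-form of $\R_{k'/k}(G')$ (the twist being by conjugation, hence through $k_s$-automorphisms), Lemma \ref{standardetaletwist} shows it is again of the form $\R_{k''/k}(G'')$ with each fiber of $G''$ connected semisimple absolutely simple and simply connected or basic exotic pseudo-reductive, so ${\rm{H}}^1(k, {}_{\tilde z}\R_{k'/k}(G')) = 1$ by Lemma \ref{R_{k'/k}(G')trivial}. Hence the fiber over $[z]$ is a singleton, forcing $x = y$.

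For the adelic statement, note that $C$ and $C\ltimes \R_{k'/k}(G')$ are smooth connected affine $k$-groups, so Proposition \ref{adeliccohomologyprop} identifies ${\rm{H}}^1(\A, C\ltimes\R_{k'/k}(G'))$ with $\coprod_v {\rm{H}}^1(k_v, C\ltimes\R_{k'/k}(G'))$ and ${\rm{H}}^1(\A, C)$ with $\coprod_v {\rm{H}}^1(k_v, C)$, compatibly with the projection. It therefore suffices to check that each local map ${\rm{H}}^1(k_v, C\ltimes \R_{k'/k}(G')) \to {\rm{H}}^1(k_v, C)$ is a bijection, which I would do by rerunning the argument of the previous paragraph over $k_v$ and invoking the local vanishing ${\rm{H}}^1(k_v, \R_{k'/k}(G')) = 1$ — together with its étale-twist version via Lemma \ref{standardetaletwist} — recorded in Lemma \ref{R_{k'/k}(G')trivial}.

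The only genuinely delicate point is the bookkeeping in the nonabelian twisting formalism, in particular verifying that the kernel appearing after twisting is a bona fide $k_s/k$-form of $\R_{k'/k}(G')$ so that Lemma \ref{standardetaletwist} applies; but this is routine given the smoothness of $C$, and all the real content sits in the two cited lemmas.
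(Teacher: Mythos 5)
Your proposal is correct and follows essentially the same route as the paper's own proof: surjectivity from the semidirect-product section, injectivity via the standard twisting argument applied to the kernel sequence together with Lemmas \ref{standardetaletwist} and \ref{R_{k'/k}(G')trivial}, and the adelic case by reduction to the local statements through Proposition \ref{adeliccohomologyprop}. The only difference is that you spell out the twisting bookkeeping that the paper compresses into the phrase ``a standard twisting argument,'' which is harmless.
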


\begin{proof}
We first prove the lemma for ${\rm{H}}^1(k, \cdot)$. That the map is surjective follows from the existence of a section to the projection $C \ltimes \R_{k'/k}(G') \rightarrow C$, namely, the map $C \rightarrow C \ltimes \R_{k'/k}(G')$ given by $c \mapsto (c, 1)$. For the injectivity, consider the exact sequence
\[
1 \longrightarrow \R_{k'/k}(G') \longrightarrow C \ltimes \R_{k'/k}(G') \longrightarrow C \longrightarrow 1
\]
where the first map is the obvious one $r \mapsto (1, r)$ and the second map is projection. A standard twisting argument shows that in order to prove the desired injectivity, it suffices to show that for any $x \in {\rm{H}}^1(k, C \ltimes \R_{k'/k}(G'))$, after twisting the sequence by $x$, the resulting kernel $(\R_{k'/k}(G'))_x$ has vanishing ${\rm{H}}^1$. 

Any such kernel is itself of the form $\R_{k''/k}(G'')$ for some finite reduced $k$-algebra $k''$ and some $k''$-group $G''$ all of whose fibers are either simply connected semisimple or basic exotic pseudo-reductive, by Lemma \ref{standardetaletwist}. Thus, the desired vanishing boils down to the vanishing of the global ${\rm{H}}^1$ that we have already seen several times for such groups: we may work fiber by fiber over Spec$(k'')$, so we may assume that $k''$ is a field. Then \cite[Ch.\,IV, \S 2.3 Cor.]{oesterle} implies that ${\rm{H}}^1(k, \R_{k''/k}(G'')) = {\rm{H}}^1(k'', G'')$. If $G''$ is simply connected, then this ${\rm{H}}^1$ vanishes by \cite[Satz A]{harder1}. If $G''$ is basic exotic, then the canonical surjection $f: G'' \twoheadrightarrow \overline{G}''$ to a simply connected group (see Remark \ref{canonicalsurjection}) induces a bijection on ${\rm{H}}^1$'s, by \cite[Prop.\,7.3.3(1)]{cgp}, so the ${\rm{H}}^1$-vanishing follows from the vanishing in the simply connected case once again.

The proof for ${\rm{H}}^1(\A, \cdot)$ is exactly the same, but using Proposition \ref{adeliccohomologyprop} and the vanishing of the local cohomology (rather than the global cohomology) of simply connected groups \cite[Thm.\,4.7]{bruhattits}.
\end{proof}

\section{Strong approximation for quotients}
\label{approximationsection}

In this section we will prove Theorem \ref{weakapprox}. We maintain the notation from the statement of that result. The key point is to note that we have a natural homomorphism $$\phi: \Ext^1(G, \Gm) \rightarrow \Br(Y).$$ This map may be defined in multiple ways, but the following is probably the simplest. Suppose given $E \in \Ext^1(Y, \Gm)$. Then $E$ is represented by a central extension of algebraic $k$-groups
\[
1 \longrightarrow \Gm \longrightarrow E \longrightarrow G \longrightarrow 1.
\]
Because this extension is central, it yields a connecting map ${\rm{H}}^1(Y, G) \rightarrow {\rm{H}}^2(Y, \Gm) = \Br(Y)$. In particular, the $G$-torsor $H$ over $Y$ gets mapped to an element of $\Br(Y)$.

We note that $\phi$ is compatible with the pairing ${\rm{H}}^1(\A, G) \times \Ext^1(G, \Gm) \rightarrow \Q/\Z$ defining the second map in Theorem \ref{pseudoredcomplexexact}, in the sense that the following diagram commutes:
\begin{equation}
\label{commdiagweakapproxpf}
\begin{tikzcd}
Y(\A) \arrow{d}{\delta} \arrow[r, phantom, "\times"] & \Br(Y) \arrow{r} & \Q/\Z \arrow[d, equals] \\
{\rm{H}}^1(\A, G) \arrow[r, phantom, "\times"] & \Ext^1(G, \Gm) \arrow{u}{\phi} \arrow{r} & \Q/\Z
\end{tikzcd}
\end{equation}
Here $\delta$ denotes the connecting map in cohomology coming from the $G$-torsor $\pi: H \rightarrow Y$. (Concretely, given $y \in Y(\A)$, one defines $\delta(y)$ to be the (isomorphism class of the) $G$-torsor $\pi^{-1}(y)$ over $\A$.) That the diagram above commutes follows from the functoriality of the connecting map ${\rm{H}}^1(\cdot, G) \rightarrow {\rm{H}}^2(\cdot, \Gm)$ associated to an extension of $G$ by $\Gm$. Now we give the proof of Theorem \ref{weakapprox}.

\begin{proof}[Proof of Theorem $\ref{weakapprox}$]
Let $y \in Y(\A^S)^{\Br}$. We need to show that there are points in $Y(k)$ arbitrarily close to $y$. By definition of $Y(\A^S)^{\Br}$, there is an element $\widetilde{y} \in Y(\A)^{\Br}$ whose projection onto $Y(\A^S)$ is $y$. Consider the following commutative diagram:
\[
\begin{tikzcd}
& Y(k) \arrow{r}{\delta} \arrow{d} & {\rm{H}}^1(k, G) \arrow{r}{f} \arrow{d} & {\rm{H}}^1(k, H) \arrow{d} \\
H(\A) \arrow{r} & Y(\A) \arrow{r}{\delta} & {\rm{H}}^1(\A, G) \arrow{d} \arrow{r} & {\rm{H}}^1(\A, H) \\
&& \Ext^1(G, \Gm)^* &
\end{tikzcd}
\]
The rows are exact sequences of pointed sets. Over fields (such as $k$ or $k_v$), this is \cite[Ch.\,I, \S 5.4, Prop.\,36]{serre}. In fact, the same assertion holds over an arbitrary base scheme, with straightforward proofs. Alternatively, one may invoke Proposition \ref{adeliccohomologyprop} for $G$ in order to reduce the claim to the corresponding assertion over the fields $k_v$.

Since $\widetilde{y} \in Y(\A)^{\Br}$, the commutativity of (\ref{commdiagweakapproxpf}) implies that $\delta(\widetilde{y}) \in {\rm{H}}^1(\A, G)$ maps to $0 \in \Ext^1(G, \Gm)^*$. By Theorem \ref{pseudoredcomplexexact}, therefore, $\delta(\widetilde{y})$ lifts to an element $\alpha \in {\rm{H}}^1(k, G)$. Then $f(x) \in \Sha^1(k, H) = 1$, since the composition $Y(\A) \rightarrow {\rm{H}}^1(\A, G) \rightarrow {\rm{H}}^1(\A, H)$ is trivial. It follows that $x = \delta(y')$ for some $y' \in Y(k)$. Then $\delta(\widetilde{y}) = \delta(y'_{\A}) \in {\rm{H}}^1(\A, G)$. We claim that it follows that $\widetilde{y} = h\cdot y'$ for some $h \in H(\A)$.

Indeed, by definition $\delta(y'_{\A}) = \pi^{-1}(y'_{\A})$ as right $G$-torsors, and similarly for $\delta(\widetilde{y})$. Let $\phi: \pi^{-1}(y'_{\A}) \rightarrow \pi^{-1}(\widetilde{y})$ be a $G$-equivariant isomorphism. Consider the map $\psi: \pi^{-1}(y'_{\A}) \rightarrow H_{\A}$ defined by $\psi(x) := \phi(x)x^{-1}$. The $G$-equivariance of $\phi$ ensures that $\psi$ is $G$-invariant, i.e., $\psi(gx) = \psi(x)$ for all $x \in \pi^{-1}(y'_{\A})$ and $g \in G$. This implies that $\psi$ is constant, since $G$ acts transitively on $\pi^{-1}(y'_{\A})$. Therefore, there is some $h \in H(\A)$ such that $\psi(x) = h$ for all $x \in \pi^{-1}(y'_{\A})$. That is, $\phi(x) = h\cdot x$. Composing with $\pi$ then yields $\widetilde{y} = h\cdot y'$, as claimed.

In particular, projection to $\A^S$ yields $y = h\cdot y'$ for some $h \in H(\A^S)$. Since strong approximation holds for $H$ with respect to $S$ by assumption, there is some $h' \in H(k)$ such that $h'$ is as close as we desire to $h$. We may therefore make $h' \cdot y' \in Y(k)$ as close as we desire to $y \in Y(\A^S)^{\Br}$. The proof of Theorem \ref{weakapprox} is therefore complete.
\end{proof}

\end{document}